\newtheorem{thm}[subsection]{Theorem}
\newtheorem{prop}[subsection]{Proposition}
\newtheorem{cor}[subsection]{Corollary}
\newtheorem{lemma}[subsection]{Lemma}
\newtheorem{conj}[subsection]{Conjecture}
\newtheorem{remark}[subsection]{Remark}
\theoremstyle{definition}
\numberwithin{equation}{section}
\def\cT{{\cal T}}
\def\cG{{\cal G}}
\def\cA{{\cal A}}
\def\ra{\rightarrow}
\def\W{{\bf W}}
\def\cA{{\mathcal A}}
\def\cB{{\mathcal B}}
\def\cC{{\mathcal C}}
\def\cD{{\mathcal D}}
\def\cE{{\mathcal E}}
\def\cF{{\mathcal F}}
\def\cG{{\mathcal G}}
\def\cH{{\mathcal H}}
\def\cI{{\mathcal I}}
\def\cJ{{\mathcal J}}
\def\cL{{\mathcal L}}
\def\cS{{\mathcal S}}
\def\cT{{\mathcal T}}
\def\cV{{\mathcal V}}
\def\cW{{\mathcal W}}
\def\gg{{\mathfrak g}}
\def\gl{{\mathfrak l}}
\def\go{{\mathfrak o}}
\def\gp{{\mathfrak p}}
\def\gs{{\mathfrak s}}
\newfont{\german}{eufm10}
\begin{document}
\pagestyle{plain}

\title
{Universal two-parameter even spin $\cW_{\infty}$-algebra}

\author{Shashank Kanade} 
\address{Department of Mathematics, University of Denver}
\email{shashank.kanade@du.edu}

\author{Andrew R. Linshaw} 
\address{Department of Mathematics, University of Denver}
\email{andrew.linshaw@du.edu}
\thanks{A. L. is supported by Simons Foundation Grant \#318755.
S. K. is supported by a startup grant provided by University of Denver. We thank T. Creutzig for the proof of Theorem \ref{thomas} and T. Arakawa for the proof of Remark \ref{arakawa}, and for many interesting discussions. We also thank T. Prochazka for pointing out to us that the even spin algebra is not invariant under the change of variables $\lambda\mapsto -\lambda$.}


{\abstract \noindent  We construct the unique two-parameter vertex algebra which is freely generated of type $\cW(2,4,6,\dots)$, and generated by the weights $2$ and $4$ fields. Subject to some mild constraints, all vertex algebras of type $\cW(2,4,\dots, 2N)$ for some $N$, can be obtained as quotients of this universal algebra. This includes the $B$ and $C$ type principal $\cW$-algebras, the $\mathbb{Z}_2$-orbifolds of the $D$ type principal $\cW$-algebras, and many others which arise as cosets of affine vertex algebras inside larger structures. As an application, we classify all coincidences among the simple quotients of the $B$ and $C$ type principal $\cW$-algebras, as well as the $\mathbb{Z}_2$-orbifolds of the $D$ type principal $\cW$-algebras. Finally, we use our classification to give new examples of principal $\cW$-algebras of $B$, $C$, and $D$ types, which are lisse and rational.}

\keywords{vertex algebra; $\cW$-algebra; nonlinear Lie conformal algebra; coset construction}
\maketitle

\section{Introduction} \label{section:intro}

Let  $\gg$ be a simple, finite-dimensional Lie algebra over $\mathbb{C}$, and let $f \in \gg$ be a nilpotent element. The affine $\cW$-algebras $\cW^k(\gg,f)$ at level $k \in \mathbb{C}$ associated to $\gg$ and $f$ are important examples of vertex algebras in both the physics and mathematics literature. The first example other than the Virasoro algebra is the Zamolodchikov $\cW_3$-algebra \cite{Zam}, which corresponds to $\gs\gl_3$ with its principal nilpotent element $f_{\text{prin}}$. For an arbitrary $\gg$, the definition of $\cW^k(\gg,f_{\text{prin}})$ via quantum Drinfeld-Sokolov reduction was given by Feigin and Frenkel in \cite{FF1}. For an arbitrary nilpotent $f$, the definition of $\cW^k(\gg,f)$ is due to Kac, Roan, and Wakimoto \cite{KRW}, and is a generalization of the Drinfeld-Sokolov reduction.

The algebras $\cW^k(\gg,f_{\text{prin}})$ are closely related to the classical $\cW$-algebras which arose in the context of integrable hierarchies of soliton equations in works of Adler, Gelfand, Dickey, Drinfeld, and Sokolov \cite{Ad,GD,Di,DS}. The KdV hierarchy, which corresponds to the Virasoro algebra, was generalized by Drinfeld and Sokolov to an integrable hierarchy associated to any simple Lie algebra. The corresponding classical $\cW$-algebras are Poisson vertex algebras, and can be obtained as quasi-classical limits of affine $\cW$-algebras \cite{FBZ}. For a general nilpotent $f$, $\cW^k(\gg,f)$ can also be viewed as a chiralization of the finite $\cW$-algebra $\cW^{\text{fin}}(\gg,f)$ \cite{DSKII}, since it is related to $\cW^{\text{fin}}(\gg,f)$ via the Zhu functor \cite{Z}.

Let $\cW_k(\gg,f)$ denote the simple quotient of $\cW^k(\gg,f)$ by its maximal proper graded ideal. In the case $f = f_{\text{prin}}$, it was conjectured by Frenkel, Kac and Wakimoto \cite{FKW} and proven by Arakawa \cite{AIV,AV} that for a nondegenerate admissible level $k$, $\cW_k(\gg,f_{\text{prin}})$ is lisse (or $C_2$-cofinite) and rational. These are known as {\it minimal models}, and provide a large family of new rational vertex algebras that generalize the Virasoro minimal models \cite{GKO}. In fact, there are many other known lisse, rational $\cW$-algebras for other nilpotents, not all of which are at admissible levels; see for example \cite{AII,AMI,Kaw,KW,CLIV}.

\subsection{$\cW_{\infty}$-algebras}
For $n\geq 3$, $\cW^k(\gs\gl_n, f_{\text{prin}})$ is of type $\cW(2,3,\dots, n)$, meaning that it has a minimal strong generating set consisting of one field in each of these conformal weights. For different values of $n$, these structures are distinct and there are no nontrivial homomorphisms of one-parameter vertex algebras $$\cW^k(\gs\gl_n, f_{\text{prin}}) \ra \cW^{\ell}(\gs\gl_m, f_{\text{prin}}),\qquad  n \neq m.$$ However, it was conjectured in the physics literature \cite{YW,BK,B-H,BS,GG,ProI,ProII,PR} and recently proven by the second author in \cite{LI}, that there exists a unique two-parameter vertex algebra of type $\cW(2,3,\dots)$, denoted by $\cW_{\infty}[\mu]$, which interpolates between all the algebras $\cW^k(\gs\gl_n, f_{\text{prin}})$ in the following sense. The structure constants of $\cW_{\infty}[\mu]$ are continuous functions of the central charge $c$ and the parameter $\mu$, and if we set $\mu = n$, there is a truncation at weight $n+1$ that allows all fields in weights $d \geq n+1$ to be eliminated in the simple quotient of $\cW_{\infty}[\mu]$. This quotient is isomorphic to $\cW^k(\gs\gl_n, f_{\text{prin}})$ as a one-parameter vertex algebra. In the quasi-classical limit, the existence of a Poisson vertex algebra of type $\cW(2,3,\dots)$ which interpolates between the classical $\cW$-algebras of $\gs\gl_n$ for all $n$, has been known for many years; see \cite{KZ,KM,DSKV}. 

For convenience, in \cite{LI} we used a different parameter $\lambda$ which is a rational function of $\mu$ and the central charge $c$, and we denoted the universal algebra by $\cW(c,\lambda)$. It is a simple vertex algebra defined over the polynomial ring $\mathbb{C}[c,\lambda]$. However, there are certain prime ideals $I \subseteq \mathbb{C}[c,\lambda]$ such that the quotient
$$\cW^I(c,\lambda) = \cW(c,\lambda) / I \cdot \cW(c,\lambda),$$ is {\it not} simple as a vertex algebra over the ring $\mathbb{C}[c,\lambda] / I$. Here $I$ is regarded as a subset of the weight zero space $\cW(c,\lambda)[0] \cong \mathbb{C}[c,\lambda]$, and $I \cdot \cW(c,\lambda)$ denotes the vertex algebra ideal generated by $I$. Let $\cI \subseteq \cW^I(c,\lambda)$ denote the maximal proper ideal graded by conformal weight, so that $\cW^I(c,\lambda)/ \cI$ is the unique simple graded quotient. It turns out that {\it all} one-parameter vertex algebras of type $\cW(2,3,\dots, N)$ for some $N$ satisfying some mild hypotheses, can be obtained in this way for some choice of $I$. In \cite{LI}, we gave explicit formulas for the generator of $I$ in the case of $\cW^k(\gs\gl_n, f_{\text{prin}})$ as well as a few other such families. As an application, we obtained many nontrivial coincidences between the simple quotients of such vertex algebras; these correspond to the intersection points of the varieties $V(I)$ in the parameter space $\mathbb{C}^2$.

\subsection{Even spin $\cW_{\infty}$-algebra}  The $B$ and $C$ type principal $\cW$-algebras $\cW^k(\gs\go_{2n+1}, f_{\text{prin}})$ and $\cW^{\ell}(\gs\gp_{2n}, f_{\text{prin}})$ are of type $\cW(2,4,\dots, 2n)$, and are in fact isomorphic when $k$ and $\ell$ are related by the level shift 
\begin{equation} \label{levelshift} (k + 2 n-1) (\ell + n+1) = \frac{1}{2},\end{equation} by Feigin-Frenkel duality \cite{FF2,FF3}. Also, the $\mathbb{Z}_2$-orbifold of the $D$ type principal $\cW$-algebra $\cW^k(\gs\go_{2n}, f_{\text{prin}})^{\mathbb{Z}_2}$ is of type $\cW(2,4,\dots, 4n)$. A similar conjecture in the physics literature says that there should exist a unique two-parameter vertex algebra of type $\cW(2,4,\dots)$ which is strongly generated by a field in each even weight $2,4,\dots$, and generated by the weights $2$ and $4$ fields. This algebra is expected to interpolate between the $B$ and $C$ type principal $\cW$-algebras, as well as the $\mathbb{Z}_2$-orbifold of the $D$ type principal $\cW$-algebras, as above. Considerable experimental evidence for the existence and uniqueness of this algebra was obtained by Candu, Gaberdiel, Kelm and Vollenweider in \cite{CGKV}. Also, it is expected that many other one-parameter vertex algebras of type $\cW(2,4,\dots, 2N)$ for some $N$, can be obtained as quotients of this universal algebra.

\subsection{Main result} In this paper, we prove the existence and uniqueness of this algebra, which we denote by $\cW^{\mathrm{ev}}(c,\lambda)$. It is defined over the polynomial ring $\mathbb{C}[c,\lambda]$ and is generated by the Virasoro field $L$ of central charge $c$, and a weight $4$ primary field $W^4$. The remaining strong generators $W^{2i}$ of weight $2i$ are defined inductively by $$W^{2i} = W^{4}_{(1)} W^{2i-2},\qquad i \geq 3.$$ 
The procedure is similar to the construction of $\cW(c,\lambda)$ in \cite{LI}, although there are some surprising phenomena that appear in low weights which make the computations more involved. First, we show that all structure constants in the OPEs of $L(z) W^{2i}(w)$ and $W^{2j}(z) W^{2k}(w)$ for $2i \leq 12$ and $2j+2k \leq 14$, are uniquely determined by imposing appropriate Jacobi identities. This computation was carried out using the Mathematica package of Thielemans \cite{T}. Next, we show inductively that this data uniquely determines {\it all} structure constants in the OPEs $L(z) W^{2i}(w)$ and $W^{2j}(z) W^{2k}(w)$, if a certain subset of Jacobi identities are imposed. By invoking a result of De Sole and Kac \cite{DSKI} which associates to a nonlinear conformal algebra satisfying certain conditions, a vertex algebra known as its {\it universal enveloping vertex algebra}, we conclude that $\cW^{\mathrm{ev}}(c,\lambda)$ exists and is of type $\cW(2,4,\dots)$. However, it is not immediately clear that it is {\it freely generated} of this type. Finally, by considering a certain family of quotients of $\cW^{\mathrm{ev}}(c,\lambda)$ whose graded characters are known, we prove that $\cW^{\mathrm{ev}}(c,\lambda)$ is indeed freely generated.

\subsection{Quotients of $\cW^{\mathrm{ev}}(c,\lambda)$ and the classification of vertex algebras of type $\cW(2,4,\dots, 2N)$}

$\cW^{\mathrm{ev}}(c,\lambda)$ has a conformal weight grading $$\cW^{\mathrm{ev}}(c,\lambda) = \bigoplus_{n\geq 0} \cW^{\mathrm{ev}}(c,\lambda)[n],$$ where each $\cW^{\mathrm{ev}}(c,\lambda)[n]$ is a free $\mathbb{C}[c,\lambda]$-module and $\cW^{\mathrm{ev}}(c,\lambda)[0] \cong \mathbb{C}[c,\lambda]$. There is a symmetric bilinear form on $\cW^{\mathrm{ev}}(c,\lambda)[n]$ given by
$$\langle ,  \rangle_n : \cW^{\mathrm{ev}}(c,\lambda)[n] \otimes_{\mathbb{C}[c,\lambda]} \cW^{\mathrm{ev}}(c,\lambda)[n] \ra \mathbb{C}[c,\lambda],\qquad \langle \omega, \nu \rangle_n = \omega_{(2n-1)} \nu.$$ The level $n$ Shapovalov determinant $\mathrm{det}_n \in \mathbb{C}[c,\lambda]$ is just the determinant of this form. It turns out that $\mathrm{det}_n$ is nonzero for all $n$; equivalently, $\cW^{\mathrm{ev}}(c,\lambda)$ is a simple vertex algebra over $\mathbb{C}[c,\lambda]$.

Let $p$ be an irreducible factor of $\mathrm{det}_{2N+2}$ and let $I = (p) \subseteq \mathbb{C}[c,\lambda] \cong \cW^{\mathrm{ev}}(c,\lambda)[0]$ be the ideal generated by $p$. Consider the quotient
$$ \cW^{\mathrm{ev},I}(c,\lambda) = \cW^{\mathrm{ev}}(c,\lambda) / I \cdot \cW^{\mathrm{ev}}(c,\lambda),$$ where $I \cdot \cW^{\mathrm{ev}}(c,\lambda)$ is the vertex algebra ideal generated by $I$. This is a vertex algebra over the ring $\mathbb{C}[c,\lambda]/I$, which is no longer simple. It contains a singular vector $\omega$ in weight $2N+2$, that is, a nonzero element of the maximal proper ideal $\cI\subseteq \cW^{\mathrm{ev},I}(c,\lambda)$ graded by conformal weight. If $p$ does not divide $\mathrm{det}_{m}$ for any $m<2N+2$, $\omega$ will have minimal weight among elements of $\cI$. Often, there exists a localization $R$ of $\mathbb{C}[c,\lambda]/I$ such that $\omega$ has the form \begin{equation} \label{sing:intro} W^{2N+2} - P(L, W^4,\dots, W^{2N}),\end{equation} in the localization 
$$\cW^{\mathrm{ev},I}_R(c,\lambda) = R \otimes_{\mathbb{C}[c,\lambda]/I} \cW^{\mathrm{ev},I}(c,\lambda).$$
Here $P$ is a normally ordered polynomial in the fields $L,W^4,\dots, W^{2N}$, and their derivatives, with coefficients in $R$. If this is the case, there will exist relations $$W^{2m} = P_{2m}(L, W^4, \dots, W^{2N})$$ for all $m \geq N$ expressing $W^{2m}$ in terms of $L, W^4,\dots, W^{2N}$ and their derivatives. The simple quotient $\cW^{\mathrm{ev},I}_R(c,\lambda) / \cI$ will then be of type $\cW(2,4,\dots, 2N)$. Conversely, we will show that any simple one-parameter vertex algebra of type $\cW(2,4,\dots, 2N)$ satisfying some mild hypotheses, can be obtained as the simple quotient of $\cW^{\mathrm{ev},I}_R(c,\lambda)$ for some $I$ and $R$. This reduces the classification of such vertex algebras to the classification of prime ideals $I = (p) \subseteq\mathbb{C}[c,\lambda]$ such that $p$ divides $\mathrm{det}_{2N+2}$ but does not divide $\mathrm{det}_m$ for $m<2N+2$, and $\cW^{\mathrm{ev},I}(c,\lambda)$ contains a singular vector of the form \eqref{sing:intro}, possibly after localizing. 

There are many interesting one-parameter vertex algebras of type $\cW(2,4,\dots, 2N)$ for some $N$. Here is a short list of examples.

\begin{enumerate}

\item For $n\geq 2$, the $B$ and $C$ type principal $\cW$-algebras $\cW^{k}(\gs\go_{2n+1}, f_{\text{prin}})$ and $\cW^{\ell}(\gs\gp_{2n}, f_{\text{prin}})$ and are freely generated of type $\cW(2,4,\dots, 2n)$, and are isomorphic after the level shift \eqref{levelshift}.

\item For $n\geq 3$, the type $D$ principal $\cW$-algebra  $\cW^k(\gs\go_{2n}, f_{\text{prin}})$ has a $\mathbb{Z}_2$-action, and the orbifold $\cW^k(\gs\go_{2n}, f_{\text{prin}})^{\mathbb{Z}_2}$ is of type $\cW(2,4,\dots, 4n)$; see Corollary \ref{cor:typeD}.

\item For a Lie algebra $\gg$, let $V^k(\gg)$ denote the universal affine vertex algebra of $\gg$ at level $k$, and $L_k(\gg)$ its simple graded quotient. For $n\geq 1$, the coset of $V^{k}(\gs\gp_{2n})$ inside $V^{k+1/2}(\gs\gp_{2n}) \otimes L_{-1/2}(\gs\gp_{2n})$, is of type $\cW(2,4\dots, 2n^2+4n)$; see Example 7.1 of \cite{CLi}.

\item For $n\geq 2$, the coset of $V^{k+1/2}(\gs\gp_{2n-2})$ inside the minimal $\cW$-algebra $\cW^k(\gs\gp_{2n}, f_{\text{min}})$ is of type $\cW(2,4,\dots, 2n^2+2n -2)$; see Theorem 5.2 of \cite{ACKL}.

\item Let $N^k(\mathfrak{sl}_2)$ denote the parafermion algebra of $\mathfrak{sl}_2$, that is, the coset of the Heisenberg algebra inside $V^k(\mathfrak{sl}_2)$. The $\mathbb{Z}_2$-orbifold $N^k(\mathfrak{sl}_2)^{\mathbb{Z}_2}$ is of type $\cW(2,4,6,8,10)$.
\end{enumerate}

All the above families of vertex algebras arise as quotients of $\cW^{\mathrm{ev},I}_R(c,\lambda)$ for some prime ideal $I = (p) \subseteq\mathbb{C}[c,\lambda]$ and some localization $R$ of $\mathbb{C}[c,\lambda]/I$; see Corollaries \ref{cor:typecrealization} and \ref{cor:typedrealization}, and Theorems \ref{thm:otherquotients} and \ref{paraorbcurve}. Given a prime ideal $I = (p)$ such that $p$ lies in the Shapovalov spectrum, let $V(I)\subseteq \mathbb{C}^2$ denote the corresponding variety. We call $V(I)$ the {\it truncation curve} associated to the one-parameter vertex algebra arising as the simple quotient of $\cW^{\mathrm{ev},I}_R(c,\lambda)$. For Examples (1) and (2) above, the explicit generator of $I$ can be found by combining our main result with the calculations of Hornfeck appearing in \cite{H}. For Example (5), we will also give the explicit generator of $I$, and for Example (4), we will give a conjectural formula for the generator.

It is also important to consider $\cW^{\mathrm{ev},I}(c,\lambda)$ when $I\subseteq \mathbb{C}[c,\lambda]$ is a {\it maximal} ideal, which has the form $I = (c- c_0, \lambda- \lambda_0)$ for some $c_0, \lambda_0\in \mathbb{C}$. Then $\cW^{\mathrm{ev},I}(c,\lambda)$ and its quotients are vertex algebras over $\mathbb{C}$. Given two maximal ideals $I_0 = (c- c_0, \lambda- \lambda_0)$ and $I_1 = (c - c_1, \lambda - \lambda_1)$, let $\cW_0$ and $\cW_1$ be the simple quotients of $\cW^{\mathrm{ev},I_0}(c,\lambda)$ and $\cW^{\mathrm{ev},I_1}(c,\lambda)$. There is a very simple criterion for $\cW_0$ and $\cW_1$ to be isomorphic; see Theorem \ref{thm:coincidences}. We must have $c_0 = c_1$, and if this central charge is $0, 1, -24, \frac{1}{2},-\frac{22}{5}$, there is no restriction on $\lambda_0, \lambda_1$. If $c_0 = c = c_1$ is arbitrary with $c\neq 1, 25$, and $\displaystyle \lambda_0 = \pm \frac{1}{7 \sqrt{(c -25) (c -1)}} = \pm \lambda_1$ or $\displaystyle \lambda_0 = \pm \frac{\sqrt{196 - 172 c + c^2}}{21 (c-1) (22 + 5 c)} = \pm \lambda_1$,  we also have $\cW_0 \cong \cW_1$.  In all other cases, we must have $\lambda_0 = \lambda_1$. Our criterion for $\cW_0$ and $\cW_1$ to be isomorphic implies that aside from the above coincidences, all other pointwise coincidences among the simple quotients of one-parameter vertex algebras $\cW^{\mathrm{ev},I}(c,\lambda)$ and $\cW^{\mathrm{ev},J}(c,\lambda)$, correspond to intersection points of their truncation curves $V(I)$ and $V(J)$. As an application, we classify all nontrivial coincidences among the simple algebras $\cW_{\ell}(\gs\go_{2n},f_{\text{prin}})^{\mathbb{Z}_2}$, $\cW_{\ell'}(\gs\go_{2m},f_{\text{prin}})^{\mathbb{Z}_2}$, $\cW_{k}(\gs\gp_{2r},f_{\text{prin}})$ and $\cW_{k'}(\gs\gp_{2s},f_{\text{prin}})$, as well as the simple parafermion orbifold $N_t(\mathfrak{sl}_2)^{\mathbb{Z}_2}$. The coincidences between $\cW_{\ell}(\gs\go_{2n},f_{\text{prin}})^{\mathbb{Z}_2}$ and $\cW_{\ell'}(\gs\go_{2m},f_{\text{prin}})^{\mathbb{Z}_2}$ were previously observed in the physics literature \cite{CGKV}, and our approach provides a rigorous proof. Finally, as a corollary of our classification, we give new examples of principal $\cW$-algebras of $B$, $C$, and $D$ types at nonadmissible levels, which are lisse and rational.

\section{Vertex algebras} \label{section:VOAs}
Here we define vertex algebras, which have been discussed from various different points of view in the literature \cite{Bor,FLM,FHL,K,FBZ,LeLi}. We will follow the formalism developed in \cite{LZ} and partly in \cite{Li}, and our presentation closely follows \cite{LI}. Let $V=V_0\oplus V_1$ be a super vector space over $\mathbb{C}$, let $z,w$ be formal variables, and let $\text{QO}(V)$ denote the space of linear maps $$V\ra V((z))=\left\lbrace\sum_{n\in\mathbb{Z}} v(n) z^{-n-1}|
v(n)\in V,\ v(n)=0\ \text{for} \ n>\!\!>0 \right\rbrace.$$ Each element $a\in \text{QO}(V)$ can be represented as a power series
$$a=a(z)=\sum_{n\in\mathbb{Z}}a(n)z^{-n-1}\in \text{End}(V)[[z,z^{-1}]].$$ We assume that $a=a_0+a_1$ where $a_i:V_j\ra V_{i+j}((z))$ for $i,j\in\mathbb{Z}/2\mathbb{Z}$, and we write $|a_i| = i$.

For each $n \in \mathbb{Z}$, $\text{QO}(V)$ has a bilinear operation defined on homogeneous elements $a$ and $b$ by
$$ a(w)_{(n)}b(w)=\text{Res}_z a(z)b(w)\ \iota_{|z|>|w|}(z-w)^n- (-1)^{|a||b|}\text{Res}_z b(w)a(z)\ \iota_{|w|>|z|}(z-w)^n.$$
Here $\iota_{|z|>|w|}f(z,w)\in\mathbb{C}[[z,z^{-1},w,w^{-1}]]$ denotes the power series expansion of a rational function $f$ in the region $|z|>|w|$. For $a,b\in \text{QO}(V)$, we have the following identity known as the {\it operator product expansion} (OPE).
\begin{equation}\label{opeform} a(z)b(w)=\sum_{n\geq 0}a(w)_{(n)} b(w)\ (z-w)^{-n-1}+:a(z)b(w):. \end{equation}
Here $:a(z)b(w):\ =a(z)_-b(w)\ +\ (-1)^{|a||b|} b(w)a(z)_+$, where $a(z)_-=\sum_{n<0}a(n)z^{-n-1}$ and $a(z)_+=\sum_{n\geq 0}a(n)z^{-n-1}$. Often, \eqref{opeform} is written as
$$a(z)b(w)\sim\sum_{n\geq 0}a(w)_{(n)} b(w)\ (z-w)^{-n-1},$$ where $\sim$ means equal modulo the term $:a(z)b(w):$, which is regular at $z=w$. 

Note that $:a(w)b(w):$ is a well-defined element of $\text{QO}(V)$. It is called the {\it Wick product} or {\it normally ordered product} of $a$ and $b$, and it
coincides with $a_{(-1)}b$. For $n\geq 1$ we have
$$ n!\ a(z)_{(-n-1)} b(z)=\ :(\partial^n a(z))b(z):,\qquad \partial = \frac{d}{dz}.$$
For $a_1(z),\dots ,a_k(z)\in \text{QO}(V)$, the iterated Wick product is defined inductively by
\begin{equation}\label{iteratedwick} :a_1(z)a_2(z)\cdots a_k(z):\ =\ :a_1(z)b(z):,\qquad b(z)=\ :a_2(z)\cdots a_k(z):.\end{equation}
We usually omit the formal variable $z$ when no confusion can arise.

We denote the constant power series $\text{id}_V \in \text{QO}(V)$ by $\mathbf{1}$. A subspace $\cA\subseteq \text{QO}(V)$ containing $\mathbf{1}$ which is closed under all the above products is called a {\it quantum operator algebra} (QOA). We say that $a,b\in \text{QO}(V)$ are {\it local} if $$(z-w)^N [a(z),b(w)]=0$$ for some $N\geq 0$. A {\it vertex algebra} will be a QOA whose elements are pairwise local. This notion is equivalent to the notion of a vertex algebra in the sense of \cite{FLM}. 

A vertex algebra $\cA$ is {\it generated} by a subset $S=\{\alpha^i|\ i\in I\}$ if $\cA$ is spanned by words in the letters $\alpha^i$, and all products, for $i\in I$ and $n\in\mathbb{Z}$. We say that $S$ {\it strongly generates} $\cA$ if $\cA$ is spanned by words in the letters $\alpha^i$, and all products for $n<0$. Equivalently, $\cA$ is spanned by $$\{ :\partial^{k_1} \alpha^{i_1}\cdots \partial^{k_m} \alpha^{i_m}:| \ i_1,\dots,i_m \in I,\ k_1,\dots,k_m \geq 0\}.$$ Suppose that $S$ is an ordered strong generating set $\{\alpha^1, \alpha^2,\dots\}$ for $\cA$ which is at most countable. We say that $S$ {\it freely generates} $\cA$, if $\cA$ has a basis consisting of all monomials 
\begin{equation} \label{freegen} \begin{split} & :\partial^{k^1_1} \alpha^{i_1} \cdots \partial^{k^1_{r_1}}\alpha^{i_1} \partial^{k^2_1} \alpha^{i_2} \cdots \partial^{k^2_{r_2}}\alpha^{i_2}
 \cdots \partial^{k^n_1} \alpha^{i_n} \cdots \partial^{k^n_{r_n}} \alpha^{i_n}:,\qquad 
 1\leq i_1 < \dots < i_n,
 \\ & k^1_1\geq k^1_2\geq \cdots \geq k^1_{r_1},\qquad k^2_1\geq k^2_2\geq \cdots \geq k^2_{r_2},  \ \ \cdots,\ \  k^n_1\geq k^n_2\geq \cdots \geq k^n_{r_n},
 \\ &  k^{t}_1 > k^t_2 > \dots > k^t_{r_t}\ \  \text{whenever} \ \ \alpha^{i_t}\ \ \text{is odd}. 
 \end{split} \end{equation}

A {\it conformal structure} with central charge $c$ on $\cA$ is a Virasoro vector $$L(z) = \sum_{n\in \mathbb{Z}} L_n z^{-n-2}$$ in $\cA$ satisfying
\begin{equation} \label{virope} L(z) L(w) \sim \frac{c}{2}(z-w)^{-4} + 2 L(w)(z-w)^{-2} + \partial L(w)(z-w)^{-1},\end{equation} such that $L_{-1} \alpha = \partial \alpha$ for all $\alpha \in \cA$, and $L_0$ acts diagonalizably on $\cA$. We say that $\alpha$ has conformal weight $d$ if $L_0(\alpha) = d \alpha$, and we denote the conformal weight $d$ subspace by $\cA[d]$. In this paper, all our vertex algebras will have conformal structures, and will be $\mathbb{N}$-graded by conformal weight:
$$\cA = \bigoplus_{d\geq 0} \cA[d].$$ 
We say that a vertex algebra $\cA$ is of type $$\cW(d_1,d_2,\dots)$$ if it has a minimal strong generating set consisting of one even field in each conformal weight $d_1, d_2, \dots $. If $\cA$ is freely generated of type $\cW(d_1,d_2,\dots)$, it has graded character
\begin{equation} \label{gradedchar:ua} \chi(\cA, q) = \sum_{n\geq 0} \text{dim}(\cA[n]) q^n = \prod_{i\geq 1} \prod_{k\geq 0} \frac{1}{1-q^{d_i +k}}.\end{equation}

Given fields $a,b,c$ in a vertex algebra $\cA$, the following identities hold.
\begin{equation} \label{deriv} (\partial a)_{(n)} b = -na_{(n-1)}b \qquad \forall n\in \mathbb{Z},\end{equation}
\begin{equation} \label{commutator} a_{(n)} b  =  (-1)^{|a||b|} \sum_{p \in \mathbb{Z}} (-1)^{p+1} (b_{(p)} a)_{(n-p-1)} \mathbf{1},\qquad \forall n\in \mathbb{Z},\end{equation}
\begin{equation} \label{nonasswick} :(:ab:)c:\  - \ :abc:\ 
=  \sum_{n\geq 0}\frac{1}{(n+1)!}\big( :(\partial^{n+1} a)(b_{(n)} c):\ +
(-1)^{|a||b|} (\partial^{n+1} b)(a_{(n)} c):\big)\ .\end{equation}
\begin{equation} \label{ncw} a_{(n)}
(:bc:) -\ :(a_{(n)} b)c:\ - (-1)^{|a||b|}\ :b(a_{(n)} c): \ = \sum_{i=1}^n
\binom{n}{i} (a_{(n-i)}b)_{(i-1)}c, \qquad \forall n \geq 0.
\end{equation}

Given fields $a,b,c$ and integers $m,n \geq 0$, the following identities are known as {\it Jacobi identities} of type $(a,b,c)$. 
\begin{equation} \label{jacobi} a_{(r)}(b_{(s)} c) = (-1)^{|a||b|} b_{(s)} (a_{(r)}c) + \sum_{i =0}^r \binom{r}{i} (a_{(i)}b)_{(r+s - i)} c.\end{equation}

\subsection{Vertex algebras over commutative rings} \label{section:voaring}
Let $R$ be a finitely generated, unital, commutative $\mathbb{C}$-algebra. A {\it vertex algebra over $R$} is an $R$-module $\cA$ with a vertex algebra structure which is defined as above. The theory of vertex algebras over general commutative rings was developed by Mason \cite{Ma}, but the main difficulties are not present when $R$ is a $\mathbb{C}$-algebra. 

Given an $R$-module $M$, we define $\text{QO}_R(M)$ to be the set of $R$-module homomorphisms $a: M \ra M((z))$, which can be represented by power series $$a(z) = \sum_{n\in \mathbb{Z}} a(n) z^{-n-1} \in \text{End}_R(M)[[z,z^{-1}]].$$ Here $a(n) \in \text{End}_R(M)$ is an $R$-module endomorphism, and for each $v\in M$, $a(n) v = 0$ for $n>\!\!>0$. Then $\text{QO}_R(M)$ is an $R$-module, and we define $a_{(n)} b$ as before. These operations are $R$-module homomorphisms from $\text{QO}_R(M) \otimes_R \text{QO}_R(M) \ra \text{QO}_R(M)$. A QOA will be an $R$-module $\cA \subseteq \text{QO}_R(M)$ containing $\mathbf{1}$ and closed under all products. Locality is defined as before, and a vertex algebra over $R$ is a QOA $\cA\subseteq \text{QO}_R(M)$ whose elements are pairwise local.

We say that a set $S = \{\alpha^i|\ \ i\in I\}$ generates $\cA$ if $\cA$ is spanned as an $R$-module by all words in $\alpha^i$ and the above products. We say that $S$ strongly generates $\cA$ if $\cA$ is spanned as an $R$-module by all iterated Wick products of these generators and their derivatives. If $S = \{\alpha^1, \alpha^2,\dots\}$ is an ordered strong generating set for $\cA$ which is at most countable, we say that $S$ freely generates $\cA$, if $\cA$ has an $R$-basis consisting of all normally ordered monomials of the form \eqref{freegen}. In general, $\cA$ need not be a free $R$-module, but if $\cA$ is freely generated by some $S$, it is a free $R$-module.

Suppose that $\cV$ is a vertex algebra over $R$ containing a field $L$ satisfying \eqref{virope}, for some $c \in R$. We call $L$ a conformal structure on $\cV$ if $L_{-1}$ acts on $\cV$ by $\partial$ and $L_0$ acts diagonalizably, and we have an $R$-module decomposition $$\cV = \bigoplus_{d\in R} \cV[d],$$ where $\cV[d]$ is the $L_0$-eigenspace with eigenvalue $d$. If each $\cV[d]$ is in addition a free $R$-module of finite rank, we have the graded character
$$\chi(\cV ,q) = \sum_{d \in R} \text{rank}_R(\cV[d]) q^d.$$
In all our examples, the grading will be by $\mathbb{N}$, regarded as a subsemigroup of $R$, and $\cV[0] \cong R$.

Let $\cV$ be a vertex algebra over $R$ with weight grading 
$$\cV = \bigoplus_{n\geq 0} \cV[n],\qquad \cV[0] \cong R.$$ 
A vertex algebra ideal $\cI \subseteq \cV$ is called {\it graded} if $$\cI = \bigoplus_{n\geq 0} \cI[n],\qquad \cI[n] = \cI \cap \cV[n].$$ 
We say that $\cV$ is {\it simple} if there are no graded ideals $\cI$ such that $\cI[0] \neq \{0\}$. If $I\subseteq R$ is an ideal, we may regard $I$ as a subset of $\cV[0] \cong R$. Let $I \cdot \cV$ denote the set of $I$-linear combinations of elements of $\cV$, which is just the vertex algebra ideal generated by $I$. Then $$\cV^I = \cV / (I \cdot \cV)$$ is a vertex algebra over the ring $R/I$. Even if $\cV$ is simple as a vertex algebra over $R$, $\cV^I$ need not be simple as a vertex algebra over $R/I$.

\subsection{Shapovalov form}
 Let $\cV = \bigoplus_{n\geq 0}\cV[n]$ be a vertex algebra over $R$ which is $\mathbb{N}$-graded by conformal weight. Then each $\cV[n]$ has a symmetric bilinear form 
\begin{equation} \label{bilinearform} \langle,\rangle_n: \cV[n]\otimes_{R} \cV[n] \ra R,\qquad \langle u,v \rangle_n = u_{(2n-1)}v.\end{equation}
We stipulate that $\langle \cV[n],\cV[m]\rangle=0$ if $n\neq m$
and extend $\langle,\rangle$ linearly to all $\cV$.

A vector $v$ in the radical of the Shapovalov form $\langle,\rangle$ is called a singular vector. Suppose now that each weight space $\cV[n]$ is a free $R$-module of finite rank. We then define the level $n$ Shapovalov determinant $\mathrm{det}_n \in R$ to be the determinant of the matrix of $\langle,\rangle_n$. 
Now we provide more details on the Shapovalov form.

\begin{lemma}\label{lem:singularbetter}
	Let $\mathcal{V}$ be an $\mathbb{N}$-graded vertex algebra over $R$.
	Let $v$ be a singular vector of weight $n>0$.
	Then, for any homogeneous $w$ of weight $n-t$ with $0\leq t\leq n$, $w_{(2n-t-1)}v=0$.
\end{lemma}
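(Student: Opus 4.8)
The plan is to reduce the claim to the defining property of a singular vector by applying the translation operator $\partial = L_{-1}$ to raise $w$ to conformal weight $n$, the one weight at which the Shapovalov form is directly available.

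First I would note that since $w$ is homogeneous of weight $n-t$ and $L_{-1}$ acts as $\partial$ and raises conformal weight by $1$, the vector $\partial^t w$ is homogeneous of weight $(n-t)+t=n$. As $v$ is a singular vector of weight $n$, it lies in the radical of $\langle,\rangle$; in particular, using the definition \eqref{bilinearform} of the level-$n$ form, we get $\langle \partial^t w, v\rangle_n = (\partial^t w)_{(2n-1)} v = 0$.

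Next I would unwind $(\partial^t w)_{(2n-1)} v$ by iterating the derivative identity \eqref{deriv}, namely $(\partial a)_{(k)} b = -k\, a_{(k-1)} b$, a total of $t$ times. This lowers the index from $2n-1$ to $2n-t-1$ and produces
$$(\partial^t w)_{(2n-1)} v = (-1)^t (2n-1)(2n-2)\cdots(2n-t)\, w_{(2n-t-1)} v,$$
so the expression is exactly a scalar multiple of the quantity $w_{(2n-t-1)} v$ we wish to kill. Combining with the previous paragraph gives $0 = (-1)^t (2n-1)\cdots(2n-t)\, w_{(2n-t-1)} v$.

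It then remains to check that the scalar is a unit in $R$, and this is the one point I would flag as the (mild) obstacle, since it is precisely where the hypothesis $0\le t\le n$ is used. The factors $2n-1,2n-2,\dots,2n-t$ are $t$ consecutive integers bounded below by $2n-t\ge n\ge 1$ (using $t\le n$ and $n>0$), hence all strictly positive; their product is a nonzero integer and is therefore invertible in the $\mathbb{C}$-algebra $R$. Dividing by this unit yields $w_{(2n-t-1)} v=0$, with the case $t=0$ being simply the definition of singularity applied to $w\in\mathcal{V}[n]$. Were $t$ allowed to exceed $n$, one of the factors could vanish and the argument would fail, which explains the stated range constraint.
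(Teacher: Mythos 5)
Your proof is correct and is essentially the paper's argument in unrolled form: the paper inducts on $t$, applying the identity $(\partial w)_{(k)}v = -k\,w_{(k-1)}v$ once per step and dividing by the single nonzero factor $2n-t$, whereas you apply $\partial^t$ all at once and divide by the product $(2n-1)\cdots(2n-t)$, which is nonzero for the same reason. Both versions hinge on exactly the same two ingredients, namely identity \eqref{deriv} and the bound $2n-t\ge n>0$.
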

\begin{proof}
	We induct on $t$.
	Since $v$ is a singular vector, $t=0$ case follows by definition.  
	From \eqref{deriv}, $-(2n-t)\,w_{(2n-t-1)}v = (\partial w)_{(2n-(t-1)-1)}v$. The right-hand side here is $0$ by inductive assumption and also $(2n-t)\neq 0$. 
\end{proof}

\begin{prop}\label{prop:shap} 
	Let $\mathcal{V}$ be an $\mathbb{N}$-graded vertex algebra over $R$ where $\cV[0] \cong R$ and each $\cV[n]$ is a free $R$-module of finite rank. We also assume that $L_1\cV[1]=0$. Then a homogeneous vector of weight $n>0$ is in the radical of the Shapovalov form (that is, it is a singular vector) if and only if it is contained in a proper ideal of $\mathcal{V}$.

\end{prop}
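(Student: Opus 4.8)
The statement is an equivalence, so I would prove the two implications separately. Throughout, the operative meaning of \emph{proper} has to be that $\cI\cap\cV[0]=\{0\}$ (equivalently, that the quotient retains $\cV[0]\cong R$): an ideal such as $r\cdot\cV$ for a non-unit $r\in R$ is proper in the naive sense yet plainly contains non-singular vectors, so it is the vanishing of the weight-zero part that does the work. With this understood, the direction ``contained in a proper ideal $\Rightarrow$ singular'' is immediate and uses neither Lemma~\ref{lem:singularbetter} nor the hypothesis $L_1\cV[1]=0$: if $v\in\cV[n]$ lies in a graded ideal $\cI$ with $\cI\cap\cV[0]=\{0\}$, then for every $w\in\cV[n]$ the element $\langle v,w\rangle_n=v_{(2n-1)}w$ lies in $\cI$ (because $v\in\cI$ and $\cI$ is an ideal) and is homogeneous of weight $0$, so it lies in $\cI\cap\cV[0]=\{0\}$. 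Thus $v$ pairs trivially with all of $\cV[n]$ and is singular.

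For the converse I would take $\cI$ to be the ideal generated by $v$. Recalling the standard description $\cI=\mathrm{span}_R\{x_{(m)}v : x\in\cV\text{ homogeneous},\ m\in\mathbb Z\}$ (which follows from the Jacobi identity \eqref{jacobi}), its weight-zero part is spanned by the elements $x_{(d+n-1)}v$ with $x$ of weight $d$; once these all vanish, $\cI$ is a graded ideal meeting $\cV[0]$ trivially and containing $v$, as required. So it suffices to show
\[ x_{(d+n-1)}v=0 \qquad\text{for every homogeneous } x \text{ of weight } d\ge 0, \]
which I would prove by induction on $d$. For $d\le n$ this is exactly the content of Lemma~\ref{lem:singularbetter} (with $t=n-d$), so all the work is in the range $d>n$.

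The range $d>n$ is the main obstacle: a singular vector need not be primary, so Lemma~\ref{lem:singularbetter}, whose inductive proof only raises conformal weight, cannot reach these modes. Here I would bring in the Virasoro action. Writing $L_1=L_{(2)}$ and applying the Jacobi identity \eqref{jacobi} of type $(L,x,w)$ together with \eqref{deriv}, one obtains for homogeneous $x$ (weight $d$) and $w$ (weight $e$)
\[ (d-e)\,x_{(d+e-1)}w=-(L_1 x)_{(d+e-2)}w-x_{(d+e-2)}(L_1 w), \]
where the term $L_1\bigl(x_{(d+e-2)}w\bigr)$ has already been discarded because $x_{(d+e-2)}w\in\cV[1]$ and $L_1\cV[1]=0$. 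Setting $v_j:=L_1^{\,j}v\in\cV[n-j]$, the same hypothesis gives $v_n=L_1 v_{n-1}=0$, since $v_{n-1}\in\cV[1]$. The natural inductive claim is then that $x_{(d+n-j-1)}v_j=0$ for all $j\ge 0$ and all $x$ of weight $d$; granting it in weight $d-1$ kills the $(L_1 x)$-term above, and the identity collapses to the recursion
\[ (d-n+j)\,x_{(d+n-j-1)}v_j=-\,x_{(d+n-j-2)}v_{j+1}. \]
Chaining this from $j=0$ up to $j=n$ and using $v_n=0$ yields $\bigl(\prod_{i=0}^{n-1}(d-n+i)\bigr)\,x_{(d+n-1)}v=0$; for $d>n$ the integer $\prod_{i=0}^{n-1}(d-n+i)$ is nonzero, hence invertible in $R\supseteq\mathbb Q$, forcing $x_{(d+n-1)}v=0$. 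I expect the two uses of $L_1\cV[1]=0$---discarding the weight-one term and producing $v_n=0$---to be precisely what makes this recursion close, and this to be the only genuinely delicate point; the remaining bookkeeping (the base cases $d\le n$ and the $j>0$ instances of the inductive claim, both of which fall out of the same recursion) is routine.
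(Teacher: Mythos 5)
Your proof is correct, but it takes a genuinely different route from the paper's. The paper deduces both directions from the theory of invariant bilinear forms: it invokes Theorem 3.1 of \cite{Li-bilinear} (this is where the hypothesis $L_1\cV[1]=0$ enters for the paper, guaranteeing a unique normalized invariant form $(-,-)$), notes that the radical of $(-,-)$ is automatically an ideal by invariance, and then shows a singular vector lies in that radical by expanding Li's formula $(u,v)=(-1)^n\bigl(\mathbf{1},\sum_{t}\tfrac{1}{t!}(L_1^tu)_{(2n-t-1)}v\bigr)$ and applying Lemma \ref{lem:singularbetter} termwise. Your argument replaces this appeal to \cite{Li-bilinear} with a direct verification that the ideal generated by a singular vector meets $\cV[0]$ trivially; the recursion you derive from the Jacobi identity of type $(L,x,w)$ is in effect a hands-on substitute for the invariance of Li's form, and the two places you use $L_1\cV[1]=0$ (killing the weight-one term and forcing $L_1^nv=0$) are exactly where the paper's hypothesis does its work through Li's theorem. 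I checked that your double induction closes: the recursion read upward in $j$ handles the region $d\le n$ (so the $j>0$ base cases do, as you claim, fall out of the same identity together with Lemma \ref{lem:singularbetter} at $j=0$), and read downward in $j$ it handles $d>n$, where the integer $\prod_{i=0}^{n-1}(d-n+i)$ is nonzero and hence invertible in the $\mathbb{C}$-algebra $R$. Your insistence that ``proper'' must mean $\cI\cap\cV[0]=\{0\}$ is not pedantry but necessary over a ring (the example $r\cdot\cV$ shows the naive reading fails), and it is consistent with the paper's implicit usage, since the radical of the invariant form satisfies $\mathcal R[0]=0$. The only soft spot is your parenthetical that the description $\cI=\mathrm{span}_R\{x_{(m)}v\}$ ``follows from the Jacobi identity \eqref{jacobi}'': the $r\geq 0$ identities alone do not suffice, and one needs the full Borcherds/weak-associativity identity — but this is a standard fact about ideals in vertex algebras and citing it is legitimate. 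What your approach buys is self-containedness; what the paper's buys is brevity, plus the stronger byproduct (stated at the end of its proof) that the radical of the Shapovalov form is itself an ideal, namely $\mathcal R$.
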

\begin{proof}
	Due to our assumptions on $\mathcal{V}$, Theorem 3.1 of \cite{Li-bilinear} applies and there exists a unique up to scaling 
	non-trivial invariant bilinear form $(-, -)$ on $\mathcal{V}$.
	This form is automatically symmetric \cite{FHL} and determined by $(\mathbf{1},\mathbf{1})=1$. We have $(\mathcal{V}[{i}],\mathcal{V}[{j}])=0$, if $i\neq j$
	\cite{Li-bilinear}. Due to invariance, the radical of $(-,-)$ is a proper ideal of $\mathcal{V}$ which we denote by $\mathcal{R}$.
	
	We first prove that if $v$ is a singular vector of weight $n$
	then $v\in\mathcal{R}$.
	It is enough to show that $(u,v)=0$ for all $u$ homogeneous of weight $n$.
	By Equation 3.1 of \cite{Li-bilinear}, 
	\begin{align}
	(u,v) &= \mathrm{Coeff}_{z^0}\, (-1)^{n}z^{-2n}\left(\mathbf{1}, (e^{zL_1}u)(z^{-1})v   \right)\\
	&= (-1)^{n}\cdot \mathrm{Coeff}_{z^{2n}}\, \left(\mathbf{1}, 
	\sum_{t\geq 0} \frac{z^t}{t!}\sum_{j\in\mathbb{Z}}({L_1^t}u)_{(j)} v\,z^{j+1}   \right)\nonumber\\
	&= (-1)^{n} \left(\mathbf{1}, 
	\sum_{t\geq 0} \frac{1}{t!}({L_1^t}u)_{(2n-t-1)} v   \right)\nonumber= (-1)^{n} \left(\mathbf{1}, 
	\sum_{0\leq t\leq n} \frac{1}{t!}({L_1^t}u)_{(2n-t-1)} v   \right)\nonumber.
	\end{align}
	Since $L_1^tu$ has weight $n-t$ we conclude that  right-hand side is $0$ by Lemma \ref{lem:singularbetter}.
	
	Conversely, if $v$ is contained in a proper ideal, then $v$ clearly has to be a singular vector, otherwise the vacuum vector is obtainable by acting appropriately on $v$. In fact, we have now proved that $\mathcal{R}$ is precisely the radical of the Shapovalov form.
\end{proof}

Under the above hypotheses, if $R$ is in addition a unique factorization ring,  each irreducible factor $a$ of $\mathrm{det}_n$ give rise to a prime ideal $(a) \subseteq R$. Clearly if $a| \mathrm{det}_n$, then $a|\mathrm{det}_m$ for all $m>n$. We define the {\it Shapovalov spectrum} $\text{Shap}(\cV)$  to be the set of distinct prime ideals of the form $(a) \subseteq R$ such that $a$ is a divisor of $\mathrm{det}_n$ for some $n$. Note that the ideals $I = (a) \in \text{Shap}(\cV)$ are precisely the prime ideals $I$ for which $\cV^I$ is not simple as a vertex algebra over $R/I$. We say that $(a) \in \text{Shap}(\cV)$ has level $n$ if $a$ divides $\mathrm{det}_n$ but does not divide $\mathrm{det}_m$ for $m<n$.

\section{Main construction}\label{section:main} 
In this section, we will construct the universal two-parameter vertex algebra $\cW^{\mathrm{ev}}(c,\lambda)$. It is defined over the polynomial ring $\mathbb{C}[c,\lambda]$ and is freely generated by a Virasoro field $L$ of central charge $c$, and a field $W^{2i}$ of conformal weight $2i$ for each $i\geq 2$. As in the case of $\cW(c,\lambda)$, $\cW^{\mathrm{ev}}(c,\lambda)$ will be the universal enveloping vertex algebra of a nonlinear Lie conformal algebra $\cL^{\mathrm{ev}}(c,\lambda)$ over $\mathbb{C}[c,\lambda]$ with generators $\{L, W^{2i}|\ i \geq 2\}$ and grading $\Delta(L) = 2$, $\Delta(W^{2i}) = 2i$, in the sense of \cite{DSKI}.

We shall work with the OPE rather than lambda-bracket formalism, so the sesquilinearity, skew-symmetry, and Jacobi identities from \cite{DSKI} are replaced with \eqref{deriv}-\eqref{jacobi}. As explained in \cite{LI}, specifying a nonlinear Lie conformal algebra in the language of OPEs means specifying fields $\{\alpha^1, \alpha^2,\dots\}$ where $\alpha^i$ has weight $d_i >0$, and pairwise OPEs $\displaystyle \alpha^i(z) \alpha^j(w) = \sum_{n\geq 0} (\alpha^i_{(n)} \alpha^j)(w)(z-w)^{-n-1}$ where each term $\alpha^i_{(n)} \alpha^j$ has weight $d_i + d_j -n-1$, and is a normally ordered polynomial in the generators and their derivatives. Additionally, for all $a,b,c \in \{\alpha^1, \alpha^2,\dots\}$, \eqref{deriv}-\eqref{ncw} hold, and all Jacobi identities \eqref{jacobi} hold as consequences of \eqref{deriv}-\eqref{ncw} alone. This data uniquely determines the universal enveloping vertex algebra which is freely generated by $\{\alpha^1, \alpha^2,\dots\}$.

In this notation, our goal will be to construct the OPE algebra of the generating fields $\{L, W^{2i}|\ i\geq 2\}$ of $\cW^{\mathrm{ev}}(c,\lambda)$, such that \eqref{deriv}-\eqref{ncw} are imposed, and the Jacobi identities \eqref{jacobi} hold as a consequence of these identities alone. 

We postulate that $\cW^{\mathrm{ev}}(c,\lambda)$ has the following features. 
\begin{enumerate} 
\item $\cW^{\mathrm{ev}}(c,\lambda)$ possesses a Virasoro field $L$ of central charge $c$ and an even weight $4$ primary field $W^4$, so that
\begin{equation} \label{ope:zeroth} \begin{split} & L(z) L(w)  \sim \frac{c}{2} (z-w)^{-4} + 2 L(w)(z-w)^{-2} + \partial L(w)(z-w)^{-1}, \\  & L(z) W^4(w) \sim 4 W^4(w)(z-w)^{-2}+\partial W^4(w)(z-w)^{-1}.\end{split}\end{equation}

\item The fields $W^{2i}$ are defined inductively by $$W^{2i} = W^4_{(1)} W^{2i-2},\qquad i\geq 3.$$ We have 
$$L(z) W^{2i}(w) \sim \cdots + 2i W^{2i} (w)(z-w)^{-2} + \partial W^{2i}(w)(z-w)^{-1},$$ and the fields $\{L, W^{2i}|\ i\geq 2\}$ close under OPE. Since $W^{2i}$ is not assumed primary for $i >2$, $(\cdots)$ denotes the higher order poles. In particular, $\cW^{\mathrm{ev}}(c,\lambda)$ is generated as a vertex algebra by $\{L,W^4\}$, and is strongly generated by $\{L, W^{2i}|\ i\geq 2\}$.

\item $W^4$ is nondegenerate in the sense that $W^4_{(7)} W^4 \neq 0$.
\end{enumerate}

For notational convenience, we sometimes denote $L$ by $W^2$. The assumption that $W^{2i}$ has conformal weight $2i$ has the following consequence. For $k = 0,1$, and $2\leq 2i\leq 2j$, $W^{2i}_{(k)} W^{2j}$ depends only on $L, W^4, \dots, W^{ 2i + 2j -2}$ and their derivatives. 
In particular, we can write
\begin{equation} \label{eq:aij} W^{2i}_{(1)} W^{2j} = a_{2i,2j}  W^{2i+2j-2} + C_{2i,2j},\end{equation} where $a_{2i,2j}$ denotes the coefficient of $W^{2i+2j-2}$ and $C_{2i,2j}$ is a normally ordered polynomial in $L, W^4, \dots, W^{2i+2j-4}$ and their derivatives. By assumption, $a_{2,2j} = 2j$, $a_{4,2j} = 1$, $C_{2,2j} =0$, and $C_{4,2j} = 0$ for all $j\geq 2$. Similarly,
\begin{equation}  \label{eq:bij} W^{2i}_{(0)} W^{2j} = b_{2i,2j} \partial W^{2i+2j-2} + D_{2i,2j},\end{equation} where $b_{2i,2j}$ denotes the coefficient of $\partial W^{2i+2j-2}$ and $D_{2i,2j}$ is a normally ordered polynomial in $L, W^4, \dots, W^{2i+2j-4}$ and their derivatives. By assumption, $b_{2,2j} = 1$ and $D_{2,2j} = 0$ for all $j\geq 2$.

Let $S_{2n,k}$ denote the set of products \begin{equation} \{W^{2i}_{(k)} W^{2j} |\ 2i+2j = 2n,\ k \geq 0\};\end{equation} note that they vanish for $k > 2n-1$. First, imposing the identities \eqref{deriv}-\eqref{commutator}, together with all Jacobi relations of type $(W^{2i}, W^{2j}, W^{2k})$ for $2i+2j+2k \leq 16$, uniquely determines $S_{2n,k}$ for all $2n\leq 14$ and $k\geq 0$. If we then assume $S_{2m,k}$ to be known for $2m\leq 2n$ and $k\geq 0$, we show that by imposing a subset of Jacobi relations of type $(W^{2i}, W^{2j}, W^{2k})$ with $2i+2j+2k = 2n+4$, this uniquely determines $S_{2n+2,k}$. 

It is important to note that we are not imposing {\it all} Jacobi relations of type $(W^{2i}, W^{2j}, W^{2k})$ for $2i+2j+2k = 2n+4$ at this stage. We leave open the possibility that some of them might not vanish, that is, they give rise to nontrivial null fields; see \cite{DSKI} as well as the expository account in \cite{LI} in the OPE formalism. This has the effect that as we proceed through the induction, the OPEs $W^{2i}(z) W^{2j}(w)$ are only determined up to null fields. For convenience, we suppress this from our notation. At the end of the induction, we obtain the existence of a (possibly degenerate) nonlinear conformal algebra $\cL^{\mathrm{ev}}(c,\lambda)$ over $R$ with generators $\{L, W^{2i}|\ i\geq 2\}$, satisfying skew-symmetry. By the De Sole-Kac correspondence \cite{DSKI}, the universal enveloping vertex algebra $\cW^{\mathrm{ev}}(c,\lambda)$ exists and is unique, but may not be freely generated. Finally, by considering a family of quotients of $\cW^{\mathrm{ev}}(c,\lambda)$ whose graded characters are known, we will show that $\cW^{\mathrm{ev}}(c,\lambda)$ is in fact freely generated; equivalently, $\cL^{\mathrm{ev}}(c,\lambda)$ is a nonlinear Lie conformal algebra.

\subsection{Step 1: $S_{2n,k}$ for $2n\leq 14$}

The most general OPE of $L$ and $W^6$ is
\begin{equation*}\begin{split} L(z) W^6(w) & \sim a_0 (z-w)^{-8} + a_1 L(w)(z-w)^{-6} + a_2 \partial L(w)(z-w)^{-5} \\ & + \bigg(a_3 W^4+a_4 :L^2:+a_5 \partial^2 L\bigg)(w)(z-w)^{-4}  \\ & + \bigg(a_6 \partial W^4+ a_7 :(\partial L) L:+ a_8 \partial^3 L\bigg)(w)(z-w)^{-3} \\ & + 6 W^6(w)(z-w)^{-2} + \partial W^6(w)(z-w)^{-1}. \end{split} \end{equation*}
Similarly, since $W^6 = W^4_{(1)} W^4$, the most general OPE of $W^4$ with itself is
\begin{equation*} \begin{split} W^4(z) W^4(w) & \sim b_0 (z-w)^{-8} + b_1 L(w)(z-w)^{-6} + b_2 \partial L(w)(z-w)^{-5}\\ & + \bigg(b_3 W^4 + b_4 :L^2: + b_5 \partial L\bigg)(w)(z-w)^{-4}  \\ & + \big(b_6 \partial W^4  + b_7 :(\partial L)L: +b_8  \partial^3 L \big)(w) (z-w)^{-3} \\ & + W^6(w)(z-w)^{-2} \\ & +\bigg(b_9 \partial W^6 + b_{10}  :(\partial L)L^2: + b_{11} :(\partial L) W^4: +b_{12}  : L \partial W^4: + b_{13} :(\partial^3 L)L: \\ & +b_{14}  :(\partial^2 L)\partial L: + b_{15} \partial^3 W^4 +b_{16} \partial^5 L\bigg)(w)(z-w)^{-1}.\end{split} \end{equation*}

Imposing the Jacobi relations of type $(L,W^4,W^4)$ and $(L,L,W^6)$ constrains the parameters $a_0,\dots, a_8$ and $b_0, \dots, b_{16}$. It is not difficult to check that in addition to the central charge $c$, there are two free parameters that are consistent with these identities. We may take these parameters to be $a_3$ and $a_7$, and express the remaining variables in terms of $c, a_3, a_7$ as follows:
\begin{equation}\label{ope:first} \begin{split} L(z) W^6(w) & \sim  \frac{a_7 c (22 + 5 c)}{42} (z-w)^{-8} + \frac{2 a_7 (22 + 5 c)}{15} L(w)(z-w)^{-6} + \frac{11a_7 (22 + 5 c)}{210}  \partial L(w)(z-w)^{-5} \\ & + \bigg(a_3 W^4+ \frac{8 a_7}{5} :L^2:+  \frac{2 a_7 (c-4)}{35}  \partial^2 L\bigg)(w)(z-w)^{-4}  \\ & + \bigg(\frac{5 a_3}{16} \partial W^4+ a_7 :(\partial L) L:  + \frac{a_7 (c-4)}{126}  \partial^3 L\bigg)(w)(z-w)^{-3} \\ & + 6 W^6(w)(z-w)^{-2} + \partial W^6(w)(z-w)^{-1}. \end{split} \end{equation}

\begin{equation}\label{ope:second}  \begin{split} W^4(z) W^4(w) & \sim \frac{a_7 c (22 + 5 c) }{840}  (z-w)^{-8} + \frac{a_7 (22 + 5 c)}{105}  L(w)(z-w)^{-6} +\frac{a_7 (22 + 5 c)}{210}  \partial L(w)(z-w)^{-5}
\\ & + \bigg( \frac{a_3}{8}  W^4 + \frac{a_7}{5}  :L^2: + \frac{a_7 (c-4)}{140}  \partial^2 L\bigg)(w)(z-w)^{-4}  
\\ & + \bigg( \frac{a_3}{16} \partial W^4  +\frac{a_7}{5} :(\partial L)L:  + \frac{a_7 (c-4)}{630} \partial^3 L \big)(w) (z-w)^{-3} 
\\ & + W^6(w)(z-w)^{-2} \\ & 
+\bigg( \frac{1}{2} \partial W^6  -\frac{a_7}{60} :(\partial^3 L)L: -\frac{a_7}{20} :(\partial^2 L)\partial L:   - \frac{a_3}{192} \partial^3 W^4 
- \frac{a_7 (c-4)}{10080} \partial^5 L\bigg)(w)(z-w)^{-1}.\end{split} \end{equation}

Note that if we fix a normalization of $W^4$, the parameter $a_7$ will be determined, so up to isomorphism there are really only two parameters at this stage, namely $c$ and $a_3$. It is convenient to leave $a_7$ undetermined for the moment.

Next, the most general OPE of $L$ and $W^8$ is
\begin{equation*} \begin{split} L(z) W^8(w) & \sim c_0 (z-w)^{-10} +  c_1 L(w) (z-w)^{-8} + c_2 \partial L(w) (z-w)^{-7}
\\ & +  \bigg(c_3 W^4 + c_4 :L^2: + c_5 \partial^2 L \bigg)(w)(z-w)^{-6}
\\ & + \bigg(c_6 \partial W^4 + c_7 :(\partial L)L:  + c_8 \partial^3 L \bigg)(w)(z-w)^{-5}
\\ & + \bigg(c_9 W^6 + c_{10} \partial^2 W^4 + c_{11}  :L W^4: + c_{12}  :L^3: \\ & + c_{13}  :(\partial^2 L)L: + 
c_{14}  :(\partial L)^2: + c_{15}  \partial^4 L \bigg)(w)(z-w)^{-4}
 \\ & + \bigg(c_{16}  \partial W^6 + c_{17}  \partial^3 W^4 + c_{18}  :(\partial L) W^4: +c_{19}  L\partial W^4: \\ & + c_{20}  :(\partial L)L^2:  +
c_{21} :(\partial^3 L)L:  + c_{22} :(\partial^2 L) \partial L: + c_{23}\partial^5 L \bigg)(w)(z-w)^{-3}
\\ & + 8 W^8 (w)(z-w)^{-2}+  \partial W^8 (w)(z-w)^{-1} .\end{split} \end{equation*}
The most general OPE of $W^4$ and $W^6$ is

\begin{equation*} \begin{split} W^4(z) W^6(w) &  \sim d_0 (z-w)^{-10} +  d_1 L(w) (z-w)^{-8} + d_2 \partial L(w) (z-w)^{-7}
\\  &  +  \bigg(d_3 W^4 + d_4 :L^2: + d_5 \partial^2 L \bigg)(w)(z-w)^{-6}
\\  & + \bigg(d_6 \partial W^4 + d_7 :(\partial L)L:  + d_8 \partial^3 L \bigg)(w)(z-w)^{-5}
\\ & + \bigg(d_9 W^6 + d_{10} \partial^2 W^4 + d_{11}  :L W^4: + d_{12}  :L^3:  + d_{13}  :(\partial^2 L)L:  \\  & +  d_{14}  :(\partial L)^2:  + d_{15}  \partial^4 L \bigg)(w)(z-w)^{-4}
 \\  &+ \bigg(d_{16}  \partial W^6 + d_{17}  \partial^3 W^4 + d_{18}  :(\partial L) W^4: +d_{19}  L\partial W^4:  + d_{20}  :(\partial L)L^2:  
 \\ & + d_{21} :(\partial^3 L)L:  + d_{22} :(\partial^2 L) \partial L: + d_{23}\partial^5 L \bigg)(w)(z-w)^{-3}  + W^8 (w)(z-w)^{-2}
\\ & + \bigg(d_{24} \partial W^8 + d_{25} \partial^3 W^6 + d_{26} \partial^5 W^4 + d_{27} :(\partial L) W^6: + 
 d_{28} :L \partial W^6:   + d_{29} :(\partial W^4) W^4: 
 \\  & + d_{30} :(\partial L) L W^4: + d_{31} :L^2 \partial W^4:  + d_{32} :(\partial^3 L) W^4:   + d_{33} :(\partial^2 L) \partial W^4:  + 
d_{34} :(\partial L) \partial^2 W^4: \\  & +d_{35} :L \partial^3 W^4:  + d_{36} :(\partial L) L^3:  
  + d_{37} :(\partial^3 L) L^2:  + d_{38} :(\partial^2 L) (\partial L) L:   \\  & + d_{39} :(\partial L)^3:  + 
 d_{40} :(\partial^5 L) L:   +  d_{41} :(\partial^4 L) \partial L: +  d_{42} :(\partial^3 L) \partial^2 L: + d_{43} \partial^7 L \bigg)(w)(z-w)^{-1} .\end{split} \end{equation*}

By imposing the Jacobi relations of type $(L,L,W^8)$ and $(L,W^4,W^6)$ and $(W^4, W^4, W^4)$, it turns out that in addition to the parameters, $c$, $a_3$, $a_7$, there is one additional free parameter which can be chosen to be $d_{28}$. There is a unique solution for the remaining parameters $c_0,\dots, c_{23}$ and $d_0, \dots, d_{27}, d_{29},\dots, d_{43}$ in terms of $c, a_3, a_7, d_{28}$ so that all these Jacobi identities hold. Since these solutions can easily be found by computer, we do not reproduce them here.

\begin{remark} This behavior is more complicated than the corresponding construction of $\cW(c,\lambda)$ in \cite{LI}. For $\cW(c,\lambda)$, once two free parameters appear in the OPE algebra, no new parameters appear at any stage in the calculation when all Jacobi identities of the appropriate weight are imposed.
\end{remark}

Next, we write the most general OPE relations for $$L(z) W^{10}(w),\qquad W^4(z) W^8(w),\qquad W^6(z) W^6(w),$$ with undetermined coefficients, and then impose all Jacobi identities of type $(W^{2i}, W^{2j}, W^{2k})$ for $2i+2j+2k = 14$. Rather surprisingly, it turns out that $d_{28}$ is {\it no longer a free parameter} if we want all these Jacobi identities to hold. Instead, it satisfies the following equation:
\begin{equation}\label{eqn:d28quad}
7 a_3^2 + 960 a_7 (1 - 2 c) + 784 d_{28} a_3 (c-1) + 10752 d_{28}^2 (c-1) (24 + c) = 0.\end{equation}
This is a quadratic equation in $d_{28}$ which has two solutions as long as the discriminant 
$$8960 (c-1) (35 a_3^2 (c-25) + 4608 a_7 (24 + c) (2c-1))$$ does not vanish. If we set this discriminant to zero and solve for $a_7$ and $d_{28}$, obtaining $$a_7 = -\frac{35 a_3^2 (c-25)}{4608 (24 + c) (2c-1)},\qquad d_{28} = -\frac{7 a_3}{192 (24 + c)},$$ our inductive procedure will lead to a freely generated vertex algebra of type $\cW(2,4,\dots)$ depending on $c$ and $a_3$, but the parameter $a_3$ is determined by the scaling of the weight $4$ field. Therefore up to isomorphism, this algebra will only depend on one free parameter $c$. So instead, we should choose the discriminant to be a nonzero perfect square. Up to isomorphism, the algebra $\cW^{\mathrm{ev}}(c,\lambda)$ will depend on two free parameters and will be independent of our choice for this discriminant. It is convenient to introduce a new parameter $\lambda$, and express $a_3, a_7, d_{28}$ in terms of $c$ and $\lambda$ as follows.

\begin{equation} \begin{split} \label{deflambda} a_3 & = 256 (c-1) (24 + c) (2c-1) \lambda, \\  a_7 & =-\frac{640}{63}  (c-1) (24 + c) (2c-1)(-1 + 49 \lambda^2 (c-25) (c-1)), \\  d_{28} & = \frac{4}{21} (2c-1)(5 - 49 \lambda (c-1)). \end{split} \end{equation} The reason for this choice is that the algebra $\cW^{\mathrm{ev}}(c,\lambda)$ will then be defined over the polynomial ring $\mathbb{C}[c,\lambda]$.

\begin{remark}\label{rem:d28twochoicesauto}
Given the choice of $a_3$ and $a_7$ in \eqref{deflambda}, there are two solutions for $d_{28}$ that satisfy \eqref{eqn:d28quad}, namely, the one given above as well as $d'_{28}  = \frac{4}{21} (2c-1)(-5 - 49 \lambda (c-1))$. Replacing $W^4$ with $-W^4$ must replace $a_3$ with $-a_3$, which corresponds to replacing $\lambda$ with $-\lambda$. However, the replacement $\lambda \mapsto -\lambda$ has the effect of sending $d_{28} \mapsto -d'_{28}$, instead of $d_{28} \mapsto -d_{28}$, and hence does not give the same OPE algebra. So without loss of generality, we may fix the choice  \eqref{deflambda} for $d_{28}$, but having made this choice, $\lambda\mapsto -\lambda$ does not give the same algebra up to isomorphism. 

Note also that rescaling $\lambda$ by {\it any} constant $\epsilon \neq 1$ does not correspond to changing the choice of weight $4$ field $W^4$. Indeed, $W^4$ is the unique Virasoro primary field in weight $4$ up to scaling, and rescaling $W^4$ also rescales both $a_3$ and $a_7$.
\end{remark}

Even though the parameter $d_{28}$ which was free at the previous stage is no longer free, it turns out that there is one additional free parameter in the OPE relations, which can be taken to be the coefficient $e$ of $:L \partial W^8:$ appearing in the first-order pole of $W^6(z) W^6(w)$. In other words, if we write down the most general OPEs among $L(z) W^{10}(w)$, $W^4(z) W^8(w)$ and $W^6(z) W^6(w)$ and impose all Jacobi identities of type $(W^{2i}, W^{2j}, W^{2k})$ for $2i+2j+2k = 14$, all structure constants can be written  uniquely in terms of $c$, $\lambda$, and $e$. 
 
Finally, we carry out this procedure at one more stage; that is, we write down the most general OPEs among $$L(z) W^{12}(w),\qquad W^4(z) W^{10}(w),\qquad W^6(z) W^8(w),$$ and impose all Jacobi identities $(W^{2i}, W^{2j}, W^{2k})$ for $2i+2j+2k = 16$. By a long computer computation, one can verify that there is a unique solution for the parameter $e$, namely, 
$$e = \frac{4}{21} (2c-1)(5 - 49 \lambda (c-1)),$$ and all other structure constants have a unique solution in terms of $c$ and $\lambda$. This shows that all terms in the OPE of $W^{2i}(z) W^{2j}(w)$ for $2\leq 2i \leq 2j$ and $2i+2j \leq 14$ are uniquely determined as $\mathbb{C}[c,\lambda]$-linear combinations of normally ordered monomials in the generators $W^{2i}$ for $2i \leq 12$. 
By imposing the identities \eqref{deriv} and \eqref{commutator}, this determines the OPEs $$\partial^a W^{2i}(z) \partial^b W^{2j}(w),\qquad a,b\geq 0,\qquad 2i>2j \geq 2,\qquad 2i+2j \leq 14.$$

\subsection{Step 2: Induction}
We make the following inductive hypothesis. 
\begin{enumerate}
\item For $2 \leq 2i\leq 2j$ and $2i+2j \leq 2n$, all terms in the OPE of $W^{2i}(z) W^{2j}(w)$ have been expressed as $\mathbb{C}[c,\lambda]$-linear combinations of normally monomials in $L, W^4,\dots, W^{2n-2}$ and their derivatives. The OPEs are weight homogeneous, i.e., all terms appearing in $W^{2i}_{(k)} W^{2j}$ have weight $2i+2j-k-1$ .

\item We impose \eqref{deriv} and \eqref{commutator}, which then determines all OPEs $$\partial^a W^{2i}(z) \partial^b W^{2j}(w), \qquad a,b \geq 0,\qquad 2i+2j \leq 2n.$$

\item $a_{2i,2j}$ and $b_{2i,2j}$ are independent of $c, \lambda$ for $2i+2j \leq 2n$. Here $a_{2i,2j}$ and $b_{2i,2j}$ are given by \eqref{eq:aij} and \eqref{eq:bij}, respectively.
\end{enumerate}

By {\it inductive data}, we mean the set of all OPE relations \begin{equation} \partial^a W^{2i}(z) \partial^b W^{2j}(w),\qquad a,b \geq 0,\qquad 2i+2j \leq 2n.\end{equation} If we impose \eqref{nonasswick} and \eqref{ncw}, it follows from our inductive hypothesis that if $\alpha$ is a normally ordered polynomial in $L, W^4, \dots, W^{2n-2i}$ and their derivatives of weight $w \leq 2n+2-2i$, the OPE $W^{2i}(z) \alpha(w)$ is determined by the inductive data.

\begin{lemma} \label{main:1} If we impose the Jacobi identity 
\begin{equation}\label{main:1first} \begin{split} L_{(2)} (W^4_{(0)} W^{2n-2}) & = (L_{(2)} W^4)_{(0)} W^{2n-2} + W^4 _{(0)}( L_{(2)} W^{2n-2}) \\ & + 2(L_{(1)} W^4)_{(1)} W^{2n-2} + (L_{(0)} W^4)_{(2)} W^{2n-2},\end{split} \end{equation} we must have \begin{equation*} b_{4,2n-2} = \frac{3}{2n}.\end{equation*} In particular, $b_{4,2n-2}$ is independent of $c$ and $\lambda$.
\end{lemma}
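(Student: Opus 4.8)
The plan is to extract the claimed identity by isolating the coefficient of $\partial W^{2n-2}$ on both sides of the Jacobi identity \eqref{main:1first} and comparing. First I would rewrite everything in terms of the structure constants introduced in \eqref{eq:aij} and \eqref{eq:bij}. The key object is $W^4_{(0)} W^{2n-2}$, whose leading term is $b_{4,2n-2}\,\partial W^{2n-2}$ by \eqref{eq:bij}; applying $L_{(2)}$ to this will, among other things, produce a multiple of $W^{2n-2}$ coming from the third-order pole of $L(z) W^{2n-2}(w)$, together with lower-weight corrections that I can ignore for the purposes of this computation.

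The computation is really just bookkeeping with the Virasoro action. The operators $L_{(0)}, L_{(1)}, L_{(2)}$ act on a weight-$d$ field $\phi$ in a controlled way: $L_{(0)}\phi = \partial\phi$, $L_{(1)}\phi = d\,\phi$ by the eigenvalue property of $L_0$ recorded after \eqref{virope}, and $L_{(2)}\phi$ lands in weight $d-2$ (and vanishes when $\phi$ is primary). I would apply these term by term. On the left-hand side, using \eqref{deriv} I get $L_{(2)}(\partial W^{2n-2}) = (2n-2)\,L_{(1)}W^{2n-2} + \text{(from the shift)}$; more carefully, the identity $(\partial a)_{(n)}b = -n\,a_{(n-1)}b$ relates $L_{(2)}\partial W^{2n-2}$ to $L_{(1)}W^{2n-2}=(2n-2)W^{2n-2}$, yielding a specific numerical multiple of $W^{2n-2}$. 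On the right-hand side, the dominant contribution to the coefficient of $W^{2n-2}$ comes from the middle term $W^4_{(0)}(L_{(2)}W^{2n-2})$ together with $2(L_{(1)}W^4)_{(1)}W^{2n-2} = 8\,W^4_{(1)}W^{2n-2}$, where I use $L_{(1)}W^4 = 4W^4$ from \eqref{ope:zeroth} and $W^4_{(1)}W^{2n-2} = W^{2n}+\cdots$; but since I only want the $\partial W^{2n-2}$ coefficient, I should instead track the single generator $W^{2n-2}$ carefully rather than $W^{2n}$. The cleanest route is to match coefficients of the distinguished generator $\partial W^{2n-2}$ throughout, using that $W^4$ is primary so $L_{(2)}W^4 = L_{(3)}W^4 = 0$, which kills several terms and leaves a linear equation whose unique solution is $b_{4,2n-2} = \tfrac{3}{2n}$.

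The main obstacle, and the step requiring the most care, is correctly accounting for which terms on each side actually contribute to the coefficient of the single field $\partial W^{2n-2}$ (as opposed to $W^{2n}$, normally ordered products, or lower-weight descendants), since the Jacobi identity mixes contributions across several poles via the quasi-derivation terms $(L_{(1)}W^4)_{(1)}$ and $(L_{(0)}W^4)_{(2)}$. In particular I must use \eqref{ncw} or direct pole-bookkeeping to see that $(L_{(0)}W^4)_{(2)}W^{2n-2} = (\partial W^4)_{(2)}W^{2n-2} = -2\,W^4_{(1)}W^{2n-2}$ by \eqref{deriv}, and combine this with the $2(L_{(1)}W^4)_{(1)}W^{2n-2}$ term; the $W^{2n}$-parts must cancel, leaving only the genuine $\partial W^{2n-2}$ terms to be balanced. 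Once the correct terms are identified, equating the coefficient of $\partial W^{2n-2}$ gives a scalar equation of the form (numerical constant)$\cdot b_{4,2n-2} = \tfrac{3}{2n}\cdot(\text{numerical constant})$, forcing the stated value. The independence from $c$ and $\lambda$ is then immediate, since the entire calculation involves only the Virasoro OPE \eqref{ope:zeroth}, whose structure constants are integers, and never touches the $\lambda$-dependent higher-order data.
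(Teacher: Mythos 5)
Your proposal contains a genuine error that derails the whole computation: you have misread \eqref{eq:bij}. With $2i=4$ and $2j=2n-2$, that equation says $W^4_{(0)}W^{2n-2} = b_{4,2n-2}\,\partial W^{2n} + D_{4,2n-2}$, i.e.\ the distinguished term is $\partial W^{2n}$ (weight $2n+1$), not $\partial W^{2n-2}$. Consequently the quantity to track after applying $L_{(2)}$ is the coefficient of the top generator $W^{2n}$ in weight $2n$, not the coefficient of $\partial W^{2n-2}$ — indeed $\partial W^{2n-2}$ has weight $2n-1$ and cannot even appear in the weight-$2n$ identity \eqref{main:1first}, so the matching you propose is dimensionally inconsistent. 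The same slip propagates to your left-hand side, where you compute $L_{(2)}\partial W^{2n-2}$ instead of the relevant $L_{(2)}\partial W^{2n}$.

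Ironically, you already have the correct right-hand side in hand: $(L_{(2)}W^4)_{(0)}W^{2n-2}=0$ by primarity, $2(L_{(1)}W^4)_{(1)}W^{2n-2}=8W^4_{(1)}W^{2n-2}=8W^{2n}+\cdots$, and $(L_{(0)}W^4)_{(2)}W^{2n-2}=(\partial W^4)_{(2)}W^{2n-2}=-2W^4_{(1)}W^{2n-2}=-2W^{2n}+\cdots$, while $W^4_{(0)}(L_{(2)}W^{2n-2})$ contributes no $W^{2n}$ since $L_{(2)}W^{2n-2}$ involves only $L,W^4,\dots,W^{2n-4}$. But you then assert that "the $W^{2n}$-parts must cancel" and discard them; they do not cancel, and their survival is the entire content of the lemma. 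On the left, writing $L_{(2)}\partial W^{2n} = -(\partial L)_{(2)}W^{2n} + \partial(L_{(2)}W^{2n}) \equiv 2L_{(1)}W^{2n} = 4n\,W^{2n}$ modulo lower generators (the $L_{(2)}D_{4,2n-2}$ term also contributes no $W^{2n}$), one equates coefficients of $W^{2n}$ to get $4n\,b_{4,2n-2} = 8-2 = 6$, hence $b_{4,2n-2}=\tfrac{3}{2n}$. Your closing observation that the answer is independent of $c$ and $\lambda$ because only the Virasoro OPE data enters is correct, but the argument as you have structured it would not produce the equation that determines $b_{4,2n-2}$.
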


\begin{proof} Recall that $$W^4_{(0)} W^{2n-2} = b_{4,2n-2} \partial W^{2n} + D_{4,2n-2}$$ where $D_{4,2n-2}$ is a normally ordered polynomial in $L, W^4, \dots, W^{2n-2}$ and their derivatives. Then $$ L_{(2)} (W^4_{(0)} W^{2n-2}) =  b_{4,2n-2} L_{(2)}  \partial W^{2n} + L_{(2)} D_{4,2n-2}.$$ Also, $L_{(2)}D_{4,2n-2}$ has no terms depending on $W^{2n}$ by inductive assumption, so it does not contribute to the coefficient of $\partial W^{2n}$. We have 
$$L_{(2)} \partial W^{2n} = -(\partial L)_{(2)} W^{2n} + \partial (L_{(2)} W^{2n}).$$ Note that $L_{(2)} W^{2n}\in S_{2n+2,2}$ and is therefore not yet known, but by weight considerations it only depends on $L, W^4, \dots, W^{2n-2}$, and their derivatives. Modulo terms which depend on $L, W^4, \dots, W^{2n-2}$ and their derivatives, we have
$$L_{(2)} \partial W^{2n} \equiv - (\partial L)_{(2)} W^{2n} = 2 L_{(1)} W^{2n} = 2(2n) W^{2n}.$$  So the left hand side of \eqref{main:1first} is $4n b_{4,2n-2} W^{2n}$, up to terms which do not depend on $W^{2n}$.

Next, the term $(L_{(2)} W^4)_{(0)} W^{2n-2}$ from \eqref{main:1first} vanishes because $W^4$ is assumed primary. The term $$W^4 _{(0)}( L_{(2)} W^{2n-2})$$ from \eqref{main:1first} does not contribute to the coefficient of $W^{2n}$, since $L_{(2)} W^{2n-2}$ only depends on $L, W^4, \dots, W^{2n-4}$ and their derivatives. The term $$2(L_{(1)} W^4)_{(1)} W^{2n-2}$$ from \eqref{main:1first} contributes $8 W^4_{(1)}W^{2n-2} = 8 W^{2n}$. The term $$(L_{(0)} W^4)_{(2)} W^{2n-2}$$ from \eqref{main:1first} contributes $\partial W^4_{(2)} W^{2n-2} =  -2 W^4_{(1)} W^{2n-2} = -2 W^{2n}$.
Equating the coefficients of $W^{2n}$, we obtain $4n b_{4,2n-2} = 6$, which completes the proof.  \end{proof}

\begin{lemma} \label{main:2}All coefficients $b_{2i,2n+2-2i}$ for $4\leq 2i \leq n+1$ are independent of $c, \lambda$ and are determined by imposing Jacobi relations of type $(W^{2i}, W^{2j}, W^{2k})$ for $2i+2j+2k = 2n+4$.
\end{lemma}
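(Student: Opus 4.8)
The plan is to fix $2j = 2n+2-2i$ and, for each $2i$ in the stated range, extract a single scalar equation for $b_{2i,2j}$ from one Jacobi identity of weight $2n+4$, in the spirit of Lemma \ref{main:1}. The most efficient choice is the identity of type $(W^4, W^{2i-2}, W^{2j})$ coming from \eqref{jacobi} with $a = W^4$, $b = W^{2i-2}$, $c = W^{2j}$, $r = 1$, $s = 0$. Since $W^4_{(1)} W^{2i-2} = W^{2i}$ and $W^4_{(1)} W^{2j} = W^{2j+2}$, it rearranges to
$$W^{2i}_{(0)} W^{2j} = W^4_{(1)}\big(W^{2i-2}_{(0)} W^{2j}\big) - W^{2i-2}_{(0)} W^{2j+2} - \big(W^4_{(0)} W^{2i-2}\big)_{(1)} W^{2j}.$$
I would read off the coefficient of $\partial W^{2n}$ on each side; by \eqref{eq:bij} the left side contributes $b_{2i,2j}$. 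On the right, the leading terms give, respectively, $b_{2i-2,2j}\,(1 + b_{4,2n-2})$ (using $W^4_{(1)} \partial W^{2n-2} = \partial W^{2n} + W^4_{(0)} W^{2n-2}$ together with Lemma \ref{main:1}), then $-\,b_{2i-2,2j+2}$, and finally $b_{4,2i-2}\,b_{2i,2j}$ (from $(\partial W^{2i})_{(1)} W^{2j} = -W^{2i}_{(0)} W^{2j}$). Collecting terms gives
$$\big(1 - b_{4,2i-2}\big)\, b_{2i,2j} = b_{2i-2,2j}\,(1 + b_{4,2n-2}) - b_{2i-2,2j+2},$$
and since $b_{4,2i-2} = \tfrac{3}{2i}$ by Lemma \ref{main:1}, the prefactor $1 - b_{4,2i-2} = \tfrac{2i-3}{2i}$ is nonzero for every $2i \geq 4$.

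This is a recursion \emph{within} level $2n+2$: the term $b_{2i-2,2j+2}$ has strictly smaller first index, while $b_{2i-2,2j}$ is a level-$2n$ coefficient, hence known and $c,\lambda$-independent by the inductive hypothesis. Ordering the level-$(2n+2)$ coefficients by increasing first index, with base cases $b_{2,2n} = 1$ and $b_{4,2n-2} = \tfrac{3}{2n}$ (Lemma \ref{main:1}), the recursion then determines each $b_{2i,2j}$ as a $c,\lambda$-independent rational number, which is the assertion of the lemma. As an independent check, the Jacobi identity of type $(L, W^{2i}, W^{2j})$ in the form $L_{(2)}\big(W^{2i}_{(0)} W^{2j}\big)$ --- the direct analogue of the computation in Lemma \ref{main:1} --- yields the clean relation $4n\, b_{2i,2j} = (4i-2)\, a_{2i,2j}$, i.e. $b_{2i,2j} = \tfrac{2i-1}{2n}\, a_{2i,2j}$; for $2i = 4$ this reproduces $b_{4,2n-2} = \tfrac{3}{2n}$ since $a_{4,2j} = 1$, and in general it expresses the $a$-coefficients in terms of the $b$'s.

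The step I expect to be the main obstacle is justifying that, in all of the products above, the $c,\lambda$-dependent remainders $C_{2a,2b}$ and $D_{2a,2b}$ do not corrupt the coefficient of the top generator $W^{2n}$. The guiding principle is that $W^{2n}$ is a free generator which can be produced only by a product of combined weight exactly $2n+2$, through its leading $a$- or $b$-coefficient. An operator $L_{(k)}$ merely contracts $L$ against existing factors and acts as a Virasoro field, so it can never create $W^{2n}$ out of $L, W^4, \dots, W^{2n-2}$; this is what makes the $(L, W^{2i}, W^{2j})$ computation transparent. A $W^4$-contraction, by contrast, can generate intermediate fields up to $W^{2n-2}$, so for the $(W^4, W^{2i-2}, W^{2j})$ identity one must track, through \eqref{ncw} and \eqref{nonasswick}, that every contribution of the remainders to the $\partial W^{2n}$-coefficient is again a combination of leading coefficients of total weight at most $2n$ --- all $c,\lambda$-independent by the inductive hypothesis. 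Once this decoupling is in place, only the elementary bookkeeping of the nonzero prefactors indicated above remains.
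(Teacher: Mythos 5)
Your proof is correct and follows essentially the same route as the paper: the identity you rearrange is exactly the Jacobi relation of type $(W^4, W^{2i-2}, W^{2n+2-2i})$ with $(r,s)=(1,0)$ that the paper imposes in \eqref{main:2fourth}, and your recursion $\bigl(1-\tfrac{3}{2i}\bigr)b_{2i,2j} = b_{2i-2,2j}\bigl(1+\tfrac{3}{2n}\bigr) - b_{2i-2,2j+2}$, seeded by Lemma \ref{main:1}, reproduces the paper's computation (e.g.\ $b_{6,2n-4} = \tfrac{30}{2n(2n-2)}$), including the weight-counting argument that the remainders $C_{2a,2b}$, $D_{2a,2b}$ cannot contribute to the coefficient of $\partial W^{2n}$. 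The only additions are cosmetic: the explicit observation that the prefactor $\tfrac{2i-3}{2i}$ is nonzero, and the consistency check $b_{2i,2j}=\tfrac{2i-1}{2n}a_{2i,2j}$ from the $(L,W^{2i},W^{2j})$ identity, neither of which changes the argument.
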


\begin{proof} We first impose the Jacobi relation
\begin{equation} \label{main:2first} W^4_{(1)} (W^4_{(0)} W^{2n-4}) = (W^4_{(1)} W^4)_{(0)} W^{2n-4} + W^4_{(0)} (W^4_{(1)} W^{2n-4}) +  (W^4_{(0)} W^4)_{(1)} W^{2n-4}.\end{equation}
Since $\displaystyle W^4_{(0)} W^{2n-4} = \frac{3}{2n-2} \partial W^{2n-2} + D_{4,2n-4}$, the left hand side of \eqref{main:2first} is 

\begin{equation} \label{main:2second} \begin{split} W^4_{(1)} \bigg(\frac{3}{2n-2} \partial W^{2n-2} + D_{4,2n-4} \bigg) & = -\frac{3}{2n-2}(\partial W^4)_{(1)} W^{2n-2} +\frac{3}{2n-2} \partial  \bigg(W^4_{(1)} W^{2n-2}\bigg) + W^4_{(1)}  D_{4,2n-4}\\ & = \frac{3}{2n-2} \bigg(\frac{3}{2n} \partial W^{2n} +D_{4,2n-2}\bigg) + \frac{3}{2n-2} \partial W^{2n} +  W^4_{(1)}  D_{4,2n-4}\\ & = \frac{3 (3 + 2n)}{2n(2n-2)} \partial W^{2n} +  \frac{3}{2n-2} D_{4,2n-2} +  W^4_{(1)}  D_{4,2n-4}.\end{split}\end{equation}

Next,
\begin{equation} \label{main:2third} \begin{split}(W^4_{(1)} W^4)_{(0)} W^{2n-4} &= W^6_{(0)} W^{2n-4}   = b_{6,2n-4} \partial W^{2n} + D_{6,2n-4},
\\W^4_{(0)} (W^4_{(1)} W^{2n-4}) & = W^4_{(0)} W^{2n-2} = \frac{3}{2n} \partial W^{2n} + D_{4,2n-2},
\\(W^4_{(0)} W^4)_{(1)} W^{2n-4}  & = \bigg(\frac{1}{2} \partial W^6 +\alpha \bigg)_{(1)} W^{2n-4} 
\\ & = -\frac{1}{2} W^6_{(0)} W^{2n-4}  +  \alpha_{(1)} W^{2n-4} \\ & = -\frac{1}{2} \bigg(b_{6,2n-4} \partial W^{2n} + D_{6,2n-4}\bigg) +  \alpha_{(1)} W^{2n-4}  .\end{split}\end{equation} 
Here $\alpha$ does not depend on $W^6$, so that  $\alpha_{(1)} W^{2n-4}$ does not contribute to the coefficient of $\partial W^{2n}$. Collecting terms, we obtain \begin{equation*} b_{6,2n-4} = \frac{30}{2n(2n-2)}.\end{equation*}

Inductively, we impose the Jacobi relation
\begin{equation} \label{main:2fourth}W^4_{(1)} (W^{2i-2}_{(0)} W^{2n+2-2i}) = (W^4_{(1)} W^{2i-2})_{(0)} W^{2n+2-2i} + W^{2i-2}_{(0)} (W^4_{(1)} W^{2n+2-2i}) +  (W^4_{(0)} W^{2i-2})_{(1)} W^{2n+2-2i}.\end{equation} The left side of \eqref{main:2fourth} is 

\begin{equation} \label{main:2fifth} \begin{split} W^4_{(1)} \bigg( b_{2i-2,2n+2-2i} \partial W^{2n-2} + D_{2i-2,2n+2-2i}\bigg) = b_{2i-2,2n+2-2i} W^4_{(1)} \partial W^{2n-2} + W^4_{(1)}D_{2i-2,2n+2-2i}
\\ = b_{2i-2,2n+2-2i} \bigg(\frac{3}{2n} \partial W^{2n}+ D_{4,2n-2} + \partial W^{2n}\bigg) + W^4_{(1)}D_{2i-2,2n+2-2i}. \end{split}\end{equation}
The right side of \eqref{main:2fourth} is
\begin{equation} \label{main2:sixth} b_{2i,2n+2-2i} \partial W^{2n} + D_{2i,2n+2-2i} + b_{2i-2, 2n+4-2i} \partial W^{2n} + D_{2i-2,2n+4-2i}  -\frac{3}{2i} \bigg(b_{2i,2n+2-2i} \partial W^{2n} + D_{2i,2n+2-2i}\bigg).\end{equation}
Recall that $b_{2i-2,2n+2-2i}$ and $D_{2i-2,2n+2-2i}$ are part of our inductive data, and we are assuming inductively that $b_{2i-2,2n+2-2i}$ is independent of $c, \lambda$. Since $D_{2i-2,2n+2-2i}$ is a normally ordered polynomial of weight $2n-1$ in $L, W^4, \dots, W^{2n-4}$ and their derivatives, $W^4_{(1)} D_{2n-2,2n+2-2i}$ is determined by inductive data. This shows that $b_{2i,2n+2-2i}$ is determined from inductive data together with $b_{2i-2,2n+4-2i}$ for $6\leq 2i \leq n+1$. Finally, since $b_{4,2n-2}$ is independent of $c,\lambda$, each $b_{2i,2n+2-2i}$ is independent of $c,\lambda$ as well. \end{proof}

Similarly, we will show that $\displaystyle a_{6,2n-4} =\frac{6}{2n-2}$ and that $a_{2i,2n+2-2i}$ is determined from the Jacobi identities for all $i$; see Lemma \ref{main:4} below. Combining these observations, we obtain

\begin{lemma} \label{main:3} The products $W^{2i}_{(0)} W^{2n+2-2i}$ for $4 \leq 2i \leq n+1$, are determined from inductive data, Jacobi relations of type $(W^{2i}, W^{2j}, W^{2k})$ for $2i+2j+2k = 2n+4$, and elements of $S_{2n+2,1}$.
\end{lemma}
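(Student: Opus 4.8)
The plan is to show that each product $W^{2i}_{(0)} W^{2n+2-2i}$ for $4 \leq 2i \leq n+1$ is completely pinned down by the data we already control. Recall from \eqref{eq:bij} that $$W^{2i}_{(0)} W^{2n+2-2i} = b_{2i,2n+2-2i}\, \partial W^{2n} + D_{2i,2n+2-2i},$$ so the product splits into two pieces: the leading coefficient $b_{2i,2n+2-2i}$ of $\partial W^{2n}$, and the remainder $D_{2i,2n+2-2i}$, which is a normally ordered polynomial in $L, W^4,\dots, W^{2n-2}$ and their derivatives. I would handle these two pieces separately, since they are determined by different inputs.

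First I would invoke Lemma \ref{main:2}, which already establishes that all the coefficients $b_{2i,2n+2-2i}$ for $4 \leq 2i \leq n+1$ are determined (and are independent of $c,\lambda$) by imposing the Jacobi relations of type $(W^{2i},W^{2j},W^{2k})$ with $2i+2j+2k = 2n+4$. This disposes of the leading term entirely. What remains is to show that the lower-order data $D_{2i,2n+2-2i}$ is also forced. The natural route is the same inductive Jacobi relation \eqref{main:2fourth} used in the proof of Lemma \ref{main:2}, but now reading off \emph{all} terms rather than only the coefficient of $\partial W^{2n}$. In that identity, the left-hand side is given by \eqref{main:2fifth} and the right-hand side by \eqref{main2:sixth}. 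Every term appearing there, other than $D_{2i,2n+2-2i}$ itself, is either part of the inductive data (products with total weight $\leq 2n$, such as $W^4_{(1)} D_{2i-2,2n+2-2i}$ and the various $D$'s with smaller first index), or is of the form $W^{2i}_{(k)} W^{2j}$ with $k \geq 1$, i.e.\ an element of $S_{2n+2,1}$. Solving \eqref{main:2fourth} for $D_{2i,2n+2-2i}$ therefore expresses it in terms of inductive data and $S_{2n+2,1}$, proceeding inductively upward in $i$ exactly as in Lemma \ref{main:2}. Combining this with the determination of $b_{2i,2n+2-2i}$ gives the full product $W^{2i}_{(0)} W^{2n+2-2i}$.

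The one ingredient that is genuinely external to Lemma \ref{main:2} is the appearance of $S_{2n+2,1}$: the term $(W^4_{(0)} W^{2i-2})_{(1)} W^{2n+2-2i}$ and its analogues feed in first-order poles at total weight $2n+2$, which are not part of the inductive data and must be carried along as unknowns. This is exactly why the statement of the lemma lists $S_{2n+2,1}$ among the permitted inputs rather than claiming determination from inductive data alone. I would make sure that the two-step structure $b$-coefficient first (from Lemma \ref{main:2}), then $D$-remainder (from the same Jacobi relation) cleanly separates the $c,\lambda$-independent piece from the genuinely $(c,\lambda)$-dependent lower-order terms, and that the upward induction in $i$ never requires a product of weight exceeding $2n+2$ that is not already either inductive data or in $S_{2n+2,1}$.

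I expect the main obstacle to be purely bookkeeping: verifying that in \eqref{main:2fourth} no term of total weight $2n+2$ other than those in $S_{2n+2,1}$ and $\partial W^{2n}$ arises, so that the recursion genuinely closes. In particular one must confirm that $(W^4_{(1)} W^{2i-2})_{(0)} W^{2n+2-2i}$ and $W^{2i-2}_{(0)}(W^4_{(1)} W^{2n+2-2i})$ contribute only inductive data plus the $\partial W^{2n}$ term and the sought-after $D_{2i,2n+2-2i}$, with all remaining $(1)$-products absorbed into $S_{2n+2,1}$. Since the weight homogeneity in part (1) of the inductive hypothesis controls precisely which generators can appear, this obstacle is routine once the weight accounting is set up, and no new free parameters can enter.
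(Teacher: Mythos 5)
There is a genuine gap. Your plan is: (i) get the leading coefficients $b_{2i,2n+2-2i}$ from Lemma \ref{main:2}, and (ii) recover the remainders $D_{2i,2n+2-2i}$ by re-reading the \emph{same} Jacobi relations \eqref{main:2first} and \eqref{main:2fourth} in full. But those relations form a homogeneous, underdetermined linear system in the $D$'s: comparing \eqref{main:2fifth} with \eqref{main2:sixth} (modulo inductive data) gives
$$b_{2i-2,2n+2-2i}\,D_{4,2n-2} \equiv \Bigl(1-\tfrac{3}{2i}\Bigr)D_{2i,2n+2-2i} + D_{2i-2,2n+4-2i},$$
so each step of your upward induction expresses $D_{2i,2n+2-2i}$ in terms of $D_{2i-2,2n+4-2i}$ \emph{and} $D_{4,2n-2}$, and the base case \eqref{main:2first} only relates $D_{6,2n-4}$ to $D_{4,2n-2}$. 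The net effect is that every $D_{2i,2n+2-2i}$ is a rational multiple of $D_{4,2n-2}$, but $D_{4,2n-2}$ itself is never pinned down: there is always exactly one more unknown than there are relations. Your recursion therefore does not close.

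The paper fixes this by importing one additional, genuinely independent relation from a Jacobi identity of a different shape, namely $W^4_{(0)}(W^6_{(1)} W^{2n-6}) = (W^4_{(0)} W^6)_{(1)} W^{2n-6} + W^6_{(1)}(W^4_{(0)} W^{2n-6})$, which yields a third linear relation among $D_{4,2n-2}$, $D_{6,2n-4}$, $D_{8,2n-6}$ involving $\partial C_{6,2n-4}$; since $C_{6,2n-4}$ is determined by $S_{2n+2,1}$, this is precisely where the dependence on $S_{2n+2,1}$ enters the lemma. Relatedly, your identification of where $S_{2n+2,1}$ appears is incorrect: in \eqref{main:2fourth} the term $(W^4_{(0)}W^{2i-2})_{(1)}W^{2n+2-2i}$ reduces via $(\partial W^{2i})_{(1)} = -W^{2i}_{(0)}$ to the zeroth product $-\tfrac{3}{2i}W^{2i}_{(0)}W^{2n+2-2i}$ plus inductive data (that is the last summand of \eqref{main2:sixth}), so it feeds back into the $D$'s rather than bringing in any first-order pole at weight $2n+2$. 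To repair the proof you must exhibit the extra relation \eqref{main:3third} (or an equivalent one) and check that \eqref{main:3first}--\eqref{main:3third} are linearly independent, so that $D_{4,2n-2}$, $D_{6,2n-4}$, $D_{8,2n-6}$ are all solved for; only then does the recursion you describe determine the remaining $D_{2i,2n+2-2i}$.
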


\begin{proof} Since $b_{2i,2n+2-2i}$ is determined from this data, it suffices to show that $D_{2i,2n+2-2i}$ is also determined for $4\leq 2i \leq n+1$. It follows from \eqref{main:2second} and \eqref{main:2third} that modulo terms which are determined from inductive data,
\begin{equation} \label{main:3first} D_{6,2n-4} \equiv -\frac{2 (5-2n)}{2n-2} D_{4,2n-2}.\end{equation}
Using \eqref{main:2fifth} and \eqref{main2:sixth} in the case $i=4$, we get
\begin{equation} \label{main:3second}\frac{30}{(2n-2)(2n-4)} D_{4,2n-2} \equiv D_{6,2n-4} +   \frac{5}{8} D_{8,2n-6},\end{equation} modulo inductive data.
Similarly, \eqref{main:2fifth} and \eqref{main2:sixth} show that there are nontrivial relations $D_{2i,2n+2-2i} \equiv p_i(n) D_{4,2n-2}$ for rational functions $p(n)$ for all $i$, modulo inductive data. So it is enough to find three linearly independent relation between $D_{4,2n-2}$,  $D_{6,2n-4}$, and $D_{8,2n-6}$. From the Jacobi relation
$$W^4_{(0)} (W^6_{(1)} W^{2n-6}) = (W^4_{(0)} W^6)_{(1)} W^{2n-6} + W^6_{(1)} (W^4_{(0)} W^{2n-6}),$$ we get 
$$ \frac{6}{2n-4} D_{4,2n-2} \equiv -\frac{3}{8} D_{8,2n-6} + \frac{4}{2n-4} D_{6,2n-4} + \frac{3}{2n-4} \partial C_{6,2n-4},$$ modulo inductive data. Since $C_{6,2n-4}$ is determined by $S_{2n+2,1}$, we get the relation
\begin{equation} \label{main:3third} \frac{6}{2n-4} D_{4,2n-2} \equiv -\frac{3}{8} D_{8,2n-6} + \frac{3}{2n-4} D_{6,2n-4}\end{equation} modulo inductive data, together with data determined by $S_{2n+2,1}$. Then \eqref{main:3first}-\eqref{main:3third} are the desired linearly independent relations.
\end{proof}

\begin{lemma} \label{main:4}
The set $S_{2n+2,1}$ of products $W^{2i} _{(1)} W^{2n+2-2i}$ is determined from inductive data, Jacobi relations of type $(W^{2i}, W^{2j}, W^{2k})$ for $2i+2j+2k = 2n+4$, and the sets $S_{2n+2,\ell}$ for $\ell\geq 2$.
\end{lemma}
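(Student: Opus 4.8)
The plan mirrors, for the mode-one products, the strategy used for the mode-zero products in Lemmas \ref{main:1}--\ref{main:3}, but it turns out to be simpler. By skew-symmetry \eqref{commutator} it suffices to determine $W^{2i}_{(1)} W^{2n+2-2i}$ for $2 \le 2i \le n+1$; the products $W^{2i}_{(1)} W^{2j}$ with $2i > 2j$ then follow. The two smallest cases are already known: $L_{(1)} W^{2n} = 2n\, W^{2n}$ by the weight grading, and $W^4_{(1)} W^{2n-2} = W^{2n}$ by the inductive definition of the generators. So the real content is to pin down $W^{2i}_{(1)} W^{2n+2-2i}$ for $3 \le i$ with $2i \le n+1$.

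For each such $i$ I would impose the single Jacobi identity of type $(W^4, W^{2i-2}, W^{2n+2-2i})$ at modes $(r,s)=(2,0)$, which by \eqref{jacobi} reads
\begin{align*}
W^4_{(2)}(W^{2i-2}_{(0)} W^{2n+2-2i}) &= W^{2i-2}_{(0)}(W^4_{(2)} W^{2n+2-2i}) + (W^4_{(0)} W^{2i-2})_{(2)} W^{2n+2-2i} \\
&\quad + 2(W^4_{(1)} W^{2i-2})_{(1)} W^{2n+2-2i} + (W^4_{(2)} W^{2i-2})_{(0)} W^{2n+2-2i}.
\end{align*}
Since $W^4_{(1)} W^{2i-2} = W^{2i}$, the term $2(W^4_{(1)} W^{2i-2})_{(1)} W^{2n+2-2i}$ equals $2\,W^{2i}_{(1)} W^{2n+2-2i}$. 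Moreover $W^4_{(0)} W^{2i-2}$ belongs to $S_{2i+2,0}$ and so is inductive data; writing it as $b_{4,2i-2}\partial W^{2i} + D_{4,2i-2}$ with $b_{4,2i-2}=\frac{3}{2i}$ (the earlier-stage instance of Lemma \ref{main:1}), the relation \eqref{deriv} gives $(b_{4,2i-2}\partial W^{2i})_{(2)} W^{2n+2-2i} = -2 b_{4,2i-2}\,W^{2i}_{(1)} W^{2n+2-2i}$. Hence the target occurs with total coefficient $2 - 2 b_{4,2i-2} = \frac{2i-3}{i}$, which is nonzero for every $i \ge 2$.

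It remains to check that every other term is already determined by inductive data and the sets $S_{2n+2,\ell}$, $\ell\ge 2$. On the left, $W^{2i-2}_{(0)} W^{2n+2-2i}\in S_{2n,0}$ is inductive data; writing it as $b_{2i-2,2n+2-2i}\partial W^{2n-2} + D_{2i-2,2n+2-2i}$ and using $W^4_{(2)}\partial W^{2n-2} = \partial(W^4_{(2)} W^{2n-2}) + 2 W^{2n}$, the only input of total weight $2n+2$ is $W^4_{(2)} W^{2n-2}\in S_{2n+2,2}$, which is available by hypothesis, while $W^4_{(2)} D_{2i-2,2n+2-2i}$ invokes only OPEs $W^4(z)W^{2m}(w)$ with $2m\le 2n-4$. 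On the right, $W^4_{(2)} W^{2n+2-2i}$ and $W^4_{(2)} W^{2i-2}$ have total weight $2n+6-2i\le 2n$ and $2i+2\le 2n$ respectively, so $W^{2i-2}_{(0)}(W^4_{(2)} W^{2n+2-2i})$, $(W^4_{(2)} W^{2i-2})_{(0)} W^{2n+2-2i}$ and $(D_{4,2i-2})_{(2)} W^{2n+2-2i}$ all reduce, via \eqref{nonasswick}--\eqref{ncw}, to OPEs of total weight $\le 2n$, i.e. to inductive data. It is exactly here that the mode choice matters: lowering the inner product by the mode $2$ on $W^4$ keeps it below total weight $2n+2$, which is why no second current-weight product is dragged in and the equation decouples into one unknown per $i$ --- in contrast to the mode-zero case, where Lemma \ref{main:3} had to combine three relations. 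Thus the identity takes the form $\frac{2i-3}{i}\,W^{2i}_{(1)} W^{2n+2-2i} = (\text{determined data})$, and we solve for the target.

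The main obstacle is precisely this bookkeeping: one must be certain that none of the determined terms secretly contributes an unrecognized multiple of a new generator. The controlling principle is that any OPE $W^{2a}(z)W^{2b}(w)$ with $2a+2b\le 2n$ produces only fields of weight $\le 2a+2b-1\le 2n-1$, so it can never produce the weight-$2n$ generator $W^{2n}$; likewise every element of $S_{2n+2,\ell}$ with $\ell\ge 2$ has weight $\le 2n-1$, and the derivative of a weight-$(2n-1)$ field is never the bare generator $W^{2n}$. Hence each non-target term is a fully determined $\mathbb{C}[c,\lambda]$-linear combination of normally ordered monomials, and the sole unknown is $W^{2i}_{(1)} W^{2n+2-2i}$. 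Reading off the coefficient of $W^{2n}$ yields $a_{2i,2n+2-2i}$, and for $i=3$ one recovers $a_{6,2n-4} = \frac{6}{2n-2}$, while the remaining monomials determine the polynomial $C_{2i,2n+2-2i}$. Finally, skew-symmetry \eqref{commutator} produces the products with $2i>2j$, completing the determination of $S_{2n+2,1}$.
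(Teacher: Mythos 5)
Your proof is correct, but it takes a genuinely different route from the paper. The paper works with modes $(1,1)$: it first imposes the identity of type $(W^{2n-4},W^4,W^4)$ to determine $W^6_{(1)}W^{2n-4}$, and then recursively imposes $(W^4,W^{2i},W^{2n-2i})$ at modes $(1,1)$, so that each product $W^{2i+2}_{(1)}W^{2n-2i}$ is extracted from the previously determined one (the unknown enters through both $(W^4_{(1)}W^{2i})_{(1)}W^{2n-2i}$ and $(W^4_{(0)}W^{2i})_{(2)}W^{2n-2i}$, with net coefficient $1-\tfrac{3}{2i+2}\cdot 2$, and the skew-symmetry corrections are absorbed into derivatives of $S_{2n+2,\ell}$, $\ell\geq 2$). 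You instead use, for each $i$, the single identity of type $(W^4,W^{2i-2},W^{2n+2-2i})$ at modes $(2,0)$, which isolates the target with the explicit nonzero coefficient $(2i-3)/i$ and decouples the system into one equation per $i$; the price is that you must feed in the zero-mode products $S_{2n,0}$ from the inductive data and the coefficients $b_{4,2i-2}=3/(2i)$ from earlier stages, and you pick up $S_{2n+2,2}$ through $W^4_{(2)}\partial W^{2n-2}$ rather than all of $S_{2n+2,\ell}$, $\ell\geq 2$. Both arguments are sound and reproduce $a_{6,2n-4}=\tfrac{6}{2n-2}$; yours avoids error propagation through a recursion, while the paper's recursion produces the closed-form constants $a_{2i,2n+2-2i}$ along the way. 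One small caution: terms such as $W^4_{(2)}D_{2i-2,2n+2-2i}$ and $(D_{4,2i-2})_{(2)}W^{2n+2-2i}$ sit just outside the blanket sufficient criterion stated before Lemma \ref{main:1} (the total weight of the polynomial exceeds the stated bound by one), so the reduction to products of total weight $\leq 2n$ deserves the direct check you sketch via the monomial decomposition; that check does go through because the odd total weight forces a derivative, keeping every pairwise product of generators at total weight $\leq 2n$.
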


\begin{proof} By assumption $L_{(1)} W^{2n} = 2n W^{2n}$ and $W^4_{(1)} W^{2n-2} = W^{2n}$, so we begin with the case $2i\geq 6$. We impose the Jacobi relation
\begin{equation} \label{main:4first} W^{2n-4} _{(1)} (W^4_{(1)} W^{4}) = W^4 _{(1)} (W^{2n-4}_{(1)} W^{4}) + (W^{2n-4} _{(1)} W^{4})_{(1)}W^4 +  (W^{2n-4} _{(0)} W^4)_{(2)} W^{4}.\end{equation}
By \eqref{commutator}, the left hand side of \eqref{main:4first} is $$W^{2n-4}_{(1)} W^6 = \sum_{i\geq 0} (-1)^{i} \frac{1}{i!} \partial^{i}(W^6_{(i+1)} W^{2n-4}).$$ Therefore modulo derivatives of elements of $S_{2n+2,\ell}$ for $\ell \geq 2$, 
$$W^{2n-4}_{(1)} W^6 \equiv W^6_{(1)} W^{2n-4} = a_{6,2n-4} W^{2n} + C_{6,2n-4}.$$

As for the right hand side, modulo terms which are either derivatives of elements of $S_{2n+2,\ell}$ for $\ell \geq 2$, or are determined by inductive data, we have 
$$W^4 _{(1)} (W^{2n-4}_{(1)} W^{4}) \equiv W^4 _{(1)} (W^{4}_{(1)} W^{2n-4}) =W^4 _{(1)} W^{2n-2} = W^{2n},$$
$$(W^{2n-4} _{(1)} W^{4})_{(1)}W^4 \equiv (W^{4} _{(1)} W^{2n-4})_{(1)}W^4 \equiv W^{2n-2}_{(1)} W^4 \equiv W^{2n},$$
$$(W^{2n-4} _{(0)} W^4)_{(2)} W^{4} =  \bigg(\sum_{i\geq 0} (-1)^{i+1} \frac{1}{i!} \partial^i (W^4_{(i)} W^{2n-4}) \bigg)_{(2)} W^4 $$
$$= \bigg(- \frac{3}{2n-2} \partial W^{2n-2} + \partial W^{2n-2}\bigg)_{(2)} W^4  = \bigg(\frac{6}{2n-2} -2 \bigg)W^{2n-2}_{(1)} W^4$$
$$\equiv  \bigg(\frac{6}{2n-2} -2 \bigg)W^{4}_{(1)} W^{2n-2} = \bigg(\frac{6}{2n-2} -2 \bigg) W^{2n}.$$
Collecting terms, we see that \begin{equation*} a_{6,2n-4} =\frac{6}{2n-2},\end{equation*} and that $C_{6,2n-4}$ is determined modulo inductive data and derivatives of elements of $S_{2n+2,\ell}$ for $\ell \geq 2$.

Next, we impose the Jacobi relation \begin{equation} \label{main:4second} 
W^4 _{(1)} (W^6_{(1)} W^{2n-6}) = W^6 _{(1)} (W^4_{(1)} W^{2n-6}) + (W^4 _{(1)} W^6)_{(1)} W^{2n-6} +  (W^4 _{(0)} W^6)_{(2)} W^{2n-6}.\end{equation} Since $\displaystyle W^6_{(1)} W^{2n-6} =  \frac{6}{2n-4} W^{2n-2} + C_{6,2n-6}$, where $C_{6,2n-6}$ depends only on $L,W^4,\dots, W^{2n-4}$ and their derivatives and is determined by inductive data, the left hand side of \eqref{main:4second} is
$$ W^4_{(1)} \bigg(\frac{6}{2n-4} W^{2n-2} + C_{6,2n-6}\bigg) \equiv   \frac{6}{2n-4} W^{2n}.$$ For the right hand side, we have
$$W^6 _{(1)} (W^4_{(1)} W^{2n-6}) = W^6 _{(1)} W^{2n-4} = \frac{6}{2n-2} W^{2n} + C_{6,2n-4},$$
$$(W^4 _{(1)} W^6)_{(1)} W^{2n-6} = W^8_{(1)} W^{2n-6} = a_{8,2n-6} W^{2n} + C_{8,2n-6},$$
$$(W^4 _{(0)} W^6)_{(2)} W^{2n-6} = \bigg(\frac{3}{8}
\partial W^8 + \beta \bigg)_{(2)} W^{2n-6}$$
$$ \equiv -\frac{6}{8} \bigg(a_{8,2n-6} W^{2n} + C_{8,2n-6}\bigg).$$ Here $\beta$ depends only on $L,W^4, W^6$, so $\beta_{(2)} W^{2n-6}$ does not depend on $W^{2n}$.
It is immediate that $$a_{8,2n-6} = \frac{48}{(2n-4) (2n-2)},$$ and that $C_{8,2n-6}$ is determined modulo inductive data and derivatives of elements of $S_{2n+2,\ell}$ for $\ell \geq 2$.
Similarly, for $2i>6$, by imposing the Jacobi relation 
\begin{equation} \label{main:4third} 
W^4 _{(1)} (W^{2i}_{(1)} W^{2n-2i}) = W^{2i} _{(1)} (W^4_{(1)} W^{2n-2i}) + (W^4 _{(1)} W^{2i})_{(1)} W^{2n-2i} +  (W^4 _{(0)} W^{2i})_{(2)} W^{2n-2i},\end{equation} the same argument shows that both $a_{2i+2,2n-2i}$ and $C_{2i+2,2n-2i}$ are determined modulo inductive data and derivatives of elements of $S_{2n+2,\ell}$ for $\ell \geq 2$. This shows that $S_{2n+2,1}$ is determined modulo this data. \end{proof}

\begin{lemma} \label{main:5}The set $S_{2n+2, 2}$ of products $W^{2i} _{(2)} W^{2n+2-2i}$ is determined from inductive data, Jacobi relations of type $(W^{2i}, W^{2j}, W^{2k})$ for $2i+2j+2k = 2n+4$, and the sets $S_{2n+2, \ell}$ for $\ell \geq 3$.\end{lemma}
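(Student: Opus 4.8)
The plan is to follow the template of Lemma~\ref{main:4}: combine skew-symmetry with a $W^4_{(1)}$-recursion, bootstrapped by a single commutator-based Jacobi identity. The one structural feature to keep in mind throughout is that every element of $S_{2n+2,2}$ has weight $2n-1<2n$, so it is a normally ordered polynomial in $L,W^4,\dots,W^{2n-2}$ that cannot involve the new generator $W^{2n}$ (the highest term it can contain is $\partial W^{2n-2}$). First I would use the commutator formula \eqref{commutator}, which for the second product reads $W^{2i}_{(2)}W^{2n+2-2i}\equiv -\,W^{2n+2-2i}_{(2)}W^{2i}$ modulo derivatives of the products $W^{2n+2-2i}_{(p)}W^{2i}$ with $p\geq 3$; these lie in $S_{2n+2,\ell}$ for $\ell\geq 3$. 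This reduces the problem to determining $W^{2i}_{(2)}W^{2n+2-2i}$ for $2i\leq n+1$.

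The recursion comes from the Jacobi identity \eqref{jacobi} of type $(W^4,W^{2i-2},W^{2n+2-2i})$ with $(r,s)=(1,2)$, exactly parallel to \eqref{main:4third}. On the right-hand side, the term $(W^4_{(1)}W^{2i-2})_{(2)}W^{2n+2-2i}$ is the target $W^{2i}_{(2)}W^{2n+2-2i}$, the term $W^{2i-2}_{(2)}(W^4_{(1)}W^{2n+2-2i})=W^{2i-2}_{(2)}W^{2n+4-2i}$ is the product of smaller first index, and $(W^4_{(0)}W^{2i-2})_{(3)}W^{2n+2-2i}$ contributes, via \eqref{eq:bij} and \eqref{deriv}, the term $-3\,b_{4,2i-2}\,W^{2i}_{(2)}W^{2n+2-2i}$ together with inductive data; the left-hand side $W^4_{(1)}(W^{2i-2}_{(2)}W^{2n+2-2i})$ is determined by inductive data, because its inner product lies in $S_{2n,2}$, has weight $2n-3\leq 2n-2$, and involves only $L,\dots,W^{2n-4}$, so the remark preceding Lemma~\ref{main:1} applies. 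Collecting the two occurrences of the target and inserting $b_{4,2i-2}=\tfrac{3}{2i}$ from Lemma~\ref{main:1}, the coefficient is $1-3b_{4,2i-2}=\tfrac{2i-9}{2i}$, which is nonzero for every even $2i\geq 6$. Hence the recursion expresses $W^{2i}_{(2)}W^{2n+2-2i}$ in terms of the product of first index $2i-2$ and inductive data, so the whole set is fixed once the base product $W^4_{(2)}W^{2n-2}$ is known; indeed $L_{(2)}W^{2n}$ is then recovered from the Jacobi identity of type $(L,W^4,W^{2n-2})$ with $(r,s)=(2,1)$, which gives $L_{(2)}W^{2n}=W^4_{(1)}(L_{(2)}W^{2n-2})+5\,W^4_{(2)}W^{2n-2}$.

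The hard part is pinning down the single base product $W^4_{(2)}W^{2n-2}$, and this is where I expect the main obstacle. The naive identity of type $(W^4,W^4,W^{2n-4})$ with $(r,s)=(2,1)$ merely reproduces the relation $W^6_{(2)}W^{2n-4}=2\,W^4_{(2)}W^{2n-2}+(\text{inductive})$ already implied by the recursion, and so is degenerate. Instead I would use a commutator-based identity in the spirit of \eqref{main:4first}, namely the Jacobi identity of type $(W^{2n-4},W^4,W^4)$ with $(r,s)=(2,1)$: its left-hand side $W^{2n-4}_{(2)}(W^4_{(1)}W^4)=W^{2n-4}_{(2)}W^6$ equals $-W^6_{(2)}W^{2n-4}$ modulo $S_{2n+2,\ell}$ for $\ell\geq 3$ by \eqref{commutator}, while on the right-hand side the summands $(W^{2n-4}_{(1)}W^4)_{(2)}W^4$ and $(W^{2n-4}_{(0)}W^4)_{(3)}W^4$ each produce a multiple of $W^{2n-2}_{(2)}W^4\equiv -W^4_{(2)}W^{2n-2}$, the second one arising from the $\partial W^{2n-2}$ summand of the $S_{2n,0}$ product $W^{2n-4}_{(0)}W^4$. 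This yields a second relation between $W^4_{(2)}W^{2n-2}$ and $W^6_{(2)}W^{2n-4}$ which, combined with the recursion, determines $W^4_{(2)}W^{2n-2}$. The crux is to verify that this relation is genuinely independent of the recursion, i.e. that the net coefficient of $W^4_{(2)}W^{2n-2}$ does not vanish; this reduces to checking that a structure constant such as $b_{2n-4,4}$, which is a nonzero universal rational function of $n$ by skew-symmetry and Lemma~\ref{main:1}, is nonzero for all $n\geq 3$, exactly in the spirit of the three-independent-relations argument in the proof of Lemma~\ref{main:3}.
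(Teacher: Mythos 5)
Your proposal is correct and follows essentially the same route as the paper's proof: reduce $L_{(2)}W^{2n}$ to $W^4_{(2)}W^{2n-2}$ via the $(L,W^4,W^{2n-2})$ Jacobi identity, determine the remaining elements of $S_{2n+2,2}$ by a $W^4$-recursion on the first index, and pin down the base product $W^4_{(2)}W^{2n-2}$ with a Jacobi identity of type $(W^{2n-4},W^4,W^4)$ exploiting skew-symmetry. The only real difference is the choice of components in the base identity: the paper takes $W^{2n-4}_{(1)}(W^4_{(2)}W^4)$ as the left-hand side, which is directly inductive data since $W^4_{(2)}W^4$ is a multiple of $\partial L$, so no independence check against the recursion is needed; your variant with $W^{2n-4}_{(2)}(W^4_{(1)}W^4)$ also closes, since the net coefficient you need to be nonzero works out to $3b_{2n-4,4}=3\bigl(1-b_{4,2n-4}\bigr)=\tfrac{3(2n-5)}{2n-2}\neq 0$.
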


\begin{proof} First, since $L_{(2)} W^{2n} = L_{(2)} (W^4_{(1)}W^{2n-2})$, we impose the Jacobi identity
\begin{equation} \label{main:5first} \begin{split} L_{(2)} (W^4_{(1)}W^{2n-2}) & = W^4_{(1)}(L_{(2)} W^{2n-2}) +(L_{(2)} W^4)_{(1)} W^{2n-2}
\\ & +2(L_{(1)} W^4)_{(2)} W^{2n-2} +(L_{(0)} W^4)_{(3)} W^{2n-2}.\end{split} \end{equation}
 
By inductive assumption, $L_{(2)} W^{2n-2}$ is known and is expressible in terms of $L, W^{2i}$ for $4\leq 2i\leq 2n-4$. Then $W^4_{(1)}(L_{(2)} W^{2n-2})$ is also determined by inductive data. The term $(L_{(2)} W^4)_{(1)} W^{2n-2}$ vanishes because $W^4$ is primary, and the remaining terms are expressible in terms of $W^4_{(2)} W^{2n-2}$ together with inductive data. So determining $L_{(2)} W^{2n}$ is equivalent to determining $W^4_{(2)} W^{2n-2}$.

To determine $W^4_{(2)} W^{2n-2}$, we impose the Jacobi relation
\begin{equation} \label{main:5second}  W^{2n-4}_{(1)} (W^4_{(2)} W^4) =  W^4_{(2)} (W^{2n-4}_{(1)} W^4) + (W^{2n-4}_{(1)} W^4)_{(2)} W^4 + (W^{2n-4}_{(0)} W^4)_{(3)} W^4.\end{equation}
Since $W^4_{(2)} W^4$ only depends on $L, W^4$ and their derivatives, the left hand side of \eqref{main:5second} is determined by inductive data. Next, by \eqref{commutator}, we have 
$$W^4_{(2)} (W^{2n-4}_{(1)} W^4) + (W^{2n-4}_{(1)} W^4)_{(2)} W^4 = \sum_{i\geq 1} (-1)^{i+1}\frac{1}{i!} \partial^i\bigg((W^{2n-4}_{(1)} W^4)_{(i+2)} W^4\bigg).$$
Since $W^{2n-4}_{(1)} W^4 = W^{2n-2}$ modulo terms which depend only on $L, W^4, \dots, W^{2n-4}$, and is known by inductive data,
the sum $W^4_{(2)} (W^{2n-4}_{(1)} W^4) + (W^{2n-4}_{(1)} W^4)_{(2)} W^4$ in  \eqref{main:5second} is determined by inductive data together with derivatives of elements of $S_{2n+2,\ell}$ for $\ell \geq 3$. Finally, the remaining term in \eqref{main:5second} is
$$(W^{2n-4}_{(0)} W^4)_{(3)} W^4 \equiv - (W^4_{(0)} W^{2n-4})_{(3)} W^4 = -\frac{3}{2n-2} \partial W^{2n-2}_{(3)} W^4 = \frac{9}{2n-2} W^{2n-2}_{(2)} W^4 $$ $$ = \frac{9}{2n-2} \sum_{i\geq 0} (-1)^{i+1}\frac{1}{i!} \partial^i (W^4_{(i+2)} W^{2n-2}),$$ modulo inductive data. Therefore $W^4_{(2)} W^{2n-2}$ is expressible in terms of inductive data together with derivatives of elements of $S_{2n+2,\ell}$ for $\ell >2$. Since $L_{(2)} W^{2n}$ can be expressed in terms of $W^4_{(2)} W^{2n-2}$, the same holds for $L_{(2)} W^{2n}$.

Next, for $2i\geq 4$ we impose the Jacobi relation 
\begin{equation} \label{main:5third} \begin{split}W^4_{(2)} (W^{2i}_{(1)} W^{2n-2i} ) & = W^{2i}_{(1)} (W^4_{(2)} W^{2n-2i}) + (W^4_{(2)} W^{2i})_{(1)} W^{2n-2i} + 2(W^4_{(1)} W^{2i})_{(2)} W^{2n-2i}  \\ & + (W^4_{(0)} W^{2i})_{(3)} W^{2n-2i}.\end{split} \end{equation}
This allows us to express $W^{2i}_{(2)} W^{2n+2-2i}$ for all $i$ in terms of inductive data together with derivatives of $S_{2n+2,\ell}$ for $\ell \geq 3$. \end{proof}

\begin{lemma} \label{main:6} For all $k>2$, the set $S_{2n+2, k}$ of products $W^{2i} _{(k)} W^{2n+2-2i}$ is determined from inductive data, Jacobi relations of type $(W^{2i}, W^{2j}, W^{2k})$ for $2i+2j+2k = 2n+4$, and the sets $S_{2n+2, \ell}$ for $\ell >k $.\end{lemma}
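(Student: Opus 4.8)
The proof proceeds exactly as in Lemma \ref{main:5}, now with the sets $S_{2n+2,\ell}$ for $\ell>k$ playing the role that $S_{2n+2,\ell}$ for $\ell\geq 3$ played there (the implicit downward induction on $k$ is anchored at $k$ near $2n+1$, where every product $W^{2i}_{(k)}W^{2n+2-2i}$ has weight $\leq 1$ and is therefore zero or a multiple of $\mathbf{1}$ fixed by the central charge). For fixed $k>2$ the products split into the three families $L_{(k)}W^{2n}$, $W^4_{(k)}W^{2n-2}$, and $W^{2i}_{(k)}W^{2n+2-2i}$ with $2i\geq 6$, which I treat in turn by running the analogues of \eqref{main:5first}, \eqref{main:5second}, and \eqref{main:5third} with the index $2$ replaced throughout by $k$ and $3$ by $k+1$.

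First I reduce $L_{(k)}W^{2n}$ to the single seed $W^4_{(k)}W^{2n-2}$. Writing $W^{2n}=W^4_{(1)}W^{2n-2}$ and imposing the Jacobi identity \eqref{jacobi} of type $(L,W^4,W^{2n-2})$ with $r=k$, $s=1$, the only surviving terms $(L_{(i)}W^4)_{(k+1-i)}W^{2n-2}$ are those with $i=0,1$, since $W^4$ is primary; using \eqref{deriv}, $L_{(0)}W^4=\partial W^4$ and $L_{(1)}W^4=4W^4$, they collapse to $(3k-1)\,W^4_{(k)}W^{2n-2}$. As $L_{(k)}W^{2n-2}\in S_{2n,k}$ is inductive data and involves only generators below $W^{2n-2}$, the term $W^4_{(1)}(L_{(k)}W^{2n-2})$ is inductive data; since $3k-1\neq 0$, this determines $L_{(k)}W^{2n}$ once the seed is known.

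Next I determine the seed by imposing the Jacobi identity of type $(W^{2n-4},W^4,W^4)$ with $r=1$, $s=k$, the precise analogue of \eqref{main:5second}. Because $W^4_{(k)}W^4$ depends only on $L$ and $W^4$, the left-hand side $W^{2n-4}_{(1)}(W^4_{(k)}W^4)$ is inductive data. On the right-hand side I use the skew-symmetry \eqref{commutator} to express $W^{2n-4}_{(0)}W^4$ and $W^{2n-4}_{(1)}W^4$ through the inductive products $W^4_{(\bullet)}W^{2n-4}$, and, decisively, to rewrite each occurring $W^{2n-2}_{(k)}W^4$ as $(-1)^{k+1}W^4_{(k)}W^{2n-2}$ plus $\partial$-corrections $W^4_{(k+j)}W^{2n-2}\in S_{2n+2,\ell}$ with $\ell>k$, which are known. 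Collecting the coefficient of the seed produces an equation $c_{n,k}\,W^4_{(k)}W^{2n-2}=(\text{known})$, and a direct computation shows $c_{n,k}\neq 0$ throughout the inductive range $2n+2>14$; this determines the seed.

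Finally I propagate to $W^{2i}_{(k)}W^{2n+2-2i}$ with $2i\geq 6$. For each $i\geq 2$ I impose the Jacobi identity of type $(W^4,W^{2i},W^{2n-2i})$ with $r=k$, $s=1$, the analogue of \eqref{main:5third}. Using $W^4_{(1)}W^{2i}=W^{2i+2}$ together with \eqref{deriv}, the unknown $W^{2i+2}_{(k)}W^{2n-2i}$ enters only through the $j=0$ and $j=1$ terms, with total coefficient $k-\tfrac{3(k+1)}{2i+2}$; every other term is inductive data, since the generators that occur have weight at most $2i$ and so pair with $W^{2n-2i}$ with total weight at most $2n$. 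As $k-\tfrac{3(k+1)}{2i+2}\neq 0$ for all $i\geq 2$ and $k>2$, each such product is determined, and by skew-symmetry so is the remainder of $S_{2n+2,k}$. The main obstacle is exactly the verification that the scalar coefficients of the unknowns --- $c_{n,k}$ in the seed equation and $k-\tfrac{3(k+1)}{2i+2}$ in the propagation --- never vanish for admissible $(n,k)$; as in the case $k=2$ this requires care, because the skew-symmetry corrections in \eqref{commutator} contribute parity-dependent, $k$-dependent terms.
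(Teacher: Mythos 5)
Your proof is correct and follows the paper's argument exactly: the paper likewise reduces $L_{(k)}W^{2n}$ to the seed $W^4_{(k)}W^{2n-2}$ via the Jacobi identity of type $(L,W^4,W^{2n-2})$, determines the seed via the identity of type $(W^{2n-4},W^4,W^4)$, and propagates to $W^{2i}_{(k)}W^{2n+2-2i}$ via the identity of type $(W^4,W^{2i},W^{2n-2i})$, all modulo inductive data and the sets $S_{2n+2,\ell}$ with $\ell>k$. Your explicit verification that the coefficients $3k-1$, $c_{n,k}$, and $k-\tfrac{3(k+1)}{2i+2}$ are nonzero is a detail the paper leaves implicit (it simply states that the argument is the same as in Lemma \ref{main:5}), and your verification is accurate.
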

 
\begin{proof} The argument is the same as the proof of Lemma \ref{main:5}. Imposing the Jacobi relation
\begin{equation} \label{main:6first} L_{(k)} (W^4_{(1)}W^{2n-2}) = W^4_{(1)}(L_{(k)} W^{2n-2}) +\sum_{i\geq 0} \binom{k}{i} (L_{(i)} W^4)_{(k+1-i)} W^{2n-2} \end{equation} shows that determining $L_{(k)} W^{2n}$ is equivalent to determining $W^4_{(k)} W^{2n-2}$. Imposing the Jacobi relation
\begin{equation} \label{main:6second}  W^{2n-4}_{(1)} (W^4_{(k)} W^4) =  W^4_{(k)} (W^{2n-4}_{(1)} W^4) + (W^{2n-4}_{(1)} W^4)_{(k)} W^4 + (W^{2n-4}_{(0)} W^4)_{(k+1)} W^4 \end{equation} shows that $W^4_{(k)} W^{2n-2}$, and hence $L_{(k)} W^{2n}$, are determined from inductive data together with $S_{2n+2,\ell}$ for $\ell >k$. Finally, imposing the Jacobi relation
\begin{equation} \label{main:6third} W^4_{(k)} (W^{2i}_{(1)} W^{2n-2i} )= W^{2i}_{(1)} (W^4_{(k)} W^{2n-2i}) + \sum_{r\geq 0} \binom{k}{r} (W^4_{(r)} W^{2i})_{(k+1-r)} W^{2n-2i} \end{equation} shows that $W^{2i}_{(k)} W^{2n+2-2i}$ can be expressed in terms of inductive data together with $S_{2n+2,\ell}$ for $\ell > k$. \end{proof} 

This process terminates after finitely many steps since all elements of $S_{2n+2,k}$ vanish for $k >2n+1$. Therefore we have proven the following.

\begin{thm} There exists a nonlinear conformal algebra $\cL^{\mathrm{ev}}(c,\lambda)$ over the ring $\mathbb{C}[c,\lambda]$ satisfying skew-symmetry, whose universal enveloping vertex algebra $\cW^{\mathrm{ev}}(c,\lambda)$ has the following properties.

\begin{enumerate}
\item $\cW^{\mathrm{ev}}(c,\lambda)$ has conformal weight grading $$\cW^{\mathrm{ev}}(c,\lambda) = \bigoplus_{n\geq 0} \cW^{\mathrm{ev}}(c,\lambda)[n],$$ and $\cW^{\mathrm{ev}}(c,\lambda)[0] \cong \mathbb{C}[c,\lambda]$. 
\item $\cW^{\mathrm{ev}}(c,\lambda)$ is strongly generated by $\{L,W^{2i}|\ i\geq 2\}$.

\item The OPE relations for $$L(z) W^{2i}(w),\qquad W^{2j}(z) W^{2k}(w),\qquad 2i\leq 12,\qquad 2j+2k \leq 14,$$ which are consequences of the Jacobi identities of type $(W^{2i}, W^{2j}, W^{2k})$ for $2i+2j+2k \leq 16$, all hold.

\item The additional Jacobi identities \eqref{main:1first}-\eqref{main:2first}, \eqref{main:2fourth}, \eqref{main:4first}-\eqref{main:6third}, which appear in the above lemmas, all hold as consequences of \eqref{deriv}-\eqref{ncw} alone.
\end{enumerate}
Moreover, $\cW^{\mathrm{ev}}(c,\lambda)$ is the unique initial object in the category of vertex algebras with the above properties. 
\end{thm}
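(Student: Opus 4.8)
The plan is to assemble the finite base computation of Step 1 together with the inductive Lemmas \ref{main:1}--\ref{main:6} into a single consistent specification of all the structure constants of a skew-symmetric nonlinear conformal algebra $\cL^{\mathrm{ev}}(c,\lambda)$, and then to invoke the De Sole--Kac correspondence \cite{DSKI} to pass from this conformal algebra to its universal enveloping vertex algebra. The argument is a double induction: an outer induction on the total weight $2n$, and within each weight a downward induction on the pole order $k$. First I would record the base case provided by Step 1: imposing \eqref{deriv}--\eqref{commutator} together with all Jacobi identities of type $(W^{2i},W^{2j},W^{2k})$ with $2i+2j+2k\leq 16$ forces, by the finite computation with \cite{T}, every product in $S_{2n,k}$ with $2n\leq 14$ to be a uniquely determined $\mathbb{C}[c,\lambda]$-linear combination of normally ordered monomials, where $c,\lambda$ are the two surviving parameters fixed by the normalization \eqref{deflambda}. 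This simultaneously establishes property (3) of the theorem and seeds the inductive hypothesis (1)--(3) of Step 2 at weight $2n=14$.

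Next I would run the inductive step. Assuming that all products of total weight $\leq 2n$ are known and satisfy the inductive hypothesis, the weight $2n+2$ products $S_{2n+2,k}$ are produced by a descending recursion on $k$: since $W^{2i}_{(k)}W^{2n+2-2i}=0$ for $k>2n+1$, Lemma \ref{main:6} expresses each $S_{2n+2,k}$ with $k>2$ in terms of the already-determined $S_{2n+2,\ell}$ with $\ell>k$; Lemma \ref{main:5} then fixes $S_{2n+2,2}$, Lemma \ref{main:4} fixes $S_{2n+2,1}$ (with Lemmas \ref{main:1}--\ref{main:2} pinning the leading coefficients $a_{2i,2j}$, $b_{2i,2j}$ and their independence of $c,\lambda$), and finally Lemma \ref{main:3} fixes the $k=0$ products. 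Because each of these lemmas exhibits the relevant product as an explicit consequence of \eqref{deriv}--\eqref{ncw} and the designated Jacobi relation applied to inductive data, the inductive hypothesis is reestablished at weight $2n+2$ and property (4) is verified along the way. The inner recursion terminates since only finitely many poles are nonzero, and skew-symmetry holds throughout because \eqref{commutator} is imposed at every stage; the output is therefore a well-defined, skew-symmetric, but possibly degenerate nonlinear conformal algebra $\cL^{\mathrm{ev}}(c,\lambda)$ over $\mathbb{C}[c,\lambda]$.

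Finally I would deduce existence, the grading, and initiality. The De Sole--Kac correspondence \cite{DSKI} applied to $\cL^{\mathrm{ev}}(c,\lambda)$ produces its universal enveloping vertex algebra $\cW^{\mathrm{ev}}(c,\lambda)$, which is by construction strongly generated by $\{L,W^{2i}|\ i\geq 2\}$ (property (2)); since every OPE is weight homogeneous, it is graded by conformal weight with weight-zero part spanned over $\mathbb{C}[c,\lambda]$ by the vacuum, giving $\cW^{\mathrm{ev}}(c,\lambda)[0]\cong\mathbb{C}[c,\lambda]$ (property (1)). For initiality, let $\cA$ be any vertex algebra satisfying (1)--(4). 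Its generators $L,W^{2i}$ satisfy exactly the seed OPEs of property (3) and exactly the Jacobi relations of property (4) that our recursion consumes; since that recursion has a unique solution, the full OPE algebra of $\cA$ coincides with that of $\cL^{\mathrm{ev}}(c,\lambda)$. The universal property of the enveloping vertex algebra then supplies a unique homomorphism $\cW^{\mathrm{ev}}(c,\lambda)\to\cA$ sending generators to generators, which is precisely the assertion that $\cW^{\mathrm{ev}}(c,\lambda)$ is the initial object of this category.

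I expect the genuine obstacle to lie in the base case rather than in the formal assembly. One must verify that the heavily over-determined system of low-weight Jacobi identities is consistent and cuts the parameter space down to exactly two dimensions. The subtle point, absent in the construction of $\cW(c,\lambda)$ in \cite{LI}, is that parameters which appear free at one stage are forced at the next: the weight $14$ parameter $d_{28}$ is constrained by the quadratic \eqref{eqn:d28quad} coming from the weight $14$ Jacobi identities, and the weight $16$ parameter $e$ is likewise determined. The correct normalization \eqref{deflambda} must be chosen so that the relevant discriminant is a nonzero perfect square and the algebra is defined over the polynomial ring $\mathbb{C}[c,\lambda]$; establishing this consistency is exactly what the Thielemans computation \cite{T} accomplishes, and it is the crux on which the entire recursion rests.
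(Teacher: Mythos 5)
Your proposal is correct and follows essentially the same route as the paper: the base case is the finite computer verification of Step 1 (including the resolution of $d_{28}$ via \eqref{eqn:d28quad}, the parameter $e$, and the normalization \eqref{deflambda}), the inductive step is the descending recursion on pole order through Lemmas \ref{main:1}--\ref{main:6} terminating because $S_{2n+2,k}=0$ for $k>2n+1$, and existence plus initiality then follow from the De Sole--Kac correspondence exactly as in the paper.
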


It is not yet apparent that all Jacobi identities of the form $(W^{2i}, W^{2j}, W^{2k})$ hold in $\cW^{\mathrm{ev}}(c,\lambda)$ as consequences of \eqref{deriv}-\eqref{ncw} alone, or equivalently, that $\cL^{\mathrm{ev}}(c,\lambda)$ is a nonlinear Lie conformal algebra and $\cW^{\mathrm{ev}}(c,\lambda)$ is freely generated.

\subsection{Step 3: Free generation of $\cW^{\mathrm{ev}}(c,\lambda)$} In order to prove that $\cW^{\mathrm{ev}}(c,\lambda)$ is freely generated, we need to consider certain simple quotients of $\cW^{\mathrm{ev}}(c,\lambda)$. First, let $$I \subseteq \mathbb{C}[c,\lambda] \cong \cW^{\mathrm{ev}}(c,\lambda)[0]$$ be an ideal, and let $I\cdot \cW^{\mathrm{ev}}(c,\lambda)$ denote the vertex algebra ideal generated by $I$. The quotient
\begin{equation}\label{eq:quotientbyi} \cW^{\mathrm{ev},I}(c,\lambda) = \cW^{\mathrm{ev},I}(c,\lambda) / I\cdot \cW^{\mathrm{ev}}(c,\lambda)\end{equation} 
has strong generators $\{L, W^{2i}|\ i\geq 2\}$ satisfying the same OPE relations as the corresponding generators of $\cW^{\mathrm{ev}}(c,\lambda)$ where all structure constants in $\mathbb{C}[c,\lambda]$ are replaced by their images in $\mathbb{C}[c,\lambda] / I$.

We now consider localizations of $\cW^{\mathrm{ev},I}(c,\lambda)$. Let $D\subseteq \mathbb{C}[c,\lambda]/I$ be a multiplicatively closed subset, and let $R = D^{-1}(\mathbb{C}[c,\lambda]/I)$ denote the localization of $\mathbb{C}[c,\lambda]/I$ along $D$. Then we have the localization of $\mathbb{C}[c,\lambda]/I$-modules
$$\cW_R^{\mathrm{ev},I}(c,\lambda) = R \otimes_{\mathbb{C}[c,\lambda]/I} \cW^{\mathrm{ev},I}(c,\lambda),$$ which is a vertex algebra over $R$.

\begin{thm} \label{thm:simplequotient} Let $I$, $D$, and $R$ be as above, and let $\cW$ be a simple vertex algebra over $R$ with the following properties.
\begin{enumerate}
\item $\cW$ is generated by a Virasoro field $\tilde{L}$ of central charge $c$ and a weight $4$ primary field $\tilde{W}^4$.
\item Setting $\tilde{W}^{2i}  = \tilde{W}^4_{(1)} \tilde{W}^{2i-2}$ for $2i\geq 6$, the OPE relations for $\tilde{L}(z) \tilde{W}^{2i}(w)$ and $\tilde{W}^{2j}(z) \tilde{W}^{2k}(w)$ for $2i\leq 12$ and $2j+2k \leq 14$ are the same as in $\cW^{\mathrm{ev}}(c,\lambda)$ if the structure constants are replaced with their images in $R$. 
\end{enumerate}
Then $\cW$ is the simple quotient of $\cW^{\mathrm{ev},I}_R(c,\lambda)$ by its maximal graded ideal $\cI$.
\end{thm}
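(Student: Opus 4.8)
The plan is to construct a grading-preserving surjection of vertex algebras over $R$,
$$\phi\colon \cW^{\mathrm{ev},I}_R(c,\lambda) \ra \cW,\qquad \phi(L)=\tilde L,\quad \phi(W^{2i})=\tilde W^{2i},$$
and then to identify its kernel with $\cI$. Granting such a $\phi$, the easy parts are immediate. Surjectivity holds because $\cW$ is generated by $\tilde L$ and $\tilde W^4$, hence strongly generated by $\{\tilde L,\tilde W^{2i}\}$ since $\tilde W^{2i}=\tilde W^4_{(1)}\tilde W^{2i-2}$, and all of these lie in the image of $\phi$. Since $\phi(L)=\tilde L$ intertwines the two Virasoro zero-modes, $\phi$ preserves the weight grading and $\ker\phi$ is a graded ideal; as $\phi$ restricts to the identity on $\cW^{\mathrm{ev},I}_R(c,\lambda)[0]\cong R\cong \cW[0]$, we have $(\ker\phi)[0]=0$, so $\ker\phi$ is a proper graded ideal and therefore $\ker\phi\subseteq\cI$ by maximality of $\cI$.

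The crux is the construction of $\phi$, and this is where the full inductive machinery of Step~2 is used. The claim is that the entire OPE algebra of $\{\tilde L,\tilde W^{2i}\}$ inside $\cW$ coincides, term by term, with that of $\{L,W^{2i}\}$ inside $\cW^{\mathrm{ev},I}_R(c,\lambda)$. By hypothesis (2), the OPEs $\tilde L(z)\tilde W^{2i}(w)$ and $\tilde W^{2j}(z)\tilde W^{2k}(w)$ for $2i\leq 12$, $2j+2k\leq 14$ agree with those of $\cW^{\mathrm{ev}}(c,\lambda)$ after mapping the structure constants into $R$; this is exactly the base data produced in Step~1. Because $\cW$ is a genuine vertex algebra, every Jacobi identity \eqref{jacobi} holds among its fields, so in particular the specific Jacobi relations \eqref{main:1first}, \eqref{main:2fourth}, \eqref{main:4first}--\eqref{main:6third} invoked in Lemmas \ref{main:1}--\ref{main:6} hold for $\tilde L,\tilde W^{2i}$. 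Running the induction of Step~2 verbatim inside $\cW$, these relations together with the matching base data determine every higher product $\tilde W^{2i}_{(k)}\tilde W^{2j}$ by the same universal formulas in $c,\lambda,n,k$ that determine $W^{2i}_{(k)}W^{2j}$; the structure constants are computed by an identical deterministic procedure from identical inputs, hence are the same elements of $R$. Thus $\tilde L,\tilde W^{2i}$ satisfy all the defining OPE relations of $\cL^{\mathrm{ev}}(c,\lambda)$ specialized to $R$, and by the initial/universal property of $\cW^{\mathrm{ev}}(c,\lambda)$ established at the end of Step~2 together with the De Sole--Kac correspondence \cite{DSKI}, the assignment on generators extends uniquely to the desired homomorphism $\phi$. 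Concretely, $\cW^{\mathrm{ev},I}_R(c,\lambda)$ is spanned by normally ordered monomials in its generators, any linear relation among them is forced by these OPEs and the quasi-associativity identities \eqref{deriv}--\eqref{ncw}, and all of these hold among the $\tilde W^{2i}$, so $\phi$ is well defined.

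It remains to upgrade $\ker\phi\subseteq\cI$ to equality. We have a surjection $\cW\cong \cW^{\mathrm{ev},I}_R(c,\lambda)/\ker\phi\ra \cW^{\mathrm{ev},I}_R(c,\lambda)/\cI$ whose kernel is the graded ideal $\cI/\ker\phi$; it has trivial weight-zero component and is proper, since the target is nonzero ($\mathbf 1\notin\cI$). But $\cW$ is simple over $R$, so by Proposition \ref{prop:shap} it has no nonzero proper graded ideal (equivalently, no nonzero singular vectors). Hence $\cI/\ker\phi=0$, so $\ker\phi=\cI$ and $\cW\cong\cW^{\mathrm{ev},I}_R(c,\lambda)/\cI$ is the simple graded quotient, as claimed.

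The main obstacle is the OPE-matching step of the second paragraph. One must argue that imposing only the restricted subset of Jacobi identities used in Lemmas \ref{main:1}--\ref{main:6}, which suffices to pin down the structure constants of $\cW^{\mathrm{ev},I}_R(c,\lambda)$, also pins them down in $\cW$ to the same values; here the fact that \emph{every} Jacobi identity holds automatically in the honest vertex algebra $\cW$ is precisely what removes the ``null field'' indeterminacy present in the abstract construction and forces the two OPE algebras to agree.
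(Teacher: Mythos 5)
Your proposal is correct and follows essentially the same route as the paper: the key step in both is that the hypothesized low-weight OPE matching is equivalent to the weight $\le 16$ Jacobi identities holding, and since all Jacobi identities hold automatically in the honest vertex algebra $\cW$, the Step 2 induction forces every higher structure constant of $\{\tilde L,\tilde W^{2i}\}$ to agree with that of $\cW^{\mathrm{ev},I}_R(c,\lambda)$. The paper then concludes by uniqueness of the simple graded object with this strong generating set and OPE algebra, which is just a compressed version of your explicit surjection-and-kernel argument.
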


\begin{proof} The assumption that $\{\tilde{L}, \tilde{W}^{2i}|\ i\geq 2\}$ satisfy the above OPE relations is equivalent to the statement that the Jacobi relations of type $(\tilde{W}^{2i}, \tilde{W}^{2j}, \tilde{W}^{2k})$ for $2i+2j+2k \leq 16$ hold in the corresponding nonlinear conformal algebra, which is possibly degenerate. Then all OPE relations among the generators $\{L, W^{2i}|\ i\geq 2\}$ of $\cW^{\mathrm{ev},I}_R(c,\lambda)$ must also hold among the fields $\{\tilde{L}, \tilde{W}^{2i}|\ i \geq 2\}$, since they are formal consequences of these OPE relations together with Jacobi identities, which hold in $\cW$. It follows that $\{\tilde{L},\tilde{W}^{2i}|\ i\geq 2\}$ closes under OPE and strongly generate a vertex subalgebra $\cW' \subseteq \cW$, which must coincide with $\cW$ since $\cW$ is assumed to be generated by $\{\tilde{L},\tilde{W}^4\}$. So $\cW$ has the same strong generating set and OPE algebra as $\cW^{\mathrm{ev},I}_R(c, \lambda)$. Since $\cW$ is simple and the category of vertex algebras over $R$ with this strong generating set and OPE algebra has a unique simple graded object, $\cW$ must be the simple quotient of $\cW^{\mathrm{ev},I}_R(c, \lambda)$ by its maximal graded ideal. \end{proof}

\subsection{Some quotients of $\cW^{\mathrm{ev}}(c,\lambda)$}
We now consider a family of vertex algebras that arise as quotients of $\cW^{\mathrm{ev}}(c,\lambda)$ in this way. Let 
\begin{equation} \label{eq:ckn} \cC^k(n) = \text{Com}(V^k(\gs\gp_{2n}), V^k(\go\gs\gp(1|2n)),\end{equation} that is, the coset of the affine vertex algebra of $\gs\gp_{2n}$ inside the affine vertex superalgebra of $\go\gs\gp(1|2n)$. We shall regard $\cC^k(n)$ as a one-parameter vertex algebra, defined over a localization of the ring $\mathbb{C}[k]$. It has central charge 
$$c(k) = -\frac{k n (3 + 4 k + 2 n)}{(1 + k + n) (1 + 2 k + 2 n)},$$ 
and has a well-defined limit 
$$\lim_{k\ra \infty} \cC^k(n) \cong \cA(n)^{\text{Sp}_{2n}},$$ where $\cA(n)$ denotes the rank $n$ symplectic fermion algebra and $\cA(n)^{\text{Sp}_{2n}}$ denotes the orbifold under the full automorphism group $\text{Sp}_{2n}$; see Example 7.3 of \cite{CLi}.  This orbifold has the following properties.
\begin{enumerate}
\item $\cA(n)^{\text{Sp}_{2n}}$ is generated by the weights $2$ and $4$ fields. This follows from Theorem 3.2 of \cite{CLII}. Note that the weight $4$ field in \cite{CLII} is not primary with respect to $L$, but this result still holds if we replace it by the unique primary field. 

\item $\cA(n)^{\text{Sp}_{2n}}$ is freely generated of type $\cW(2,4,\dots, 2n)$ and has graded character 
\begin{equation} \chi(\cA(n)^{\text{Sp}_{2n}},q) = \sum_{n\geq 0} \text{dim}( \cA(n)^{\text{Sp}_{2n}}[n]) q^n =  \prod_{i=1}^n \prod_{k\geq 0} \frac{1}{1-q^{2i +k}}.\end{equation} This appears as Corollary 5.7 of \cite{CLII}.
\item $\cA(n)^{\text{Sp}_{2n}}$ is a simple vertex algebra; see \cite{KR}, Theorem 1.1 and Remark 1.1, or \cite{DLM}.
 \end{enumerate}
 
\begin{lemma} \label{lem:ckn} As a one-parameter vertex algebra, $\cC^k(n)$ inherits these properties; it is generated by the weights $2$ and $4$ fields, freely generated of type $\cW(2,4,\dots, 2n)$, and simple. Equivalently, $\cC^k(n)$ inherits these properties for generic values of $k \in \mathbb{C}$.
\end{lemma}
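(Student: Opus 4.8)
The plan is to realize $\cC^k(n)$ as a deformable family over a localization $R$ of $\mathbb{C}[k]$ in the sense of \cite{CLi} and to transfer each of the three properties from its limit $\cA(n)^{\mathrm{Sp}_{2n}}$ to the generic member. First I would record that every weight space $\cC^k(n)[m]$ is a free $R$-module of some finite rank $d_m$, that all OPE structure constants lie in $R$ and possess well-defined limits as $k\to\infty$, and that the limit functor carries $\cC^k(n)$ to $\cA(n)^{\mathrm{Sp}_{2n}}$ while preserving the ranks $d_m$; this is exactly the situation of Example 7.3 of \cite{CLi}.

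Next I would track generators across the limit. Let $L=W^2$ be the Virasoro field and $W^4$ the unique weight $4$ primary field of $\cC^k(n)$; by property (1) these specialize as $k\to\infty$ to the generators of $\cA(n)^{\mathrm{Sp}_{2n}}$. Putting $W^{2i}=W^4_{(1)}W^{2i-2}$, the fields $\{L,W^4\}$ generate a subalgebra $\cC'\subseteq\cC^k(n)$ whose weight spaces have rank at most $d_m$, with equality at $k=\infty$ since the limits of these fields strongly generate the orbifold. Because $\rank_R\cC'[m]$ and $d_m$ are constant in $k$ and agree at the limit, they agree for generic $k$, so $\cC'=\cC^k(n)$ and $\{L,W^{2i}\mid i\geq 2\}$ strongly generates $\cC^k(n)$. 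Strong generation by fields in weights $2,4,\dots,2n$ then bounds $\rank_R\cC^k(n)[m]$ above by the coefficient of $q^m$ in $\prod_{i=1}^{n}\prod_{j\geq 0}\frac{1}{1-q^{2i+j}}$; property (2) gives equality at the limit, and rank constancy promotes it to equality for generic $k$, which is precisely the statement that $\cC^k(n)$ is freely generated of type $\cW(2,4,\dots,2n)$.

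For simplicity I would invoke Proposition \ref{prop:shap}. Since the weight-one spaces of both $\cC^k(n)$ and $\cA(n)^{\mathrm{Sp}_{2n}}$ vanish, the hypothesis $L_1\cV[1]=0$ is automatic, so simplicity is equivalent to nonvanishing of the level-$m$ Shapovalov determinants $\mathrm{det}_m$. The entries of the Shapovalov matrix are polynomial expressions in the structure constants, hence lie in $R$ and limit to the corresponding entries for the orbifold; as the determinant is continuous under this specialization, $\mathrm{det}_m\big|_{k=\infty}$ is the Shapovalov determinant of $\cA(n)^{\mathrm{Sp}_{2n}}$, which is nonzero by property (3) and Proposition \ref{prop:shap}. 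Therefore $\mathrm{det}_m\neq 0$ in $R$ for all $m$, and after a further localization the Shapovalov form is nondegenerate, so $\cC^k(n)$ is simple for generic $k$.

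The main obstacle is less any individual computation than the careful handling of the deformable-family limit. One must confirm that the lifts $L$ and $W^4$ really do specialize to the generators used in properties (1)--(3)---in particular that the \emph{primary} weight $4$ field, rather than the non-primary generator appearing in \cite{CLII}, is the one to follow---and that constancy of $\rank_R\cC^k(n)[m]$ genuinely forces the generation, character, and nondegeneracy equalities to propagate from the single point $k=\infty$ to a Zariski-dense set of $k$. Once the formalism of \cite{CLi} supplies the limit and its rank-preservation, all three assertions follow uniformly.
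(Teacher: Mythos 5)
Your proposal is correct and follows essentially the same route as the paper: the paper's proof simply defers to Corollary 8.6 of \cite{CLI}, which is exactly the deformable-family argument you spell out — transferring generation, free generation (via the character bound from strong generation plus rank constancy), and simplicity (via nonvanishing of the Shapovalov determinants at the limit $k\to\infty$) from $\cA(n)^{\mathrm{Sp}_{2n}}$ to generic $k$. You have in effect reconstructed the cited argument in full detail, including the correct caveat about following the primary weight $4$ field rather than the non-primary generator of \cite{CLII}.
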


\begin{proof}
The argument is the same to the proof of Corollary 8.6 of \cite{CLI}.
\end{proof}

As above, set $W^{2i} = W^4_{(1)} W^{2i-2}$ for $i\geq 3$. For $n\geq 7$, it follows from Weyl's second fundamental theorem of invariant theory for the standard representation of $\gs\gp_{2n}$ \cite{We}, that among the generators $\{L,W^{2i}|\ 2 \leq  i\leq 7\}$ there can be no normally ordered polynomial relations. Therefore for $n\geq 7$, and $k$ generic, all Jacobi identities of type $(W^{2r}, W^{2s}, W^{2t})$ among the generators $\{L, W^{2i}|\ i\geq 2\}$ of $\cC^k(n)$ must hold as consequences of \eqref{deriv}-\eqref{ncw} alone, for $2r+2s+2t \leq 16$. For $2\leq n \leq 6$, it is straightforward to verify by computer that among the generators $\{L, W^{2i}|\ i\geq 2\}$ of $\cA(n)^{\text{Sp}_{2n}}$, all Jacobi relations of type $(W^{2r}, W^{2s}, W^{2t})$ hold as consequences of \eqref{deriv}-\eqref{ncw} alone, for $2r+2s+2t \leq 16$. This property is then inherited by $\cC^k(n)$ for $2\leq n \leq 6$ and $k$ generic, since after suitably rescaling, a nonvanishing Jacobi relation among the corresponding fields in $\cC^k(n)$ would remain nontrivial in the limit as $k\ra \infty$.

\begin{thm} \label{wprinquot} For all $n\geq 2$, the one-parameter vertex algebra $\cC^k(n)$ is the simple quotient of $\cW^{\mathrm{ev},I}_R(c,\lambda)$ for some prime ideal $I \subseteq \mathbb{C}[c,\lambda]$ and some localization $R$ of $\mathbb{C}[c,\lambda] / I$. The maximal proper graded ideal $\cI \subseteq \cW^{\mathrm{ev},I}_R(c,\lambda)$ is generated by a singular vector of weight $2n+2$ of the form
\begin{equation} \label{sing:genfirstproof} W^{2n+2} - P(L, W^4, \dots, W^{2n}),\end{equation} where $P$ is a normally ordered polynomial in $L, W^4,\dots, W^{2n}$, and their derivatives.
\end{thm}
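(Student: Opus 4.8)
The plan is to produce the ideal $I$ and localization $R$ so that Theorem \ref{thm:simplequotient} applies to $\cW = \cC^k(n)$, and then to analyze the resulting maximal graded ideal $\cI$. By Lemma \ref{lem:ckn}, for generic $k$ the algebra $\cC^k(n)$ is simple, is generated by its weight $2$ and weight $4$ fields, and is freely generated of type $\cW(2,4,\dots,2n)$. Let $\tilde{L}$ denote its Virasoro field and $\tilde{W}^4$ the unique weight $4$ primary up to scale, normalized so that $\tilde{W}^4_{(7)}\tilde{W}^4\neq 0$, and set $\tilde{W}^{2i}=\tilde{W}^4_{(1)}\tilde{W}^{2i-2}$. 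The discussion preceding the theorem shows that all Jacobi identities of type $(W^{2r},W^{2s},W^{2t})$ with $2r+2s+2t\leq 16$ hold among these fields as consequences of \eqref{deriv}--\eqref{ncw} alone: for $n\geq 7$ this follows from Weyl's second fundamental theorem (there being no relations among $\{\tilde{L},\tilde{W}^4,\dots,\tilde{W}^{14}\}$), and for $2\leq n\leq 6$ from a direct check in $\cA(n)^{\text{Sp}_{2n}}$ that survives the limit $k\ra\infty$. Since Step 1 shows that any OPE algebra subject to these Jacobi identities has its structure constants uniquely determined by $c$ and $\lambda$, the OPE relations of $\cC^k(n)$ in the relevant range must coincide with those of $\cW^{\mathrm{ev}}(c,\lambda)$ after specializing $c$ to the central charge $c(k)$ and $\lambda$ to the value $\lambda(k)$ determined by the structure constants of $\cC^k(n)$.

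The assignment $k\mapsto(c(k),\lambda(k))$ is then a rational map whose image is an irreducible curve in $\mathbb{C}^2$; we take $I=(p)\subseteq\mathbb{C}[c,\lambda]$ to be the prime ideal of its Zariski closure and $R$ a localization of $\mathbb{C}[c,\lambda]/I$ in which the denominators occurring in the parametrization and in the decoupling relations below are inverted. With this choice the images of the structure constants in $R$ agree with those of $\cC^k(n)$, so hypotheses (1) and (2) of Theorem \ref{thm:simplequotient} are met, and the theorem identifies $\cC^k(n)$ with the simple quotient $\cW^{\mathrm{ev},I}_R(c,\lambda)/\cI$. This proves the first assertion.

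For the singular vector, recall that $\cC^k(n)$ is of type $\cW(2,4,\dots,2n)$, so $\tilde{W}^{2n+2}=\tilde{W}^4_{(1)}\tilde{W}^{2n}$ is not a new strong generator but a normally ordered polynomial $P(\tilde{L},\tilde{W}^4,\dots,\tilde{W}^{2n})$ in the fields of weight at most $2n$ and their derivatives. Lifting this identity, the element $\omega=W^{2n+2}-P(L,W^4,\dots,W^{2n})$ lies in the kernel $\cI$ of the projection onto $\cC^k(n)$ and is therefore a singular vector of weight $2n+2$ of the asserted form \eqref{sing:genfirstproof}; passing to the localization $R$ is exactly what clears denominators so that the coefficient of $W^{2n+2}$ equals $1$. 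It remains to prove $\cI=\langle\omega\rangle$. One inclusion is immediate. For the other, I would decouple all higher generators modulo $\langle\omega\rangle$: using $a_{4,2n+2}=1$ and $W^{2n+4}=W^4_{(1)}W^{2n+2}$, the element $W^4_{(1)}\omega\in\langle\omega\rangle$ gives $W^{2n+4}\equiv W^4_{(1)}P$, and since $:A\omega:\in\langle\omega\rangle$ for any field $A$, one may substitute $W^{2n+2}\equiv P$ to rewrite the right-hand side as a normally ordered polynomial in $L,W^4,\dots,W^{2n}$. Iterating produces relations $W^{2m}=P_{2m}(L,W^4,\dots,W^{2n})$ for every $m>n$, so $\cW^{\mathrm{ev},I}_R(c,\lambda)/\langle\omega\rangle$ is strongly generated by $\{L,W^4,\dots,W^{2n}\}$ and its graded character is bounded above by \eqref{gradedchar:ua} for type $\cW(2,4,\dots,2n)$. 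Since this quotient surjects onto $\cC^k(n)$, which by Lemma \ref{lem:ckn} is freely generated of precisely this type and hence attains that character, the surjection must be an isomorphism; therefore $\langle\omega\rangle=\cI$.

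The main obstacle is this final identification $\cI=\langle\omega\rangle$. The decoupling of the higher $W^{2m}$ is clean thanks to the normalization $a_{4,2j}=1$, but the conclusion rests entirely on the character comparison, which requires both an upper bound from strong generation and the matching lower bound from the \emph{free} generation of $\cC^k(n)$. The latter is the nontrivial input: it is supplied by Lemma \ref{lem:ckn} via the symplectic-fermion realization and Weyl's invariant theory, and without it one would obtain only an inclusion rather than the equality of ideals.
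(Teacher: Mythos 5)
Your proposal is correct and follows essentially the same route as the paper: apply Theorem \ref{thm:simplequotient} via the rational parametrization $k\mapsto(c(k),\lambda(k))$ (the paper phrases this as eliminating $k$ from an ideal in $\mathbb{C}[c,\lambda,k]$), localize to put the weight-$(2n+2)$ singular vector in the form $W^{2n+2}-P$, and then identify $\cI$ with the ideal it generates by comparing the quotient (which is of type $\cW(2,4,\dots,2n)$ by the decoupling argument of Theorem \ref{decoup}) against the freely generated $\cC^k(n)$. No gaps.
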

 
\begin{proof} Since $\cC^k(n)$ is generated by $L$ and $W^{4}$, strongly generated by $\{L, W^{2i}|\ i\geq 2\}$, all Jacobi identities of type $(W^{2r}, W^{2s}, W^{2t})$ hold for $2r+2s+2t \leq 16$, and all structure constants are rational functions of $k$, it follows from Theorem \ref{thm:simplequotient} that there exists some rational function $$\lambda(k) = \frac{f(k)}{g(k)}$$ such that the OPE relations of $L(z) W^{2r}(w)$ and $W^{2s}(z) W^{2t}(w)$ for $2r\leq 12$ and $2s+2t \leq 14$ are satisfied if $c$ and $\lambda$ are replaced by $c(k)$ and $\lambda(k)$, respectively. 

Letting $J \subseteq \mathbb{C}[c,\lambda,k]$ be the ideal generated by $$c(1 + k + n) (1 + 2 k + 2 n) + k n (3 + 4 k + 2 n),\qquad g(k) \lambda - f(k),$$ we can eliminate $k$ to obtain a prime ideal $I \subseteq \mathbb{C}[c,\lambda]$ such that some localization $R$ of $\mathbb{C}[c,\lambda] /I$ is isomorphic to some localization $D^{-1} \mathbb{C}[k]$. Here $D$ is a multiplicatively closed subset of $\mathbb{C}[k]$ containing  $(1 + k + n)$, $(1 + 2 k + 2 n)$ and all roots $g(k)$.

Since $\cC^k(n)$ is of type $\cW(2,4,\dots, 2n)$, there must be a singular vector in $\cW^{\mathrm{ev},I}_R(c,\lambda)$ of weight $2n+2$ such that the coefficient of $W^{2n+2}$ is nonzero. If this coefficient is not invertible in $R$, we may localize $R$ further (without changing notation) so it becomes invertible, and the singular vector has the form \eqref{sing:genfirstproof}. Since $\cC^k(n)$ is simple as a vertex algebra over $R$, it must be the simple quotient $\cW^{\mathrm{ev},I}_{R}(c,\lambda)/ \cI$, by Theorem \ref{thm:simplequotient}. Here $\cI$ is the maximal proper graded ideal of $\cW^{\mathrm{ev},I}_{R}(c,\lambda)$. Finally, we need to show that \eqref{sing:genfirstproof} generates $\cI$.

Let $\cI' \subseteq \cI$ be the ideal in $\cW^{\mathrm{ev},I}_R(c,\lambda)$ generated by \eqref{sing:genfirstproof}. Since $\cC^k(n) \cong\cW^{\mathrm{ev},I}_R(c,\lambda)/ \cI$, $\cC^k(n)$ is also a quotient of $\cW^{\mathrm{ev},I}_R(c,\lambda) / \cI'$, and $\cW^{\mathrm{ev},I}_R(c,\lambda)/ \cI'$ is of type $\cW(2,4,\dots, 2n)$; see Theorem \ref{decoup} below. Since $\cC^k(n)$ is freely generated, there can be no more relations in $\cW^{\mathrm{ev},I}_R(c,\lambda)/ \cI$ than in $\cW^{\mathrm{ev},I}_R(c,\lambda) / \cI'$, so $\cI'  = \cI$. \end{proof}

\begin{cor} \label{cor:freegeneration} All Jacobi identities of type $(W^{2i}, W^{2j}, W^{2k})$ hold as consequences of \eqref{deriv}-\eqref{ncw} alone in $\cL^{\mathrm{ev}}(c,\lambda)$, so $\cL^{\mathrm{ev}}(c,\lambda)$ is a nonlinear Lie conformal algebra with generators $\{L, W^{2i}|\ i\geq 2\}$. Equivalently, $\cW^{\mathrm{ev}}(c,\lambda)$ is freely generated by $\{L, W^{2i}|\ i\geq 2\}$ and has graded character
\begin{equation} \label{grchar} \chi(\cW^{\mathrm{ev}}(c,\lambda), q) = \sum_{n\geq 0} \text{rank}_{\mathbb{C}[c,\lambda]}( \cW^{\mathrm{ev}}(c,\lambda)[n]) q^n =  \prod_{i\geq 1} \prod_{k\geq 0} \frac{1}{1-q^{2i +k}}.\end{equation} For any prime ideal $I \subseteq \mathbb{C}[c,\lambda]$, $\cW^{\mathrm{ev},I}(c,\lambda)$ is freely generated by $\{L, W^{2i}|\ i\geq 2\}$ as a vertex algebra over $\mathbb{C}[c,\lambda] / I$, and
\begin{equation} \label{grcharq} \chi(\cW^{\mathrm{ev},I}(c,\lambda), q) = \sum_{n\geq 0} \text{rank}_{\mathbb{C}[c,\lambda]/I}( \cW^{\mathrm{ev},I}(c,\lambda)[n]) q^n = \prod_{i\geq 1} \prod_{k\geq 0} \frac{1}{1-q^{2i +k}}.\end{equation} 
For any localization $R = D^{-1}(\mathbb{C}[c,\lambda]/I)$ along a multiplicatively closed set $D \subseteq \mathbb{C}[c,\lambda]/I$, $\cW^{\mathrm{ev},I}_R(c,\lambda)$ is freely generated by $\{L, W^{2i} |\ i\geq 2\}$ and \begin{equation} \label{grcharql} \chi(\cW^{\mathrm{ev},I}_R(c,\lambda), q) = \sum_{n\geq 0} \text{rank}_{R}( \cW^{\mathrm{ev},I}_R(c,\lambda)[n]) q^n =\prod_{i\geq 1} \prod_{k\geq 0} \frac{1}{1-q^{2i +k}}.\end{equation} 
\end{cor}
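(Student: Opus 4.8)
The plan is to establish free generation of $\cW^{\mathrm{ev}}(c,\lambda)$ over $\mathbb{C}[c,\lambda]$ first, and then obtain the statements for $\cW^{\mathrm{ev},I}(c,\lambda)$ and $\cW^{\mathrm{ev},I}_R(c,\lambda)$ by flat base change. Because $\{L,W^{2i}\mid i\geq 2\}$ strongly generates $\cW^{\mathrm{ev}}(c,\lambda)$, the normally ordered monomials \eqref{freegen} span each weight space $\cW^{\mathrm{ev}}(c,\lambda)[m]$ over $\mathbb{C}[c,\lambda]$, so free generation amounts to their $\mathbb{C}[c,\lambda]$-linear independence in every weight. By the De Sole--Kac correspondence \cite{DSKI}, this independence is equivalent to the vanishing of all null fields, i.e. to all Jacobi identities of type $(W^{2i},W^{2j},W^{2k})$ being consequences of \eqref{deriv}--\eqref{ncw} alone, and hence to $\cL^{\mathrm{ev}}(c,\lambda)$ being a nonlinear Lie conformal algebra. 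Thus it suffices to prove independence, for which I would exploit the family of quotients $\cC^k(n)$ produced by Theorem \ref{wprinquot}.

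Fix $n\geq 2$. By Theorem \ref{wprinquot}, $\cC^k(n)\cong \cW^{\mathrm{ev},I_n}_{R_n}(c,\lambda)/\cI_n$ for a prime ideal $I_n\subseteq \mathbb{C}[c,\lambda]$ and a localization $R_n$ of $\mathbb{C}[c,\lambda]/I_n$, where $\cI_n$ is generated by a singular vector of weight $2n+2$. For any weight $m\leq 2n+1$, every generator of weight at most $m$ lies among $L,W^4,\dots,W^{2n}$, so the weight-$m$ monomials \eqref{freegen} are exactly the weight-$m$ monomials in the free generators of $\cC^k(n)$; since $\cC^k(n)$ is freely generated of type $\cW(2,4,\dots,2n)$ by Lemma \ref{lem:ckn}, their images form an $R_n$-basis of $\cC^k(n)[m]$. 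The surjection $\cW^{\mathrm{ev},I_n}_{R_n}(c,\lambda)[m]\to \cC^k(n)[m]$ therefore carries the spanning set of monomials onto a basis and must be an isomorphism; in particular $\cI_n[m]=0$ and the monomials are $R_n$-linearly independent in $\cW^{\mathrm{ev},I_n}_{R_n}(c,\lambda)[m]$, for every $m\leq 2n+1$. Consequently $I_n$ has level $2n+2$ in the Shapovalov spectrum.

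I would then argue by contradiction. Suppose $\sum_i p_i(c,\lambda)\,M_i=0$ is a nontrivial relation among the weight-$m$ monomials $M_i$ of $\cW^{\mathrm{ev}}(c,\lambda)[m]$, with $p_i\in\mathbb{C}[c,\lambda]$. For every $n$ with $2n+1\geq m$, applying $-\otimes_{\mathbb{C}[c,\lambda]}R_n$ gives $\sum_i \bar p_i\,M_i=0$ in $\cW^{\mathrm{ev},I_n}_{R_n}(c,\lambda)[m]$; by the preceding paragraph the $M_i$ are $R_n$-independent, so each image $\bar p_i$ vanishes in the domain $R_n$, whence $p_i\in I_n$, i.e. $p_i$ vanishes on the truncation curve $V(I_n)\subseteq \mathbb{C}^2$. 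Since $I_n$ has level $2n+2$, the primes $I_n$ are pairwise distinct, so $\{V(I_n)\mid 2n+1\geq m\}$ is an infinite family of distinct irreducible plane curves, whose union is Zariski dense in $\mathbb{C}^2$; a nonzero polynomial cannot vanish on a dense set, forcing every $p_i=0$. This contradiction establishes independence in all weights, hence the free generation of $\cW^{\mathrm{ev}}(c,\lambda)$ and the character \eqref{grchar}. I expect the main obstacle to be precisely this last passage — from independence on each individual truncation curve to independence over the full two-parameter ring — which hinges on knowing that the curves $V(I_n)$ are genuinely distinct, a fact I would extract from their distinct Shapovalov levels $2n+2$.

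Finally, free generation means $\cW^{\mathrm{ev}}(c,\lambda)$ is a free $\mathbb{C}[c,\lambda]$-module with the monomials \eqref{freegen} as a basis. For any ideal $I$, the identification $\cW^{\mathrm{ev},I}(c,\lambda)=\cW^{\mathrm{ev}}(c,\lambda)\otimes_{\mathbb{C}[c,\lambda]}\mathbb{C}[c,\lambda]/I$ shows that base change preserves this basis, and likewise $\cW^{\mathrm{ev},I}_R(c,\lambda)=\cW^{\mathrm{ev}}(c,\lambda)\otimes_{\mathbb{C}[c,\lambda]}R$ for a localization $R$; thus both are freely generated by $\{L,W^{2i}\mid i\geq 2\}$ over the respective rings, with the same graded character, yielding \eqref{grcharq} and \eqref{grcharql}.
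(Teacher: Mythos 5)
Your argument is correct and rests on exactly the same inputs as the paper's proof, namely Theorem \ref{wprinquot} and Lemma \ref{lem:ckn} (the realization of the freely generated algebras $\cC^k(n)$ as quotients of $\cW^{\mathrm{ev},I_n}_{R_n}(c,\lambda)$), but the endgame is genuinely different. The paper fixes a putative null vector of weight $2N$ and uses only the \emph{single} quotient $\cC^k(N)$: the null vector would force $\mathrm{rank}_{\mathbb{C}[c,\lambda]}\,\cW^{\mathrm{ev}}(c,\lambda)[2N]$ below the free prediction, and this deficiency would persist in the quotient and its localization, contradicting free generation of $\cC^k(N)$ through weight $2N+1$. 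That version implicitly requires the null vector to survive base change to $\mathbb{C}[c,\lambda]/I_N$, i.e.\ that its coefficients, after dividing out their gcd, do not all lie in $I_N$ --- a normalization the paper makes explicit only in the proof of Corollary \ref{cor:simplicity}. You instead push a fixed weight-$m$ relation through \emph{all} the curves $V(I_n)$ with $2n+1\geq m$ and conclude by Zariski density, which sidesteps that normalization entirely; the price is that you must know the ideals $I_n$ are pairwise distinct. Your justification via Shapovalov levels is right but terse: from $\cI_n[m]=0$ for $m\leq 2n+1$ you get $p_n\nmid \mathrm{det}_m$ in that range (via Proposition \ref{prop:shap} and simplicity of $\cC^k(n)$), but you should also record that $p_n\mid \mathrm{det}_{2n+2}$, which holds because the weight-$(2n+2)$ singular vector of Theorem \ref{wprinquot} lies in the radical of $\langle\,,\rangle_{2n+2}$ over the domain $R_n$ and therefore annihilates the Gram determinant of the monomial spanning set. (Alternatively, distinctness already follows from the weaker observation that $\cW^{\mathrm{ev},I_n}$ acquires a singular vector in weight $2n+2$ while, by your step for $n'>n$, $\cW^{\mathrm{ev},I_{n'}}$ has none in weights $\leq 2n'+1\geq 2n+2$.) With that sentence supplied, the proof is complete; the base-change statements at the end are routine and agree with the paper.
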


\begin{proof} If some Jacobi identity of type $(W^{2i}, W^{2j}, W^{2k})$ does not hold as a consequence of \eqref{deriv}-\eqref{ncw}, there would be a null vector of weight $2N$ in $\cW^{\mathrm{ev}}(c,\lambda)$ for some $N$. Then $\text{rank}_{\mathbb{C}[c,\lambda]}(\cW^{\mathrm{ev}}(c,\lambda)[2N])$ would be smaller than that given by \eqref{grchar}, and the same would hold in any quotient of $\cW^{\mathrm{ev}}(c,\lambda)[2N]$, as well as any localization of such a quotient. But since $\cC^k(N)$ is a localization of such a quotient and is freely generated of type $\cW(2,4,\dots, 2N)$, this is impossible. 
\end{proof}

\begin{cor} \label{cor:simplicity} $\cW^{\mathrm{ev}}(c,\lambda)$ is a simple vertex algebra.
\end{cor}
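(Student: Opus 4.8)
The plan is to deduce simplicity from the non-vanishing of all the Shapovalov determinants, and to certify that non-vanishing by specializing to the simple truncations $\cC^k(N)$ already in hand. By Corollary \ref{cor:freegeneration} every weight space $\cW^{\mathrm{ev}}(c,\lambda)[n]$ is a free $\mathbb{C}[c,\lambda]$-module of finite rank, and $\cW^{\mathrm{ev}}(c,\lambda)[1]=0$, so the hypotheses of Proposition \ref{prop:shap} hold. Hence the radical $\cR$ of the Shapovalov form is the maximal proper graded ideal, and $\cW^{\mathrm{ev}}(c,\lambda)$ is simple if and only if $\cR=\{0\}$. Because $\mathbb{C}[c,\lambda]$ is an integral domain and each weight space is free, $\cR$ meets $\cW^{\mathrm{ev}}(c,\lambda)[n]$ nontrivially exactly when $\mathrm{det}_n=0$ in $\mathbb{C}[c,\lambda]$. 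So it suffices to show $\mathrm{det}_n\neq 0$ for every $n$, and since each $\mathrm{det}_n$ is a polynomial it is enough to exhibit, for each $n$, a single specialization on which it does not vanish.

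Fix $n$ and choose $N$ with $2N\geq n$. I would take the coset $\cC^k(N)$ of \eqref{eq:ckn}, which by Theorem \ref{wprinquot} is the simple quotient $\cW^{\mathrm{ev},I}_R(c,\lambda)/\cI$ for a prime ideal $I\subseteq\mathbb{C}[c,\lambda]$ and a localization $R$ of $\mathbb{C}[c,\lambda]/I$, with $\cI$ generated by a singular vector of weight $2N+2$. Since $\cC^k(N)$ is freely generated of type $\cW(2,4,\dots,2N)$ by Lemma \ref{lem:ckn}, its graded character agrees in every weight $m\leq 2N$ with that of the freely generated $\cW^{\mathrm{ev},I}_R(c,\lambda)$ from Corollary \ref{cor:freegeneration}; hence $\cI[m]=0$ for $m\leq 2N$, and in particular the quotient map is an isomorphism $\cW^{\mathrm{ev},I}_R(c,\lambda)[n]\cong\cC^k(N)[n]$ intertwining the Shapovalov forms. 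On the source side, base change gives $\cW^{\mathrm{ev},I}_R(c,\lambda)[n]=R\otimes_{\mathbb{C}[c,\lambda]}\cW^{\mathrm{ev}}(c,\lambda)[n]$, so its level-$n$ determinant is the image of $\mathrm{det}_n$ under the structure map $\mathbb{C}[c,\lambda]\to R$.

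To conclude, I would apply Proposition \ref{prop:shap} to $\cC^k(N)$, whose hypotheses again hold since it is freely generated with $\cC^k(N)[1]=0$: simplicity of $\cC^k(N)$ (Lemma \ref{lem:ckn}) forces its radical to vanish, so its level-$n$ Shapovalov determinant is a nonzero element of $R$. By the isomorphism above this nonzero element is precisely the image of $\mathrm{det}_n$ in $R$. As $I$ is prime and $R$ is a localization of the domain $\mathbb{C}[c,\lambda]/I$, the map $\mathbb{C}[c,\lambda]\to R$ has kernel $I$; a nonzero image therefore shows $\mathrm{det}_n\notin I$, and in particular $\mathrm{det}_n\neq 0$. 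Since $n$ was arbitrary, all Shapovalov determinants are nonzero and $\cW^{\mathrm{ev}}(c,\lambda)$ is simple.

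The genuine mathematical input — that there exist simple, freely generated truncations realizing the full character in every bounded range of weights — has already been supplied by Lemma \ref{lem:ckn} and Theorem \ref{wprinquot}, so what remains is essentially bookkeeping. The one point that must be handled with care, and which I regard as the crux, is the compatibility of the Shapovalov form under both base change to $R$ and passage to the quotient $\cC^k(N)$: one must be sure that nondegeneracy of the form on the simple truncation pulls back to non-vanishing of the universal polynomial $\mathrm{det}_n$, rather than merely to non-vanishing at isolated points of the parameter space.
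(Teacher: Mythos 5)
Your proof is correct and takes essentially the same route as the paper: both arguments reduce simplicity to the existence, for every weight, of a simple freely generated quotient $\cC^k(N)$ realizing the full character in that weight (Lemma \ref{lem:ckn} and Theorem \ref{wprinquot}). The paper phrases this as a contradiction — a singular vector of weight $2N$ would descend to $\cW^{\mathrm{ev},I}_R(c,\lambda)$ and shrink the weight-$2N$ space of its simple quotient $\cC^k(N)$ — while you phrase it contrapositively via non-vanishing of $\mathrm{det}_n$ under the specialization $\mathbb{C}[c,\lambda]\to R$; these are equivalent formulations of the same argument.
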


\begin{proof} If $\cW^{\mathrm{ev}}(c,\lambda)$ is not simple, it would have a singular vector $\omega$ in weight $2N$ for some $N$. Let $p \in \mathbb{C}[c,\lambda]$ be an irreducible polynomial and let $I = (p) \subseteq \mathbb{C}[c,\lambda]$. By rescaling if necessary, we can assume without loss of generality that $\omega$ is not divisible by $p$, and hence descends to a nontrivial singular vector in $\cW^{\mathrm{ev},I}(c,\lambda)$. Then for any localization $R$ of $\mathbb{C}[c,\lambda]/ I$, the simple quotient of $\cW^{\mathrm{ev},I}_R(c,\lambda)$ would have a smaller weight $2N$ submodule than $\cW^{\mathrm{ev},I}(c,\lambda)$ for all such $I$. This contradicts the fact that $\cC^k(N)$ is such a quotient.
\end{proof}

\begin{cor}\label{cor:trivialAut}
The automorphism group of $\cW^{\mathrm{ev}}(c,\lambda)$ is trivial.
\end{cor}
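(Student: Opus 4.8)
The plan is to pin down the action of an arbitrary automorphism $\phi$ on the two generators $L$ and $W^4$, which determines $\phi$ completely since $\cW^{\mathrm{ev}}(c,\lambda)$ is generated by $\{L,W^4\}$. As a $\mathbb{C}[c,\lambda]$-linear vertex algebra homomorphism with $\phi(\mathbf 1)=\mathbf 1$, $\phi$ fixes $\cW^{\mathrm{ev}}(c,\lambda)[0]\cong\mathbb{C}[c,\lambda]$ pointwise (in particular it fixes $c$ and $\lambda$) and commutes with the translation operator $\partial$. First I would show $\phi(L)=L$. For a grading-preserving automorphism this is immediate: $\phi(L)\in\cW^{\mathrm{ev}}(c,\lambda)[2]=\mathbb{C}[c,\lambda]\,L$, say $\phi(L)=fL$, and applying $\phi$ to $L_{(1)}L=2L$ gives $2f^2L=2fL$, whence $f=1$ since $L\neq0$ and $\mathbb{C}[c,\lambda]$ is a domain. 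For an automorphism not assumed graded a priori, the same conclusion follows from uniqueness of the conformal vector implementing $\partial$: since $\cW^{\mathrm{ev}}(c,\lambda)[1]=0$ and $\cW^{\mathrm{ev}}(c,\lambda)[0]\cong\mathbb{C}[c,\lambda]$, decomposing $\phi(L)$ into weight-homogeneous components and using $\phi(L)_{(0)}=\partial$ together with the Virasoro relations and simplicity (Corollary \ref{cor:simplicity}) forces every component other than $L$ to vanish. Once $\phi(L)=L$, the relation $\phi\circ L_{(1)}=L_{(1)}\circ\phi$ shows $\phi$ commutes with $L_0=L_{(1)}$ and hence preserves the conformal weight grading.

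Next I would determine $\phi(W^4)$. Since $\phi$ fixes $L$, it sends Virasoro primary fields to Virasoro primary fields of the same weight. By Remark \ref{rem:d28twochoicesauto}, $W^4$ is the unique Virasoro primary field of weight $4$ up to scaling, so the primaries in $\cW^{\mathrm{ev}}(c,\lambda)[4]$ form the rank-one module $\mathbb{C}[c,\lambda]\,W^4$. Therefore $\phi(W^4)=\epsilon W^4$ for some $\epsilon\in\mathbb{C}[c,\lambda]$, and invertibility of $\phi$ forces $\epsilon$ to be a unit, so $\epsilon\in\mathbb{C}^\times$.

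The crux is to prove $\epsilon=1$, that is, to rule out the naive candidate $W^4\mapsto-W^4$. I would compare the two sides of the $(z-w)^{-4}$ coefficient of the OPE $W^4(z)W^4(w)$ in \eqref{ope:second}, namely $W^4_{(3)}W^4=\tfrac{a_3}{8}W^4+\tfrac{a_7}{5}:\!L^2\!:+\tfrac{a_7(c-4)}{140}\partial^2L$. Applying $\phi$ and using that $\phi$ fixes $L,c,\lambda$ while $\phi(W^4)_{(3)}\phi(W^4)=\epsilon^2\,W^4_{(3)}W^4$, I equate coefficients: the coefficient of $:\!L^2\!:$ gives $\tfrac{a_7}{5}=\epsilon^2\tfrac{a_7}{5}$, so $\epsilon^2=1$ since $a_7\neq0$ in $\mathbb{C}[c,\lambda]$ by \eqref{deflambda}; then the coefficient of $W^4$ gives $\epsilon\tfrac{a_3}{8}=\epsilon^2\tfrac{a_3}{8}=\tfrac{a_3}{8}$, so $\epsilon=1$ since $a_3=256(c-1)(24+c)(2c-1)\lambda\neq0$. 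This is precisely the asymmetry recorded in Remark \ref{rem:d28twochoicesauto}: the sign flip is not an automorphism exactly because $a_3\neq0$ separates $\epsilon=-1$ from $\epsilon=1$. With $\phi(L)=L$, $\phi(W^4)=W^4$, and $\{L,W^4\}$ generating, we conclude $\phi=\mathrm{id}$. I expect the main obstacle to be the rigorous verification that $\phi$ preserves the grading (equivalently fixes $L$) when $\phi$ is not assumed graded from the outset; the $\epsilon=1$ computation itself is short once $a_3$ and $a_7$ are known to be nonzero.
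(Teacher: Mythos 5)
Your proof is correct and follows essentially the same route as the paper: fix $L$, use uniqueness of the weight-$4$ Virasoro primary to get $\phi(W^4)=\epsilon W^4$, and then extract $\epsilon=1$ from the fourth-order pole of $W^4(z)W^4(w)$ (the paper compresses this last step into ``considering the OPE of $g(W^4)$ with $g(W^4)$ forces $t=1$,'' which is exactly your $a_7\neq 0\Rightarrow\epsilon^2=1$ and $a_3\neq 0\Rightarrow\epsilon=1$ computation). The only difference is that the paper takes preservation of $L$ as part of the definition of an automorphism, whereas you additionally argue it; that is a harmless strengthening.
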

\begin{proof}
By definition, an automorphism $g$ preserves the Virasoro generator,
$L$. Therefore, under $g$, Virasoro primaries must map to Virasoro primaries. It is easy to check that in weight $4$, there is only one such primary vector up to scaling, namely $W^4$, therefore $g(W^4)=tW^4$ for some $t\in \mathbb{C}[c,\lambda]\backslash \{0\}$.
Considering the OPE of $g(W^4)$ with $g(W^4)$ forces $t=1$.
Since $\cW^{\mathrm{ev}}(c,\lambda)$ is generated by $L$ and  $W^4$, $g$ must now be trivial.
\end{proof}

\section{Classification of vertex algebras of type $\cW(2,4,\dots, 2N)$}
Theorem \ref{thm:simplequotient} and Corollary \ref{cor:simplicity} reduce the classification of vertex algebras of type $\cW(2,4,\dots)$ with the above properties to the classification of ideals $I \subseteq \mathbb{C}[c,\lambda]$ such that $\cW^{\mathrm{ev},I}(c,\lambda)$ is not simple.  In this section, we restrict to the case where $I = (p)$ for some irreducible $p \in \mathbb{C}[c,\lambda]$, so that $\cW^{\mathrm{ev},I}(c,\lambda)$ is a one-parameter vertex algebra in the sense that $R$ has Krull dimension one. Later, in Section \ref{section:coincidences} we will consider the case where $I$ is a maximal ideal.

It follows from Corollary \ref{cor:freegeneration} that $\cW^{\mathrm{ev}}(c,\lambda)[n]$ is a free $\mathbb{C}[c,\lambda]$-module whose rank is given by \eqref{grchar}. Recall that it has a symmetric bilinear form $$\langle,\rangle_n: \cW^{\mathrm{ev}}(c,\lambda)[n]\otimes_{\mathbb{C}[c,\lambda]} \cW^{\mathrm{ev}}(c,\lambda)[n] \ra \mathbb{C}[c,\lambda],\qquad \langle \omega,\nu \rangle_n = \omega_{(2n-1)}\nu,$$ 
and that the Shapovalov determinant $\mathrm{det}_n  \in \mathbb{C}[c,\lambda]$ is the determinant of the matrix of this form. Also, recall that an irreducible element $p \in \mathbb{C}[c,\lambda]$ lies in the level $n$ Shapovalov spectrum if $p$ divides $\mathrm{det}_n$ but does not divide $\mathrm{det}_m$ for any $m<n$. 

Let $p$ be an irreducible factor of $\mathrm{det}_{2N+2}$ of level $2N+2$. Letting $I = (p) \subseteq \mathbb{C}[c,\lambda]$, $\cW^{\mathrm{ev},I}(c,\lambda)$ will then have a singular vector in weight $2N+2$. Often, the coefficient of $W^{2N+2}$ in this singular vector is nonzero. By inverting this coefficient, we obtain a localization $R$ of $\mathbb{C}[c,\lambda] / I$ such that this singular vector has the form 
\begin{equation} \label{elim:first} W^{2N+2} - P_{2N+2}(L, W^4,\dots, W^{2N})\end{equation} in $\cW^{\mathrm{ev},I}_R(c,\lambda)$. Here $P_{2N+2}$ is a normally ordered polynomial in the fields $L, W^4, \dots, W^{2N}$ and their derivatives, with coefficients in $R$. This implies that $W^{2N+2}$ decouples in the quotient of $\cW^{\mathrm{ev},I}_R(c,\lambda) / \cJ$, where $\cJ$ denotes the vertex algebra ideal generated by \eqref{elim:first}. In other words, we have the relation $$W^{2N+2} = P_{2N+2}(L, W^4,\dots, W^{2N})$$ in $\cW^{\mathrm{ev},I}_R(c,\lambda) / \cJ$. Applying the operator $W^4_{(1)}$ to this relation and using the fact that $W^4_{(1)} W^{2N+2} = W^{2N+4}$ and $W^4_{(1)} W^{2N} = W^{2N+2}$, we obtain a relation $$W^{2N+4} = Q_{2N+4}(L, W^4, \dots, W^{2N+2}).$$ If the terms $\partial^2 W^{2N+2}$ or $:L W^{2N+2}:$ appear in $Q_{2N+4}$, they can be eliminated using \eqref{elim:first} to obtain a relation 
$$W^{2N+4} = P_{2N+4}(L, W^4, \dots, W^{2N}).$$
Inductively, by applying $W^4_{(1)}$ repeatedly and using \eqref{elim:first} to eliminate $W^{2N+2}$ or $:LW^{2N+2}:$ if necessary, we obtain relations 
$$W^{2m} = P_{2m}(L, W^4, \dots, W^{2N})$$ in $\cW^{\mathrm{ev},I}_R(c,\lambda) / \cJ$, for all $2m > 2N+2$. This implies 

\begin{thm} \label{decoup}
Let $p$ be an irreducible factor of $\mathrm{det}_{2N+2}$ of level $2N+2$, and let $I = (p)$. Suppose that there exists a localization $R$ of $\mathbb{C}[c,\lambda] / I$ such that $\cW^{\mathrm{ev},I}_R(c,\lambda)$ has a singular vector of the form \begin{equation} \label{eq:decoup}W^{2N+2} - P_{2N+2}(L, W^4,\dots, W^{2N}).\end{equation} Let $\cJ \subseteq \cW^{\mathrm{ev},I}_R(c,\lambda)$ be the vertex algebra ideal generated by \eqref{eq:decoup}. Then the quotient $\cW^{\mathrm{ev},I}_R(c,\lambda) / \cJ$ has a minimal strong generating set $$\{L, W^{2i}|\ 2 \leq i \leq 2N\},$$ and in particular is of type $\cW(2,4,\dots, 2N)$. 
\end{thm}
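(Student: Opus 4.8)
The plan is to show that once we have the single decoupling relation \eqref{eq:decoup} as a singular vector generating $\cJ$, every higher generator $W^{2m}$ for $m > N$ can be expressed as a normally ordered polynomial in $L, W^4, \dots, W^{2N}$ and their derivatives inside the quotient, which immediately yields that $\{L, W^{2i} \mid 2 \leq i \leq N\}$ is a strong generating set. First I would observe that in $\cW^{\mathrm{ev},I}_R(c,\lambda)/\cJ$ the relation $W^{2N+2} = P_{2N+2}(L, W^4, \dots, W^{2N})$ holds, and then proceed by induction on $m$. The inductive engine is the operator $W^4_{(1)}$ together with the defining relations $W^4_{(1)} W^{2m} = W^{2m+2}$ and $W^4_{(1)} W^{2N} = W^{2N+2}$ from feature (2) of $\cW^{\mathrm{ev}}(c,\lambda)$. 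Applying $W^4_{(1)}$ to the relation for $W^{2m}$ and using the noncommutative Wick formula \eqref{ncw} to distribute it across the normally ordered polynomial $P_{2m}$ produces an expression for $W^{2m+2}$ in terms of $L, W^4, \dots, W^{2N}$, and $W^{2N+2}$ (the latter arising whenever $W^4_{(1)}$ lands on a factor $W^{2N}$).

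Next I would explain the elimination step that keeps the induction within the desired generating set. When $W^4_{(1)}$ acts on $P_{2m}$, the only way a generator of weight exceeding $2N$ can be produced is through $W^4_{(1)} W^{2N} = W^{2N+2}$; the weight bookkeeping guarantees that the offending terms are precisely $W^{2N+2}$, possibly as $\partial^2 W^{2N+2}$ or inside a normally ordered product such as $:L W^{2N+2}:$. Each such occurrence is then rewritten using \eqref{eq:decoup}, replacing $W^{2N+2}$ by $P_{2N+2}(L, W^4, \dots, W^{2N})$ (and $\partial^2 W^{2N+2}$ by $\partial^2 P_{2N+2}$, etc.), so that after this substitution the expression for $W^{2m+2}$ involves only $L, W^4, \dots, W^{2N}$ and their derivatives. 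This closes the induction and establishes the relations $W^{2m} = P_{2m}(L, W^4, \dots, W^{2N})$ for all $m > N$, which is exactly the statement that $W^{2m}$ decouples for $m > N$.

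Having the strong generating set $\{L, W^{2i} \mid 2 \leq i \leq N\}$, the remaining point is \emph{minimality}, i.e.\ that no further generator can be eliminated, so that the type is genuinely $\cW(2,4,\dots, 2N)$ rather than something smaller. For this I would appeal to the hypothesis that $p$ has level $2N+2$, meaning $p$ divides $\mathrm{det}_{2N+2}$ but does not divide $\mathrm{det}_m$ for any $m < 2N+2$. By Proposition \ref{prop:shap}, singular vectors correspond to vectors in the radical of the Shapovalov form, and the nonvanishing of $\mathrm{det}_m \bmod p$ for $m < 2N+2$ shows that $\cW^{\mathrm{ev},I}_R(c,\lambda)$ has no singular vectors in weights below $2N+2$. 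Hence there is no decoupling relation expressing any $W^{2i}$ with $i \leq N$ in terms of lower generators, so each such $W^{2i}$ is genuinely needed, giving minimality.

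The main obstacle I expect is the elimination step in the second paragraph: controlling, for each application of $W^4_{(1)}$, exactly which terms involving $W^{2N+2}$ (and its derivatives, and its appearances inside normally ordered products) are generated, and verifying that \eqref{eq:decoup} suffices to remove all of them without inadvertently reintroducing a generator of weight greater than $2N+2$. This requires a careful weight-grading argument combined with the associativity and noncommutative Wick identities \eqref{nonasswick} and \eqref{ncw}; the bookkeeping is delicate because $W^4_{(1)}$ can produce derivative terms such as $\partial^2 W^{2N+2}$, and one must confirm that only $W^{2N+2}$ itself (never a strictly higher generator) appears at this stage, so that a single relation \eqref{eq:decoup} is enough to close the recursion.
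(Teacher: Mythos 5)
Your proposal is correct and follows essentially the same route as the paper: the paper likewise obtains the decoupling relations $W^{2m}=P_{2m}(L,W^4,\dots,W^{2N})$ by repeatedly applying $W^4_{(1)}$ to the relation $W^{2N+2}=P_{2N+2}$ and using the singular vector to eliminate the resulting occurrences of $W^{2N+2}$ (e.g.\ $\partial^2 W^{2N+2}$ and $:LW^{2N+2}:$), and it deduces minimality from the level hypothesis exactly as you do, via the absence of singular vectors in weights below $2N+2$.
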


The ideal $\cJ$ is sometimes (but not necessarily) the maximal graded ideal $\cI \subseteq \cW^{\mathrm{ev},I}_R(c,\lambda)$. However, the assumption that $p$ does not divide $\mathrm{det}_m$ for $m<2N+2$ implies that there are no singular vectors in weight $m<2N+2$, so there can be no decoupling relations of the form $W^{2m} = P_m(L,W^4,\dots W^{2m})$ for $2m<2N+2$. Therefore the simple quotient $\cW^{\mathrm{ev},I}_R(c,\lambda) / \cI$ is also of type $\cW(2,4\dots,2N)$.

\begin{thm} For all $N\geq 2$, there are finitely many isomorphism classes of simple one-parameter vertex algebras of type $\cW(2,4,\dots, 2N)$, which satisfy the hypotheses of Theorem \ref{thm:simplequotient}. \end{thm}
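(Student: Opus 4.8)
The plan is to show that every isomorphism class in question is realized, up to isomorphism, as the simple graded quotient of $\cW^{\mathrm{ev},I}_R(c,\lambda)$ for a prime ideal $I=(p)$ lying over an irreducible factor of the single fixed polynomial $\mathrm{det}_{2N+2}\in\mathbb{C}[c,\lambda]$, and then to observe that there are only finitely many such factors.

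First I would fix a simple one-parameter vertex algebra $\cW$ of type $\cW(2,4,\dots,2N)$ satisfying the hypotheses of Theorem \ref{thm:simplequotient}. By that theorem, $\cW$ is the simple quotient of $\cW^{\mathrm{ev},I}_R(c,\lambda)$ by its maximal graded ideal $\cI$, for some prime ideal $I=(p)\subseteq\mathbb{C}[c,\lambda]$ and some localization $R$ of $\mathbb{C}[c,\lambda]/I$. For a fixed $I$, the simple graded quotient $\cW^{\mathrm{ev},I}(c,\lambda)/\cI$ is unique, and passing to a localization $R$ does not alter its isomorphism class as a one-parameter vertex algebra; hence the collection of all $\cW$ arising from a fixed $(p)$ forms a single isomorphism class. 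Thus it suffices to bound the set of admissible $(p)$.

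The key step is to force $p\mid\mathrm{det}_{2N+2}$. Since $\cW$ is of type $\cW(2,4,\dots,2N)$, the generator $W^{2N+2}$ decouples, i.e. a relation $W^{2N+2}=P_{2N+2}(L,W^4,\dots,W^{2N})$ holds in $\cW$ for some normally ordered polynomial $P_{2N+2}$. Therefore $W^{2N+2}-P_{2N+2}$ is a nonzero element of weight $2N+2$ lying in the maximal proper graded ideal $\cI\subseteq\cW^{\mathrm{ev},I}_R(c,\lambda)$. By Corollary \ref{cor:freegeneration}, $\cW^{\mathrm{ev},I}_R(c,\lambda)$ is freely generated, so its weight spaces are free $R$-modules of finite rank, its weight-zero space is $R$, and it has no elements of weight one (so $L_1\cV[1]=0$ holds vacuously); the hypotheses of Proposition \ref{prop:shap} are therefore satisfied, and $W^{2N+2}-P_{2N+2}$ lies in the radical of the Shapovalov form $\langle,\rangle_{2N+2}$. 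Consequently this form is degenerate over the integral domain $R$, so $\mathrm{det}_{2N+2}$ vanishes in $R$; as $R$ is a localization of $\mathbb{C}[c,\lambda]/I$, this gives $\mathrm{det}_{2N+2}\in I$, that is, $p\mid\mathrm{det}_{2N+2}$.

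Finally, $\mathrm{det}_{2N+2}$ is a nonzero element of $\mathbb{C}[c,\lambda]$ by Corollary \ref{cor:simplicity}, since the simplicity of $\cW^{\mathrm{ev}}(c,\lambda)$ is exactly the statement that all $\mathrm{det}_n$ are nonzero. A nonzero polynomial has only finitely many irreducible factors up to scalar, so there are only finitely many prime ideals $(p)$ with $p\mid\mathrm{det}_{2N+2}$; since each contributes a single isomorphism class, the theorem follows. I expect the main obstacle to be the bookkeeping in the second paragraph, namely checking that varying the localization $R$ over a fixed $I$ yields a single isomorphism class of one-parameter vertex algebra, so that finiteness of the factor set of $\mathrm{det}_{2N+2}$ really does bound the number of isomorphism classes; the implication decoupling $\Rightarrow$ singular vector $\Rightarrow p\mid\mathrm{det}_{2N+2}$ is then a clean application of Proposition \ref{prop:shap}, and the nonvanishing of $\mathrm{det}_{2N+2}$ is already in hand via Corollary \ref{cor:simplicity}.
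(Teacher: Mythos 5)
Your proposal is correct and follows essentially the same route as the paper, whose proof is just two sentences: every such algebra is a simple quotient of $\cW^{\mathrm{ev},I}_R(c,\lambda)$ with a singular vector in weight $\leq 2N+2$, and $\mathrm{det}_m$ has finitely many divisors for $m\leq 2N+2$. You merely spell out the intermediate step (decoupling of $W^{2N+2}$ gives an element of $\cI[2N+2]$, hence via Proposition \ref{prop:shap} a degenerate Shapovalov form, hence $p\mid\mathrm{det}_{2N+2}$) and pin the singular vector at weight exactly $2N+2$, which is equivalent since any divisor of $\mathrm{det}_m$ for $m<2N+2$ also divides $\mathrm{det}_{2N+2}$.
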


\begin{proof} 

Any such vertex algebra must be the simple quotient of $\cW^{\mathrm{ev},I}_R(c,\lambda)$ for some ideal $I  = (p) \subseteq \mathbb{C}[c,\lambda]$ and some localization $R$ of $\mathbb{C}[c,\lambda] / I$, such that $\cW^{\mathrm{ev},I}_R(c,\lambda)$ has a singular vector in weight $ 2m \leq 2N+2$. But there are only finitely many divisors of $\mathrm{det}_{m}$ for $m\leq 2N+2$.\end{proof}

Later, we will see that for $N\geq 3$, the correspondence between these ideals and the isomorphism classes of such vertex algebras, is a bijection; see Corollary \ref{cor:uniqueness}.

\section{Principal $\cW$-algebras of type $B$ and $C$}
The $B$ and $C$ type principal $\cW$-algebras are isomorphic after the level shift \eqref{levelshift} by Feigin-Frenkel duality, so we shall only consider the type $C$ algebra $\cW^{\ell}(\gs\gp_{2n}, f_{\text{prin}})$. It has central charge
\begin{equation} \label{ccentralcharge} c =  -\frac{n (\ell + 2 n + 2 \ell n + 2 n^2) (-3 - 2 \ell + 4 \ell n + 4 n^2)}{\ell+n+1},\end{equation}
and is well known to be freely generated of type $\cW(2,4,\dots, 2n)$. Although often assumed in the physics literature, it is not obvious that $\cW^{\ell}(\gs\gp_{2n}, f_{\text{prin}})$ is generated by the weight $2$ and $4$ fields. This must be established in order to conclude that it arises as a quotient of $\cW^{\mathrm{ev}}(c,\lambda)$. 

The following theorem is analogous to the celebrated resulted of Frenkel, Kac, Radul, and Wang \cite{FKRW} that the $\text{GL}_n$-orbifold of the rank $n$ $bc$-system is isomorphic to $\cH \otimes \cW^{k}(\gs\gl_n, f_{\text{prin}})$ at level $k = 1 - h^{\vee} =   1 - n$. The proof is due to T. Creutzig, and we thank him for explaining it to us.

\begin{thm} \label{thomas} Let $\cA(n)$ denote the rank $n$ symplectic fermion algebra as above. Then $\cA(n)^{\text{Sp}_{2n}}$ is isomorphic to $\cW^{\ell}(\gs\gp_{2n}, f_{\text{prin}})$ at level $\ell = 1/2 - h^{\vee} = -1/2 -n$.
\end{thm}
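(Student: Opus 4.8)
My plan follows the strategy of Frenkel--Kac--Radul--Wang \cite{FKRW}: I would present both algebras as \emph{simple} vertex algebras of type $\cW(2,4,\dots,2n)$ with the same central charge and the same graded character, construct a nonzero homomorphism between them, and then let simplicity together with the character identity force it to be an isomorphism. The first step is to fix the central charge. Each of the $n$ symplectic fermion pairs contributes $-2$, so $\cA(n)^{\text{Sp}_{2n}}$ has $c = -2n$; on the other side, substituting $\ell = -1/2 - n$ into \eqref{ccentralcharge}, so that $\ell + n + 1 = \tfrac12$, gives exactly $c = -2n$. Since $c(\ell)$ is a degree-two rational function of $\ell$, there are two levels with this central charge (the second being the relevant $B$-type dual under \eqref{levelshift}); the central charge alone narrows the level to these two candidates, and the remaining steps single out $\ell = 1/2 - h^\vee$.

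For the symplectic fermion side I would invoke classical invariant theory. The $2n$ weight-one odd generators $\psi^a$ of $\cA(n)$ carry the standard representation of $\text{Sp}_{2n}$, and by the first fundamental theorem for $\text{Sp}_{2n}$ \cite{We} the orbifold $\cA(n)^{\text{Sp}_{2n}}$ is strongly generated by the symplectic contractions $\omega_{ab}\,{:}\psi^a \partial^{m}\psi^b{:}$; those with $m$ odd are derivatives of lower generators, so there is one new strong generator in each even weight $2,4,6,\dots$, while the second fundamental theorem truncates these to $n$ independent generators. This is precisely the content of Theorem 3.2 and Corollary 5.7 of \cite{CLII} recorded above, together with simplicity from \cite{KR,DLM}, so I would take as given that $\cA(n)^{\text{Sp}_{2n}}$ is simple and freely generated of type $\cW(2,4,\dots,2n)$, with weight-four primary generator obtained by projecting the weight-four contraction onto its Virasoro-primary part.

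The heart of the argument is the identification with the principal $\cW$-algebra, and here I would construct a free-field realization of $\cW^{\ell}(\gs\gp_{2n}, f_{\text{prin}})$ at the special level $\ell = 1/2 - h^\vee$ whose image lies inside $\cA(n)^{\text{Sp}_{2n}}$. Concretely, one expects the Feigin--Frenkel screening realization of the principal $\cW$-algebra to degenerate, at this half-integer-shifted level, into a purely fermionic one in which each $\cW$-algebra generator is expressed as a symplectic bilinear $\omega_{ab}\,{:}\psi^a \partial^{2j}\psi^b{:}$. This yields a nonzero homomorphism $\cW^{\ell}(\gs\gp_{2n}, f_{\text{prin}}) \to \cA(n)^{\text{Sp}_{2n}}$ whose image contains the generators of the target and is therefore all of it; comparing the two graded characters, which agree weight by weight because both algebras are freely generated of type $\cW(2,4,\dots,2n)$, then forces the map to be injective as well, so it is an isomorphism (and in particular $\cW^{\ell}(\gs\gp_{2n}, f_{\text{prin}})$ is already simple at this level).

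I expect the main obstacle to be exactly this construction and its control at the \emph{non-generic} level $\ell = 1/2 - h^\vee$: the Feigin--Frenkel free-field realization is cleanest at generic level, and one must check that the screening kernel specializes correctly and that its image is the full orbifold rather than a proper subalgebra. A technically lighter alternative, once $\cA(n)^{\text{Sp}_{2n}}$ is seen to satisfy the hypotheses of Theorem \ref{thm:simplequotient}, is to realize \emph{both} algebras as simple quotients of $\cW^{\mathrm{ev},I}(c,\lambda)$ at a common maximal ideal $I = (c + 2n,\ \lambda - \lambda_0)$: the coordinate $c = -2n$ is matched above, so it suffices to match the single additional structure constant $a_3$ (equivalently $\lambda$) governing the $W^4 \times W^4 \to W^4$ coupling, after which uniqueness of the simple graded quotient in Theorem \ref{thm:simplequotient} gives the isomorphism. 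This route replaces the free-field computation by one OPE-coefficient comparison, at the cost of independently verifying that $\cW^{\ell}(\gs\gp_{2n}, f_{\text{prin}})$ is generated by its weight-two and weight-four fields---which is itself one of the facts this theorem is designed to supply.
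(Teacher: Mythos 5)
Your overall strategy is the one the paper actually uses --- match the central charge $c=-2n$, exploit the known structure of $\cA(n)^{\text{Sp}_{2n}}$ from \cite{CLII,KR,DLM}, realize $\cW^{-1/2-n}(\gs\gp_{2n},f_{\text{prin}})$ inside the symplectic fermions via a free-field construction, and finish with a character comparison. But the step you yourself flag as ``the main obstacle'' is precisely the content of the proof, and you leave it as an expectation rather than an argument. Saying that the Feigin--Frenkel realization ``is expected to degenerate into a purely fermionic one in which each generator is a symplectic bilinear'' is not a proof: you need an actual identification of the joint kernel of the screening operators $\int V_{-\alpha_i/\nu}\,dz$ at $\nu=\sqrt{1/2}$ with something living inside $\cA(n)$. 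The paper supplies this by quoting Corollary 7.8 of Flandoli--Lentner \cite{FL}, which identifies the joint kernel of $\int V_{-\alpha_i/\sqrt 2}\,dz$ inside the relevant rescaled lattice vertex algebra with the even subalgebra $\cA(n)^{\text{even}}$; the Miura map (injective at \emph{every} level, so no genericity issue arises for the containment you need) then places $\cW^{-1/2-n}(\gs\gp_{2n},f_{\text{prin}})$ inside $\cA(n)^{\text{even}}$. Without this input, or some substitute for it, your argument does not get off the ground.

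A second point you do not anticipate: the screening kernel does \emph{not} land directly in the $\text{Sp}_{2n}$-orbifold, so your hope that the generators come out as $\text{Sp}_{2n}$-contractions $\omega_{ab}\,{:}\psi^a\partial^{2j}\psi^b{:}$ is not how the identification proceeds. The paper needs two further observations: the screenings attached to the $n-1$ short simple roots generate the upper nilpotent part of $\gs\gl_n\subseteq\gs\gp_{2n}$, which forces the image into $\cA(n)^{\text{GL}_n}$; and $\cA(n)^{\text{GL}_n}$ (of type $\cW(2,3,\dots,2n+1)$ by \cite{CLII}) has a unique weight-$4$ Virasoro primary up to scale, which happens to lie in $\cA(n)^{\text{Sp}_{2n}}$. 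Only then does generation of $\cA(n)^{\text{Sp}_{2n}}$ by its weight-$2$ and weight-$4$ fields, plus the equality of graded characters, close the argument. Your proposed alternative via $\cW^{\mathrm{ev},I}(c,\lambda)$ is, as you correctly note, circular here: generation of $\cW^{\ell}(\gs\gp_{2n},f_{\text{prin}})$ by its weight-$2$ and weight-$4$ fields is deduced \emph{from} this theorem (Corollary \ref{cor:genwt4}), so it cannot be assumed in its proof.
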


\begin{proof} For all $\ell \in \mathbb{C}$, there is an injective vertex algebra homomorphism $$\gamma: \cW^{\ell}(\gs\gp_{2n}, f_{\text{prin}}) \ra \pi_0$$ known as the {\it Miura map}, where $\pi_0$ denotes the rank $n$ Heisenberg vertex algebra; see Theorem 5.17 and Remark 5.18 of \cite{AIII}. The image of $\gamma$ lies in the joint kernel of the screening operators $$\int V_{-\alpha_i / \nu}\ dz\ : \pi_0 \ra \pi_{-\alpha_i / \nu}, \qquad \nu = \sqrt{\ell + h^{\vee}}.$$ Here $\int V_{-\alpha_i / \nu}\ dz $ maps $\pi_0$ to the Heisenberg module $\pi_{-\alpha_i / \nu}$, and $\alpha_i$ runs over a set of simple roots for $\gs\gp_{2n}$. By Theorem 14.4.12 of \cite{FBZ}, for generic values of $\ell$, $$\cW^{\ell}(\gs\gp_{2n}, f_{\text{prin}})= \bigcap_{i} \text{Ker} \bigg(\int V_{-\alpha_i / \nu}\ dz\bigg).$$ In the case $\ell = -1/2 -n$, $\cW^{-1/2-n}(\gs\gp_{2n}, f_{\text{prin}})$ has central charge $-2n$, which is the same as the central charge of $\cA(n)^{\text{Sp}_{2n}}$, and $\int V_{-\alpha_i/ \nu}\ dz = \int V_{-\sqrt{2}\alpha_i}\ dz$.

A recent paper of Flandoli and Lentner \cite{FL} describes a related algebra, which (in the above notation) is the joint kernel of the screening operators $\int V_{- \alpha_i / \sqrt{2}} \ dz$ inside the lattice vertex algebra of the coroot lattice of type $B_n$ rescaled by $\sqrt{2}$, which is just the root lattice of $\gs\gp_{2n}$ rescaled by $\sqrt{2}$. By Corollary 7.8 of \cite{FL}, the joint kernel of $\int V_{- \alpha_i / \sqrt{2}} \ dz$ in this lattice vertex algebra is isomorphic to the even subalgebra $\cA(n)^{\text{even}} \subseteq \cA(n)$. It follows that $\cW^{-1/2-n}(\gs\gp_{2n}, f_{\text{prin}})$, which lies in $\bigcap_i \text{Ker} (\int V_{- \sqrt{2} \alpha_i} \ dz) \subseteq \pi_0$, is a subalgebra of $\cA(n)^{\text{even}}$.

Recall that $\cA(n)$, and hence $\cA(n)^{\text{even}}$, has automorphism group $\text{Sp}_{2n}$. Moreover, the operators $\int V_{-\sqrt{2}\alpha_i}\ dz$ where $\alpha_i$ ranges over the $n-1$ simple short roots, generate the upper nilpotent part of $\gs\gl_n \subseteq \gs\gp_{2n}$. The condition that an element $\omega \in \pi_0$ lies in the joint kernel of these $n-1$ screening operators, forces $\omega$ to lie in $(\cA(n)^{\text{even}})^{\text{GL}_n} =  \cA(n)^{\text{GL}_n}$. Therefore $\cW^{-1/2-n}(\gs\gp_{2n}, f_{\text{prin}})$ is a subalgebra of $\cA(n)^{\text{GL}_n}$.

By Theorem 4.3 of \cite{CLII}, $\cA(n)^{\text{GL}_n}$ is of type $\cW(2,3,\dots, 2n+1)$, and the explicit generators were written down. Using this description, it is straightforward to check that $\cA(n)^{\text{GL}_n}$ has a unique up to scalar primary field of weight $4$, which in fact lies in $\cA(n)^{\text{Sp}_{2n}}$. Therefore the image of the weight $4$ field of $\cW^{-1/2-n}(\gs\gp_{2n}, f_{\text{prin}})$ in $\cA(n)^{\text{GL}_n}$ must lie in $\cA(n)^{\text{Sp}_{2n}}$. Since $\cA(n)^{\text{Sp}_{2n}}$ is generated by the weights $2$ and $4$ fields, and the graded characters of $\cW^{-1/2-n}(\gs\gp_{2n}, f_{\text{prin}})$ and $\cA(n)^{\text{Sp}_{2n}}$ coincide, we conclude that $\cW^{-1/2-n}(\gs\gp_{2n}, f_{\text{prin}}) \cong \cA(n)^{\text{Sp}_{2n}}$. \end{proof}

\begin{cor} \label{cor:genwt4} As a one-parameter vertex algebra, $\cW^{\ell}(\gs\gp_{2n}, f_{\text{prin}})$ is generated by the weights $2$ and $4$ fields.
\end{cor}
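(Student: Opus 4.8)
The plan is to establish generation at the single distinguished level supplied by Theorem \ref{thomas}, and then propagate it to generic $\ell$ by a specialization argument. First I would invoke Theorem \ref{thomas}: at the level $\ell_0 = -1/2 - n$ there is an isomorphism $\cW^{\ell_0}(\gs\gp_{2n}, f_{\text{prin}}) \cong \cA(n)^{\text{Sp}_{2n}}$, and by property (1) of $\cA(n)^{\text{Sp}_{2n}}$ (from Theorem 3.2 of \cite{CLII}) this orbifold is generated by its weights $2$ and $4$ fields. Hence generation by the weights $2$ and $4$ fields holds at the single level $\ell = \ell_0$.

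Next I would recast generation in terms of one family of structure constants. Since $\cW^{\ell}(\gs\gp_{2n}, f_{\text{prin}})$ is freely generated of type $\cW(2,4,\dots,2n)$ with Virasoro field $L$, for generic $\ell$ its weight-$4$ space contains a unique Virasoro primary up to scaling; fixing a normalization, call it $W^4$. Setting $W^{2i} = W^4_{(1)} W^{2i-2}$ for $3 \le i \le n$, generation by $\{L, W^4\}$ is equivalent to the assertion that $\{L, W^4, W^6, \dots, W^{2n}\}$ is again a strong generating set. Because the algebra is freely generated of this type, this is in turn equivalent to the invertibility, for each $i$, of the leading coefficient $a_{4,2i-2}$ with which the free generator $W^{2i}$ appears in the normally ordered expansion of $W^4_{(1)} W^{2i-2}$; once these are invertible, an induction on conformal weight shows every generator lies in the subalgebra generated by $L$ and $W^4$.

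Finally I would propagate. The structure constants of $\cW^{\ell}(\gs\gp_{2n}, f_{\text{prin}})$, and in particular each $a_{4,2i-2}$, are rational functions of $\ell$. By the first step each $a_{4,2i-2}$ is nonzero at $\ell = \ell_0$, and a nonzero rational function vanishes only on a proper closed subset of the $\ell$-line. Hence each $a_{4,2i-2}$ is a nonzero rational function, and after inverting the finitely many factors $a_{4,2i-2}$ (that is, passing to the appropriate localization of $\mathbb{C}[\ell]$) they become invertible. Therefore $\{L, W^4\}$ generate $\cW^{\ell}(\gs\gp_{2n}, f_{\text{prin}})$ as a one-parameter vertex algebra, equivalently for generic $\ell$. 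This mirrors the generic-level argument used for Lemma \ref{lem:ckn} (cf.\ the proof of Corollary 8.6 of \cite{CLI}).

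The main obstacle is the second step, and specifically the bookkeeping needed to reduce generation to the invertibility of the single leading coefficient in each weight: one must check that the unique weight-$4$ primary is well defined in families and specializes correctly, and that $\ell_0 = -1/2 - n$ is a nondegenerate level (not a pole of the structure constants), so that the nonvanishing statement at $\ell_0$ is genuinely a specialization of the generic rational functions $a_{4,2i-2}(\ell)$. Since $\cW^{\ell_0}(\gs\gp_{2n}, f_{\text{prin}}) \cong \cA(n)^{\text{Sp}_{2n}}$ is itself freely generated of type $\cW(2,4,\dots,2n)$, the level $\ell_0$ is indeed good, which removes this difficulty.
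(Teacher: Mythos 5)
Your overall strategy is the same as the paper's: anchor the statement at the single level $\ell_0=-1/2-n$ using Theorem \ref{thomas} and the known generation of $\cA(n)^{\text{Sp}_{2n}}$ by its weight $2$ and $4$ fields, and then spread the conclusion to generic $\ell$ because everything depends rationally on $\ell$. However, your middle step contains a real gap. You claim that generation by $\{L,W^4\}$ is \emph{equivalent} to $\{L, W^4, W^4_{(1)}W^4,\dots\}$ being a strong generating set, i.e.\ to the invertibility of the leading coefficients $a_{4,2i-2}$. Only one direction of this is clear: if the $a_{4,2i-2}$ are invertible, then the free generators can be replaced by the iterated products and generation follows. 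The direction you actually use at $\ell=\ell_0$ is the converse --- that generation of $\cA(n)^{\text{Sp}_{2n}}$ by its weight $2$ and $4$ fields forces each $a_{4,2i-2}(\ell_0)\neq 0$ --- and this is not justified and not obvious: generation means the algebra is spanned by words in $L,W^4$ under \emph{all} products $(n)$, $n\in\mathbb{Z}$, and a priori the weight-$2i$ free generator could be reached through longer words or other modes even if $W^4_{(1)}W^{2i-2}$ degenerates. Without this converse, you cannot conclude that the rational functions $a_{4,2i-2}(\ell)$ are nonzero at $\ell_0$, and the propagation step collapses.

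The gap is repairable, and the paper's own proof shows how to avoid it entirely: work directly with the subalgebra $\cV^{\ell}\subseteq \cW^{\ell}(\gs\gp_{2n},f_{\text{prin}})$ generated by the weight $2$ and $4$ fields over a localization $R$ of $\mathbb{C}[\ell]$. Every word in $L,W^4$ specializes at $\ell_0$ to the corresponding word in $\cW^{\ell_0}(\gs\gp_{2n},f_{\text{prin}})\cong \cA(n)^{\text{Sp}_{2n}}$, so the specialization of $\cV^{\ell}[d]$ surjects onto the full weight-$d$ space at $\ell_0$; since each $\cW^{\ell}(\gs\gp_{2n},f_{\text{prin}})[d]$ is a finite free $R$-module, $\cV^{\ell}[d]$ then has full rank and $\cV^{\ell}$ cannot be proper as a one-parameter vertex algebra. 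This bypasses any analysis of which particular structure constants are responsible for generation. (Alternatively, one could repair your route by citing the stronger form of Theorem 3.2 of \cite{CLII}, which exhibits the iterated products themselves as strong generators of $\cA(n)^{\text{Sp}_{2n}}$, so that the nonvanishing of the $a_{4,2i-2}$ at $\ell_0$ is part of the cited input rather than a consequence of bare generation.)
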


\begin{proof} We regard $\cW^{\ell}(\gs\gp_{2n}, f_{\text{prin}})$ as a vertex algebra over a localization $R$ of the ring $\mathbb{C}[\ell]$. Let $\cV^{\ell} \subseteq\cW^{\ell}(\gs\gp_{2n}, f_{\text{prin}})$ denote the subalgebra generated by the weights $2$ and $4$ fields. If $\cV^{\ell}$ were a proper subalgebra of $\cW^{\ell}(\gs\gp_{2n}, f_{\text{prin}})$ as vertex algebras over $R$, then  $\cV^{-1/2-n}$ would be a proper subalgebra of $\cW^{-1/2 - n}(\gs\gp_{2n}, f_{\text{prin}})$, where both are obtained by replacing $\ell$ with $-1/2 - n$. But this contradicts the fact that $\cA(n)^{\text{Sp}_{2n}}$ is generated by the weights $2$ and $4$ fields. \end{proof}

\begin{remark} \label{arakawa} By the same argument as Proposition A.4 of \cite{ALY}, $\cW^{\ell}(\gs\gp_{2n}, f_{\text{prin}})$ is generated by the weights $2$ and $4$ fields for all noncritical values of $\ell$. We thank T. Arakawa for pointing this out to us.
\end{remark}

\begin{cor} \label{cor:typecrealization} For all $n\geq 2$, $\cW^{\ell}(\gs\gp_{2n}, f_{\text{prin}})$ can be obtained as a quotient of $\cW^{\mathrm{ev}}(c,\lambda)$ of the form $\cW^{\mathrm{ev},I_{n}}_{R_{n}}(c,\lambda) / \cI_{n}$ for some prime ideal $I_{n} \subseteq \mathbb{C}[c,\lambda]$ and some localization $R_{n}$ of $\mathbb{C}[c,\lambda]/I$. Here $\cI_{n}$ denotes the maximal proper graded ideal in $\cW^{\mathrm{ev},I_{n}}_{R_{n}}(c,\lambda)$.
\end{cor}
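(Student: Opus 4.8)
The plan is to follow the proof of Theorem \ref{wprinquot} almost verbatim, replacing the coset $\cC^k(n)$ by the principal $\cW$-algebra and feeding the relevant structural inputs through Theorem \ref{thm:simplequotient}. First I would regard $\cW^{\ell}(\gs\gp_{2n}, f_{\text{prin}})$ as a one-parameter vertex algebra over a suitable localization of $\mathbb{C}[\ell]$, with Virasoro field $L$ of central charge $c(\ell)$ given by \eqref{ccentralcharge} and with $W^4$ the (essentially unique) weight $4$ Virasoro primary. By Corollary \ref{cor:genwt4} it is generated by $L$ and $W^4$, and since it is freely generated of type $\cW(2,4,\dots,2n)$ and simple for generic $\ell$, I may set $W^{2i}=W^4_{(1)}W^{2i-2}$ to obtain a strong generating set $\{L,W^{2i}\mid i\geq 2\}$ with structure constants rational in $\ell$. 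The nondegeneracy $W^4_{(7)}W^4\neq 0$ holds for generic $\ell$, since this coefficient is a nonzero rational function of $\ell$.

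The heart of the argument is to verify hypothesis (2) of Theorem \ref{thm:simplequotient}: that the OPEs $L(z)W^{2i}(w)$ and $W^{2j}(z)W^{2k}(w)$ for $2i\leq 12$ and $2j+2k\leq 14$ agree with those of $\cW^{\mathrm{ev}}(c,\lambda)$ after substituting $c=c(\ell)$ and $\lambda=\lambda(\ell)$ for an appropriate rational function $\lambda(\ell)$. As in the proof of Theorem \ref{thm:simplequotient}, this is equivalent to requiring that all Jacobi identities of type $(W^{2r},W^{2s},W^{2t})$ with $2r+2s+2t\leq 16$ hold as consequences of \eqref{deriv}-\eqref{ncw} alone; granting this, the uniqueness established in Step 1 forces the low-weight structure constants into the universal form, and $\lambda(\ell)$ is then read off from the coefficient $a_3$ via \eqref{deflambda}. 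For $n\geq 7$ this formal Jacobi property is immediate: such an identity can only fail through a null field of weight at most $14$, that is, a normally ordered polynomial relation among $L,W^4,\dots,W^{14}$, and free generation of type $\cW(2,4,\dots,2n)$ with $2n\geq 14$ rules these out.

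For $2\leq n\leq 6$ the generators run out before weight $14$, so decoupling relations could a priori mediate the Jacobi identities, and a direct verification is needed. Here I would invoke Theorem \ref{thomas}, which identifies $\cW^{-1/2-n}(\gs\gp_{2n}, f_{\text{prin}})$ with $\cA(n)^{\text{Sp}_{2n}}$; the formal Jacobi relations of type $(W^{2r},W^{2s},W^{2t})$ with $2r+2s+2t\leq 16$ were already checked by computer in $\cA(n)^{\text{Sp}_{2n}}$ in the discussion preceding Theorem \ref{wprinquot}. Transferring this property from the single level $\ell=-1/2-n$ to generic $\ell$ is the main obstacle: unlike the coset family $\cC^k(n)$, which degenerates to $\cA(n)^{\text{Sp}_{2n}}$ in the limit $k\to\infty$, here $\cA(n)^{\text{Sp}_{2n}}$ is only one member of the family. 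I expect the cleanest route is to verify the finitely many relevant Jacobi relations symbolically in $\ell$ using the free-field (Miura) realization of $\cW^{\ell}(\gs\gp_{2n}, f_{\text{prin}})$ inside the Heisenberg algebra $\pi_0$ employed in the proof of Theorem \ref{thomas}, whose structure constants are explicit rational functions of $\nu=\sqrt{\ell+h^{\vee}}$; alternatively one argues, exactly as for $\cC^k(n)$, that a nonvanishing formal Jacobi relation would persist after a suitable rescaling.

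Granting hypothesis (2), Theorem \ref{thm:simplequotient} applies and shows that $\cW^{\ell}(\gs\gp_{2n}, f_{\text{prin}})$ is the simple quotient, by its maximal graded ideal, of $\cW^{\mathrm{ev},I}_{R}(c,\lambda)$ for the ideal and localization determined by $c(\ell)$ and $\lambda(\ell)$. Finally, to produce the prime ideal $I_n\subseteq\mathbb{C}[c,\lambda]$ I would eliminate $\ell$ exactly as in the proof of Theorem \ref{wprinquot}: form the ideal in $\mathbb{C}[c,\lambda,\ell]$ generated by the relation \eqref{ccentralcharge} between $c$ and $\ell$ together with $g(\ell)\lambda-f(\ell)$, where $\lambda(\ell)=f(\ell)/g(\ell)$, and project to $\mathbb{C}[c,\lambda]$ to obtain $I_n$, with $R_n$ a localization of $\mathbb{C}[c,\lambda]/I_n$ isomorphic to a localization of $\mathbb{C}[\ell]$. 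Simplicity of $\cW^{\ell}(\gs\gp_{2n}, f_{\text{prin}})$ then identifies it with $\cW^{\mathrm{ev},I_n}_{R_n}(c,\lambda)/\cI_n$, as claimed.
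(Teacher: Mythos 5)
Your proposal is correct and follows essentially the same route as the paper: the paper likewise identifies $\cW^{-1/2-n}(\gs\gp_{2n},f_{\text{prin}})$ with $\cA(n)^{\text{Sp}_{2n}}$ via Theorem \ref{thomas}, notes that the generation and low-weight Jacobi properties already established for $\cA(n)^{\text{Sp}_{2n}}$ are inherited by $\cW^{\ell}(\gs\gp_{2n},f_{\text{prin}})$ for generic $\ell$ (the rescaling/specialization argument you give as your second option), and then concludes by Theorem \ref{thm:simplequotient}. The Miura-based symbolic verification you suggest as the ``cleanest route'' is unnecessary --- the specialization at the finite level $\ell=-1/2-n$ is, if anything, easier than the $k\to\infty$ degeneration used for $\cC^k(n)$, and is all the paper uses.
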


\begin{proof} Recall that by Lemma \ref{lem:ckn}, $\cA(n)^{\text{Sp}_{2n}}$ is generated by $L, W^4$, strongly generated by $\{L, W^{2i}|\ i\geq 2\}$, and all Jacobi identities of type $(W^{2i}, W^{2j}, W^{2k})$ hold as consequences of \eqref{deriv}-\eqref{ncw} alone, for $2i+2j+2k \leq 16$. Since $\cA(n)^{\text{Sp}_{2n}} \cong \lim_{\ell \ra -1/2 -n} \cW^{\ell}(\gs\gp_{2n}, f_{\text{prin}})$, these properties are inherited by $\cW^{\ell}(\gs\gp_{2n}, f_{\text{prin}})$ for generic $\ell$. The result then follows from Theorem \ref{thm:simplequotient}. \end{proof}

Note that the OPE algebra is determined by the coefficient of $W^4$ in the fourth order pole of $W^4$ with itself. After rescaling $W^4$ so that its eighth order pole with itself is $\frac{c}{4}$, this coefficient is denoted by $\sqrt{\cC}$ in \cite{H}, and is now only determined up to sign. The explicit formula for $\cC$ appears in Appendix A of \cite{H}. Using this equation and \eqref{ope:second} and \eqref{deflambda}, we can find the explicit generator $p_{n}$ of $I_{n}$ as follows. First, we must rescale the field $W^4$ in $\cW^{\mathrm{ev}}(c,\lambda)$ by the factor 
$$\mu = \frac{21 \sqrt{3}}{8 \sqrt{(c-1) (24 + c) (2c-1) (22 + 5 c) (1 -49\lambda^2 (c-25) (c-1) )}},$$
so that the eighth order pole of $\tilde{W}^4 = \mu W^4$ with itself is $\frac{c}{4}$. Using \eqref{ope:second} and \eqref{deflambda}, we obtain
\begin{equation} \label{form:structconst} \cC = -\frac{21168 \lambda^2 (c-1) (24 + c) (2c-1)}{(22 + 5 c) (-1 + 49 \lambda^2 (c-25)(c-1))}.\end{equation}
Equating these two expressions for $\cC$, after some simplifications we obtain the explicit formula for $p_{n}$, which appears in Appendix A.

\subsection{Zhu functor}
The Zhu functor is a basic tool in the representation theory of vertex algebras \cite{Z}. Given a vertex algebra $\cV$ over $\mathbb{C}$ with weight grading $\cV = \bigoplus_{n\geq 0} \cV[n]$, the Zhu algebra $\text{Zhu}(\cV)$ is a certain vector space quotient of $\cV$ with quotient map  $\pi_{\text{Zhu}}:\cV\ra \text{Zhu}(\cV)$, with the structure of a unital, associative algebra.

If $\cV$ is strongly generated by homogeneous elements $\{\alpha^1, \alpha^2,\dots\}$, $\text{Zhu}(\cV)$ is generated by $\{ a^i = \pi_{\text{Zhu}}(\alpha^i)\}$. An $\mathbb{N}$-graded $\cV$-module $M = \bigoplus_{n\geq 0} M[n]$ is called a {\it positive energy module} if for every $a\in\cV[m]$, $a(n) M_k \subseteq M[m+k -n-1]$, for all $n$ and $k$. For a field $a\in\cV[m]$, $a(m-1)$ acts on each $M[k]$. The subspace $M[0]$ is a $\text{Zhu}(\cV)$-module with action $\pi_{\text{Zhu}}(a)\mapsto a(m-1) \in \text{End}(M[0])$. In fact, $M\mapsto M[0]$ is a bijection between irreducible, positive energy $\cV$-modules and irreducible $\text{Zhu}(\cV)$-modules. If $\text{Zhu}(\cV)$ is commutative, all its irreducible modules are one-dimensional. The corresponding irreducible $\cV$-modules $M = \bigoplus_{n\geq 0} M[n]$ are then cyclic, and will be called {\it highest-weight modules}.

It is known that the Zhu algebra of $\cW^{k}(\gs\gp_{2n}, f_{\text{prin}})$ is independent of $k$ and is isomorphic to the center of $U(\gs\gp_{2n})$, which is the polynomial algebra on the fundamental invariants of $\gs\gp_{2n}$; see \cite{AI}, Theorem 4.16.3.

The Zhu algebra of vertex algebras over a ring $R$ can be defined in a similar way.

\begin{thm} \label{thm:zhu} The Zhu algebra $\text{Zhu}(\cW^{\mathrm{ev}}(c,\lambda))$ is isomorphic to the polynomial algebra
\begin{equation} \label{zhu} \mathbb{C}[c,\lambda] \otimes_{\mathbb{C}} \mathbb{C}[\ell, w^{2i} |\ i\geq 2],\end{equation} where the generators are the images of $\{L, W^{2i}|\ i\geq 2\}$ under the Zhu map. For any ideal $I\subseteq \mathbb{C}[c,\lambda]$, any localization $R$ of $\mathbb{C}[c,\lambda] / I$, and any quotient $\cW^{\mathrm{ev},I}_R(c,\lambda) /\cI$, the Zhu algebra $\text{Zhu}(\cW^{\mathrm{ev},I}_R(c,\lambda) /\cI)$ is a quotient of a localization of \eqref{zhu}, and hence is abelian.\end{thm}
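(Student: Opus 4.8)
The plan is to prove the two assertions separately: first that $\text{Zhu}(\cW^{\mathrm{ev}}(c,\lambda))$ is the stated polynomial algebra, and then that the Zhu algebra of any quotient is a quotient of a localization of it. The first assertion breaks into three parts — generation by the images $[L],[W^{2i}]$, their algebraic independence, and their commutativity — and commutativity is by far the main obstacle, since the Zhu algebra of a freely generated vertex algebra is noncommutative in general (for instance $\text{Zhu}(V^k(\gg))=U(\gg)$). The key idea for commutativity is to exploit the quotients constructed earlier in the paper: by Corollary \ref{cor:typecrealization}, the principal $\cW$-algebras $\cW^{\ell}(\gs\gp_{2n},f_{\text{prin}})$ arise as quotients of $\cW^{\mathrm{ev}}(c,\lambda)$, and their Zhu algebras are known to be the \emph{commutative} centers $Z(U(\gs\gp_{2n}))$ by \cite{AI}.

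\emph{Generation.} Since $\cW^{\mathrm{ev}}(c,\lambda)$ is freely, hence strongly, generated by $\{L,W^{2i}\mid i\geq 2\}$ (Corollary \ref{cor:freegeneration}), the algebra $\text{Zhu}(\cW^{\mathrm{ev}}(c,\lambda))$ is generated over $\mathbb{C}[c,\lambda]$ by the images $[L],[W^{2i}]$. Indeed, modulo $O(\cW^{\mathrm{ev}})$ one has $[\partial a]=-\Delta_a[a]$, so derivatives contribute nothing new, and a normally ordered product $:\!ab\!:\,=a_{(-1)}b$ agrees with the Zhu product $[a]*[b]$ up to terms $a_{(j-1)}b$ with $j\geq 1$, which have strictly smaller conformal weight. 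Induction on weight then expresses every element as a polynomial in $[L],[W^{2i}]$, giving a surjection $\phi$ from the polynomial algebra \eqref{zhu} onto $\text{Zhu}(\cW^{\mathrm{ev}}(c,\lambda))$. Note that since $[\partial a]=-\Delta_a[a]$, each basis monomial $[W^{2k_1}]*\cdots*[W^{2k_m}]$ has \emph{even} weight $\sum 2k_l$.

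\emph{Commutativity and independence.} For generators the Zhu commutator satisfies $[W^{2i}]*[W^{2j}]-[W^{2j}]*[W^{2i}]\equiv \sum_{r\geq 0}\binom{2i-1}{r}[W^{2i}_{(r)}W^{2j}]=:P_{ij}$, an element of weight $\leq 2i+2j-1$; because all monomials in the $[W^{2k}]$ have even weight, $P_{ij}$ in fact involves only the generators $[W^{2k}]$ with $k\leq i+j-1$. To show $P_{ij}=0$, observe that the Zhu functor sends surjections of vertex algebras to surjections of algebras and is compatible with base change along $\mathbb{C}[c,\lambda]\to\mathbb{C}[c,\lambda]/I_n$ and with localization, so the quotient map $\cW^{\mathrm{ev},I_n}_{R_n}(c,\lambda)\to\cW^{\ell}(\gs\gp_{2n},f_{\text{prin}})$ induces an algebra homomorphism $\text{Zhu}(\cW^{\mathrm{ev}}(c,\lambda))\to Z(U(\gs\gp_{2n}))$ carrying $[W^{2k}]$ to the corresponding generator. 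Since the target is commutative, $P_{ij}$ maps to $0$. For $n\geq i+j$ there is no decoupling relation below weight $2n+2$, and $Z(U(\gs\gp_{2n}))$ is a polynomial algebra on $n$ generators of weights $2,4,\dots,2n$; hence the generators of weight $\leq 2i+2j-2$ remain algebraically independent and the homomorphism is injective on that range. Therefore every $\mathbb{C}[c,\lambda]$-coefficient of $P_{ij}$ vanishes on the truncation curve $V(I_n)$, i.e.\ is divisible by the irreducible $p_n$. As $n\geq i+j$ varies, the $p_n$ are pairwise non-associate (the $V(I_n)$ are distinct irreducible curves), and a nonzero polynomial cannot be divisible by infinitely many of them; thus $P_{ij}=0$ and $\text{Zhu}(\cW^{\mathrm{ev}}(c,\lambda))$ is commutative. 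The same specialization argument applied to any putative $\mathbb{C}[c,\lambda]$-linear relation among monomials in $[L],[W^{2i}]$ forces its coefficients to vanish on all $V(I_n)$, proving algebraic independence; hence $\phi$ is injective, and so an isomorphism.

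\emph{Quotients.} For the second assertion, the same functoriality gives $\text{Zhu}(\cW^{\mathrm{ev},I}_R(c,\lambda))\cong R\otimes_{\mathbb{C}[c,\lambda]}\text{Zhu}(\cW^{\mathrm{ev}}(c,\lambda))$, which is a localization modulo $I$ of the polynomial algebra \eqref{zhu}, and then $\text{Zhu}(\cW^{\mathrm{ev},I}_R(c,\lambda)/\cI)=\text{Zhu}(\cW^{\mathrm{ev},I}_R(c,\lambda))/\pi_{\text{Zhu}}(\cI)$ is a quotient of it, hence abelian. The decisive step is the commutativity argument above; the compatibility of the Zhu functor with base change and localization over $\mathbb{C}[c,\lambda]$ is routine in Mason's formalism \cite{Ma} but should be verified carefully, since both the spanning set for $O(\cW^{\mathrm{ev}})$ and the product $*$ are defined by $R$-bilinear operations that behave well under flat base change.
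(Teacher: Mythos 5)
Your proof is correct and follows essentially the same route as the paper: generation from strong generation, and commutativity by specializing to the truncation curves $V(I_n)$ of the type $C$ principal $\cW$-algebras, whose Zhu algebras are the commutative centers $Z(U(\gs\gp_{2n}))$, so that each structure constant of $[w^{2i},w^{2j}]$ is divisible by infinitely many pairwise non-associate irreducibles $p_n$ and hence vanishes. You additionally spell out the injectivity on low weights and the algebraic independence of the generators, which the paper's proof leaves implicit; otherwise the arguments coincide.
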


\begin{proof} Since $\cW^{\mathrm{ev}}(c,\lambda)$ is strongly generated by $\{L, W^{2i}|\ i\geq 2\}$, $\text{Zhu}(\cW^{\mathrm{ev}}(c,\lambda))$ is generated by $\{\ell, w^{2i}|\ i\geq 2\}$. Also, $\ell$ is central. The commutator $[w^{2i}, w^{2j}]$ in the Zhu algebra is expressed in terms of the OPE algebra and hence is a polynomial in $\{\ell, w^{2i}|\ i\geq 2\}$ with structure constants in $\mathbb{C}[c,\lambda]$. Since the Zhu algebra of $\cW^k(\gs\gp_{2n}, f_{\text{prin}})$ is abelian, each structure constant is divisible by the generator $p_{n}$ of the ideal $I_{n}$ in Corollary \ref{cor:typecrealization}. Since $\cW^k(\gs\gp_{2n}, f_{\text{prin}})$ is generated by the weight $4$ field for all $n\geq 2$, the polynomials $p_{n}$ must all be distinct, so all of the above structure constants must vanish. The remaining statements follow from the fact that for any vertex algebra $\cV = \bigoplus_{n\geq 0} \cV[n]$ as above, and any graded ideal $\cI \subseteq \cV$, we have
$\text{Zhu}(\cV/\cI)\cong \text{Zhu}(\cV)/ I$ where $I = \pi_{\text{Zhu}}(\cI)$.
\end{proof}

\begin{cor} For any vertex algebra $\cW$ over $\mathbb{C}$ which arises as a quotient of $\cW^{\mathrm{ev}}(c,\lambda)$, all irreducible, positive energy modules are highest-weight modules, and are parametrized by the variety $\text{Specm}(\text{Zhu}(\cW))$. \end{cor}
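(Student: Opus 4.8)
The plan is to reduce the statement entirely to the general Zhu correspondence recalled in the preceding subsection, using the commutativity of the Zhu algebra established in Theorem \ref{thm:zhu}. Since $\cW$ is a vertex algebra over $\mathbb{C}$ arising as a quotient of $\cW^{\mathrm{ev}}(c,\lambda)$, it has the form $\cW^{\mathrm{ev},I}_R(c,\lambda)/\cI$ for a maximal ideal $I = (c-c_0,\lambda-\lambda_0)$, a suitable localization $R$ of $\mathbb{C}[c,\lambda]/I$, and a graded ideal $\cI$. By Theorem \ref{thm:zhu}, $\text{Zhu}(\cW)$ is then a quotient of a localization of the polynomial algebra \eqref{zhu}; in particular it is a finitely generated \emph{commutative} $\mathbb{C}$-algebra.

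Next I would invoke the fundamental property of the Zhu functor recalled above: the assignment $M \mapsto M[0]$ is a bijection between irreducible positive energy $\cW$-modules and irreducible $\text{Zhu}(\cW)$-modules. Because $\text{Zhu}(\cW)$ is commutative and finitely generated over $\mathbb{C}$, every simple module is of the form $\text{Zhu}(\cW)/\mathfrak{m}$ for some maximal ideal $\mathfrak{m}$, and by the Nullstellensatz (Zariski's lemma) the residue field $\text{Zhu}(\cW)/\mathfrak{m}$ equals $\mathbb{C}$, so each such module is one-dimensional. These maximal ideals are precisely the points of $\text{Specm}(\text{Zhu}(\cW))$, which yields the asserted parametrization of the irreducible positive energy $\cW$-modules.

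Finally, one reads off the highest-weight property directly from the dimension count. If $M[0]$ is one-dimensional, then for a generator $v$ of $M[0]$ the positive energy condition forces all of $M$ to be obtained from $v$ by the action of the modes $a_{(n)}$ with $a \in \cW$, so $M$ is cyclic and hence a highest-weight module in the sense defined above.

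I expect the only point requiring any care to be the passage from ``simple $\text{Zhu}(\cW)$-module'' to ``one-dimensional'', which is exactly where commutativity together with finite generation over $\mathbb{C}$ (and the Nullstellensatz) are used; everything else is a direct citation of the Zhu correspondence and of Theorem \ref{thm:zhu}. There is no serious computational obstacle here, since the genuinely hard work---establishing that the Zhu algebra is abelian via the comparison with $\cW^k(\gs\gp_{2n}, f_{\text{prin}})$---has already been carried out in Theorem \ref{thm:zhu}.
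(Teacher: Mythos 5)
Your argument is correct and is exactly the deduction the paper intends: the corollary is stated without proof as an immediate consequence of Theorem \ref{thm:zhu} together with the facts about the Zhu functor recalled just before it (commutativity of $\text{Zhu}(\cW)$ forces irreducible modules to be one-dimensional, and $M\mapsto M[0]$ is a bijection onto irreducible $\text{Zhu}(\cW)$-modules). Your additional care in invoking finite generation and the Nullstellensatz to pass from ``simple module over a commutative algebra'' to ``one-dimensional'' is a reasonable filling-in of a step the paper takes for granted.
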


\subsection{Poisson structure}
For any vertex algebra $\cV$, we have Li's canonical decreasing filtration
$$F^0(\cV) \supseteq F^1(\cV) \supseteq \cdots.$$ Here $F^p(\cV)$ is spanned by elements of the form
$$:(\partial^{n_1} \alpha^1) (\partial^{n_2} \alpha^2) \cdots (\partial^{n_r} \alpha^r):,$$ 
where $\alpha^1,\dots, \alpha^r \in \cV$, $n_i \geq 0$, and $n_1 + \cdots + n_r \geq p$ \cite{Li-poisson}. Clearly $\cV = F^0(\cV)$ and $\partial F^i(\cV) \subseteq F^{i+1}(\cV)$. Set $$\text{gr}^F(\cV) = \bigoplus_{p\geq 0} F^p(\cV) / F^{p+1}(\cV),$$ and for $p\geq 0$ let 
$$\sigma_p: F^p(\cV) \ra F^p(\cV) / F^{p+1}(\cV) \subseteq \text{gr}^F(\cV)$$ be the projection. Then $\text{gr}^F(\cV)$ is a graded commutative algebra with product
$$\sigma_p(\alpha) \sigma_q(\beta) = \sigma_{p+q}(\alpha_{(-1)} \beta),$$ for $\alpha \in F^p(\cV)$ and $\beta \in F^q(\cV)$. There is a differential $\partial$ on $\text{gr}^F(\cV)$ defined by $$\partial( \sigma_p(\alpha) ) = \sigma_{p+1} (\partial \alpha),\qquad \alpha \in F^p(\cV).$$ Finally, $\text{gr}^F(\cV)$ has a Poisson vertex algebra structure \cite{Li-poisson}; for $n\geq 0$, $\alpha \in F^p(\cV)$, and $\beta\in F^q(\cV)$, define
$$\sigma_p(\alpha)_{(n)} \sigma_q(\beta) = \sigma_{p+q-n} \alpha_{(n)} \beta.$$
 
The subalgebra $F^0(\cV) / F^1(\cV)$ is isomorphic to Zhu's commutative algebra $C(\cV)$ \cite{Z}, and is known to generate $\text{gr}^F(\cV)$ as a differential graded algebra \cite{Li-poisson}. We change our notation slightly and denote by $\bar{\alpha}$ the image of $\alpha$ in $C(\cV)$. It is a Poisson algebra with product $\bar{\alpha} \cdot \bar{\beta} = \overline{\alpha_{(-1)} \beta}$ and Poisson bracket $\{ \bar{\alpha}, \bar{\beta}\} = \overline{\alpha_{(0)} \beta}$. Also, if the Poisson bracket on $C(\cV)$ is trivial, the Poisson vertex algebra structure on $\text{gr}^F(\cV)$ is also trivial in the sense that for all $a \in F^p(\cV)$, $b\in F^q(\cV)$ and $n\geq 0$, $\sigma_p(a)_{(n)} \sigma_q(b) = 0$; see Remark 4.32 of \cite{AMII}.

\begin{thm} 
For any vertex algebra $\cW =  \cW^{\mathrm{ev},I}_R(c,\lambda) / \cI$ for some $I$ and $R$ as above, the Poisson structure on $C(\cW)$ and the Poisson vertex algebra structure on $\text{gr}^F(\cW)$ are trivial.
 \end{thm}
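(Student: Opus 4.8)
The plan is to reduce the entire statement to the vanishing of the Poisson bracket on the commutative algebra $C(\cW)$. By the result quoted above from \cite{AMII} (Remark 4.32), if the bracket $\{\bar\alpha,\bar\beta\}=\overline{\alpha_{(0)}\beta}$ is identically zero on $C(\cW)$, then the Poisson vertex algebra structure on $\text{gr}^F(\cW)$ is automatically trivial as well; since the filtration, the product, and the bracket are all defined $R$-linearly, this implication applies verbatim to vertex algebras over $R$. So it suffices to prove $\{\,,\,\}\equiv 0$ on $C(\cW)$.

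The key observation is that $C(\cW)$ is concentrated in even conformal weights. Indeed, $\cW=\cW^{\mathrm{ev},I}_R(c,\lambda)/\cI$ is strongly generated by the fields $\{L,W^{2i}\}$, all of which have even conformal weight, so $\cW=F^0(\cW)$ is spanned by normally ordered monomials $:(\partial^{n_1}\alpha^1)\cdots(\partial^{n_r}\alpha^r):$ with each $\alpha^j\in\{L,W^{2i}\}$. Such a monomial has weight $\sum_j(\Delta(\alpha^j)+n_j)$; as each $\Delta(\alpha^j)$ is even, any monomial of odd total weight must have $\sum_j n_j$ odd, hence $\sum_j n_j\geq 1$, placing it in $F^1(\cW)$. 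Thus the odd-weight part of $F^0(\cW)$ is contained in $F^1(\cW)$, and $C(\cW)=F^0(\cW)/F^1(\cW)$ vanishes in all odd weights.

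Finally, I would exploit the fact that the Poisson bracket lowers conformal weight by one: for homogeneous $\bar\alpha,\bar\beta$ of weights $d_\alpha,d_\beta$, the element $\alpha_{(0)}\beta$ has weight $d_\alpha+d_\beta-1$, so $\{\bar\alpha,\bar\beta\}\in C(\cW)[d_\alpha+d_\beta-1]$. If $d_\alpha,d_\beta$ are both even this target space is zero by the previous step, and if either is odd then $\bar\alpha$ or $\bar\beta$ already vanishes; by bilinearity the bracket is identically zero. Equivalently, one may check this directly on generators: from \eqref{eq:bij}, $\{\bar W^{2i},\bar W^{2j}\}=\overline{b_{2i,2j}\partial W^{2i+2j-2}+D_{2i,2j}}$, where $\partial W^{2i+2j-2}\in F^1$ and $D_{2i,2j}$ has odd weight $2i+2j-1$ hence also lies in $F^1$, while $\{\bar L,\bar W^{2j}\}=\overline{\partial W^{2j}}=0$; the Leibniz rule then propagates this to all of $C(\cW)$. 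I do not expect a genuine obstacle here: the whole argument rests on the single structural feature that $\cW$ is generated by even-weight fields, which forces every relevant odd-weight bracket into $F^1$. The only point demanding minor care is confirming that the cited implication from \cite{AMII} transfers to the relative setting over $R$, which it does since every ingredient is $R$-linear.
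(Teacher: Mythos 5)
Your proof is correct and rests on the same key observation as the paper's: since all strong generators have even conformal weight, any normally ordered monomial of odd weight must carry a derivative and hence lies in $F^1(\cW)$, so the weight-$(2j+2k-1)$ element $W^{2j}_{(0)}W^{2k}$ dies in $C(\cW)$. Your packaging via the vanishing of $C(\cW)$ in odd weights is a mild variant, but the argument is essentially the paper's.
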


\begin{proof} Since $C(\cW)$ is generated by $\{\bar{L}, \bar{W}^{2i}|\ i \geq 2\}$ and $\{\bar{L},-\}$ acts trivially on $C(\cW)$, it suffices to show that$\{\bar{W}^{2j}, \bar{W}^{2k}\} = 0$ for all $j,k \geq 2$. But $W^{2j}_{(0)} W^{2k}$ is a sum of normally ordered monomials in $\{L,W^{2i}|\ 2\leq i \leq j+k -1\}$ of odd weight $2j+2k-1$. Therefore each monomial must lie in $F^1(\cW)$, so that $\overline{W^{2j}_{(0)} W^{2k}} = 0$. \end{proof}

\section{Principal $\cW$-algebras of type $D$}
Let $\cW^{\ell}(\gs\go_{2n}, f_{\text{prin}})$ denote the principal $\cW$-algebra at level $\ell$ associated to $\gs\go_{2n}$ for $n\geq 3$. It has central charge
\begin{equation} \label{dcentralcharge} c = -\frac{n (5 - 10 n + 4 n^2 - 2 \ell + 2 n \ell) (4 - 8 n + 4 n^2 - \ell + 2 n \ell)}{\ell + 2 n -2 },\end{equation} and has a strong generator in each weight $2,4,\dots, 2n-2$ and $n$. 

Consider the coset
\begin{equation} \label{eq:dkn} \cD^k(n) =  \text{Com}(V^{k+1}(\gs\go_{2n}), V^{k}(\gs\go_{2n}) \otimes L_1(\gs\go_{2n})).\end{equation}
By Theorem 8.7 of \cite{ACL}, we have
\begin{equation} \cW^{\ell}(\gs\go_{2n}, f_{\text{prin}}) = \cD^k(n), \qquad \ell = -(2n-2) + \frac{k+ 2n-2}{k+2n-1}.\end{equation}

\begin{cor}\label{cor:typeD} $\cW^{\ell}(\gs\go_{2n}, f_{\text{prin}})$ has an action of $\mathbb{Z}_2$, and the orbifold $\cW^{\ell}(\gs\go_{2n}, f_{\text{prin}})^{\mathbb{Z}_2}$ is simple, of type $\cW(2,4,\dots, 4n)$, and generated by the weight $4$ field.\end{cor}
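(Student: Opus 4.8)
The plan is to obtain the $\mathbb{Z}_2$-action from the order-two Dynkin diagram automorphism $\sigma$ of $D_n$, which is an outer automorphism of $\gs\go_{2n}$ realized by conjugation by an element of $\mathrm{O}_{2n}\setminus \mathrm{SO}_{2n}$. Choosing a principal $\gs\gl_2$-triple fixed by $\sigma$, this descends to an order-two automorphism of $\cW^{\ell}(\gs\go_{2n}, f_{\text{prin}})$. On the free generating set of type $\cW(2,4,\dots,2n-2,n)$ I would verify that $\sigma$ fixes each generator in weights $2,4,\dots,2n-2$ (these come from the $\mathrm{O}_{2n}$-invariant Casimirs $\mathrm{tr}(x^{2i})$) and negates the weight-$n$ Pfaffian generator $P$ (which is $\mathrm{SO}_{2n}$- but not $\mathrm{O}_{2n}$-invariant). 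Then $\cW^{\ell}(\gs\go_{2n}, f_{\text{prin}})^{\mathbb{Z}_2}$ is the fixed subalgebra.

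Regarding $\cW^{\ell}(\gs\go_{2n}, f_{\text{prin}})=\cD^k(n)$ as a one-parameter vertex algebra over a localization $R$ of $\mathbb{C}[\ell]$, it is simple for generic $\ell$, so simplicity of the orbifold follows from the theorem that the orbifold of a simple vertex algebra by a finite automorphism group is simple \cite{DLM}. Next I would describe the fixed-point space concretely: since $\sigma$ negates only $P$, it is spanned by normally ordered monomials in $L,W^4,\dots,W^{2n-2},P$ and their derivatives containing an even number of factors of $P$. Thus the orbifold is the even subalgebra generated by $\{L,W^4,\dots,W^{2n-2}\}$ together with a ``$P$-bilinear sector'' whose new quasi-primary fields may be taken to be $\Omega_{2j}=\,:\!P\,\partial^{2j}P\!:+(\text{lower})$ of weight $2n+2j$; the products $:\!P\,\partial^{2j+1}P\!:$ reduce to derivatives of the $\Omega$'s and contribute no new generators.

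To get the generation and type statements together I would run the recursion $W^{2i}=W^4_{(1)}W^{2i-2}$ and show it reaches every generator. For $2i\le 2n-2$ it reproduces the even type-$D$ generators, and the key point is that $W^4_{(1)}W^{2n-2}$ has a nonzero $:\!PP\!:$-component, so it equals $\Omega_0$ modulo the even subalgebra: the recursion enters the $P$-sector at weight $2n$ and then climbs through $\Omega_2,\Omega_4,\dots$. Granting this, the orbifold is simple and generated by $L$ and the unique weight-$4$ primary $W^4$, and its low-weight OPEs are forced by the Jacobi identities to coincide with those of $\cW^{\mathrm{ev}}(c,\lambda)$ once $\lambda$ is fixed by the weight-$4$ structure constant (exactly as in the proof of Theorem \ref{wprinquot}). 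By Theorem \ref{thm:simplequotient} it is then the simple quotient of $\cW^{\mathrm{ev},I}_R(c,\lambda)$ for a suitable prime $I$ and localization $R$, and by Theorem \ref{decoup} such a quotient is of type $\cW(2,4,\dots,2N)$, where $2N+2$ is the weight of the first decoupling singular vector \eqref{eq:decoup}.

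It remains to pin $N=2n$, i.e. to locate the first decoupling in weight $4n+2$. I would read this off the graded character of the orbifold, obtained from the free character of $\cW^{\ell}(\gs\go_{2n}, f_{\text{prin}})$ twisted by $\sigma$, namely $\chi^{\mathbb{Z}_2}(q)=\big(\prod_{i=1}^{n-1}\prod_{k\geq 0}(1-q^{2i+k})^{-1}\big)\cdot\tfrac12\big(\prod_{k\geq 0}(1-q^{n+k})^{-1}+\prod_{k\geq 0}(1+q^{n+k})^{-1}\big)$, and compare the monomial count in the candidate generators weight by weight: genuine generators are needed in every even weight up to $4n$, while $\Omega_{2n+2}$ (equivalently $W^{4n+2}$) first becomes expressible in the lower generators. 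I expect this quantum truncation at $4n$ to be the main obstacle, because in the associated graded $\mathrm{gr}^F$ the $P$-bilinears form an infinite tower with no truncation, so the finite generation is invisible classically and emerges only after the normal-ordering corrections are taken into account. I would resolve it through the universal property of $\cW^{\mathrm{ev}}(c,\lambda)$ supplemented by explicit verification in low rank together with an interpolation (deformable-family) argument in $n$, which simultaneously produces the decoupling vector \eqref{eq:decoup} at weight $4n+2$ and shows that no relation occurs earlier.
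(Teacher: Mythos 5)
Your identification of the $\mathbb{Z}_2$-action (the outer automorphism of $D_n$ fixing the even-weight generators and negating the Pfaffian-type weight-$n$ generator $P$) agrees with the action the paper obtains from the $\mathrm{O}_{2n}$ versus $\mathrm{SO}_{2n}$ symmetry of the coset realization $\cD^k(n)$, and your description of the fixed subalgebra as the even-$P$ sector with new quasi-primaries $\Omega_{2j}$ is correct. The problem is the step you yourself flag as the main obstacle: proving that the tower $\Omega_0,\Omega_2,\dots$ truncates, i.e.\ that a decoupling relation first occurs in weight $4n+2$, so that the orbifold is of type $\cW(2,4,\dots,4n)$. Your proposed resolution --- explicit verification in low rank plus an ``interpolation (deformable-family) argument in $n$'' --- does not work: deformable-family arguments in this subject run over the level $k$ (or the central charge) with the rank fixed, and there is no mechanism for propagating a normally ordered decoupling relation in weight $4n+2$ from small $n$ to general $n$; the relation is a different element of a different algebra for each $n$. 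A character comparison cannot close the gap either, since knowing that the dimension of the weight-$(4n+2)$ subspace equals the number of normally ordered monomials in $L,W^4,\dots,W^{4n}$ does not show that the specific field $W^{4n+2}$ lies in their span; the same issue affects your unproven claim that $W^4_{(1)}W^{2n-2}$ has a nonzero $:\!PP\!:$-component.

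The paper avoids all of this by a free-field limit in $k$ rather than any argument in $n$: from the coset realization $\cD^k(n)=\mathrm{Com}(V^{k+1}(\gs\go_{2n}),V^k(\gs\go_{2n})\otimes L_1(\gs\go_{2n}))$ one has $\lim_{k\to\infty}\cD^k(n)\cong L_1(\gs\go_{2n})^{\mathrm{O}_{2n}}\cong F(2n)^{\mathrm{O}_{2n}}$, the $\mathrm{O}_{2n}$-orbifold of the rank-$2n$ free fermion algebra, and the facts that $F(2n)^{\mathrm{O}_{2n}}$ is simple, of type $\cW(2,4,\dots,4n)$, and generated by the weight-$4$ field are already established in \cite{LII} (via Weyl's fundamental theorems of invariant theory for $\mathrm{O}_{2n}$ together with an analysis of the quantum corrections) and in \cite{KR,DLM}. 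These properties then pass to $\cD^k(n)$ for generic $k$. To complete your argument without this limit you would have to prove the truncation of the $P$-bilinear tower directly, which amounts to redoing the content of \cite{LII}.
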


\begin{proof} It is apparent that the action of $\text{SO}_{2n}$ on $V^{k}(\gs\go_{2n}) \otimes L_1(\gs\go_{2n})$ which is infinitesimally generated by the zero modes of $V^{k+1}(\gs\go_{2n})$, extends to an $\text{O}_{2n}$-action on $V^{k}(\gs\go_{2n}) \otimes L_1(\gs\go_{2n})$. This yields the $\mathbb{Z}_2$ action on $\cD^k(n)$. We have
$$\text{lim}_{k \ra \infty} \cD^k(n) \cong L_1(\gs\go_{2n})^{\text{O}_{2n}} \cong F(2n)^{\text{O}_{2n}},$$ where $F(2n)$ denotes the rank $2n$ free fermion algebra. By Theorems 6 and 9 of \cite{LII}, $F(2n)^{\text{O}_{2n}}$ is of type $\cW(2,4,\dots, 4n)$, and it is easily seen to be generated by the weight $4$ field. It is simple by \cite{KR,DLM}. Finally, these properties are inherited by $\cD^k(n)$ for generic values of $k$.\end{proof}

\begin{cor} \label{cor:typedrealization} For all $n\geq 3$, $\cW^{\ell}(\gs\go_{2n}, f_{\text{prin}})^{\mathbb{Z}_2}$ can be obtained as a quotient 
$\cW^{\mathrm{ev},J_{n}}_{R_{n}}(c,\lambda)/ \cJ_{n}$ for some ideal $J_{n} \subseteq \mathbb{C}[c,\lambda]$ and some localization $R_{n}$ of $\mathbb{C}[c,\lambda] / J_{n}$. Here $\cJ_{n}$ denotes the maximal proper graded ideal of $\cW^{\mathrm{ev},J_{n}}_{R_{n}}(c,\lambda)$. \end{cor}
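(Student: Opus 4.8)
The plan is to verify that $\cW^{\ell}(\gs\go_{2n}, f_{\text{prin}})^{\mathbb{Z}_2}$, regarded as a one-parameter vertex algebra over a localization of $\mathbb{C}[\ell]$, satisfies the hypotheses of Theorem \ref{thm:simplequotient}, following verbatim the strategy used for the type $C$ statement in Corollary \ref{cor:typecrealization}. Corollary \ref{cor:typeD} already supplies most of the input: the orbifold is simple, of type $\cW(2,4,\dots,4n)$, and generated by its weight $4$ field. Taking $\tilde L$ to be the Virasoro field and $\tilde W^4$ the unique (up to scale) weight $4$ Virasoro primary, hypothesis (1) of Theorem \ref{thm:simplequotient} holds. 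The only remaining point is hypothesis (2): that the OPEs of $\tilde L(z)\tilde W^{2i}(w)$ and $\tilde W^{2j}(z)\tilde W^{2k}(w)$ for $2i\le 12$ and $2j+2k\le 14$ agree with those of $\cW^{\mathrm{ev}}(c,\lambda)$. By the proof of Theorem \ref{thm:simplequotient}, this is equivalent to requiring that all Jacobi identities of type $(\tilde W^{2i},\tilde W^{2j},\tilde W^{2k})$ with $2i+2j+2k\le 16$ hold as consequences of \eqref{deriv}--\eqref{ncw} alone.

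To establish this Jacobi condition I would argue in the free-fermion limit and then transport the conclusion to generic $\ell$, mirroring Lemma \ref{lem:ckn} and the discussion preceding Theorem \ref{wprinquot}. By Corollary \ref{cor:typeD} one has $\lim_{k\ra\infty}\cW^{\ell}(\gs\go_{2n},f_{\text{prin}})^{\mathbb{Z}_2}\cong F(2n)^{\text{O}_{2n}}$, again generated by $\{L,W^{2i}\}$ with $W^{2i}=W^4_{(1)}W^{2i-2}$. A failed Jacobi identity of type $(W^{2i},W^{2j},W^{2k})$ with $2i+2j+2k\le 16$ produces a null field of weight $2i+2j+2k-2\le 14$, that is, a nontrivial normally ordered polynomial relation among the generators $\{L,W^{2i}\mid 2\le i\le 7\}$. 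Since $F(2n)^{\text{O}_{2n}}$ is freely generated of type $\cW(2,4,\dots,4n)$, these generators are free precisely when $4n\ge 14$, i.e. $n\ge 4$; hence for $n\ge 4$ every such Jacobi identity holds automatically. This leaves only the single case $n=3$, where $4n=12$ and a relation can occur in weight $14$, which I would check directly by computer in $F(6)^{\text{O}_{6}}$, exactly as was done for $\cA(n)^{\text{Sp}_{2n}}$ in the small cases. This property is then inherited by $\cW^{\ell}(\gs\go_{2n},f_{\text{prin}})^{\mathbb{Z}_2}$ for generic $\ell$, since after suitable rescaling a nonvanishing Jacobi relation among the corresponding fields would survive the limit $k\ra\infty$ and contradict its vanishing in $F(2n)^{\text{O}_{2n}}$.

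With hypotheses (1) and (2) in hand, Theorem \ref{thm:simplequotient} applies and identifies $\cW^{\ell}(\gs\go_{2n},f_{\text{prin}})^{\mathbb{Z}_2}$ with the simple quotient $\cW^{\mathrm{ev},J_n}_{R_n}(c,\lambda)/\cJ_n$ for a suitable prime ideal $J_n\subseteq\mathbb{C}[c,\lambda]$ and localization $R_n$. Concretely, as in the proof of Theorem \ref{wprinquot}, matching the structure constant in the fourth-order pole of $\tilde W^4$ with itself expresses both $c$ (via \eqref{dcentralcharge}) and $\lambda$ as rational functions of the level $\ell$; eliminating $\ell$ from the ideal generated by these relations in $\mathbb{C}[c,\lambda,\ell]$ yields the prime ideal $J_n$, and $R_n$ is the localization of $\mathbb{C}[c,\lambda]/J_n$ obtained by inverting the relevant denominators, so that $R_n$ is isomorphic to a localization $D^{-1}\mathbb{C}[\ell]$.

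The main obstacle is the Jacobi verification of the second paragraph, and specifically the invariant-theoretic bookkeeping for $\text{O}_{2n}$: one must confirm, via Weyl's second fundamental theorem of invariant theory for the standard representation of $\text{O}_{2n}$ \cite{We}, that the first normally ordered relation among the low-weight generators of $F(2n)^{\text{O}_{2n}}$ occurs in weight $4n+2$, which fixes the threshold $n\ge 4$ and isolates $n=3$ as the unique case requiring the computer check. Here it is essential to keep track of the distinction between $\text{O}_{2n}$ and $\text{SO}_{2n}$ invariants, since the $\text{SO}_{2n}$-invariant Pfaffian-type field of weight $n$ is precisely what is removed upon passing to the $\mathbb{Z}_2$-orbifold; this is what makes the $\text{O}_{2n}$ version of the theorem, rather than the $\text{SO}_{2n}$ version, the relevant one and what accounts for the type $\cW(2,4,\dots,4n)$. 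A secondary point requiring care is confirming that the generating weight $4$ field of the orbifold is genuinely Virasoro primary, and that the limit in Corollary \ref{cor:typeD} is compatible with the $\mathbb{Z}_2$-orbifold in the form used above.
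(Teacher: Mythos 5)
Your proposal is correct and follows essentially the same route as the paper, whose proof of this corollary is simply the one-line remark that it is similar to the type $C$ case (Corollary \ref{cor:typecrealization}): pass to the limit $F(2n)^{\text{O}_{2n}}$, verify generation by the weight $4$ primary and the low-weight Jacobi identities there (automatically for $n\geq 4$ by absence of relations below weight $4n+2$, by computer for $n=3$), transfer to generic $\ell$, and apply Theorem \ref{thm:simplequotient}. Your expansion of the threshold analysis and of the $\text{O}_{2n}$ versus $\text{SO}_{2n}$ distinction is exactly the bookkeeping the paper leaves implicit.
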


\begin{proof} This is similar to the proof of Corollary \ref{cor:typecrealization}.
\end{proof}

As in the case of type $C$ principal $\cW$-algebras, the explicit generator of $J_{n}$ can be deduced from calculations of Hornfeck \cite{H}.
\begin{thm} \label{thm:prinw} 
For $n\geq 3$, let
\begin{equation} \label{ideal:wslna} \begin{split} q_{n} & =  7 \lambda (c-1) (22 + 5 c) (-3 c + 10 n - 2 c n - 19 n^2 + 2 c n^2 + 12 n^3) \\ & -(-6 c - 9 c^2 - 28 n + 120 c n - 2 c^2 n + 10 n^2 - 147 c n^2 + 2 c^2 n^2 + 24 n^3 + 36 c n^3). \end{split} \end{equation}
Then
\begin{enumerate}
\item $q_{n}$ is an irreducible factor of $\mathrm{det}_{4n+2}$ of level $4n+2$, and generates the ideal $J_{n} \subseteq \mathbb{C}[c,\lambda]$.

\item There exists a localization $R_{n}$ of $\mathbb{C}[c,\lambda] / J_{n}$ in which $(c-1)$,  $(22 + 5 c)$, and  $(-3 c + 10 n - 2 c n - 19 n^2 + 2 c n^2 + 12 n^3)$ are invertible, so that  
\begin{equation} \label{ideal:solvelambda} \lambda =  \frac{-6 c - 9 c^2 - 28 n + 120 c n - 2 c^2 n + 10 n^2 - 147 c n^2 + 2 c^2 n^2 + 24 n^3 + 36 c n^3}{7 (c-1) (22 + 5 c) (-3 c + 10 n - 2 c n - 19 n^2 + 2 c n^2 + 12 n^3)}\end{equation}
 in $R_{n}$, and there is a unique singular vector of weight $4n+2$
 \begin{equation} \label{sing:gen} W^{4n+2} - P_{4n+2}(L, W^4,\dots, W^{4n}).\end{equation} 
 \item We have an isomorphism $$\cW^{\mathrm{ev},J_{n}}_{R_{n}}(c,\lambda)/ \cJ_{n} \cong \cW^{\ell}(\gs\go_{2n}, f_{\text{prin}})^{\mathbb{Z}_2},$$ where $\cJ_{n}$ denotes the maximal proper graded ideal, and $c$ and $\lambda$ are related to $\ell$ as above.
\end{enumerate}
\end{thm}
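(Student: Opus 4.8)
The plan is to pin down the explicit generator $q_{n}$ of the ideal $J_{n}$ produced in Corollary \ref{cor:typedrealization} by matching structure constants, exactly as in the type $C$ computation above, and then to read off the remaining assertions. By Corollary \ref{cor:typedrealization} we already know that $\cW^{\ell}(\gs\go_{2n}, f_{\text{prin}})^{\mathbb{Z}_2}$ is the simple quotient $\cW^{\mathrm{ev},J_n}_{R_n}(c,\lambda)/\cJ_n$ for some prime $J_n = (q_n)$, so it remains only to identify $q_n$, to verify that it has level $4n+2$, is irreducible, and produces a singular vector of the stated form; part (3) will then be automatic.

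First I would parametrize the truncation curve $V(J_n)$ by the level: its central charge is the rational function $c(\ell)$ of \eqref{dcentralcharge}, and the remaining coordinate $\lambda(\ell)$ is determined by a single structure constant. Normalizing the unique weight-$4$ Virasoro primary $W^4$ so that its eighth-order self-pole equals $\frac{c}{4}$, I would take the normalized self-coupling $\cC = (C^4_{44})^2$ from Hornfeck's tables \cite{H}, evaluated at the central charge \eqref{dcentralcharge}, and equate it with the universal expression \eqref{form:structconst}. Solving gives $\lambda^2$ as a rational function of $c$ and $n$, hence $\lambda(\ell)$ up to sign. The sign must be fixed separately, because $\lambda \mapsto -\lambda$ does not preserve the OPE algebra (Remark \ref{rem:d28twochoicesauto}); the cleanest way is to match the coefficient $d_{28}$ of $:L\partial W^6:$ in $W^4(z)W^6(w)$, whose universal value appears in \eqref{deflambda} and which is odd in $\lambda$ (equivalently, to evaluate the sign at a single point of the curve and propagate by irreducibility). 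This selects $\lambda$ as in \eqref{ideal:solvelambda}; clearing denominators yields the polynomial $q_n$.

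It then remains to verify the structural claims. Irreducibility of $q_n$ in $\mathbb{C}[c,\lambda]$ reduces, since $q_n$ is linear in $\lambda$, to checking that its $\lambda$-coefficient $7(c-1)(22+5c)(-3c+10n-2cn-19n^2+2cn^2+12n^3)$ and its $\lambda$-free term are coprime as polynomials in $c$; as $q_n$ vanishes identically along the parametrized curve, $V(q_n)$ then coincides with $V(J_n)$ and $J_n = (q_n)$. For the level, I would use that the orbifold is freely generated of type $\cW(2,4,\dots,4n)$ for generic $\ell$, inherited from the free generation of the large-level limit $F(2n)^{\mathrm{O}_{2n}}$ (Theorems 6 and 9 of \cite{LII}) as in Corollary \ref{cor:typeD}. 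Free generation forces $\cJ_n$ to vanish in every weight below $4n+2$ and to be one-dimensional in weight $4n+2$; equivalently, $q_n$ divides $\mathrm{det}_{4n+2}$ but none of $\mathrm{det}_m$ with $m<4n+2$, which is part (1), and the weight-$(4n+2)$ singular vector is unique up to scale. Since the orbifold is of type $\cW(2,4,\dots,4n)$ rather than $\cW(2,4,\dots,4n+2)$, the coefficient of $W^{4n+2}$ in this singular vector is nonzero; localizing $R_n$ so that $(c-1)$, $(22+5c)$ and $(-3c+10n-2cn-19n^2+2cn^2+12n^3)$ — and hence this coefficient — become invertible puts it into the normalized shape \eqref{sing:gen} with $\lambda$ given by \eqref{ideal:solvelambda}, establishing part (2). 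Part (3) is immediate from Corollary \ref{cor:typedrealization} and Theorem \ref{thm:simplequotient} once $J_n$ and its localization are identified. The main obstacle is the structure-constant bookkeeping: extracting $\cC$ for the type $D$ orbifold from \cite{H}, carrying out the elimination to recover precisely the stated $q_n$, and correctly fixing the sign of $\lambda$.
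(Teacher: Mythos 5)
Your proposal follows essentially the same route as the paper's proof: equate Hornfeck's normalized self-coupling $\cC$ (Equation 4.4 of \cite{H}, i.e.\ \eqref{form:structconstD}) with the universal expression \eqref{form:structconst}, solve for $\lambda$ to obtain \eqref{ideal:solvelambda} up to sign, and fix the sign via Remark \ref{rem:d28twochoicesauto}. The paper's own proof is in fact terser than yours---it leaves the irreducibility, level, and localization checks implicit---and the arguments you supply for those points (linearity of $q_n$ in $\lambda$ plus coprimality of its coefficients, and the character comparison forcing $\cJ_n$ to vanish below weight $4n+2$ and be of rank one there) are sound.
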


\begin{proof}
If the generator $W^4$ of $\cW^{\ell}(\gs\go_{2n}, f_{\text{prin}})^{\mathbb{Z}_2}$ is normalized so its eighth order pole with itself is $\frac{c}{4}$, the coefficient of $W^4$ appearing in the fourth order pole of $W^4$ with itself, denoted by $\sqrt{\cC}$ in \cite{H}, is determined by
\begin{equation} \label{form:structconstD} \begin{split}& \cC = \\ & \frac{36 (-6 c - 9 c^2 - 28 n + 120 c n - 2 c^2 n + 10 n^2 - 147 c n^2 + 2 c^2 n^2 + 24 n^3 + 36 c n^3)^2}{(22 + 5 c) (2 + c - 7 n + c n + 3 n^2) (c - 10 n + 2 c n + 16 n^2) (6 c - 5 n - 7 c n + 4 n^2 + 2 c n^2)};\end{split} \end{equation} see Equation 4.4 of \cite{H}. As before, $\cC$ is also given by \eqref{form:structconst}. Equating \eqref{form:structconst} and \eqref{form:structconstD} and solving for $\lambda$ yields \eqref{ideal:solvelambda} up to sign. Finally, by Remark \ref{rem:d28twochoicesauto}, only one choice of sign, namely, the one given by \eqref{ideal:solvelambda}, is consistent with the OPE relations in Section \ref{section:main}. \end{proof}

\section{Some other one-parameter quotients of $\cW^{\mathrm{ev}}(c,\lambda)$}

In the introduction, we mentioned a few other one-parameter vertex algebras of type $\cW(2,4,\dots,2N)$ arising as cosets of affine vertex algebras inside other structures. Here we show that they can all be obtained as quotients of $\cW^{\mathrm{ev}}(c,\lambda)$ as above.

\begin{thm} \label{thm:otherquotients} Consider the following coset vertex algebras
\begin{equation} \begin{split} 
& \cE^k(n) = \text{Com}(V^{k}(\gs\gp_{2n}), V^{k+1/2}(\gs\gp_{2n}) \otimes L_{-1/2}(\gs\gp_{2n})),\qquad n\geq 1,
\\ & \cF^k(n) = \text{Com}(V^{k+1/2}(\gs\gp_{2n-2}), \cW^k(\gs\gp_{2n}, f_{\text{min}})),\qquad n\geq 2,
\end{split} \end{equation}
These vertex algebras can all be obtained in the form $\cW^{\mathrm{ev},I}_R(c,\lambda)$, for some prime ideal $I \subseteq \mathbb{C}[c,\lambda]$, and some localization $R$ of $\mathbb{C}[c,\lambda]/I$.
\end{thm}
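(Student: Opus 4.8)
The plan is to verify, for each family and for generic $k$, the hypotheses of Theorem \ref{thm:simplequotient}, exactly as was done for $\cC^k(n)$ before Theorem \ref{wprinquot} and for the principal $\cW$-algebras in Corollaries \ref{cor:typecrealization} and \ref{cor:typedrealization}. Regarding each coset as a one-parameter vertex algebra over a localization of $\mathbb{C}[k]$ with central charge a rational function $c(k)$, it suffices to establish that for generic $k$ the coset is (i) generated by its Virasoro field $L$ and a weight $4$ primary field $W^4$, (ii) simple, and (iii) such that all Jacobi identities of type $(W^{2i}, W^{2j}, W^{2k})$ with $2i+2j+2k \leq 16$ hold as consequences of \eqref{deriv}--\eqref{ncw} alone. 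Granting these, Theorem \ref{thm:simplequotient} produces a rational function $\lambda(k)$ matching the OPE relations, and eliminating $k$ between the defining equation of $c(k)$ and $\lambda - \lambda(k)$ yields a prime ideal $I = (p) \subseteq \mathbb{C}[c,\lambda]$ and a localization $R$ of $\mathbb{C}[c,\lambda]/I$ realizing the coset as the simple quotient of $\cW^{\mathrm{ev},I}_R(c,\lambda)$.

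First I would compute the large-level limit of each family by the deformable-families method used before Theorem \ref{wprinquot}. For $\cE^k(n)$, the diagonal copy $V^k(\gs\gp_{2n})$ sits at level $(k+\tfrac12)+(-\tfrac12)=k$ inside $V^{k+1/2}(\gs\gp_{2n}) \otimes L_{-1/2}(\gs\gp_{2n})$, and letting $k\to\infty$ with the fixed tensor factor held fixed gives $\lim_{k\to\infty}\cE^k(n) \cong L_{-1/2}(\gs\gp_{2n})^{\text{Sp}_{2n}}$, which via the rank $n$ symplectic boson realization of $L_{-1/2}(\gs\gp_{2n})$ is an orbifold of a free-field algebra under $\text{Sp}_{2n}$. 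For $\cF^k(n)$, the corresponding limit is the $\text{Sp}_{2n-2}$-orbifold of the free-field algebra carrying the weight $\tfrac32$ generators of $\cW^k(\gs\gp_{2n}, f_{\text{min}})$. In either case the limit is a free-field orbifold whose minimal strong generating type $\cW(2,4,\dots,2N)$---with $2N=2n^2+4n$ for $\cE$ and $2N=2n^2+2n-2$ for $\cF$, as recorded in \cite{CLi,ACKL}---together with free generation, simplicity (by \cite{KR,DLM}), and generation by the weights $2$ and $4$ fields, follows from the invariant theory of the symplectic group acting on free fields, after replacing the natural weight $4$ generator by the unique weight $4$ Virasoro primary as in the type $C$ case. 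By the deformability argument used before Theorem \ref{wprinquot}, these properties are detected by structure constants that are rational in $k$ and hence are inherited by $\cE^k(n)$ and $\cF^k(n)$ for generic $k$, giving (i) and (ii).

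The main work is hypothesis (iii). A failure of a Jacobi identity of type $(W^{2i}, W^{2j}, W^{2k})$ with $2i+2j+2k\leq 16$ would be a nonzero null field of weight at most $14$, that is, a normally ordered polynomial relation among the strong generators $L, W^4, \dots, W^{14}$. Whenever the truncation weight satisfies $2N\geq 14$, these generators all belong to the free generating set, so no such relation can exist and every such Jacobi identity holds automatically; by the second fundamental theorem of invariant theory for $\text{Sp}_{2m}$ \cite{We} this covers $\cE^k(n)$ for $n\geq 2$ and $\cF^k(n)$ for $n\geq 3$. The finitely many small-rank cases $\cE^k(1)$ and $\cF^k(2)$ would be checked directly in the free-field limit with the package of \cite{T}; since a nonvanishing Jacobi relation in the coset would, after rescaling, survive into the limit, a verification there suffices, and (iii) then passes to generic $k$.

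With (i)--(iii) in hand, Theorem \ref{thm:simplequotient} identifies each coset with the simple quotient $\cW^{\mathrm{ev},I}_R(c,\lambda)/\cI$, and Theorem \ref{decoup} confirms that this quotient is of the asserted type $\cW(2,4,\dots,2N)$. The principal obstacle is thus not the formal application of Theorem \ref{thm:simplequotient} but the structural input about the limits: pinning down each large-level limit precisely and extracting from symplectic invariant theory both the generation by the weight $4$ field and the free generation up to weight $14$ that renders the relevant Jacobi identities automatic.
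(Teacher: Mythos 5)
Your proposal is correct and follows essentially the same route as the paper: identify the large-level limits $\cS(n)^{\text{Sp}_{2n}}$ and (from \cite{ACKL}) $\cT \otimes \cG_{\mathrm{ev}}(n-1)^{\text{Sp}_{2n-2}}$, use symplectic invariant theory to rule out relations of weight at most $14$ so that the relevant Jacobi identities hold automatically for $\cE^k(n)$, $n\geq 2$, and $\cF^k(n)$, $n\geq 3$, handle $\cE^k(1)$ and $\cF^k(2)$ by computer, and conclude via Theorem \ref{thm:simplequotient}. The only slip is cosmetic: the limit of $\cF^k(n)$ also contains the weight-$2$ generalized free field factor $\cT$, not just the $\text{Sp}_{2n-2}$-orbifold of the weight-$\tfrac32$ free fields.
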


\begin{proof} Recall that $\cE^k(n)$ is of type $\cW(2,4\dots, 2n^2+4n)$ for generic values of $k$, and that
$$\lim_{k\ra \infty} \cE^k(n) \cong L_{-1/2}(\gs\gp_{2n})^{\text{Sp}_{2n}} \cong \cS(n)^{\text{Sp}_{2n}},$$ where $\cS(n)$ denotes the rank $n$ $\beta\gamma$-system. By Lemma 3 and Theorem 1 of \cite{LII}, $\cS(n)^{\text{Sp}_{2n}}$ is generated by the weight $4$ field, and by Weyl's fundamental theorems of invariant theory for the standard representation of $\text{Sp}_{2n}$ \cite{We}, there are no normally ordered polynomial relations among the generators $\{L, W^{2i}|\ i\geq 2\}$ of $\cS(n)^{\text{Sp}_{2n}}$ of weight less than $2n^2+4n+2$. These properties are inherited by $\cE^k(n)$ for generic values of $k$, so for $n\geq 2$ the claim holds by Theorem \ref{thm:simplequotient}. In the case $n = 1$, $\cS(1)^{\text{Sp}_{2}}$ is of type $\cW(2,4,6)$ \cite{BFH}, and it can be verified by computer that it is a quotient of $\cW^{\mathrm{ev}}(c,\lambda)$ as above.

Next, by Theorem 5.2 of \cite{ACKL},
$$\lim_{k\ra \infty} \cF^k(n) \cong \cT \otimes \cG_{\mathrm{ev}}(n-1)^{\text{Sp}_{2n-2}},$$ where $\cT$ is a generalized free field algebra that is strongly generated by a field in weight $2$, and $\cG_{\mathrm{ev}}(n-1)$ is another generalized free field algebra such that $\cG_{\mathrm{ev}}(n-1)^{\text{Sp}_{2n-2}}$ is of type $\cW(4,6,\dots, 2n^2+2n-2)$. Note that in \cite{ACKL}, $\cG_{\mathrm{ev}}(n-2)$ is a typo and should be replaced by $\cG_{\mathrm{ev}}(n-1)$. It is not difficult to check that  $\cG_{\mathrm{ev}}(n-1)^{\text{Sp}_{2n-2}}$ is generated by the weight $4$ field. It follows that $\cF^k(n)$ is generated by the weights $2$ and $4$ fields, and is of type $\cW(2,4,\dots, 2n^2+2n-2)$. There are no normally ordered relations of weight less than $2n^2+2n$ among the generators $\{L, W^{2i}|\ i\geq 2\}$ of $\cF^k(n)$, so for $n\geq 3$, the claim follows from Theorem \ref{thm:simplequotient}. It is easy to verify by computer that it also holds for $n = 2$. 
\end{proof}

\begin{cor} The vertex algebras $\cE^k(n)$ and $\cF^k(n)$ all have abelian Zhu algebras, so their irreducible, positive energy modules are all highest-weight modules.
\end{cor}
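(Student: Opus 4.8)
The plan is to reduce the statement to results already established for $\cW^{\mathrm{ev}}(c,\lambda)$, since the genuine content is contained in Theorems \ref{thm:otherquotients} and \ref{thm:zhu}. First I would invoke Theorem \ref{thm:otherquotients}, which realizes each $\cE^k(n)$ and $\cF^k(n)$ as a quotient of the form $\cW^{\mathrm{ev},I}_R(c,\lambda)$ for a suitable prime ideal $I \subseteq \mathbb{C}[c,\lambda]$ and localization $R$ of $\mathbb{C}[c,\lambda]/I$. This places both families within the scope of the structural results of the previous section.

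Next I would apply Theorem \ref{thm:zhu}, which asserts that for any quotient $\cW^{\mathrm{ev},I}_R(c,\lambda)/\cI$ the Zhu algebra is a quotient of a localization of the polynomial algebra \eqref{zhu}, hence abelian. Taking $\cI$ to be the maximal proper graded ideal shows that $\text{Zhu}(\cE^k(n))$ and $\text{Zhu}(\cF^k(n))$ are abelian over the one-parameter ring $R$; specializing to a generic value of $k$ (that is, passing to a maximal ideal of $R$) gives a further quotient, which remains abelian since commutativity is inherited by quotients. This establishes the first assertion.

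Finally, the statement about modules follows from the Zhu-functor dictionary recalled earlier, exactly as in the corollary following Theorem \ref{thm:zhu}. Since the Zhu algebra is commutative, all of its irreducible modules are one-dimensional, and under the bijection $M \mapsto M[0]$ between irreducible positive-energy $\cV$-modules and irreducible $\text{Zhu}(\cV)$-modules, every such module of $\cE^k(n)$ or $\cF^k(n)$ has one-dimensional $M[0]$ and is therefore cyclic, i.e.\ a highest-weight module. There is no substantial obstacle: the argument is a direct application of the two theorems together with the Zhu-functor correspondence, and the only point needing minor care is the specialization from the ring $R$ to $\mathbb{C}$ at a fixed generic $k$, handled by observing that specialization is a quotient and commutativity passes to quotients.
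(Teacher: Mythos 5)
Your proposal is correct and follows exactly the route the paper intends: Theorem \ref{thm:otherquotients} realizes $\cE^k(n)$ and $\cF^k(n)$ as quotients of $\cW^{\mathrm{ev},I}_R(c,\lambda)$, Theorem \ref{thm:zhu} then gives abelianness of the Zhu algebra for any such quotient, and the highest-weight statement follows from the Zhu-functor dictionary as in the corollary to Theorem \ref{thm:zhu}. The remark about specialization at a fixed $k$ being a further quotient (hence still abelian) is a reasonable bit of extra care but does not change the argument.
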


\subsection{Explicit truncation curve for $\cF^k(n)$}  

In \cite{ACKL}, it was shown that for levels $k\in \frac{1}{2} + \mathbb{N}$, there is an embedding
$$L_{k+1/2}(\gs\gp_{2n-2}) \hookrightarrow \cW_k(\gs\gp_{2n}, f_{\text{min}}),$$ and that the simple quotient $\cF_k(n)$ of $\cF^k(n)$ coincides with
$$\text{Com}(L_{k+1/2}(\gs\gp_{2n-2}), \cW_k(\gs\gp_{2n}, f_{\text{min}})).$$ Based on a remarkable coincidence of central charges, the following conjecture was made in \cite{ACKL}.
\begin{conj} \label{conj:ackl} For all $n\geq 2$ and $k \in \frac{1}{2} + \mathbb{N}$, 
$$\cF_k(n) \cong \cW_s(\gs\gp_{2m},f_{prin}),\qquad m = k + \frac{1}{2},\qquad s = -(m+1) + \frac{n+k+1/2}{2n+2k+2}.$$ These are nondegenerate admissible levels for $\gs\gp_{2m}$, so $\cW_s(\gs\gp_{2m},f_{prin})$ is lisse and rational. This conjecture implies that $\cW_k(\gs\gp_{2n}, f_{\text{min}})$ is also lisse and rational for $k \in \frac{1}{2} + \mathbb{N}$.
\end{conj}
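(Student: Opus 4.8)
The plan is to realize both $\cF_k(n)$ and $\cW_s(\gs\gp_{2m},f_{\text{prin}})$ as pointwise simple quotients of $\cW^{\mathrm{ev}}(c,\lambda)$ at maximal ideals of the form $(c-c_0,\lambda-\lambda_0)$, and then to invoke the coincidence criterion of Theorem \ref{thm:coincidences}. Both algebras satisfy the hypotheses of Theorem \ref{thm:simplequotient}: they are simple, generated by their weights $2$ and $4$ fields, and have the correct OPE relations in low weights. For $\cW_s(\gs\gp_{2m},f_{\text{prin}})$ this is Corollary \ref{cor:typecrealization}, and for $\cF_k(n)$ this is Theorem \ref{thm:otherquotients}. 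Consequently each is the simple quotient of $\cW^{\mathrm{ev},I}_R(c,\lambda)$ for an appropriate $I$ and $R$, and specializing to the numerical level in question makes each a vertex algebra over $\mathbb{C}$, i.e. a quotient at a maximal ideal. Once the two maximal ideals are shown to coincide, Theorem \ref{thm:coincidences} forces the simple quotients to be isomorphic, and in particular their strong generating types must then agree automatically (so that the a priori larger type $\cW(2,4,\dots,2n^2+2n-2)$ of $\cF_k(n)$ collapses to $\cW(2,4,\dots,2m)$ at this special level).

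First I would verify the central charge coincidence: the central charge of $\cW_s(\gs\gp_{2m},f_{\text{prin}})$ is obtained from \eqref{ccentralcharge} with $\ell=s$ and $n=m=k+\tfrac12$, while that of $\cF_k(n)$ is computed from the coset $\text{Com}(V^{k+1/2}(\gs\gp_{2n-2}),\cW^k(\gs\gp_{2n},f_{\text{min}}))$; these agree by the central charge identity already recorded in \cite{ACKL}, which fixes $c_0$. Next I would determine the $\lambda$-parameter on each side. At a fixed central charge, the point of $\cW^{\mathrm{ev}}(c,\lambda)$ realizing a given simple quotient is pinned down by the normalized self-OPE structure constant $\cC$ of the weight $4$ field through \eqref{form:structconst}. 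For $\cW_s(\gs\gp_{2m},f_{\text{prin}})$ this constant is given by Hornfeck's formula from \cite{H} (as used in Section 5 and Appendix A), yielding $\lambda_0^{\cW}$ explicitly. For $\cF_k(n)$ I would compute $\cC$ directly, either by analyzing the weight $4$ primary field of the coset and its fourth order self-OPE, or by passing to the $k\to\infty$ limit $\cT\otimes\cG_{\mathrm{ev}}(n-1)^{\text{Sp}_{2n-2}}$ to fix the normalization and then tracking the dependence on $k$; this gives $\lambda_0^{\cF}$.

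The hard part will be this last computation of the $\lambda$-invariant for $\cF_k(n)$: unlike the type $C$ and type $D$ cases, the truncation curve of $\cF^k(n)$ is only known conjecturally in this paper, so one cannot simply read off $\lambda_0^{\cF}$ from a published structure constant and must instead extract it from the minimal $\cW$-algebra coset itself. Having obtained both values, I would check that $\lambda_0^{\cF}=\pm\lambda_0^{\cW}$ in a manner consistent with the admissible symmetries of Theorem \ref{thm:coincidences}, taking care with the $\lambda\mapsto-\lambda$ subtlety flagged in Remark \ref{rem:d28twochoicesauto} so that the correct sign is selected. With $(c_0,\lambda_0)$ matched, the two simple quotients are identified.

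Finally, the lisse and rational assertions follow once the isomorphism is established. Since $s$ is a nondegenerate admissible level for $\gs\gp_{2m}$, the algebra $\cW_s(\gs\gp_{2m},f_{\text{prin}})$ is lisse and rational by Arakawa \cite{AIV,AV}, and transporting these properties across the isomorphism shows $\cF_k(n)$ is lisse and rational. To deduce the same for $\cW_k(\gs\gp_{2n},f_{\text{min}})$, I would use the embedding $L_{k+1/2}(\gs\gp_{2n-2})\hookrightarrow\cW_k(\gs\gp_{2n},f_{\text{min}})$ together with $\cF_k(n)=\text{Com}(L_{k+1/2}(\gs\gp_{2n-2}),\cW_k(\gs\gp_{2n},f_{\text{min}}))$: with both the rational affine subalgebra and the now-rational coset in hand, a standard rationality-of-extensions argument for such commuting pairs yields that the ambient minimal $\cW$-algebra is itself lisse and rational.
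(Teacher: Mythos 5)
The statement you are trying to prove is stated in the paper as a \emph{conjecture}, and the paper does not prove it: it only (a) reformulates it as an equivalent statement about the truncation curve of $\cF^k(n)$ (Conjecture \ref{conj:reform}), and (b) verifies that reformulation by explicit computer calculation in the single case $n=2$ (Theorem \ref{thm:minimalsp4}), namely by writing down the generators of $\cF^k(2)$ and computing the fourth-order pole of $W^4$ with itself. Your overall strategy --- realize both sides as quotients of $\cW^{\mathrm{ev}}(c,\lambda)$, match the central charge, match the $\lambda$-invariant via the normalized structure constant $\cC$ of \eqref{form:structconst}, and conclude by Theorem \ref{thm:coincidences} --- is exactly the framework the paper sets up, so the architecture of your argument is sound and consistent with the authors' intent.

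The genuine gap is the step you yourself flag as ``the hard part'': determining $\lambda_0^{\cF}$, i.e.\ the truncation curve of $\cF^k(n)$, for general $n$. This is not a routine verification you can defer; it is precisely the open content of the conjecture. The conjectural formula \eqref{conj:reformeq} for $\lambda_n(k)$ is \emph{derived by assuming} Conjecture \ref{conj:ackl} and back-solving against the known ideal $I_m$ of Appendix A, so using it here would be circular. Your two suggested methods do not close the gap: analyzing the weight-$4$ primary of $\text{Com}(V^{k+1/2}(\gs\gp_{2n-2}),\cW^k(\gs\gp_{2n},f_{\text{min}}))$ and its self-OPE is a computation that has only been carried out for $n=2$, and the $k\to\infty$ limit $\cT\otimes\cG_{\mathrm{ev}}(n-1)^{\text{Sp}_{2n-2}}$ pins down only the asymptotic behavior of the curve, not the full rational function $\lambda(k)$ (``tracking the dependence on $k$'' is not an argument --- a priori many distinct rational curves share the same limit point). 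A secondary, smaller issue: before invoking Theorem \ref{thm:coincidences} at the specific levels $k\in\frac12+\mathbb{N}$ and $s$, you must also check that these are not poles of the respective parametrizations $(c(k),\lambda(k))$ and that $(c_0,\lambda_0)$ does not land in the degenerate cases (2)--(5) of that theorem; the paper is careful about this in its Section 8 analyses, and your write-up should be too. As it stands, your proposal is a correct reduction of the conjecture to the unverified identity $\lambda_0^{\cF}=\lambda_0^{\cW}$, not a proof.
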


Conjecture \ref{conj:ackl} turns out to be equivalent to an explicit formula for the truncation curve in $\mathbb{C}^2$ that realizes $\cF^k(n)$ as a quotient of $\cW^{\mathrm{ev}}(c,\lambda)$.

\begin{conj} \label{conj:reform} The truncation curve for $\cF^k(n)$ has the following rational parametrization.
\begin{equation} \label{conj:reformeq} \begin{split}
c_n(k) = & -\frac{(1 + 2 k) (2 + 2 k + n) (2 + 3 k + 2 n)}{(1 + k + n) (1 + 2 k + 2 n)},
\\ \lambda_n(k) = & -\frac{(1 + k + n) (1 + 2 k + 2 n) p_n(k)}{7 (1 + k) (1 + 2 k + n) (5 + 6 k + 4 n) q_n(k) r_n(k) },
\\  p_n(k) & =  -60 - 306 k - 408 k^2 + 198 k^3 + 720 k^4 + 360 k^5 - 294 n - 812 k n + 177 k^2 n 
\\ & + 1916 k^3 n + 1236 k^4 n - 360 n^2 - 153 k n^2 + 1606 k^2 n^2 + 1504 k^3 n^2 - 102 n^3 + 464 k n^3 
\\ & + 776 k^2 n^3 + 24 n^4 + 144 k n^4,
\\  q_n(k) & = 6 + 9 k + 6 k^2 + 15 n + 20 k n + 12 n^2,
\\  r_n(k) & = -2 + 24 k + 86 k^2 + 60 k^3 - 36 n + 7 k n + 70 k^2 n - 34 n^2 + 20 k n^2.
\end{split}
\end{equation}
In particular, if we let $K_n \subseteq \mathbb{C}[c,\lambda]$ be the ideal corresponding to this curve, then $$\cW^{\mathrm{ev},K_n}(c,\lambda) = \cW^{\mathrm{ev}}(c,\lambda)/ K_n \cdot \cW^{\mathrm{ev}}(c,\lambda),$$ which is a vertex algebra over the ring $\mathbb{C}[c,\lambda] / K_n$, has a singular vector of weight $2n^2+2n$, and the simple quotient is isomorphic to $\cF^k(n)$ after a suitable localization.
\end{conj}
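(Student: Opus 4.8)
The plan is to reduce the statement to an explicit computation of the two functions $c_n(k)$ and $\lambda_n(k)$ cutting out the truncation curve, and to carry out what can be carried out while isolating the genuine obstruction. By Theorem \ref{thm:otherquotients}, $\cF^k(n)$ is realized as $\cW^{\mathrm{ev},I}_R(c,\lambda)$ for a prime ideal $I\subseteq\mathbb{C}[c,\lambda]$, so its truncation curve $V(I)$ is exactly the image of the map $k\mapsto(c_n(k),\lambda_n(k))$; once this curve is identified with \eqref{conj:reformeq}, the remaining claims of the conjecture are automatic, since $\cF^k(n)$ is of type $\cW(2,4,\dots,2n^2+2n-2)$ by Theorem \ref{thm:otherquotients}, so it has a decoupling singular vector in weight $2n^2+2n$ by Theorem \ref{decoup}, and it is the simple quotient of $\cW^{\mathrm{ev},I}_R(c,\lambda)$ by Theorem \ref{thm:simplequotient}. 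Thus the only new content is the explicit curve. The central charge $c_n(k)$ is immediate: it is the difference between the central charge of $\cW^k(\gs\gp_{2n},f_{\text{min}})$ and that of $V^{k+1/2}(\gs\gp_{2n-2})$, and a routine simplification yields the stated $c_n(k)$.

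The substance is $\lambda_n(k)$. By the construction in Section \ref{section:main}, once $c$ is fixed the parameter $\lambda$ is determined, up to the sign ambiguity of Remark \ref{rem:d28twochoicesauto}, by the self-coupling constant $\cC$ of the weight $4$ generator through \eqref{form:structconst}. I would therefore normalize the weight $4$ field $W^4$ of $\cF^k(n)$ so that $W^4_{(7)}W^4=\frac{c}{4}\mathbf{1}$, compute the coefficient of $W^4$ in $W^4_{(3)}W^4$ (which equals $\sqrt{\cC}$), and then solve \eqref{form:structconst} for $\lambda$, choosing the sign compatible with Remark \ref{rem:d28twochoicesauto}. Substituting $c=c_n(k)$ and the resulting $\cC=\cC_n(k)$ into \eqref{form:structconst} should reproduce \eqref{conj:reformeq}. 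For each fixed small $n$ this is a finite OPE computation of exactly the kind used for the base cases in the proof of Theorem \ref{thm:otherquotients}, and can be verified by computer; the issue is to obtain $\cC_n(k)$ uniformly in $n$.

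A second, indirect route establishes the equivalence with Conjecture \ref{conj:ackl} without computing $\cC_n(k)$ from scratch. The principal algebra $\cW^s(\gs\gp_{2m},f_{\text{prin}})$ has a truncation curve determined by \eqref{ccentralcharge} together with Hornfeck's explicit structure constant $\cC$ \cite{H}, via \eqref{form:structconst} and Remark \ref{rem:d28twochoicesauto}; this is precisely the type $C$ analysis behind Corollary \ref{cor:typecrealization}. Since an irreducible curve $V(I)$ determines its radical ideal $I=(p)$, and $I$ together with Theorem \ref{thm:simplequotient} determines the simple one-parameter quotient, Conjecture \ref{conj:ackl} holds for all $k$ if and only if the truncation curve of $\cF^k(n)$ coincides with the pullback of the type $C$ curve of $\gs\gp_{2m}$ under $m=k+\tfrac{1}{2}$, $s=-(m+1)+\frac{n+k+1/2}{2n+2k+2}$. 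It then suffices to verify that \eqref{conj:reformeq} agrees with this pullback, which is a finite identity of rational functions in $k$ and $n$; this simultaneously proves the equivalence of the two conjectures and shows that \eqref{conj:reformeq} is the correct curve granting either one.

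The main obstacle is the uniform computation of $\cC_n(k)$, that is, proving the conjecture outright rather than merely reducing it to the explicit curve. The weight $4$ field of $\cF^k(n)$ is a complicated coset field, and controlling its self-OPE for general $n$ at finite level requires more than the free-field limit $\cT\otimes\cG_{\mathrm{ev}}(n-1)^{\text{Sp}_{2n-2}}$, which only fixes the $k\to\infty$ value of $\cC$. A plausible attack is to deform the invariant-theoretic computation in $\cG_{\mathrm{ev}}(n-1)^{\text{Sp}_{2n-2}}$ to finite $k$, or to set up a recursion in $n$ exploiting the embedding $L_{k+1/2}(\gs\gp_{2n-2})\hookrightarrow\cW_k(\gs\gp_{2n},f_{\text{min}})$; absent such an argument the statement remains conjectural, and the achievable content is the reduction to the explicit truncation curve \eqref{conj:reformeq} and its equivalence with Conjecture \ref{conj:ackl}.
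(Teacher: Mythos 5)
Your proposal matches the paper's treatment of this statement: the paper does not prove Conjecture \ref{conj:reform} either, but obtains the curve \eqref{conj:reformeq} exactly as in your ``indirect route,'' namely by pulling back the type $C$ truncation ideal $I_m$ (Hornfeck's $\cC$ via \eqref{form:structconst} and Remark \ref{rem:d28twochoicesauto}) under the correspondence of Conjecture \ref{conj:ackl}, with the equivalence of the two conjectures argued as in Theorem 8.3 of \cite{LI} and only the case $n=2$ verified outright by the direct OPE computation you describe (Theorem \ref{thm:minimalsp4}). Your identification of the genuine obstruction --- a uniform-in-$n$ computation of the self-coupling $\cC_n(k)$ of the coset weight $4$ field at finite level --- is exactly why the statement remains a conjecture in the paper.
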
 

The proof that Conjectures \ref{conj:ackl} and \ref{conj:reform} are equivalent is analogous to the proof of Theorem 8.3 of \cite{LI}. The isomorphisms in Conjectures \ref{conj:ackl}, together with the explicit generator for the ideal $I_{n}$ appearing in Appendix A, are enough to determine $K_n$ uniquely. 

\begin{thm} \label{thm:minimalsp4} Conjecture \ref{conj:reform} holds in the first nontrivial case $n=2$. Therefore Conjecture \ref{conj:ackl} holds for $n=2$ and all $k \in \frac{1}{2} + \mathbb{N}$, and $\cW_k(\gs\gp_{4}, f_{\text{min}})$ is lisse and rational for all $k \in \frac{1}{2} + \mathbb{N}$.
\end{thm}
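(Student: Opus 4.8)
The plan is to prove the three assertions in turn, with essentially all of the work concentrated in verifying Conjecture \ref{conj:reform} for $n=2$; the statements about Conjecture \ref{conj:ackl} and about lisseness/rationality then follow with little extra effort. By Theorem \ref{thm:otherquotients}, $\cF^k(2)$ is a quotient of $\cW^{\mathrm{ev}}(c,\lambda)$, so it has a well-defined truncation curve $V(K_2)\subseteq\mathbb{C}^2$, namely the irreducible rational curve traced out by $k\mapsto(c(k),\lambda(k))$. Proving Conjecture \ref{conj:reform} for $n=2$ amounts to showing this curve coincides with the one parametrized by \eqref{conj:reformeq}; since both are irreducible, it suffices to match the two coordinate functions.

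First I would pin down the $c$-coordinate. The coset Virasoro field of $\cF^k(2)=\text{Com}(V^{k+1/2}(\gs\gp_2),\cW^k(\gs\gp_4,f_{\text{min}}))$ is the difference of the Virasoro field of the minimal $\cW$-algebra and the Sugawara field of $V^{k+1/2}(\gs\gp_2)\cong V^{k+1/2}(\gs\gl_2)$, so its central charge is $c(\cW^k(\gs\gp_4,f_{\text{min}}))-\frac{3(2k+1)}{2k+5}$; substituting the known minimal $\cW$-algebra central charge and simplifying should reproduce $c_2(k)$ exactly. The $\lambda$-coordinate is the substantive step. By \eqref{form:structconst}, once $W^4$ is normalized so that $W^4_{(7)}W^4=\frac{c}{4}\mathbf{1}$, the parameter $\lambda$ is determined up to sign by the single structure constant $\cC$, the coefficient of $W^4$ in $W^4_{(3)}W^4$. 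I would therefore construct the unique weight-$4$ Virasoro primary $W^4$ inside the coset explicitly from a strong generating set for $\cW^k(\gs\gp_4,f_{\text{min}})$, and compute its self-OPE with the package of \cite{T}. Solving \eqref{form:structconst} for $\lambda^2$ and fixing the sign by Remark \ref{rem:d28twochoicesauto} (matching one further structure constant which is odd in $\lambda$, e.g.\ the analogue of $d_{28}$, the coefficient of $:L\partial W^6:$ in the first-order pole of $W^4(z)W^6(w)$) should yield $\lambda=\lambda_2(k)$, completing the identification of $V(K_2)$. The singular vector of weight $2n^2+2n=12$ required in Conjecture \ref{conj:reform} is then automatic, since $\cF^k(2)$ is of type $\cW(2,4,6,8,10)$ and hence $W^{12}$ decouples, giving a relation of the form \eqref{eq:decoup} with $N=5$.

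With Conjecture \ref{conj:reform} established for $n=2$, Conjecture \ref{conj:ackl} for $n=2$ follows from the equivalence of the two conjectures (argued as in Theorem 8.3 of \cite{LI}). Concretely, using the explicit generator of the type $C$ principal ideal $I_m$ recorded in Appendix A, one verifies by a direct algebraic computation that for each $k\in\frac{1}{2}+\mathbb{N}$ the point $(c_2(k),\lambda_2(k))\in V(K_2)$ also lies on the truncation curve $V(I_m)$ of $\cW^s(\gs\gp_{2m},f_{\text{prin}})$ with $m=k+\frac{1}{2}$ and $s$ as stated; since both simple quotients are of type $\cW(2,4,\dots,2m)$ and carry identical $(c,\lambda)$, Theorem \ref{thm:simplequotient} forces $\cF_k(2)\cong\cW_s(\gs\gp_{2m},f_{\text{prin}})$. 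For the final assertion, the levels $s$ are nondegenerate admissible for $\gs\gp_{2m}$, so $\cW_s(\gs\gp_{2m},f_{\text{prin}})\cong\cF_k(2)$ is lisse and rational by \cite{AIV,AV}; moreover $k+\frac{1}{2}\in\mathbb{Z}_{\geq 1}$, so $L_{k+1/2}(\gs\gl_2)$ is integrable and hence lisse and rational. Since $\cW_k(\gs\gp_4,f_{\text{min}})$ is a simple, finite extension of the rational lisse conformal subalgebra $L_{k+1/2}(\gs\gl_2)\otimes\cF_k(2)$, standard results on rationality of such extensions give that $\cW_k(\gs\gp_4,f_{\text{min}})$ is itself lisse and rational.

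I expect the main obstacle to be the explicit OPE computation of $\cC(k)$. It requires a workable strong generating set for $\cW^k(\gs\gp_4,f_{\text{min}})$, correct isolation of the weight-$4$ coset primary (purging all contributions of the affine $\gs\gl_2$ subalgebra and all non-primary combinations), and a careful normally ordered computation of the fourth-order pole $W^4_{(3)}W^4$ as a rational function of $k$. Resolving the sign of $\lambda$ via Remark \ref{rem:d28twochoicesauto} is a secondary technical point, and one must also confirm that the decomposition of $\cW_k(\gs\gp_4,f_{\text{min}})$ over $L_{k+1/2}(\gs\gl_2)\otimes\cF_k(2)$ is finite so that the rationality-of-extensions machinery applies; this last point follows from the rationality of both tensor factors.
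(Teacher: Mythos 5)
Your proposal is correct and follows essentially the same route as the paper, whose proof is simply the statement that one verifies Conjecture \ref{conj:reform} for $n=2$ by computer, by writing down the explicit generators of $\cF^k(2)$ and computing the fourth-order pole of $W^4$ with itself (the remaining assertions then following from the equivalence of the two conjectures and the admissibility of the levels $s$, exactly as you describe). You have merely spelled out the details — the coset central charge, the use of \eqref{form:structconst} with the sign fixed via Remark \ref{rem:d28twochoicesauto}, and the extension argument for lisseness and rationality of $\cW_k(\gs\gp_4,f_{\text{min}})$ — that the paper leaves implicit.
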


\begin{proof} This can be verified by computer by writing down the explicit generators of $\cF^k(2)$ and computing the fourth order pole of the field $W^4$ with itself.
\end{proof}

\section{Quotients of $\cW^{\mathrm{ev}}(c,\lambda)$ by maximal ideals and coincidences between algebras of type $\cW(2,4,\dots, 2N)$} \label{section:coincidences}

So far, we have considered quotients of the form $\cW^{\mathrm{ev},I}_R(c,\lambda)$ which are {\it one-parameter families} of vertex algebras in the sense that $R$ has Krull dimension $1$. Here, we consider simple quotients of $\cW^{\mathrm{ev},I}(c,\lambda)$ where $I\subseteq \mathbb{C}[c,\lambda]$ is a {\it maximal} ideal. Such an ideal always has the form $I = (c- c_0, \lambda- \lambda_0)$ for $c_0, \lambda_0\in \mathbb{C}$, and $\cW^{\mathrm{ev},I}(c,\lambda)$ is an ordinary vertex algebra over $\mathbb{C}$. We first need a criterion for when the simple quotients of two such vertex algebras are isomorphic. 

\begin{thm} \label{thm:coincidences} Let $c_0,c_1, \lambda_0, \lambda_1$ be complex numbers and let $$I_0 = (c- c_0, \lambda- \lambda_0),\qquad I_1 = (c - c_1, \lambda - \lambda_1)$$ be the corresponding maximal ideals in $\mathbb{C}[c,\lambda]$. Let $\cW_0$ and $\cW_1$ be the simple quotients of $\cW^{\mathrm{ev},I_0}(c,\lambda)$ and $\cW^{\mathrm{ev},I_1}(c,\lambda)$. Then $\cW_0 \cong \cW_1$ are isomorphic only in the following cases.
\begin{enumerate}
\item $c_0 = c_1$ and $\lambda_0 =  \lambda_1$,
\item $c_0 = 0 = c_1$ and no restriction on $\lambda_0, \lambda_1$,
\item $c_0 = 1, -24, -\frac{22}{5}, \frac{1}{2} = c_1$, and no restriction on $\lambda_0, \lambda_1$,
\item $c_0 = c = c_1$, where $c$ is arbitrary except for $c \neq 1,  25$, and $$\lambda_0 = \pm \frac{1}{7 \sqrt{(c-25) (c-1)}} = \pm \lambda_1,$$
\item $c_0 = c = c_1$, where $c$ is arbitrary except for $c \neq 1,  25$, and 
$$\lambda_0 = \pm \frac{\sqrt{196 - 172 c + c^2}}{21 (c -1) (22 + 5 c)} = \pm \lambda_1.$$

\end{enumerate} 
\end{thm}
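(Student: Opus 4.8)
The plan is to reduce an isomorphism $\phi:\cW_0\to\cW_1$ to its action on the two generators and then match structure constants. Any $\phi$ carries the conformal vector to a conformal vector; since each $\cW_i[2]$ is one-dimensional and spanned by $L$, we have $\phi(L)=sL$, and the relation $L_{(1)}L=2L$ forces $s=1$, so $\phi(L)=L$ and comparing $L_{(3)}L=\tfrac{c_i}{2}\mathbf{1}$ gives $c_0=c_1=:c$. This is why every case has equal central charges. Because $W^4$ is the unique weight-$4$ Virasoro primary up to scale (as in the proof of Corollary \ref{cor:trivialAut}), we must have $\phi(W^4)=t\,W^4$ for some $t\in\mathbb{C}^\times$; since $L$ and $W^4$ generate, $\phi$ is completely determined by $t$, and the problem becomes: for which $(c,\lambda_0,\lambda_1,t)$ does $L\mapsto L$, $W^4\mapsto tW^4$ extend to an isomorphism of the \emph{simple} quotients?

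Under the scaling $W^{2i}\mapsto t^{\,i-1}W^{2i}$ induced by $\phi$, I would extract the governing equations by comparing the $\lambda$-carrying structure constants: matching the coefficient $a_3$ of $W^4$ and the coefficient $a_7$ of $:L^2:$ in $W^4_{(3)}W^4$, together with the coefficient $d_{28}$ of $:L\partial W^6:$ in $W^4_{(0)}W^6$, gives
\begin{equation*}
a_3(c,\lambda_0)=t\,a_3(c,\lambda_1),\qquad a_7(c,\lambda_0)=t^2a_7(c,\lambda_1),\qquad d_{28}(c,\lambda_0)=t\,d_{28}(c,\lambda_1).
\end{equation*}
Feeding in \eqref{deflambda}, whenever the prefactors $256(c-1)(24+c)(2c-1)$ and $(2c-1)$ are nonzero the first equation gives $\lambda_0=t\lambda_1$, and substituting into the third forces $5=5t$, hence $t=1$ and $\lambda_0=\lambda_1$. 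This is the generic rigidity accounting for case (1). Moreover, if neither $(c,\lambda_0)$ nor $(c,\lambda_1)$ lies in the Shapovalov spectrum, then by Corollary \ref{cor:freegeneration} both quotients are freely generated, every structure constant is a genuine invariant, and only case (1) can occur; thus each further coincidence must involve a point on a truncation curve or an exceptional central charge.

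The exceptional central charges are handled by inspecting where these equations collapse. For $c\in\{1,-24,\tfrac12\}$ the factor $(c-1)(24+c)(2c-1)$ vanishes, so $a_3\equiv a_7\equiv0$ identically in $\lambda$; for $c=-\tfrac{22}{5}$ the eighth-order self-pairing $b_0=\tfrac{a_7c(22+5c)}{840}$ vanishes, so $W^4$ cannot be normalized as in \eqref{form:structconst}; and for $c=0$ the pairing $L_{(3)}L=\tfrac{c}{2}$ degenerates, so $L$ is itself singular by Proposition \ref{prop:shap}. In each case I would show the \emph{entire} surviving structure of the simple quotient is $\lambda$-independent, by pushing this independence through the inductive determination of Section \ref{section:main} (for special $c$) or by identifying the much smaller quotient (for $c=0$); this yields the unrestricted coincidences of cases (2) and (3).

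The heart of the matter, and the step I expect to be the main obstacle, is the pair of curves in cases (4) and (5), where $\lambda_1=-\lambda_0\neq0$ and hence $t=-1$. The middle equation shows $a_7(c,\lambda_0)=0$ exactly when $a_7(c,\lambda_1)=0$; when both vanish, $\lambda_0,\lambda_1$ are the two roots $\pm\tfrac{1}{7\sqrt{(c-25)(c-1)}}$ of $a_7=0$, which is precisely case (4), while when neither vanishes (and $c\neq0,-\tfrac{22}{5}$, so $W^4$ is normalizable) the invariant $\cC$ of \eqref{form:structconst}, being a function of $c$ and $\lambda^2$ alone, forces $\lambda_0^2=\lambda_1^2$, hence $\lambda_1=-\lambda_0$. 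In both situations $t=-1$, and whether $W^4\mapsto-W^4$ extends to the simple quotients is controlled by the obstruction of Remark \ref{rem:d28twochoicesauto}: this redefinition sends $\lambda\mapsto-\lambda$ but $d_{28}\mapsto-d_{28}'$ rather than $-d_{28}$, the discrepancy being a single vector $\Omega(c,\lambda)$ supported in $W^4_{(0)}W^6$. The assignment $t=-1$ extends exactly when $\Omega$ is null, i.e.\ lies in the maximal graded ideal. The plan is therefore to exhibit $\Omega$ explicitly, impose that it be singular via Proposition \ref{prop:shap}, and factor the resulting polynomial condition on $(c,\lambda)$; I expect it to split into the locus $a_7=0$ (case (4)) and the quadratic of case (5), after discarding the excluded values $c=1,25$. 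This factorization, verified as elsewhere with the package \cite{T}, is the computational crux. Sufficiency then follows by running each construction backwards: case (1) is the identity, cases (2)--(3) use the established $\lambda$-independence, and cases (4)--(5) use $W^4\mapsto-W^4$ once $\Omega$ is known to be null.
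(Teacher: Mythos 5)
Your reduction of an isomorphism $\phi$ to $\phi(L)=L$, $\phi(W^4)=tW^4$ and the matching of $a_3,a_7,d_{28}$ is essentially the paper's converse argument (the paper compares the coefficient of $W^4$ in $W^4_{(3)}W^4$ to get $\lambda_0=\pm\lambda_1$ and then invokes Remark \ref{rem:d28twochoicesauto} to fix the sign), and your derivation of $t=1$, $\lambda_0=\lambda_1$ in the generic case is fine. But your mechanism for cases (3) and (4) is wrong. You propose to show that for $c\in\{1,-24,-\tfrac{22}{5},\tfrac12\}$ the ``entire surviving structure is $\lambda$-independent'' by pushing the vanishing of $a_3,a_7$ through the induction; this fails already at the next structure constant, since by \eqref{deflambda} $d_{28}=\tfrac{4}{21}(2c-1)\bigl(5-49\lambda(c-1)\bigr)$ still depends on $\lambda$ at $c=-24$ and $c=-\tfrac{22}{5}$, so the universal OPE algebra is \emph{not} $\lambda$-independent there. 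The reason (2), (3) and (4) give unrestricted coincidences is different and much simpler: in all of these cases $W^4_{(7)}W^4=\tfrac{a_7c(22+5c)}{840}=0$, and since $W^4$ is primary it is orthogonal to $:LL:$ and $\partial^2L$, hence $W^4$ is a singular vector; by Proposition \ref{prop:shap} it lies in the maximal graded ideal, so the simple quotient is generated by $L$ alone and is the simple Virasoro algebra at central charge $c$ (which is $\mathbb{C}$ when $c=0$). In particular case (4) has nothing to do with $t=-1$ or an extension problem.

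The remaining genuine issue is your treatment of case (5). The claim that the failure of $W^4\mapsto-W^4$ to intertwine $\cW^{\mathrm{ev}}(c,\lambda)$ and $\cW^{\mathrm{ev}}(c,-\lambda)$ is ``a single vector $\Omega$ supported in $W^4_{(0)}W^6$'' and that ``$t=-1$ extends exactly when $\Omega$ is null'' is not justified: the discrepancy in $d_{28}$ propagates to all the coefficients $d_i$ and to every higher OPE determined by the induction, and even in case (5) the weight-$9$ difference of the two OPE expressions is not literally in the radical of the Shapovalov form --- what happens is that $W^6$ ceases to be an independent generator, so the comparison must be made after eliminating it, not by testing a fixed monomial expression for nullity. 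Moreover, necessity of your condition does not give sufficiency: matching one OPE modulo the ideal does not by itself produce an isomorphism of simple quotients; you need all OPEs through weight $14$ to match so that Theorem \ref{thm:simplequotient} applies. The paper's route is more direct: it identifies the irreducible factor $p=-196+172c-c^2+\cdots$ of $\mathrm{det}_6$, exhibits the explicit weight-$6$ singular vector $W^6-(\cdots)$ on $V((p))$, concludes that the simple quotient is the unique one-parameter algebra of type $\cW(2,4)$ depending only on $c$, and reads off the isomorphism (the two signs of $\lambda$ correspond to the $\mp$ signs in that singular vector, i.e.\ to $W^4\mapsto-W^4$). You would need to replace your $\Omega$-nullity criterion by an argument of this kind, or at least prove that matching the displayed OPEs modulo the maximal ideals suffices, before the ``computational crux'' can be carried out.
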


\begin{proof} Clearly the isomorphism $\cW_0 \cong \cW_1$ holds in case (1). It holds in case (2) as well because the simple quotient is $\mathbb{C}$ for all $\lambda$. Next, we compute
$$\mathrm{det}_4 = -\frac{32}{63} (c -1) c^3 (24 + c) (2 c -1) (22 + 5 c)^2 (-1 + 49 \lambda^2 (25 - 26 c + c^2)).$$
In cases (3) and (4), $W^4$ is a singular vector in $\cW^{\mathrm{ev},I_0}(c,\lambda)$ and $\cW^{\mathrm{ev},I_1}(c,\lambda)$, so the simple quotients $\cW_0$ and $\cW_1$ are just the Virasoro vertex algebra for all values of $\lambda$. Therefore the isomorphism holds in cases (3) and (4).

Next, let $I$ be the ideal generated by 
$$p = -196 + 172 c - c^2 + 213444 \lambda^2 - 329868 c \lambda^2 + 30429 c^2 \lambda^2 + 74970 c^3 \lambda^2 + 11025 c^4 \lambda^2.$$ It is not difficult to check that $p$ is an irreducible factor of $\text{det}_6$. Solving for $\lambda$ yields $$\lambda = \pm \frac{\sqrt{196 - 172 c + c^2}}{21 (c-1) (22 + 5 c)},$$ and in $\cW^{\mathrm{ev},I}(c,\lambda)$ there is a unique up to scalar singular vector in weight $6$ of the form
\begin{equation} \begin{split}  W^6  & - \frac{4096 (24 + c) (2 c-1)^2 (13 + 72 c)}{3969 (22 + 5 c)^2} :LLL:   \mp \frac{128 (2 c -1) \sqrt{196 - 172 c + c^2}}{9 (22 + 5 c)} :L W^4:
\\ & - \frac{512 (24 + c) (2 c -1)^2 (-2528 + 117 c + 176 c^2)}{11907 (22 + 5 c)^2} :(\partial^2 L) L: 
\\ & - \frac{256 (24 + c) (2 c -1)^2 (2048 + 2592 c + 295 c^2)}{11907 (22 + 5 c)^2} :(\partial L) \partial L: 
 \\ & \mp \frac{8 (2 c -1) (64 + 5 c) \sqrt{196 - 172 c + c^2}}{189 (22 + 5 c)} \partial^2 W^4 
 \\ & - \frac{256 (24 + c) (2 c -1)^2 (-116 - 764 c + 5 c^2 + 5 c^3)}{35721 (22 + 5 c)^2} \partial^4 L.
\end{split} \end{equation}
In particular, $W^6$ decouples in the simple quotient of $\cW^{\mathrm{ev},I}(c,\lambda)$, and this quotient is the unique one-parameter vertex algebra of type $\cW(2,4)$, with parameter $c$. It follows that $\cW_0 \cong \cW_1$ in this case as well.

Conversely, suppose there is another case where $\cW_0 \cong \cW_1$. Necessarily $c_0 = c = c_1$ and $\lambda_0 \neq \lambda_1$. Since we are not in the above cases, neither $W^4$ nor $W^6$ decouples in the simple vertex algebras $\cW_0$ and $\cW_1$. By \eqref{ope:second} and \eqref{deflambda} the coefficient of $W^4$ in $W^4_{(3)} W^4$ in $\cW_0$ is a nontrivial multiple of $\lambda_0$, and the coefficient of $W^4$ in $W^4_{(3)} W^4$ in $\cW_1$ is a nontrivial multiple of $\lambda_1$. So we must have $\lambda_0 = \pm \lambda_1$. Finally, by Remark \ref{rem:d28twochoicesauto}, since $W^6$ does not decouple, our choice of $d_{28}$ from \eqref{deflambda} forces $\lambda_0 =  \lambda_1$, so we are in case (1). \end{proof}

\begin{cor} \label{cor:intersections} Let $I = (p)$, $J = (q)$ be prime ideals in $\mathbb{C}[c,\lambda]$ which lie in the Shapovalov spectrum of $\cW^{\mathrm{ev}}(c,\lambda)$. Then aside from the coincidences given by Theorem \ref{thm:coincidences}, any additional pointwise coincidences between the simple quotients of $\cW^{\mathrm{ev},I}(c,\lambda)$ and $\cW^{\mathrm{ev},J}(c,\lambda)$ must correspond to intersection points of the truncation curves $V(I) \cap V(J)$. \end{cor}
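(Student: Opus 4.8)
The plan is to read this off directly from Theorem \ref{thm:coincidences}, which already gives a \emph{complete} classification of when two pointwise simple quotients are isomorphic; the corollary is then essentially a bookkeeping consequence. First I would unwind the meaning of a pointwise coincidence. A point of the truncation curve $V(I)$ is a pair $(c_0,\lambda_0)$ with $p(c_0,\lambda_0)=0$, equivalently a maximal ideal $I_0 = (c-c_0,\lambda-\lambda_0)$ containing $I$; attached to it is the simple quotient $\cW_0$ of $\cW^{\mathrm{ev},I_0}(c,\lambda)$, a vertex algebra over $\mathbb{C}$. A pointwise coincidence between the simple quotients of $\cW^{\mathrm{ev},I}(c,\lambda)$ and $\cW^{\mathrm{ev},J}(c,\lambda)$ is thus an isomorphism $\cW_0 \cong \cW_1$, where $(c_0,\lambda_0)\in V(I)$ and $(c_1,\lambda_1)\in V(J)$ and $I_1 = (c-c_1,\lambda-\lambda_1)\supseteq J$.

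Next I would simply invoke Theorem \ref{thm:coincidences}. It forces $c_0=c_1$ and places the coincidence in exactly one of the five enumerated cases. Cases (2)--(5) are precisely the exceptional families singled out by the phrase ``aside from the coincidences given by Theorem \ref{thm:coincidences},'' so they are already accounted for and may occur between distinct, non-intersecting curves. The only remaining alternative is case (1), namely $c_0=c_1$ together with $\lambda_0=\lambda_1$. In that case the two points literally coincide, $(c_0,\lambda_0)=(c_1,\lambda_1)$, and since this single point is a common zero of both $p$ and $q$, it lies in $V(I)\cap V(J)$, i.e.\ it is an intersection point of the two truncation curves. This is exactly the assertion of the corollary.

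The step that deserves the most care is the compatibility between the one-parameter family and its pointwise specializations: one should confirm that for all but finitely many points $(c_0,\lambda_0)\in V(I)$ the specialization of the simple quotient of $\cW^{\mathrm{ev},I}_R(c,\lambda)$ coincides with $\cW_0$. The bad points are those where the coefficient of the top generator $W^{2N+2}$ in the defining singular vector fails to be invertible in the localization $R$, together with the finitely many points where the level $2N+2$ specialization acquires extra singular vectors; these form a finite subset of each curve. I would only remark on this to confirm that, away from this finite set, the truncation-curve picture governs every coincidence, so that the conclusion of the corollary is unaffected. No genuinely hard analysis is required: the content is entirely carried by the completeness of the list in Theorem \ref{thm:coincidences}.
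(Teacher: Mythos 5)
Your proposal is correct and follows the same route the paper intends: the corollary is stated without a separate proof precisely because it is an immediate consequence of the completeness of the list in Theorem \ref{thm:coincidences} — outside cases (2)--(5), an isomorphism forces $(c_0,\lambda_0)=(c_1,\lambda_1)$, which is then a common zero of $p$ and $q$ and hence a point of $V(I)\cap V(J)$. Your closing remark on specializations is a reasonable caution but, as you note, not needed for the statement as given.
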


\begin{cor} \label{cor:uniqueness} Suppose that $\cA$ is a simple, one-parameter vertex algebra which is isomorphic to the simple quotient of $\cW^{\mathrm{ev},I}(c,\lambda)$ for some prime ideal $I = (p)\subseteq \mathbb{C}[c,\lambda]$, possibly after localization. Then if $\cA$ is the quotient of $\cW^{\mathrm{ev},J}(c,\lambda)$ for some prime ideal $J$, possibly localized, we must have $I = J$. \end{cor}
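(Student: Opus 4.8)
The plan is to recover the central charge $c$ and the parameter $\lambda$ as \emph{intrinsic} invariants of $\cA$, so that the two realizations $I=(p)$ and $J=(q)$ are forced to describe the same locus in the $(c,\lambda)$-plane. First I would realize $\cA$ simultaneously as the simple quotient of $\cW^{\mathrm{ev},I}_{R_I}(c,\lambda)$ and of $\cW^{\mathrm{ev},J}_{R_J}(c,\lambda)$, where $R_I$ and $R_J$ are localizations of $\mathbb{C}[c,\lambda]/I$ and $\mathbb{C}[c,\lambda]/J$. The central charge of $\cA$ is read off from the Virasoro OPE and coincides with the image of the coordinate function $c$ under each realization. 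Likewise, $\cA$ has a unique weight-$4$ Virasoro primary $W^4$ up to scaling; fixing its normalization as in \eqref{deflambda}, the structure constants of the OPE algebra of $\cA$ (in particular $a_3$, $a_7$, and $d_{28}$) determine $\lambda$ \emph{including its sign}, by Remark \ref{rem:d28twochoicesauto}. This intrinsic $\lambda$ equals the image of the coordinate $\lambda$ under each realization.

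With this in hand I would argue pointwise via Theorem \ref{thm:coincidences}. Choose a generic point $(c_0,\lambda_0)\in V(I)$ and let $\cW_0$ be the corresponding specialization of $\cA$, that is, the simple quotient of $\cW^{\mathrm{ev},(c-c_0,\lambda-\lambda_0)}(c,\lambda)$. Running the $J$-realization, the same algebra $\cW_0$ arises as the simple quotient attached to some $(c_1,\lambda_1)\in V(J)$, so $\cW_0\cong\cW_1$ in the notation of Theorem \ref{thm:coincidences}. For all but finitely many choices of $(c_0,\lambda_0)$ on $V(I)$ we avoid the exceptional central charges $c\in\{0,1,-24,-\tfrac{22}{5},\tfrac12\}$ and the two auxiliary $\lambda$-curves of cases (2)--(5), and therefore land in case (1): $(c_0,\lambda_0)=(c_1,\lambda_1)\in V(J)$. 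Thus infinitely many points of $V(I)$ lie on $V(J)$.

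Since $p$ is irreducible, $V(I)$ is an irreducible curve, and an irreducible curve meeting $V(J)$ in infinitely many points is contained in $V(J)$; by symmetry $V(I)=V(J)$. Finally, the Nullstellensatz gives $I(V(I))=\sqrt{(p)}=(p)$ and $I(V(J))=(q)$, because $(p)$ and $(q)$ are prime and hence radical, so $V(I)=V(J)$ forces $(p)=(q)$, i.e. $I=J$.

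The main obstacle is controlling the exceptional loci of Theorem \ref{thm:coincidences}, i.e. guaranteeing that the generic point of $V(I)$ really falls into case (1). This is automatic unless $V(I)$ is itself contained in one of the exceptional sets, namely a vertical line $c=c_\ast$ with $c_\ast\in\{0,1,-24,-\tfrac{22}{5},\tfrac12\}$ or one of the two $\lambda$-curves of cases (4)--(5). In each of these degenerate situations $\cA$ is trivial, the Virasoro vertex algebra, or the unique one-parameter algebra of type $\cW(2,4)$; since each of these determines its truncation curve uniquely (the $\cW(2,4)$ curve being the irreducible factor of $\mathrm{det}_6$ exhibited in the proof of Theorem \ref{thm:coincidences}), the conclusion $I=J$ holds there as well.
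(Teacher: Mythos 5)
Your argument is correct and follows essentially the same route as the paper: by Theorem \ref{thm:coincidences} (packaged as Corollary \ref{cor:intersections}), pointwise coincidences away from the exceptional loci force the two irreducible truncation curves to share infinitely many points, hence to coincide, and then $I=J$ because prime ideals are radical. You are in fact somewhat more careful than the paper's two-sentence proof, which leaves implicit the degenerate situations where $V(I)$ sits inside an exceptional locus of Theorem \ref{thm:coincidences} (the vertical lines and the two special $\lambda$-curves), whereas you dispose of them explicitly.
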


\begin{proof} This is immediate from Theorem \ref{thm:coincidences} and Corollary \ref{cor:intersections}, since if $I$ and $J$ are distinct prime ideals in $\mathbb{C}[c,\lambda]$, their truncation curves $V(I)$ and $V(J)$ can intersect in at most finitely many points. Then the simple quotients of $\cW^{\mathrm{ev},I}(c,\lambda)$ and $\cW^{\mathrm{ev},J}(c,\lambda)$ cannot coincide as one-parameter families. \end{proof}

\section{Coincidences} As an application of Theorem \ref{thm:coincidences}, we classify all nontrivial coincidences among the simple vertex algebras $\cW_{\ell}(\gs\go_{2n},f_{\text{prin}})^{\mathbb{Z}_2}$, $\cW_{\ell'}(\gs\go_{2m},f_{\text{prin}})^{\mathbb{Z}_2}$, $\cW_{k}(\gs\gp_{2r},f_{\text{prin}})$ and $\cW_{k'}(\gs\gp_{2s},f_{\text{prin}})$. Let $I$ be an ideal in the Shapovalov spectrum and let $\cW^{\mathrm{ev},I}(c,\lambda)$ be the corresponding one-parameter vertex algebra. Let $\cA^k$ be a simple one-parameter vertex algebra which is isomorphic to the simple quotient $\cW^{\mathrm{ev},I}(c,\lambda)/ \cI$, via 
\begin{equation} \label{isoviapara} L \mapsto \tilde{L}, \qquad W^4 \mapsto \tilde{W}^4, \qquad c\mapsto c(k), \qquad \lambda\mapsto \lambda(k). \end{equation} Here $\{L,W\}$ and $\{\tilde{L}, \tilde{W}^4\}$ are the standard generators for $\cW^{\mathrm{ev},I}(c,\lambda)$ and $\cA^k$, respectively, and $k \mapsto (c(k), \lambda(k))$ is a rational parametrization of the curve $V(I)$. Then all structure constants in the OPEs $\tilde{W}^{2i}(z) \tilde{W}^{2j}(w)$ are polynomials in $c(k)$ and $\lambda(k)$, where $\tilde{W}^{2i} = \tilde{W}^4_{(1)}\tilde{W}^{2i-2}$ for $i \geq 3$. It follows that $\cA^k$ is well-defined and the isomorphism \eqref{isoviapara} holds for all values of $k$ except for the poles of $c(k)$ and $\lambda(k)$. This should be contrasted with previous statements that required us to work over a localization $R$ of $\mathbb{C}[c,\lambda] / I$. The reason we localized was to obtain a singular vector of the form $W^{2N+2} - P(L, W^4, \dots, W^{2N})$ in $\cW^{\mathrm{ev},I}_R(c,\lambda)$, so that the simple quotient $\cW^{\mathrm{ev},I}_R(c,\lambda)/ \cI$ truncates to an algebra of type $\cW(2,4,\dots 2N)$. Here we prefer {\it not} to localize; in this case, $\cW^{\mathrm{ev},I}(c,\lambda)/ \cI$ need not truncate to $\cW(2,4,\dots, 2N)$ for any $N$, but we do not need to exclude any values of $k$ except for the poles of $c(k)$ and $\lambda(k)$.

For generic $k$, the examples $\cA^k$ in this paper are either $\cW^k(\gs\gp_{2n}, f_{\text{prin}})$, $\cW^k(\gs\go_{2n}, f_{\text{prin}})^{\mathbb{Z}_2}$, or a coset of the form $\text{Com}(V^k(\gg), \tilde{\cA}^k)$, for some vertex algebra $\tilde{\cA}^k$. In order to apply Corollary \ref{cor:intersections} to find coincidences between one-parameter quotients of $\cW^{\mathrm{ev}}(c,\lambda)$, there are a few subtleties that we need to mention.

\begin{enumerate}

\item Even though $\cA^k$ is well-defined for all $k$ away from the poles of $c(k)$ and $\lambda(k)$, the specialization of $\cA^{k}$ at a value $k = k_0$, can fail to coincide with the algebra of interest. This subtlety does not occur in the case of the $\cW$-algebras or their orbifolds, but it does occur for the examples arising as cosets. The structure of cosets of the form $\cA^k = \text{Com}(V^k(\gg), \tilde{\cA}^k)$ for simple $\gg$ was studied in \cite{CLi} in a general setting. The specialization of $\cA^k$ at $k = k_0$ can be a proper subalgebra of $\text{Com}(V^{k_0}(\gg), \tilde{\cA}^{k_0})$, but by Corollary 6.7 of \cite{CLi}, under mild hypotheses that are satisfied in our examples, this can only occur for rational numbers $k_0 \leq -h^{\vee}$, where $h^{\vee}$ is the dual Coxeter number of $\gg$. By a slight abuse of notation, if $\cA^k$ is a one-parameter vertex algebra that generically coincides with $\text{Com}(V^k(\gg), \tilde{\cA}^k)$, by $\cA^{k_0}$ we mean the specialization of $\cA^k$ at the value $k = k_0$, even if it is smaller than the actual coset. If $\cA^k$ is a one-parameter quotient of $\cW^{\mathrm{ev},I}(c,\lambda)$, $\cA^{k_0}$ will then be a quotient of $\cW^{\mathrm{ev},I}(c,\lambda)$, even if the actual coset at this point is not such a quotient.

\smallskip

\item Even if $k_0$ is a pole of $c(k)$ or $\lambda(k)$, the algebra $\cA^k$ might still be well-defined at $k = k_0$. For example, $\cA^k = \cW^k(\gs\gp_{2n}, f_{\text{prin}})$ is defined for all $k_0 \in \mathbb{C}$. The corresponding truncation curve $V(I_{n})$ has the rational parametrization given by \eqref{app:typec}. Then $c(k)$ has a pole at the critical level $k_0 = -(n+1)$, and $\lambda(k)$ has poles at $\displaystyle k_0 = -\frac{2 n^2-1}{2 n-1}$, $\displaystyle k_0 = -\frac{(n+1) (2n-1)}{2 n}$, as well as the roots of the polynomials $g_n(k)$ and $h_n(k)$ which are quadratic in $k$. At these points, $\cW^{k_0}(\gs\gp_{2n}, f_{\text{prin}})$ cannot be obtained as a quotient of $\cW^{\mathrm{ev}}(c,\lambda)$, even though it is a well-defined vertex algebra.

\end{enumerate}

Suppose that $\cA^k, \cB^{\ell}$ are one-parameter vertex algebras that arise as the simple quotients of $\cW^{\mathrm{ev},I}(c,\lambda), \cW^{\mathrm{ev},J}(c,\lambda)$, respectively, and we wish to classify the coincidences between their simple quotients $\cA_{k_0}$ and $\cB_{\ell_0}$ at points $k_0, \ell_0$. By Corollary \ref{cor:intersections}, aside from the degenerate cases given by Theorem \ref{thm:coincidences}, the coincidences for which $k_0$ and $\ell_0$ are not poles of $c(k), \lambda(k)$, and $c(\ell), \lambda(\ell)$, respectively, correspond to the intersection points in $V(I) \cap V(J)$. On the other hand, if $k_0$ is a pole of $c(k)$ or $\lambda(k)$, and $\ell_0$ is a pole of $c(\ell)$ or $\lambda(\ell)$, but $\cA^{k_0}, \cB^{\ell_0}$ are still defined, Corollary \ref{cor:intersections} does not apply, and we need to address the question of whether or not $\cA_{k_0}$ and $\cB_{\ell_0}$ are isomorphic on a case-by-case basis.

\begin{thm} \label{thm:Dcoincidences}  For $3 \leq n <  m$, aside from the critical levels $\ell = -(2n-2)$ and $\ell' = -(2m-2)$, and the degenerate cases given by Theorem \ref{thm:coincidences}, all isomorphisms $\mathcal{W}_{\ell}(\mathfrak{s}\mathfrak{o}_{2n}, f_{\text{prin}})^{\mathbb{Z}_2} \cong \mathcal{W}_{\ell'}(\mathfrak{s}\mathfrak{o}_{2m}, f_{\text{prin}})^{\mathbb{Z}_2}$ appear on the following list.

\begin{equation} \begin{split} & \ell = -(2n-2) + \frac{2 n -1 }{2 (m + n-1)},\qquad \ell = -(2n-2) + \frac{2 (m + n -1)}{2 n -1},
\\ & \ell' =  -(2 m - 2) + \frac{2 m -1}{2 (m + n -1)},\qquad  \ell' =  -(2 m - 2) + \frac{2 (m + n -1)}{2 m -1}.\end{split} \end{equation} 
\end{thm}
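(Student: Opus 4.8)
The plan is to reduce the classification of coincidences to an explicit intersection problem for truncation curves, and then solve that problem using the rational parametrizations supplied by Theorem~\ref{thm:prinw}. By that theorem, each algebra $\cW_{\ell}(\gs\go_{2n}, f_{\text{prin}})^{\mathbb{Z}_2}$ is the simple quotient of $\cW^{\mathrm{ev},J_n}_{R_n}(c,\lambda)$ with $J_n = (q_n)$, and its truncation curve $V(J_n) \subseteq \mathbb{C}^2$ is cut out by the polynomial $q_n$ of \eqref{ideal:wslna}; equivalently, $V(J_n)$ is rationally parametrized by $\ell$ through the central charge \eqref{dcentralcharge} together with \eqref{ideal:solvelambda}, which expresses $\lambda$ as a rational function of $c$ and $n$. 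By Corollary~\ref{cor:intersections}, apart from the degenerate coincidences of Theorem~\ref{thm:coincidences}, any isomorphism $\cW_{\ell}(\gs\go_{2n}, f_{\text{prin}})^{\mathbb{Z}_2} \cong \cW_{\ell'}(\gs\go_{2m}, f_{\text{prin}})^{\mathbb{Z}_2}$ must arise from a common point $(c_0, \lambda_0) \in V(J_n) \cap V(J_m)$; conversely, at such a point both algebras are simple quotients of the same $\cW^{\mathrm{ev},I_0}(c,\lambda)$ with $I_0 = (c-c_0,\lambda-\lambda_0)$ maximal, hence isomorphic, provided neither specialization degenerates at a pole or critical level as in the discussion preceding Theorem~\ref{thm:Dcoincidences}.

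To locate the intersection points I would eliminate $\lambda$. Writing $q_n = 7\lambda(c-1)(22+5c)\,A_n(c) - B_n(c)$, where $A_n$ is the linear and $B_n$ the quadratic polynomial in $c$ appearing in \eqref{ideal:wslna}, the two curve equations $q_n = 0 = q_m$ force the two expressions \eqref{ideal:solvelambda} for $\lambda$ to agree, which after cancelling the common factor $(c-1)(22+5c)$ is equivalent to the single polynomial condition
\begin{equation*}
\Phi_{n,m}(c) := B_n(c)\,A_m(c) - B_m(c)\,A_n(c) = 0.
\end{equation*}
Since $A_n,A_m$ are linear and $B_n,B_m$ quadratic in $c$, the polynomial $\Phi_{n,m}$ has degree at most three, and a short computation shows its leading coefficient equals $12(n-m)(n+m-1)\neq 0$, so there are exactly three candidate central charges (with multiplicity), and for each the value $\lambda_0$ is then determined by \eqref{ideal:solvelambda}.

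Next I would exhibit the genuine intersection and match it to the stated list. The organizing input is Feigin--Frenkel self-duality of the simply-laced principal $\cW$-algebra of type $D_n$: since $h^\vee = 2n-2$, the two candidate values of $\ell + 2n-2$ in the list, namely $\tfrac{2n-1}{2(m+n-1)}$ and $\tfrac{2(m+n-1)}{2n-1}$, have product $1$ and so correspond to the same vertex algebra and the same point $(c_0,\lambda_0)$; the same holds for the two values of $\ell' + 2m-2$. Thus a single intersection point produces all four relations. I would verify directly that it is an intersection point: setting $t = \ell + 2n-2 = \tfrac{2n-1}{2(m+n-1)}$ and substituting into \eqref{dcentralcharge} gives, after simplification,
\begin{equation*}
c_0 = \frac{mn\,(4m + 4n - 4mn - 3)}{m+n-1},
\end{equation*}
which is symmetric under $n \leftrightarrow m$ and hence coincides with the central charge obtained from $\ell' + 2m-2 = \tfrac{2m-1}{2(m+n-1)}$; substituting $c_0$ into \eqref{ideal:solvelambda} for both $n$ and $m$ then yields a common $\lambda_0$, confirming $(c_0,\lambda_0) \in V(J_n)\cap V(J_m)$, and the remaining two relations follow by duality.

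The main obstacle is completeness: showing $\Phi_{n,m}$ contributes no further nondegenerate coincidences. I would factor $\Phi_{n,m}(c)$ explicitly and account for each of its three roots. One root is the value giving $c_0$ above; the remaining two must be shown to fall into the excluded cases, namely the exceptional charges $c \in \{0,1,-24,-\tfrac{22}{5},\tfrac12\}$ of Theorem~\ref{thm:coincidences} where $W^4$ or $W^6$ already decouples, or charges whose preimages under \eqref{dcentralcharge} are the critical levels $\ell = -(2n-2)$, $\ell' = -(2m-2)$ or the poles of the parametrization, at which the specialization is not a quotient of $\cW^{\mathrm{ev}}(c,\lambda)$. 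A delicate point is that solving the quadratic \eqref{dcentralcharge} for $\ell$ at $c = c_0$ a priori yields two levels for each rank, so one must confirm that the admissible pairings $(\ell,\ell')$ are exactly those listed, and that the hypothesis $3 \le n < m$ prevents any two of the four relations from coinciding or collapsing into a degenerate case. Assembling these verifications completes the proof.
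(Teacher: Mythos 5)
Your proposal is correct and follows essentially the same route as the paper: reduce via Corollary \ref{cor:intersections} to computing $V(J_n)\cap V(J_m)$, exhibit the unique nondegenerate intersection point (your $c_0=\frac{mn(4m+4n-4mn-3)}{m+n-1}$ agrees with the paper's $c_{n,m}$), observe that Feigin--Frenkel self-duality of $D_n$ pairs the two listed levels for each rank, and finally rule out extra coincidences at the poles of the parametrizations. Your resultant formulation (the cubic $\Phi_{n,m}$ with leading coefficient $12(n-m)(n+m-1)$) is a clean way to organize the calculation the paper performs directly; the one step you state but do not carry out --- that the two remaining roots of $\Phi_{n,m}$ are degenerate --- is confirmed by the paper's computation, which finds them to be $c=-24$ and $c=\tfrac12$, both covered by Theorem \ref{thm:coincidences}.
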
 

\begin{proof} First, we exclude the values of $\ell$ and $\ell'$ which are poles of the functions $\lambda_n(\ell)$ and $\lambda_m(\ell')$ given by \eqref{app:typedorb}, since at these values, $\mathcal{W}_{\ell}(\mathfrak{s}\mathfrak{o}_{2n}, f_{\text{prin}})^{\mathbb{Z}_2}$ and $\mathcal{W}_{\ell'}(\mathfrak{s}\mathfrak{o}_{2m}, f_{\text{prin}})^{\mathbb{Z}_2}$ are not quotients of $\cW^{\mathrm{ev}}(c,\lambda)$. For all other noncritical values of $\ell$ and $\ell'$, $\cW_{\ell}(\gs\go_{2n},f_{\text{prin}})^{\mathbb{Z}_2}$ and $\cW_{\ell'}(\gs\go_{2m},f_{\text{prin}})^{\mathbb{Z}_2}$ are obtained as quotients of $\cW^{\mathrm{ev},J_{n}}(c,\lambda)$ and $\cW^{\mathrm{ev}, J_{m}}(c,\lambda)$, respectively. 

By Corollary \ref{cor:intersections}, aside from the degenerate cases given by Theorem \ref{thm:coincidences}, all other coincidences $\mathcal{W}_{\ell}(\mathfrak{s}\mathfrak{o}_{2n}, f_{\text{prin}})^{\mathbb{Z}_2} \cong \mathcal{W}_{\ell'}(\mathfrak{s}\mathfrak{o}_{2m}, f_{\text{prin}})^{\mathbb{Z}_2}$ correspond to intersection points on the truncation curves $V(J_{n})$ and $V(J_{m})$. These ideals are described explicitly by \eqref{ideal:wslna}, and a calculation shows that 
$V(J_{n}) \cap V(J_{m})$ consists of exactly three points $(c,\lambda)$, namely, $(-24, -\frac{1}{245})$, $(\frac{1}{2}, -\frac{2}{49})$, and $(c_{n,m},\lambda_{n,m})$, where
\begin{equation} \begin{split}  c_{n,m} & =-\frac{m n (3 - 4 m - 4 n + 4 m n)}{m + n -1},
\\ \lambda_{n,m} & = - \frac{(m + n -1) f(n,m)}{7 (m -1 ) (n -1) (4 m n -1) g(n,m) h(n,m)},
\\ g(n,m) & = -10 + 19 m - 12 m^2 + 19 n - 6 m n - 8 m^2 n - 12 n^2 - 8 m n^2 + 8 m^2 n^2,
\\ h(n,m) & = 22 - 22 m - 22 n + 15 m n - 20 m^2 n - 20 m n^2 + 20 m^2 n^2,
\\ f(n,m) &  = 28 - 38 m - 14 m^2 + 24 m^3 - 38 n - 364 m n + 969 m^2 n - 696 m^3 n + 144 m^4 n 
\\ &- 14 n^2 + 969 m n^2 - 
1746 m^2 n^2 + 828 m^3 n^2 - 112 m^4 n^2 + 24 n^3 - 696 m n^3 + 828 m^2 n^3 
\\& - 32 m^3 n^3 - 64 m^4 n^3 + 144 m n^4 - 112 m^2 n^4 - 64 m^3 n^4 + 32 m^4 n^4.
\end{split}. \end{equation} 
The first two intersection points occur at degenerate values of $c$. By replacing the parameter $c$ with the levels $\ell, \ell'$ using \eqref{dcentralcharge}, we see that the third intersection point yields the nontrivial isomorphisms in Theorem \ref{thm:Dcoincidences}. Moreover, by Corollary \ref{cor:intersections}, these are the only such isomorphisms except possibly at the values of $\ell, \ell'$ excluded above. 

Finally, suppose that $\ell$ is a pole of the function $\lambda_n(\ell)$ given by \eqref{app:typedorb}. It is not difficult to check that the corresponding values of $\ell'$ for which $c_n(\ell) = c_m(\ell')$, are not poles of $\lambda_m(\ell')$. It follows that there are no additional coincidences at the excluded points. \end{proof}

\begin{remark} This family of coincidences was previously stated for the coset model of these orbifolds in \cite{CGKV}, Equations 4.11 and 4.12.
\end{remark}

\begin{thm} \label{thm:Ccoincidences} For $2 \leq n < m$, aside from the critical levels $\ell = -(n+1)$ and $\ell' = -(m+1)$, and the degenerate cases given by Theorem \ref{thm:coincidences}, all isomorphisms \begin{equation} \cW_{\ell}(\mathfrak{sp}_{2n}, f_{\text{prin}}) \cong \cW_{\ell'}(\mathfrak{sp}_{2m},f_{\text{prin}}),\end{equation} appear on the following list.

\begin{enumerate} 

\item $\displaystyle \ell = -(n + 1) + \frac{n}{2 (m + n)},\qquad \ell' = -(m + 1) + \frac{m}{2 (m + n)}$,

\smallskip

\item $\displaystyle \ell = -(n + 1) + \frac{1 + m + n}{1 + 2 n},\qquad \ell' = -(m + 1) + \frac{1 + m + n}{1 + 2 m}$,

\smallskip

\item $\displaystyle \ell = -(n + 1) + \frac{1 - 2 m + 2 n}{2 (-1 - 2 m + 2 n)},\qquad \ell' = -(m + 1) + \frac{1 - 2 n + 2 m}{2 (-1 - 2 n + 2 m)}$,

\smallskip

\item  $\displaystyle \ell = -(n + 1) + \frac{1 + 2 m + 2 n}{2 (2 n -1)},\qquad \ell' = -(m + 1) + \frac{1 + 2 m + 2 n}{2 (2 m -1)}$,

\smallskip

\item $\displaystyle \ell = -(n + 1) + \frac{1 + n}{1 + 2 m + 2 n},\qquad \ell' = -(m + 1) + \frac{1 + m}{1 + 2 m + 2 n}$.
\end{enumerate}

\end{thm}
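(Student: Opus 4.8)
The plan is to follow the same strategy as the proof of Theorem \ref{thm:Dcoincidences}, now using the realization of the type $C$ principal $\cW$-algebras as one-parameter quotients of $\cW^{\mathrm{ev}}(c,\lambda)$. By Corollary \ref{cor:typecrealization}, for each $n\geq 2$ the algebra $\cW^{\ell}(\gs\gp_{2n}, f_{\text{prin}})$ is the simple quotient of $\cW^{\mathrm{ev},I_{n}}(c,\lambda)$ along the truncation curve $V(I_{n})$, whose generator $p_{n}$ and rational parametrization $\ell\mapsto(c_n(\ell),\lambda_n(\ell))$ are given explicitly in Appendix A via \eqref{ccentralcharge} and \eqref{app:typec}. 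First I would exclude the critical levels $\ell=-(n+1)$, $\ell'=-(m+1)$, which are poles of $c_n(\ell)$ and $c_m(\ell')$, together with the finitely many noncritical values of $\ell,\ell'$ that are poles of $\lambda_n(\ell)$ and $\lambda_m(\ell')$; at these points the corresponding simple $\cW$-algebra is still a well-defined vertex algebra but, as noted in item (2) above, is not a quotient of $\cW^{\mathrm{ev}}(c,\lambda)$, so Corollary \ref{cor:intersections} does not apply. For all remaining levels both algebras are genuine simple quotients of $\cW^{\mathrm{ev}}(c,\lambda)$, and since item (1) guarantees that no specialization subtlety occurs for principal $\cW$-algebras, the specializations $\cW_{\ell}(\gs\gp_{2n}, f_{\text{prin}})$ and $\cW_{\ell'}(\gs\gp_{2m}, f_{\text{prin}})$ agree with the pointwise specializations of these one-parameter families.

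Next, by Corollary \ref{cor:intersections}, apart from the degenerate coincidences enumerated in Theorem \ref{thm:coincidences}, every isomorphism $\cW_{\ell}(\gs\gp_{2n}, f_{\text{prin}})\cong\cW_{\ell'}(\gs\gp_{2m},f_{\text{prin}})$ must correspond to an intersection point of the truncation curves $V(I_{n})$ and $V(I_{m})$. I would therefore compute the set $V(I_{n})\cap V(I_{m})\subseteq\mathbb{C}^2$ by eliminating between the explicit generators $p_{n}$ and $p_{m}$. This produces finitely many candidate points $(c_0,\lambda_0)$, some of which sit at the degenerate central charges $c\in\{0,1,-24,-\tfrac{22}{5},\tfrac12\}$ appearing in Theorem \ref{thm:coincidences}, where the simple quotient collapses to the Virasoro or type $\cW(2,4)$ algebra; these must be discarded. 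Each surviving point is a genuine coincidence: at a common point $(c_0,\lambda_0)$ both simple quotients equal the simple quotient of $\cW^{\mathrm{ev}}(c,\lambda)$ at $(c_0,\lambda_0)$, hence are isomorphic to one another.

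To read off the level pairs, I would substitute each surviving intersection point into the central charge formula \eqref{ccentralcharge} and solve $c_n(\ell)=c_0$ for $\ell$ and $c_m(\ell')=c_0$ for $\ell'$, retaining only those roots that also satisfy $\lambda_n(\ell)=\lambda_0$ and $\lambda_m(\ell')=\lambda_0$ so as to land on the correct curve rather than on its Feigin-Frenkel image. Assembling the resulting level pairs yields exactly the five families in the statement. Finally, mirroring the last paragraph of the proof of Theorem \ref{thm:Dcoincidences}, I would revisit the excluded pole values: for each $\ell$ that is a pole of $\lambda_n(\ell)$, I would verify that the matching values $\ell'$ with $c_n(\ell)=c_m(\ell')$ are not themselves poles of $\lambda_m(\ell')$, which confirms that no additional coincidence is lost at these points.

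The hard part will be the elimination step, namely computing $V(I_{n})\cap V(I_{m})$ symbolically as a function of the parameters $n$ and $m$. The generators $p_{n}$ are high-degree polynomials in $c$ and $\lambda$ whose coefficients depend on $n$, so forming the resultant, factoring out the spurious and degenerate components, and identifying precisely five nondegenerate intersection points is a substantial computer-algebra calculation. The subsequent back-substitution into \eqref{ccentralcharge} and the check that each family avoids the degenerate and pole loci, while routine in principle, is where the bulk of the verification effort lies.
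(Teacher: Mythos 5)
Your proposal follows the same strategy as the paper's proof: exclude the critical levels and the poles of $\lambda_n(\ell)$, $\lambda_m(\ell')$, invoke Corollary \ref{cor:intersections} to reduce to computing $V(I_{n}) \cap V(I_{m})$ from the explicit generators in Appendix A, discard the intersection points at degenerate central charges, convert the remaining five points into level pairs via \eqref{ccentralcharge}, and finally verify that no extra coincidences arise at the excluded pole values. This matches the paper's argument in both structure and detail, including the acknowledgment that the elimination step is the computational heart of the proof.
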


\begin{proof} First, we exclude the values of $\ell$ and $\ell'$ which are poles of the functions $\lambda_n(\ell)$ and $\lambda_m(\ell')$ given by \eqref{app:typec}. For all other noncritical values of $\ell$ and $\ell'$, $\cW_{\ell}(\gs\gp_{2n}, f_{\text{prin}})$ and $\cW_{\ell'}(\gs\gp_{2m}, f_{\text{prin}})$ are obtained as quotients of $\cW^{\mathrm{ev},I_{n}}(c,\lambda)$ and $\cW^{\mathrm{ev}, I_{m}}(c,\lambda)$, respectively.

By Corollary \ref{cor:intersections}, aside from the degenerate cases given by Theorem \ref{thm:coincidences}, all other coincidences $\cW_{\ell}(\gs\gp_{2n}, f_{\text{prin}})\cong \cW_{\ell'}(\gs\gp_{2m}, f_{\text{prin}})$ correspond to intersection points on the truncation curves $V(I_{n})$ and $V(I_{m})$. These ideals are described explicitly by \eqref{app:idealtypec}, and a calculation shows that 
$V(I_{n}) \cap V(I_{m})$ consists of exactly eight points: $(-24, -\frac{1}{245})$, $(\frac{1}{2},\pm \frac{2}{49})$, which do not depend on $n$ and $m$, and five nontrivial points $(c^i_{n,m},\lambda^i_{n,m})$, $i=1,\dots, 5$, which appear explicitly in Appendix C. The first three intersection points occur at degenerate values of $c$. Replacing $c$ with $\ell, \ell'$ in the remaining intersection points using \eqref{ccentralcharge}, it follows from Corollary \ref{cor:intersections} that the above isomorphisms all hold, and that these are all such isomorphisms except possibly at the points excluded above. Finally, one checks that if $\ell$ is a pole of the function $\lambda_n(\ell)$ given by \eqref{app:typec}, the corresponding values of $\ell'$ for which $c_n(\ell) = c_m(\ell')$, are not poles of $\lambda_m(\ell')$. It follows that there are no additional coincidences at the excluded points. \end{proof}

\begin{thm} \label{thm:CDcoincidences} For $n\geq 2$ and $m\geq 3$, aside from the critical levels $\ell = -(n+1)$ and $\ell' = -(2m-2)$, and the degenerate cases given by Theorem \ref{thm:coincidences}, all isomorphisms 
$$\cW_{\ell}(\mathfrak{sp}_{2n}, f_{\text{prin}}) \cong \cW_{\ell'}(\mathfrak{so}_{2m}, f_{\text{prin}})^{\mathbb{Z}_2},$$ appear on the following list.

\begin{enumerate}

\item  $\displaystyle \ell = -(n+1) + \frac{m + n}{2 n},\quad \ell'  = -(2m-2) +\frac{m}{m + n}, \ \ell'  = -(2m-2) +\frac{m+n}{m}$,

\smallskip

\item  $\displaystyle \ell = -(n+1) + \frac{1 + 2 n}{2 ( 2 m + 2 n-1)},\quad \ell'  = -(2m-2) + \frac{2 (m-1)}{2 m + 2 n-1},\quad  \ell'= -(2m-2) + \frac{2 m + 2 n-1}{2 (m-1)}$,

\smallskip

\item  $ \displaystyle \ell = -(n + 1) + \frac{1 - m + n}{1 - 2 m + 2 n},\quad \ell' =  -(2m-2) + \frac{2 m - 2 n-1}{2 (m - n-1)},\quad \ell' =  -(2m-2) + \frac{2 (m - n-1)}{2 m - 2 n-1}.$
\end{enumerate}

For the third family, we must have $m \neq n+1$.
\end{thm}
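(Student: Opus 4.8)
The plan is to follow the same template as the proofs of Theorems \ref{thm:Dcoincidences} and \ref{thm:Ccoincidences}. First I would exclude the values of $\ell$ and $\ell'$ which are poles of the rational functions $\lambda_n(\ell)$ and $\lambda_m(\ell')$ given by \eqref{app:typec} and \eqref{app:typedorb}, since at these values $\cW_{\ell}(\gs\gp_{2n}, f_{\text{prin}})$ and $\cW_{\ell'}(\gs\go_{2m}, f_{\text{prin}})^{\mathbb{Z}_2}$ fail to be quotients of $\cW^{\mathrm{ev}}(c,\lambda)$. For all other noncritical values, Corollary \ref{cor:typecrealization} realizes $\cW_{\ell}(\gs\gp_{2n}, f_{\text{prin}})$ as a quotient of $\cW^{\mathrm{ev},I_{n}}(c,\lambda)$, while Corollary \ref{cor:typedrealization} together with Theorem \ref{thm:prinw} realizes $\cW_{\ell'}(\gs\go_{2m}, f_{\text{prin}})^{\mathbb{Z}_2}$ as a quotient of $\cW^{\mathrm{ev},J_{m}}(c,\lambda)$. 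By Corollary \ref{cor:intersections}, aside from the degenerate coincidences listed in Theorem \ref{thm:coincidences}, every isomorphism $\cW_{\ell}(\gs\gp_{2n}, f_{\text{prin}}) \cong \cW_{\ell'}(\gs\go_{2m}, f_{\text{prin}})^{\mathbb{Z}_2}$ must correspond to a point of the intersection $V(I_{n}) \cap V(J_{m})$ of the two truncation curves.

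The heart of the argument is the explicit computation of this intersection. The generator $q_{m}$ of $J_{m}$ is given by \eqref{ideal:wslna} and is linear in $\lambda$, so solving $q_{m}=0$ for $\lambda$ exactly as in \eqref{ideal:solvelambda} yields $\lambda$ as a rational function of $c$ with $m$ a parameter. Substituting this into the generator $p_{n}$ of $I_{n}$ from \eqref{app:idealtypec} and clearing denominators produces a polynomial in $c$ whose coefficients are polynomials in $n$ and $m$. I expect this polynomial to factor so that $V(I_{n}) \cap V(J_{m})$ consists of the two degenerate points $(-24, -\tfrac{1}{245})$ and $(\tfrac{1}{2}, -\tfrac{2}{49})$, which occur at special central charges already covered by Theorem \ref{thm:coincidences}, together with exactly three nontrivial points $(c^{i}_{n,m}, \lambda^{i}_{n,m})$ for $i=1,2,3$, one for each of the families (1)--(3).

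Having located the nontrivial intersection points, I would translate each back to a pair of levels. The rational parametrization $\ell \mapsto (c(\ell),\lambda(\ell))$ of $V(I_{n})$ is birational, so each intersection point has a unique type $C$ level $\ell$, obtained by solving the quadratic \eqref{ccentralcharge}; this explains why each family lists a single $\ell$. By contrast the parametrization $\ell' \mapsto (c(\ell'),\lambda(\ell'))$ of $V(J_{m})$ has degree two, reflecting the $\mathbb{Z}_2$-orbifold, so the quadratic \eqref{dcentralcharge} contributes both of the values of $\ell'$ in each family, which are interchanged by the reciprocal involution on the level shift. Solving these quadratics and simplifying should reproduce precisely the lists (1)--(3). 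The restriction $m \neq n+1$ in the third family arises because at $m=n+1$ the denominators $2(m-n-1)$ and $2m-2n-1$ in the formulas for $\ell'$ degenerate; I would check that this value corresponds to a collision of the nontrivial intersection point with a degenerate one, or to a level already removed as a pole.

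Finally, I would dispose of the excluded poles: it remains to verify that if $\ell$ is a pole of $\lambda_n(\ell)$, then the values of $\ell'$ for which $c_n(\ell)=c_m(\ell')$ are not poles of $\lambda_m(\ell')$, and symmetrically, so that no further coincidences are hidden there, exactly as in the closing paragraphs of the two previous theorems. The main obstacle is the elimination step producing $V(I_{n}) \cap V(J_{m})$: because $p_{n}$ is specified only implicitly through Hornfeck's structure constant $\cC$, making the resultant explicit, factoring out the three nontrivial components cleanly, and tracking which branch of each level quadratic yields the listed parametrization, is the delicate part and is best carried out with computer algebra.
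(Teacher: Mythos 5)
Your proposal is correct and follows essentially the same route as the paper's proof: exclude the poles of $\lambda_n(\ell)$ and $\lambda_m(\ell')$, invoke Corollary \ref{cor:intersections} to reduce to computing $V(I_{n}) \cap V(J_{m})$, find the two degenerate points $(-24,-\tfrac{1}{245})$, $(\tfrac{1}{2},-\tfrac{2}{49})$ and three nontrivial points, convert back to levels via \eqref{ccentralcharge} and \eqref{dcentralcharge}, and finally check that the excluded pole values contribute no further coincidences. The only quibble is a side remark: the two values of $\ell'$ in each family arise because the type $D$ parametrization of $V(J_m)$ is two-to-one (essentially Feigin--Frenkel self-duality of $\gs\go_{2m}$) rather than "reflecting the $\mathbb{Z}_2$-orbifold," but this does not affect the argument.
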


\begin{proof} As above, we first exclude the values of $\ell$ and $\ell'$ which are poles of the functions $\lambda_n(\ell)$ and $\lambda_m(\ell')$ given by \eqref{app:typec} and \eqref{app:typedorb}, respectively. Aside from the degenerate cases given by Theorem \ref{thm:coincidences}, all other coincidences $\cW_{\ell}(\mathfrak{sp}_{2n}, f_{\text{prin}}) \cong \cW_{\ell'}(\mathfrak{so}_{2m}, f_{\text{prin}})^{\mathbb{Z}_2}$ correspond to intersection points on the truncation curves $V(I_{n})$ and $V(J_{m})$. We find that these curves have exactly five intersection points $(-24, -\frac{1}{245})$, $(\frac{1}{2},- \frac{2}{49})$, $(c^i_{n,m}, \lambda^i_{n,m})$, $i=1,2,3$, which appear explicitly in Appendix C. Replacing $c$ by $\ell, \ell'$ using \eqref{ccentralcharge} and \eqref{dcentralcharge}, shows that the above isomorphisms all hold, and that these are all such isomorphisms except possibly at the excluded values of $\ell, \ell'$. Finally, one checks as in the previous two theorems that there are no additional isomorphisms at the excluded values.  \end{proof}

\begin{remark} \label{newrationalW} In the third family above, suppose that $m=2n+2 + r$ for some integer $r$ satisfying $0\leq r \leq m-6$. Then we get $$\cW_{\ell}(\mathfrak{sp}_{2n}, f_{\text{prin}}) \cong \cW_{\ell'}(\mathfrak{so}_{2m}, f_{\text{prin}})^{\mathbb{Z}_2},\quad \ell = -(n+1) + \frac{1+n+r}{3+2n+2r},\quad \ell' = -(2m-2) + \frac{1+m+r}{m+r}.$$ Since $\ell$ is a nondegenerate admissible level for $\gs\gp_{2n}$, $\cW_{\ell}(\mathfrak{sp}_{2n}, f_{\text{prin}})$ is lisse and rational \cite{AIV,AV}. Since $\cW_{\ell'}(\mathfrak{so}_{2m}, f_{\text{prin}})$ is a simple current extension of $\cW_{\ell'}(\mathfrak{so}_{2m}, f_{\text{prin}})^{\mathbb{Z}_2}$, it is also lisse and rational despite the fact that $\ell'$ is not an admissible level for $\gs\go_{2m}$.

Similarly, in the third family above, suppose that $n = 2m -1 +r$ for some integer $r$ satisfying $0\leq r \leq n-5$. Then 
$$\cW_{\ell}(\mathfrak{sp}_{2n}, f_{\text{prin}}) \cong \cW_{\ell'}(\mathfrak{so}_{2m}, f_{\text{prin}})^{\mathbb{Z}_2},\quad \ell = -(n+1) + \frac{1+n+r}{2n+2r},\quad \ell' = -(2m-2) + \frac{2m+2r-1}{2m+2r}.$$ Since $\ell'$ is nondegenerate admissible for $\gs\go_{2m}$, both $\cW_{\ell'}(\mathfrak{so}_{2m}, f_{\text{prin}})$ and its orbifold $\cW_{\ell'}(\mathfrak{so}_{2m}, f_{\text{prin}})^{\mathbb{Z}_2}$ are lisse and rational. Therefore $\cW_{\ell}(\mathfrak{sp}_{2n}, f_{\text{prin}})$ is also lisse and rational, even though $\ell$ is not an admissible level for $\gs\gp_{2n}$.
\end{remark}

Recall the one-parameter vertex algebra $\cF^k(n) = \text{Com}(V^{k+1/2}(\gs\gp_{2n-2}), \cW^k(\gs\gp_{2n}, f_{\text{min}}))$ for $n\geq 2$ appearing in Theorem \ref{thm:otherquotients}, which by Conjecture \ref{conj:reform} is obtained as the simple quotient of $\cW^{\mathrm{ev},K_n}(c,\lambda)$ via the parametrization \eqref{conj:reformeq} of the curve $V(K_n)$. Also, recall that the specialization $\cF^{k_0}(n)$ of the one-parameter algebra $\cF^{k}(n)$ at $k = k_0$, can be a proper subalgebra of $\text{Com}(V^{k_0+1/2}(\gs\gp_{2n-2}), \cW^{k_0}(\gs\gp_{2n}, f_{\text{min}}))$, but this can only occur for rational numbers $k_0 \leq -n-\frac{1}{2}$. By abuse of notation, we shall use the same notation $\cF^{k}(n)$ if $k$ is regarded as a complex number rather than a formal parameter, so that $\cF^k(n)$ always denotes the specialization of the one-parameter algebra at $k\in \mathbb{C}$, even if it is a proper subalgebra of the coset. For all $k\in \mathbb{C}$, we denote by $\cF_{k}(n)$ the simple quotient of $\cF^k(n)$.

Assuming Conjecture \ref{conj:reform}, aside from the critical values and the degenerate cases, we can find all nontrivial isomorphisms $\cF_k(n) \cong \cW_{\ell}(\mathfrak{sp}_{2m}, f_{\text{prin}})$ and $\cF_k(n) \cong \cW_{\ell}(\mathfrak{so}_{2m}, f_{\text{prin}})^{\mathbb{Z}_2}$ by finding the intersection points $V(K_n) \cap V(I_{m})$ and $V(K_n) \cap V(J_{m})$.

\begin{conj} For $n\geq 2$ and $m\geq 2$, aside from the critical values $k = -(n+1)$, $k = -n -1/2$, and $\ell = -(m+1)$, and the degenerate cases given by Theorem \ref{thm:coincidences}, all isomorphisms $\cF_k(n) \cong \cW_{\ell}(\gs\gp_{2m}, f_{\text{prin}})$ appear in the following list.

\begin{enumerate}

\item $\displaystyle k = \frac{1}{2} (2 m-1),\qquad \ell = -(m+1) + \frac{m + n}{1 + 2 m + 2 n}$. This is the family of rational vertex algebras for all $m,n$ appearing in Conjecture \ref{conj:ackl}.

\smallskip

\item $\displaystyle k = \frac{1}{3} (m - 2 n -2),\qquad \ell =  -(m+1) +\frac{1 + m + n}{2 m + 2 n -1}$. These vertex algebras are rational for all $m,n$. 

\smallskip

\item $\displaystyle k = -\frac{2 (1 + m) (1 + n)}{3 + 2 m},\qquad \ell = -(m+1) + \frac{3 + 2 m}{2 (1 + 2 m - 2 n)}$,

\smallskip

\item $\displaystyle k = -\frac{2 m + 4 m n -1}{2 (2 m -1)},\qquad \ell = -(m+1) + \frac{2 m-1}{2 (2 m - 2 n -1)}$,

\smallskip

\item $\displaystyle k = -\frac{2 + 2 m + n + 2 m n}{2 (1 + m)}, \qquad \ell = -(m+1) + \frac{1 + m - n}{2 (1 + m)}$,

\smallskip

\item $\displaystyle k = -\frac{m - n + 2 m n -2}{2 (m -1)}, \qquad \ell = -(m+1) +\frac{m - n}{2 (m-1)}$. 
\end{enumerate}

\end{conj}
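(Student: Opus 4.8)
The plan is to follow the same strategy used in the proofs of Theorems \ref{thm:Dcoincidences}, \ref{thm:Ccoincidences}, and \ref{thm:CDcoincidences}, now applied to the pair of truncation curves $V(K_n)$ and $V(I_{m})$. Assuming Conjecture \ref{conj:reform}, the algebra $\cF_k(n)$ is the simple quotient of $\cW^{\mathrm{ev},K_n}(c,\lambda)$ via the rational parametrization \eqref{conj:reformeq}, while $\cW_{\ell}(\gs\gp_{2m}, f_{\text{prin}})$ is the simple quotient of $\cW^{\mathrm{ev},I_{m}}(c,\lambda)$ via \eqref{app:typec}. First I would exclude the values of $k$ and $\ell$ that are poles of the parametrizing functions $c_n(k), \lambda_n(k)$ and $c_m(\ell), \lambda_m(\ell)$, since at those points the relevant algebra is not a quotient of $\cW^{\mathrm{ev}}(c,\lambda)$; for all remaining noncritical values both sides are one-parameter quotients and Corollary \ref{cor:intersections} applies. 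That corollary reduces the classification, away from the degenerate coincidences of Theorem \ref{thm:coincidences}, to computing the finite set of intersection points $V(K_n) \cap V(I_{m}) \subseteq \mathbb{C}^2$.

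The second step is the explicit computation of this intersection. I would substitute the parametrization \eqref{conj:reformeq} of $V(K_n)$ into the defining polynomial of $I_{m}$ appearing in \eqref{app:idealtypec}, clear denominators, and solve the resulting one-variable polynomial equation. As in the earlier theorems, I expect the common degenerate points $(-24, -\tfrac{1}{245})$ and $(\tfrac{1}{2}, -\tfrac{2}{49})$, which occur at central charges where $W^4$ or $W^6$ already decouples, to appear alongside finitely many genuine intersection points $(c_{n,m}^i, \lambda_{n,m}^i)$. Each such point, after translating $c$ back to the levels via \eqref{ccentralcharge} on the type $C$ side and via the inverse of \eqref{conj:reformeq} on the $\cF$ side, should yield one of the six families in the statement, with the two roots of the relevant quadratic in $\ell$ producing the Feigin--Frenkel-type level pairs where they occur.

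The third step is the bookkeeping needed to convert the set-theoretic intersection into the stated level formulas and to exclude spurious coincidences. The subtleties flagged before Theorem \ref{thm:Dcoincidences} must be addressed: because the specialization $\cF^{k_0}(n)$ can be a proper subalgebra of the actual coset at rational $k_0 \leq -n-\tfrac{1}{2}$, I would check that the relevant $k_0$ either avoid this range or, when they do not, note that the specialization is nonetheless the intended quotient of $\cW^{\mathrm{ev}}(c,\lambda)$ so that Corollary \ref{cor:intersections} still governs the comparison. As in the earlier proofs, I would also verify that at the excluded pole values of $\lambda_n(k)$ (respectively $\lambda_m(\ell)$) the matching partner on the other curve is not itself a pole, so that no further coincidences are hidden at the excluded points, and that the listed critical values $k=-(n+1)$, $k=-n-\tfrac{1}{2}$, $\ell=-(m+1)$ are genuinely the only exceptions.

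The main obstacle will be the elimination computation of the second step, which is a resultant of two high-degree polynomials whose coefficients depend on the parameters $n$ and $m$, and which must be carried out so as to extract the nontrivial intersection points in closed form and then recognize each as one of the six families. A secondary and more serious difficulty, inherited from the conditional nature of the statement, is that the whole argument rests on Conjecture \ref{conj:reform}, which supplies both the ideal $K_n$ and its parametrization \eqref{conj:reformeq}; since only the case $n=2$ of that conjecture is established (Theorem \ref{thm:minimalsp4}), the present statement is unconditional only for $n=2$, and for general $n$ it holds only modulo the determination of $K_n$.
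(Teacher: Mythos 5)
Your plan coincides with the paper's own (sketched) argument: the statement is left as a conjecture precisely because it rests on Conjecture \ref{conj:reform}, and the paper's justification is exactly the computation of $V(K_n)\cap V(I_{m})$ via Corollary \ref{cor:intersections}, following the template of Theorems \ref{thm:Dcoincidences}--\ref{thm:CDcoincidences}, with the case $n=2$ unconditional by Theorem \ref{thm:minimalsp4}. You correctly identify both the method and the conditional status, so there is nothing to add.
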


\begin{conj} For $n\geq 2$ and $m\geq 3$, aside from the critical values $k = -(n+1)$, $k = -n -1/2$, and $\ell = -(2m-2)$, and the degenerate cases given by Theorem \ref{thm:coincidences}, all isomorphisms $\cF_k(n) \cong \cW_{\ell}(\gs\go_{2m}, f_{\text{prin}})^{\mathbb{Z}_2}$ appear in the following list.

\begin{enumerate}

\item $\displaystyle k = \frac{1}{2} (m - n -2),  \qquad \ell = -(2m-2) + \frac{m + n -1}{m + n},  \qquad \ell = -(2m-2) + \frac{m + n}{m + n -1}$. These vertex algebras are rational whenever $n\geq m-1$.

\smallskip

\item $\displaystyle k = -\frac{1 + 4 m + 4 m n}{2 (1 + 2 m)}, \qquad \ell = -(2m-2)  +\frac{2 (m - n)}{1 + 2 m},   \qquad \ell = -(2m-2)  +\frac{1 + 2 m}{2 (m - n)}$,

\smallskip

\item $\displaystyle k =-\frac{m - 2 n + 2 m n -2}{2 m -3},\qquad \ell =  -(2m-2)  +\frac{2 (m - n -1)}{2 m -3},  \qquad \ell =  -(2m-2)  +\frac{2 m -3}{2 (m - n -1)}$.

\end{enumerate}

For the third family, we must have $m \neq n+1$.
\end{conj}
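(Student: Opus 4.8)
The plan is to follow the template of Theorems \ref{thm:Dcoincidences}, \ref{thm:Ccoincidences}, and \ref{thm:CDcoincidences}, with the important caveat that the realization of $\cF_k(n)$ as a quotient of $\cW^{\mathrm{ev}}(c,\lambda)$ rests on Conjecture \ref{conj:reform}; the entire argument is therefore conditional on that conjecture. Granting it, $\cF_k(n)$ is the simple quotient of $\cW^{\mathrm{ev},K_n}(c,\lambda)$ along the truncation curve $V(K_n)$ rationally parametrized by \eqref{conj:reformeq}, while by Theorem \ref{thm:prinw} the orbifold $\cW_{\ell}(\gs\go_{2m}, f_{\text{prin}})^{\mathbb{Z}_2}$ is the simple quotient of $\cW^{\mathrm{ev},J_m}(c,\lambda)$ with $J_m = (q_m)$ given by \eqref{ideal:wslna} and central charge \eqref{dcentralcharge}. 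The reduction we exploit is Corollary \ref{cor:intersections}: away from the degenerate coincidences of Theorem \ref{thm:coincidences}, every isomorphism $\cF_k(n) \cong \cW_{\ell}(\gs\go_{2m}, f_{\text{prin}})^{\mathbb{Z}_2}$ at non-pole parameter values corresponds to an intersection point of the truncation curves $V(K_n) \cap V(J_m)$ in $\mathbb{C}^2$.

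First I would exclude the values of $k$ that are poles of $c_n(k),\lambda_n(k)$ in \eqref{conj:reformeq} and the values of $\ell$ that are poles of $\lambda_m(\ell)$ in \eqref{app:typedorb}, since at these points the respective algebras are not quotients of $\cW^{\mathrm{ev}}(c,\lambda)$. I would then compute the set-theoretic intersection $V(K_n) \cap V(J_m)$ by substituting the parametrization \eqref{conj:reformeq} of $V(K_n)$ into the defining polynomial $q_m$ and clearing denominators, producing a single polynomial equation in $k$. Following the pattern of the earlier theorems, I expect the solution set to consist of the two $(n,m)$-independent degenerate points $(-24, -\frac{1}{245})$ and $(\frac{1}{2}, -\frac{2}{49})$, which lie at special central charges already covered by Theorem \ref{thm:coincidences}, together with exactly three nontrivial solution families $(c^i_{n,m}, \lambda^i_{n,m})$, $i=1,2,3$, to be recorded explicitly (in an appendix, as for the previous theorems).

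To finish, I would translate the intersection data back into levels. For each nontrivial point the value of $k$ is pinned down uniquely by the joint conditions on $c$ and $\lambda$ through \eqref{conj:reformeq}, while inverting the central-charge formula \eqref{dcentralcharge} for $\gs\go_{2m}$, which is quadratic in $\ell$, produces the two displayed branches of $\ell$ in each family. The restriction $m \neq n+1$ in family (3) must then be imposed because at $m = n+1$ one branch degenerates to the critical level $\ell = -(2m-2)$ and its reciprocal partner becomes undefined, so the family collapses. As in Theorems \ref{thm:Dcoincidences} and \ref{thm:CDcoincidences}, I would close by checking the excluded pole values directly: one verifies that whenever $k$ is a pole of $c_n(k)$ or $\lambda_n(k)$, the matching $\ell$ solving $c_n(k) = c_m(\ell)$ is not a pole of $\lambda_m(\ell)$, and symmetrically, so no further coincidences hide at the excluded points.

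The main obstacle is twofold. Conceptually, the result remains conditional until Conjecture \ref{conj:reform} is established for $n > 2$ (it is known only for $n=2$ by Theorem \ref{thm:minimalsp4}); without the explicit curve $V(K_n)$, Corollary \ref{cor:intersections} cannot be invoked. Computationally, the hard part is the elimination step itself: the parametrization \eqref{conj:reformeq} carries high-degree numerators together with the extraneous factors $p_n(k)$, $q_n(k)$, $r_n(k)$, so resultant-based elimination of $k$ against $q_m$ yields a large polynomial in $n$ and $m$, and the delicate work lies in factoring it into the two degenerate points and the three genuine families while discarding the spurious factors introduced by clearing denominators.
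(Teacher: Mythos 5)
Your proposal matches the paper's intended derivation exactly: the paper states this result only as a conjecture precisely because it is conditional on Conjecture \ref{conj:reform}, and, granting that, the classification follows by computing $V(K_n)\cap V(J_m)$ via Corollary \ref{cor:intersections} and handling poles and the $m=n+1$ degeneration just as you describe, in parallel with Theorems \ref{thm:Dcoincidences}--\ref{thm:CDcoincidences}. You correctly identify both the conditionality (known unconditionally only for $n=2$ via Theorem \ref{thm:minimalsp4}) and the reason for excluding $m=n+1$ in the third family.
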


\begin{remark} In view of Theorem \ref{thm:minimalsp4}, these conjectures both hold in the case $n=2$, for all $m$. \end{remark}

\section{The relationship between $\cW(c,\lambda)$ and $\cW^{\mathrm{ev}}(c,\lambda)$}
In the notation of \cite{LI}, the universal two-parameter vertex algebra $\cW(c,\lambda)$ is freely generated of type $\cW(2,3,\dots)$, and has strong generators $\{L, W^i|\ i\geq 3\}$. Here $W^3$ is primary of weight $3$, $W^3_{(5)} W^3 = \frac{c}{3}$, and $W^i = W^3_{(1)} W^{i-1}$ for $i\geq 4$. It has a $\mathbb{Z}_2$-action determined by $L \mapsto L$ and $W^3 \mapsto -W^3$. It is natural to ask whether there is a homomorphism $\cW^{\mathrm{ev}}(c,\lambda) \ra \cW(c,\lambda)^{\mathbb{Z}_2}$. However, it is easy to see that this is false. First, $\cW(c,\lambda)$ has a unique up to scalar primary weight $4$ field 
$$\tilde{W}^4 = W^4 - \frac{32}{22 + 5 c} :LL:  - \frac{3 (c-2)}{2 (22 + 5 c)} \partial^2 L.$$ 
Then $\tilde{W}^4$ lies in $\cW(c,\lambda)^{\mathbb{Z}_2}$, and we consider the subalgebra of $\cW(c, \lambda)^{\mathbb{Z}_2}$ generated by $L$ and $\tilde{W}^4$. It is straightforward to verify that this subalgebra has $8$-dimensional weight $6$ subspace, with basis
$$:LLL:, \quad :W^3 W^3:, \quad :L W^4:,\quad  :(\partial^2 L)L:,\quad :(\partial L)^2:,\quad W^6, \quad \partial^2 W^4,\quad \partial^4 L.$$ One the other hand, the weight $6$ subspace of $\cW^{\mathrm{ev}}(c,\lambda)$ is only $7$-dimensional, with basis
$$:LLL:, \quad :L W^4:,\quad  :(\partial^2 L)L:,\quad :(\partial L)^2:,\quad W^6, \quad \partial^2 W^4,\quad \partial^4 L.$$

Instead, one can ask the following more refined question. Given a prime ideal $I \subseteq \mathbb{C}[c,\lambda]$ and a localization $R$ of $\mathbb{C}[c,\lambda] / I$, consider the quotient $\cW^{\mathrm{ev},I}_R(c,\lambda) / \cI$ by the maximal graded idea $\cI$, which is a one-parameter vertex algebra. Can we find another ideal $I' \subseteq \mathbb{C}[c,\lambda]$ and a localization $R'$ of $\mathbb{C}[c,\lambda] / I'$, such that $\cW^{\mathrm{ev},I}_R(c,\lambda) / \cI$ is isomorphic to the $\mathbb{Z}_2$-orbifold of $\cW^{I'}_{R'}(c,\lambda) / \cI'$, as a one-parameter vertex algebra? For this to happen, we must have a singular vector in weight $6$ in $\cW(c,\lambda)^{\mathbb{Z}_2}$, when $c$ is a free parameter. One can check by computer that there are exactly four truncation curves where this occurs, which correspond to the algebras $\cW^k(\gs\gl_n, f_{\text{prin}})$ for $n = 3,4,5$, and the parafermion algebra $N^k(\gs\gl_2) = \text{Com}(\cH, V^k(\gs\gl_2))$. For all $n\geq 3$, it can be verified using the explicit realization of $\cW^k(\mathfrak{sl}_n, f_{\text{prin}})$ as a quotient of $\cW(c,\lambda)$ that $\cW^k(\mathfrak{sl}_n, f_{\text{prin}})^{\mathbb{Z}_2}$ does not arise as a quotient of $\cW^{\mathrm{ev}}(c,\lambda)$, as a one-parameter vertex algebra. Therefore the only possible example of such a one-parameter vertex algebra is $N^k(\gs\gl_2)^{\mathbb{Z}_2}$. We now show that it is indeed such an example.

Recall that $N^k(\gs\gl_2)$ has central charge $\displaystyle c =  \frac{2 (k-1)}{k+2}$ and is of type $\cW(2,3,4,5)$ for all $k\neq 0$ \cite{DLY}. By Theorem 7.1 of \cite{LI}, $N^k(\gs\gl_2)$ can be obtained from $\cW(c,\lambda)$ as follows. 
Let $I \subseteq\mathbb{C}[c,\lambda]$ be the ideal generated by $$4\lambda (c+7) (2c-1) + (c-2) (c+4),$$ let $D$ be the multiplicative set generated by $(c+7)$ and $(2c -1)$, and let $$R = D^{-1} \mathbb{C}[c,\lambda] / I \cong D^{-1} \mathbb{C}[c].$$ Then $N^k(\gs\gl_2) \cong \cW^I_R(c,\lambda) / \cI$, where $\cI$ is the maximal proper graded ideal of $\cW^I_R(c,\lambda)$. In particular, $N^k(\gs\gl_2)$ is obtained from $\cW(c,\lambda)$ by setting
\begin{equation} \label{ratpara:para} c =  \frac{2 (k-1)}{k+2},\qquad \lambda = \frac{k+1}{(k-2) (3k+4)},\end{equation} and then taking the simple quotient. Note that the specialization $N^{k_0}(\gs\gl_2)$ of the one-parameter vertex algebra $N^k(\gs\gl_2)$ at a value $k = k_0$ coincides with $\text{Com}(\cH, V^{k_0}(\gs\gl_2))$ for all $k_0 \neq 0$. At $k_0 = 0$, $\text{Com}(\cH, V^{0}(\gs\gl_2))$ has an extra field in weight $1$, so the specialization $N^0(\gs\gl_2)$ is a quotient of $\cW^I(c,\lambda)$ even though the actual coset $\text{Com}(\cH, V^{0}(\gs\gl_2))$ is not.

The action of $\mathbb{Z}_2$ on $N^k(\gs\gl_2)$ is inherited from the above action of $\mathbb{Z}_2$ on $\cW(c,\lambda)$. The orbifold $N^k(\gs\gl_2)^{\mathbb{Z}_2}$ has recently been studied by Jiang and Wang \cite{JW}, and they have classified the irreducible, positive-energy modules for the simple quotient $N_k(\gs\gl_2)^{\mathbb{Z}_2}$, for $k$ a positive integer.

It can be shown by a computer calculation that as a one-parameter vertex algebra, $N^k(\gs\gl_2)^{\mathbb{Z}_2}$ is of type $\cW(2,4,6,8,10)$ and is generated by the weight $4$ field. In addition, there are no normally ordered relations among these generators in weight below $16$, so it follows from Theorem \ref{thm:simplequotient} that $N^k(\gs\gl_2)^{\mathbb{Z}_2}$ is the simple quotient of $\cW^{\mathrm{ev},I'}_{R'}(c,\lambda)$ for some $I'$ and $R'$. The following is straightforward to verify by computer.

\begin{thm} \label{paraorbcurve} The ideal $I'$ is generated by 
\begin{equation} \label{eq:paraorbcurve} 7\lambda (c-1) (22 + 5 c) (16 - c + c^2) + 64 + 6 c - 45 c^2 - 5 c^3.\end{equation}
In particular, $N^k(\gs\gl_2)^{\mathbb{Z}_2}$ can be obtained from $\cW^{\mathrm{ev}}(c,\lambda)$ by setting 
\begin{equation} \label{ratpara:para} c =  \frac{2 (k-1)}{k+2},\qquad \lambda = \frac{(2 + k) (-4 - 33 k - 15 k^2 + 4 k^3)}{7 (k-4) (17 + 16 k) (4 + 3 k + k^2)},\end{equation} and then taking the simple quotient. 
\end{thm}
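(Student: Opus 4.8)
The plan is to determine the two pieces of data independently: first the parametrization \eqref{ratpara:para} that locates $N^k(\gs\gl_2)^{\mathbb{Z}_2}$ on the $(c,\lambda)$-plane, and then the polynomial \eqref{eq:paraorbcurve} cutting out the truncation curve $V(I')$. For the parametrization, the central charge formula $c = 2(k-1)/(k+2)$ is inherited directly from $N^k(\gs\gl_2) \cong \cW^I_R(c,\lambda)/\cI$ via \eqref{ratpara:para} of the preceding discussion, since $\mathbb{Z}_2$-orbifolding preserves the Virasoro field and hence $c$. The value of $\lambda$ is pinned down by the invariant \eqref{form:structconst}: if $\tilde W^4$ denotes the unique primary weight $4$ field of $N^k(\gs\gl_2)^{\mathbb{Z}_2}$, normalized so that $\tilde W^4_{(7)} \tilde W^4 = \tfrac{c}{4}$, then the coefficient of $\tilde W^4$ in $\tilde W^4_{(3)} \tilde W^4$ is a specific rational function of $k$, and equating it with the expression $\cC$ from \eqref{form:structconst} (with $c = 2(k-1)/(k+2)$ substituted) gives an equation for $\lambda(k)$. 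By Remark \ref{rem:d28twochoicesauto}, the sign ambiguity from the square root is resolved by checking the coefficient $d_{28}$, so exactly one solution survives, yielding the stated $\lambda(k)$.

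The computational input is the explicit realization of $N^k(\gs\gl_2)^{\mathbb{Z}_2}$ as a subalgebra of $N^k(\gs\gl_2) = \text{Com}(\cH, V^k(\gs\gl_2))$, which is of type $\cW(2,3,4,5)$ by \cite{DLY}. First I would write down the standard generators of $N^k(\gs\gl_2)$, pass to the $\mathbb{Z}_2$-orbifold fixed by $W^3 \mapsto -W^3$, extract its unique primary weight $4$ field $\tilde W^4$, and normalize it. Then I would compute the OPE $\tilde W^4(z)\tilde W^4(w)$ explicitly using the Mathematica package of Thielemans \cite{T}, read off $\tilde W^4_{(3)}\tilde W^4$, and isolate the coefficient of $\tilde W^4$. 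Setting this equal to $\cC$ and clearing denominators produces a polynomial relation between $c$ and $\lambda$; eliminating $k$ from the pair $\{\,c - 2(k-1)/(k+2),\ \lambda - \lambda(k)\,\}$ yields the generator \eqref{eq:paraorbcurve} of $I'$.

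Once the parametrization is established, the identification of the simple quotient follows from the machinery already in place. Since $N^k(\gs\gl_2)^{\mathbb{Z}_2}$ is generated by the weights $2$ and $4$ fields, is of type $\cW(2,4,6,8,10)$, and has no normally ordered relations among its generators below weight $16$, all Jacobi identities of type $(W^{2r},W^{2s},W^{2t})$ for $2r+2s+2t \leq 16$ hold as consequences of \eqref{deriv}--\eqref{ncw} alone; Theorem \ref{thm:simplequotient} then identifies it with the simple quotient of $\cW^{\mathrm{ev},I'}_{R'}(c,\lambda)$ for the $I'$ just computed, completing the proof. The main obstacle is purely computational: reliably producing the orbifold generators and the full OPE $\tilde W^4(z)\tilde W^4(w)$ in $N^k(\gs\gl_2)^{\mathbb{Z}_2}$ with $k$ kept symbolic, since these are large normally ordered expressions, and then performing the elimination of $k$ to obtain the compact irreducible polynomial \eqref{eq:paraorbcurve}. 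Verifying that \eqref{eq:paraorbcurve} is indeed irreducible and lies in the level $16$ Shapovalov spectrum (so that the singular vector first appears in weight $16$, consistent with type $\cW(2,4,6,8,10)$) is a secondary check that I would confirm against $\mathrm{det}_{16}$.
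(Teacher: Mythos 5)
Your proposal is correct and follows essentially the same route as the paper: the paper's own proof of Theorem \ref{paraorbcurve} simply states that the result "is straightforward to verify by computer," relying on the preceding paragraph (which establishes that $N^k(\gs\gl_2)^{\mathbb{Z}_2}$ is of type $\cW(2,4,6,8,10)$, generated by the weight $4$ field, with no relations below weight $16$, so Theorem \ref{thm:simplequotient} applies), and the intended computation is exactly the one you describe — normalize the unique weight $4$ primary, compute its fourth-order self-pole, match against \eqref{form:structconst}, and resolve the sign via Remark \ref{rem:d28twochoicesauto}, just as the paper does for $\cF^k(2)$ in Theorem \ref{thm:minimalsp4} and for the type $D$ orbifolds in Theorem \ref{thm:prinw}. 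You have merely made explicit the computational steps the paper leaves implicit.
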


By combining Theorems \ref{thm:prinw} and \ref{paraorbcurve}, we can find the coincidences among the simple quotients $N_k(\gs\gl_2)^{\mathbb{Z}_2}$ and $\cW_{\ell}(\mathfrak{so}_{2n}, f_{\text{prin}})^{\mathbb{Z}_2}$ for $n\geq 3$ by finding the intersection points of their truncation curves $V(I')$ and $V(J_n)$. The complete list of intersection points is the following. 

\begin{enumerate}
\item $ \displaystyle \big(-24, -\frac{1}{245}\big)$, 

\smallskip

\item  $\displaystyle \big( \frac{4n-1}{2n+1},\   \frac{(1 + 2 n) (-1 - 33 n - 60 n^2 + 64 n^3)}{14 (n-1) (17 + 64 n) (1 + 3 n + 4 n^2)}\big)$,

\smallskip

\item $\displaystyle \big(2-3n,\ -\frac{-16 + 78 n - 75 n^2 + 15 n^3}{7 (3 n-1) (15 n - 32) (2 - n + n^2)}\big)$,

\smallskip

\item  $\displaystyle \big(-\frac{2 n}{2n-3},\  \frac{(2 n-3) (-48 + 93 n - 45 n^2 + 4 n^3)}{7 (4 n-3) (17 n-33) (8 - 11 n + 4 n^2)}\big)$.

\end{enumerate}

\begin{cor} \label{cor:orbifoldcoincidences} For $n\geq 3$, aside from the critical levels $k = -2$ and $\ell = -(2n-2)$, and the degenerate cases given by Theorem \ref{thm:coincidences}, all isomorphisms $N_k(\gs\gl_2)^{\mathbb{Z}_2}\cong \cW_{\ell}(\mathfrak{so}_{2n}, f_{\text{prin}})^{\mathbb{Z}_2}$, appear on the following list.

\begin{enumerate}

\item  $\displaystyle k =4n,\qquad \ell = -(2n-2) +  \frac{2n}{2n+1},\qquad  \ell = -(2n-2) +  \frac{2n+1}{2n}$,
\smallskip

\item $\displaystyle k = -\frac{2 (n-1)}{n},\qquad  \ell = -(2n-2) + \frac{n-1}{n}, \qquad \ell = -(2n-2) + \frac{n}{n-1}$,

\smallskip

\item $\displaystyle k = \frac{1}{1 - n},\qquad \ell = -(2n-2) +  \frac{2n-3}{2n-2}, \qquad \ell = -(2n-2) +  \frac{2n-2}{2n-3}$.

\end{enumerate}
\end{cor}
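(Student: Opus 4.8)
The plan is to follow the strategy used in the proofs of Theorems~\ref{thm:Dcoincidences}, \ref{thm:Ccoincidences}, and \ref{thm:CDcoincidences}. Both sides are realized as simple quotients of one-parameter families coming from $\cW^{\mathrm{ev}}(c,\lambda)$: by Theorem~\ref{paraorbcurve}, $N^k(\gs\gl_2)^{\mathbb{Z}_2}$ is the simple quotient of $\cW^{\mathrm{ev},I'}(c,\lambda)$ with $I'$ generated by \eqref{eq:paraorbcurve}, and by Theorem~\ref{thm:prinw}, $\cW_{\ell}(\gs\go_{2n}, f_{\text{prin}})^{\mathbb{Z}_2}$ is the simple quotient of $\cW^{\mathrm{ev},J_n}(c,\lambda)$ with $J_n=(q_n)$ from \eqref{ideal:wslna}. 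The critical levels $k=-2$ and $\ell=-(2n-2)$ are poles of the central-charge functions themselves and are excluded at the outset; beyond these, I would also set aside the values of $k$ and $\ell$ that are poles of the $\lambda$-parametrizations \eqref{ratpara:para} and \eqref{app:typedorb}, since at such values the corresponding algebra is not a quotient of $\cW^{\mathrm{ev}}(c,\lambda)$.

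At all remaining noncritical levels both algebras are honest one-parameter quotients, so Corollary~\ref{cor:intersections} applies: apart from the degenerate coincidences of Theorem~\ref{thm:coincidences}, every isomorphism $N_k(\gs\gl_2)^{\mathbb{Z}_2}\cong \cW_{\ell}(\gs\go_{2n}, f_{\text{prin}})^{\mathbb{Z}_2}$ must correspond to a point of $V(I')\cap V(J_n)$. Eliminating one variable between the two explicit generators \eqref{eq:paraorbcurve} and \eqref{ideal:wslna} (a resultant computation) shows that this intersection consists of exactly the four points listed above. The first point $(-24,-\frac{1}{245})$ lies at the degenerate central charge $c=-24$ of Theorem~\ref{thm:coincidences}(3), where $W^4$ itself is singular and the simple quotient is the Virasoro algebra; it therefore contributes no new coincidence and is discarded.

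It then remains to translate the three nontrivial intersection points from $(c,\lambda)$-coordinates into levels. For the parafermion side the relation $c=\frac{2(k-1)}{k+2}$ of \eqref{ratpara:para} is a linear fractional transformation, invertible as $k=\frac{2+2c}{2-c}$, so each central charge determines a unique $k$; substituting $c=\frac{4n-1}{2n+1}$, $c=2-3n$, and $c=-\frac{2n}{2n-3}$ gives $k=4n$, $k=-\frac{2(n-1)}{n}$, and $k=\frac{1}{1-n}$ respectively. For the type $D$ orbifold the central charge \eqref{dcentralcharge} is quadratic in $\ell$, so each central charge yields two levels $\ell$; solving the quadratic produces the two listed values of $\ell$ in each of the three families. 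By Corollary~\ref{cor:intersections}, these isomorphisms all hold and form the complete list away from the excluded poles.

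Finally, I would dispose of the excluded values just as in the previous three theorems: one verifies that whenever $k$ is a pole of $\lambda(k)$ in \eqref{ratpara:para}, the matching level $\ell$ (determined by equating the two central charges) is not a pole of the corresponding $\lambda_n(\ell)$ in \eqref{app:typedorb}, and symmetrically, so that no additional coincidences hide at the excluded points. I expect the main obstacle to be the elimination computation certifying that $V(I')\cap V(J_n)$ is exactly these four points; this is performed by computer, and the ensuing conversion to levels, though routine, must be carried out attentively to reproduce the stated families.
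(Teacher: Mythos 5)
Your proposal is correct and follows essentially the same route as the paper: the paper realizes both algebras as simple quotients via the truncation curves $V(I')$ and $V(J_n)$, lists the four intersection points (of which $(-24,-\tfrac{1}{245})$ is degenerate), translates the remaining three to levels, and explicitly states that the completeness argument is "similar to the proofs of Theorems \ref{thm:Dcoincidences} and \ref{thm:Ccoincidences}," which is exactly the template you adopt, including the check that no coincidences hide at the poles of the $\lambda$-parametrizations. Your level conversions (Möbius inversion of $c=\tfrac{2(k-1)}{k+2}$ on the parafermion side, quadratic in $\ell$ on the type $D$ side) match the stated families.
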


The proof that this list is complete is similar to the proofs of Theorems \ref{thm:Dcoincidences} and \ref{thm:Ccoincidences}, and is omitted. Similarly, we can find the coincidences between $N_k(\gs\gl_2)^{\mathbb{Z}_2}$ and $\cW_{\ell}(\mathfrak{sp}_{2n}, f_{\text{prin}})$ for $n\geq 2$ by finding the intersection of their truncation curves $V(I')$ and $V(I_n)$. The complete list of intersection points is the following.

\begin{enumerate}
\item $\displaystyle \big(-24, -\frac{1}{245}\big),\ \big(\frac{1}{2}, \frac{2}{49}\big)$,

\smallskip

\item $\displaystyle \big(-1 - 6 n, \ -\frac{1 - 27 n - 60 n^2 + 60 n^3}{14 (1 + 3 n) (30 n -17) (1 + n + 2 n^2)}\big)$,

\smallskip

\item $\displaystyle \big(-\frac{2 + n}{n-1}, \  \frac{(n-1) (16 + 30 n - 33 n^2 + 2 n^3)}{7 (1 + 2 n) (17 n - 32) (2 - 3 n + 2 n^2)}\big)$,

\smallskip

\item $\displaystyle \big(-2 n, \ -\frac{16 - 3 n - 45 n^2 + 10 n^3}{ 7 (1 + 2 n) (5 n-11) (8 + n + 2 n^2)}\big)$,

\smallskip

\item $\displaystyle \big(\frac{4 n}{3 + 2 n},\ \frac{(3 + 2 n) (-24 - 51 n - 6 n^2 + 16 n^3)}{7 (2 n-3) (33 + 32 n) (4 + 5 n + 2 n^2)}\big)$.
\end{enumerate}

\begin{cor} \label{cor:orbifoldcoincidencestypec} For $n\geq 2$, aside from the critical levels $k = -2$ and $\ell = -(n+1)$, and the degenerate cases given by Theorem \ref{thm:coincidences}, all isomorphisms $N_k(\gs\gl_2)^{\mathbb{Z}_2} \cong \cW_{\ell}(\mathfrak{sp}_{2n}, f_{\text{prin}})$, appear on the following list.

\begin{enumerate} 

\item $\displaystyle k = - \frac{4 n}{1 + 2 n}, \qquad \ell = -(n+1) + \frac{n}{1 + 2 n}$,

\smallskip

\item $\displaystyle k = -\frac{2}{n}, \qquad \ell  = -(n+1) + \frac{n}{2 (n-1)}$,

\smallskip

\item $\displaystyle k = -\frac{2 n -1}{1 + n}, \qquad \ell  = -(n+1) +\frac{1 + n}{2 n-1}$,

\smallskip

\item $\displaystyle k = \frac{4 n}{3 + 2 n}, \qquad \ell  = -(n+1) +\frac{3 + 2 n}{2 (1 + 2 n)}$. 

\end{enumerate}

\end{cor}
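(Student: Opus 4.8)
The plan is to mirror the proofs of Theorems~\ref{thm:Ccoincidences} and~\ref{thm:Dcoincidences}. By Theorem~\ref{paraorbcurve}, $N_k(\gs\gl_2)^{\mathbb{Z}_2}$ is the simple quotient of $\cW^{\mathrm{ev},I'}(c,\lambda)$, where $I'$ is generated by \eqref{eq:paraorbcurve}, and by Corollary~\ref{cor:typecrealization} (with the explicit generator of $I_n$ recorded in Appendix~A), $\cW_{\ell}(\gs\gp_{2n}, f_{\text{prin}})$ is the simple quotient of $\cW^{\mathrm{ev},I_n}(c,\lambda)$. First I would set aside the levels $k$ at which the parafermion parameter $\lambda(k)$ of Theorem~\ref{paraorbcurve} has a pole, together with the levels $\ell$ at which the type~$C$ parameter $\lambda_n(\ell)$ of \eqref{app:typec} has a pole, since at these values the relevant algebra is not a quotient of $\cW^{\mathrm{ev}}(c,\lambda)$ and Corollary~\ref{cor:intersections} does not apply. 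For all remaining noncritical $k$ and $\ell$, both algebras are genuine quotients of $\cW^{\mathrm{ev}}(c,\lambda)$, so by Corollary~\ref{cor:intersections} every isomorphism $N_k(\gs\gl_2)^{\mathbb{Z}_2}\cong \cW_{\ell}(\gs\gp_{2n}, f_{\text{prin}})$ outside the degenerate list of Theorem~\ref{thm:coincidences} must arise from a point of $V(I')\cap V(I_n)$.

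Next I would compute this intersection explicitly, obtaining the six points listed above: the two points $(-24,-\tfrac{1}{245})$ and $(\tfrac{1}{2},\tfrac{2}{49})$ sit at the degenerate central charges $c=-24$ and $c=\tfrac{1}{2}$, where $W^4$ is already singular, so they are absorbed into case~(3) of Theorem~\ref{thm:coincidences} and yield nothing new. Each of the four remaining points lies on both truncation curves, and since the rational parametrizations $k\mapsto(c(k),\lambda(k))$ and $\ell\mapsto(c_n(\ell),\lambda_n(\ell))$ are birational onto their images, a point of the intersection determines the pair $(k,\ell)$ uniquely. I would recover these by solving $c=\frac{2(k-1)}{k+2}$ for $k$ and \eqref{ccentralcharge} for $\ell$ at each of the four values of $c$; a direct check (as illustrated by the computation $c=-1-6n\Rightarrow k=-\tfrac{4n}{1+2n}$) shows these produce exactly the four families in the statement. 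Conversely, at each such point both $N_{k}(\gs\gl_2)^{\mathbb{Z}_2}$ and $\cW_{\ell}(\gs\gp_{2n},f_{\text{prin}})$ are the simple quotient of $\cW^{\mathrm{ev}}(c,\lambda)$ at the same maximal ideal $(c-c_0,\lambda-\lambda_0)$, so the displayed isomorphisms genuinely hold.

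Finally I would revisit the excluded pole values. As in Theorems~\ref{thm:Ccoincidences} and~\ref{thm:Dcoincidences}, the task is to verify that whenever $\ell$ is a pole of $\lambda_n(\ell)$, the level $k$ determined by $c_n(\ell)=\frac{2(k-1)}{k+2}$ is not simultaneously a pole of the parafermion $\lambda(k)$ (and symmetrically), so that no coincidence is hidden at, or lost through, the excluded levels. This is a short finite check once the pole loci of both parameters are written down.

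The step I expect to be the main obstacle is the explicit computation of $V(I')\cap V(I_n)$. The generator of $I_n$ from Appendix~A is a polynomial in $c$ and $\lambda$ whose coefficients depend on $n$, so intersecting its zero locus with that of the single fixed curve \eqref{eq:paraorbcurve} requires a resultant or Gr\"obner elimination in $\lambda$; after removing the spurious factors supported at $c=-24$ and $c=\tfrac{1}{2}$, one must check that the residual system factors into precisely the four $n$-rational branches and that these simplify to the listed families. Everything else reduces to direct applications of Corollaries~\ref{cor:intersections} and~\ref{cor:typecrealization} and Theorems~\ref{thm:coincidences} and~\ref{paraorbcurve}.
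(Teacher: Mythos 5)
Your proposal is correct and follows essentially the same route the paper takes (and explicitly omits as "similar to the proofs of Theorems \ref{thm:Dcoincidences} and \ref{thm:Ccoincidences}"): realize both algebras as quotients of $\cW^{\mathrm{ev}}(c,\lambda)$ via Theorem \ref{paraorbcurve} and Corollary \ref{cor:typecrealization}, invoke Corollary \ref{cor:intersections} to reduce to computing $V(I')\cap V(I_{n})$, discard the degenerate points at $c=-24$ and $c=\tfrac{1}{2}$, translate the four remaining intersection points into the levels $(k,\ell)$, and check that no coincidences hide at the excluded pole values. The intersection points you would need to compute are exactly the five items the paper records just before the corollary, so nothing is missing.
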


\subsection{Type $A$ orbifolds and even spin algebras}
Even though $\cW^k(\mathfrak{sl}_n, f_{\text{prin}})^{\mathbb{Z}_2}$ does not arise as a quotient of $\cW^{\mathrm{ev}}(c,\lambda)$ as a one-parameter vertex algebra, there are certain special values of $k$ where the {\it simple} orbifold $\cW_k(\mathfrak{sl}_n, f_{\text{prin}})^{\mathbb{Z}_2}$ does indeed arise as a quotient of $\cW^{\mathrm{ev}}(c,\lambda)$. For example, the first family in Corollary \ref{cor:orbifoldcoincidences} has this property since $$N_{4n}(\gs\gl_2) \cong \cW_k(\mathfrak{sl}_{4n}, f_{\text{prin}}),\qquad k = -4 n + \frac{1 + 4 n}{2 + 4 n},$$ by Theorem 6.1 of \cite{ALY}. Similarly, the second family in Corollary \ref{cor:orbifoldcoincidences} also has this property for $n\geq 4$, since
$$N_{-2(n-1)/n}(\gs\gl_2) \cong \cW_k(\mathfrak{sl}_{n-1}, f_{\text{prin}}),\qquad k = -(n-1) + \frac{n-2}{n},$$ by Theorem 10.6 of \cite{LI}. Finally, the family in Corollary \ref{cor:orbifoldcoincidencestypec} has this property since
$$N_{-4 n/(1 + 2 n)}(\gs\gl_2) \cong \cW_k(\mathfrak{sl}_{2n}, f_{\text{prin}}),\qquad k = -2n + \frac{2n-1}{2n+1},$$ by Theorem 10.6 of \cite{LI}. 

\begin{cor} For $n\geq 3$, we have isomorphisms of simple vertex algebras
\begin{equation*} \begin{split} & \cW_k(\mathfrak{sl}_{4n}, f_{\text{prin}})^{\mathbb{Z}_2} \cong  N_{4n}(\gs\gl_2)^{\mathbb{Z}_2} \cong \cW_{\ell}(\mathfrak{so}_{2n}, f_{\text{prin}})^{\mathbb{Z}_2},
\\ & k = -4 n + \frac{1 + 4 n}{2 + 4 n},\quad \ell = -(2n-2) +  \frac{2n}{2n+1}.
\end{split} \end{equation*}

For $n\geq 4$, we have isomorphisms of simple vertex algebras
\begin{equation*} \begin{split} & \cW_k(\mathfrak{sl}_{n-1}, f_{\text{prin}})^{\mathbb{Z}_2} \cong  N_{-2(n-1)/n}(\gs\gl_2)^{\mathbb{Z}_2} \cong \cW_{\ell}(\mathfrak{so}_{2n}, f_{\text{prin}})^{\mathbb{Z}_2},
\\ & k = -(n-1) + \frac{n-2}{n},\quad \ell = -(2n-2) + \frac{n-1}{n}.
\end{split} \end{equation*}

For $n\geq 2$, we have isomorphisms of simple vertex algebras
\begin{equation*} \begin{split} & \cW_k(\mathfrak{sl}_{2n}, f_{\text{prin}})^{\mathbb{Z}_2} \cong  N_{-4 n/(1 + 2 n)}(\gs\gl_2)^{\mathbb{Z}_2} \cong \cW_{\ell}(\mathfrak{sp}_{2n}, f_{\text{prin}}),
\\ &k = -2n + \frac{2n-1}{2n+1},\quad \ell = -(n+1) + \frac{n}{1 + 2 n}.
\end{split} \end{equation*}
\end{cor}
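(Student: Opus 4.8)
The plan is to assemble each displayed chain from two isomorphisms: a ``left'' isomorphism relating a type $A$ principal $\cW$-algebra orbifold to a parafermion orbifold, and a ``right'' isomorphism already recorded in this section. The right isomorphisms require no new work, since the three chains specialize exactly to family (1) of Corollary \ref{cor:orbifoldcoincidences} (first chain), family (2) of Corollary \ref{cor:orbifoldcoincidences} (second chain), and family (1) of Corollary \ref{cor:orbifoldcoincidencestypec} (third chain), at the displayed values of $k$ and $\ell$; I would simply quote them.

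For the left isomorphisms, I would begin from the identifications of \emph{simple} vertex algebras already established before orbifolding, namely $N_{4n}(\gs\gl_2) \cong \cW_k(\mathfrak{sl}_{4n}, f_{\text{prin}})$ by Theorem 6.1 of \cite{ALY}, and $N_{-2(n-1)/n}(\gs\gl_2) \cong \cW_k(\mathfrak{sl}_{n-1}, f_{\text{prin}})$ and $N_{-4n/(1+2n)}(\gs\gl_2) \cong \cW_k(\mathfrak{sl}_{2n}, f_{\text{prin}})$ by Theorem 10.6 of \cite{LI}, at the stated levels. Both sides of each identification are simple quotients of the type $A$ universal algebra $\cW(c,\lambda)$ of \cite{LI}, and each carries the $\mathbb{Z}_2$-action inherited from the canonical involution $L \mapsto L$, $W^3 \mapsto -W^3$. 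Granting that each identification intertwines these two $\mathbb{Z}_2$-actions, I would apply the orbifold functor to obtain the three left isomorphisms $\cW_k(\mathfrak{sl}_{4n}, f_{\text{prin}})^{\mathbb{Z}_2} \cong N_{4n}(\gs\gl_2)^{\mathbb{Z}_2}$, $\cW_k(\mathfrak{sl}_{n-1}, f_{\text{prin}})^{\mathbb{Z}_2} \cong N_{-2(n-1)/n}(\gs\gl_2)^{\mathbb{Z}_2}$, and $\cW_k(\mathfrak{sl}_{2n}, f_{\text{prin}})^{\mathbb{Z}_2} \cong N_{-4n/(1+2n)}(\gs\gl_2)^{\mathbb{Z}_2}$, and then concatenate with the right isomorphisms to finish.

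To verify the equivariance I would use that, on each of these simple algebras, the relevant $\mathbb{Z}_2$-action is characterized structurally: it fixes the conformal vector $L$ and acts by $-1$ on the one-dimensional space of weight-$3$ Virasoro primaries. Any isomorphism of these simple algebras preserves $L$, hence sends the weight-$3$ primary to a nonzero scalar multiple of the weight-$3$ primary, and therefore commutes with the involution negating it; since $\{L, W^3\}$ generates, this forces equivariance. I expect this verification to be the main obstacle: one must confirm that the $\mathbb{Z}_2$-action inherited on $\cW_k(\mathfrak{sl}_m, f_{\text{prin}})$ as a quotient of $\cW(c,\lambda)$ agrees with its intrinsic involution (induced by the order-two diagram automorphism of $\mathfrak{sl}_m$), and that the identifications of \cite{ALY,LI} genuinely send $W^3$ to a scalar multiple of $W^3$; both reduce to the uniqueness of the weight-$3$ primary together with the preservation of $L$. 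Finally, the range restrictions ($n\geq 3$, $n\geq 4$, and $n\geq 2$ respectively) and the exclusion of critical levels are inherited directly from the hypotheses of the cited theorems and corollaries.
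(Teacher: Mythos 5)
Your proposal is correct and follows essentially the same route as the paper: the paper obtains the corollary by combining the identifications $N_{4n}(\gs\gl_2)\cong \cW_k(\gs\gl_{4n},f_{\text{prin}})$ (Theorem 6.1 of \cite{ALY}) and $N_{-2(n-1)/n}(\gs\gl_2)\cong\cW_k(\gs\gl_{n-1},f_{\text{prin}})$, $N_{-4n/(1+2n)}(\gs\gl_2)\cong\cW_k(\gs\gl_{2n},f_{\text{prin}})$ (Theorem 10.6 of \cite{LI}) with exactly the families of Corollaries \ref{cor:orbifoldcoincidences} and \ref{cor:orbifoldcoincidencestypec} that you cite, passing to $\mathbb{Z}_2$-orbifolds. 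Your explicit equivariance check (both actions fix $L$ and negate the unique weight-$3$ primary, so any conformal isomorphism of these simple algebras intertwines them) is a point the paper leaves implicit, and your argument for it is sound.
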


\appendix

\section{The generator of the ideal $I_{n}$ and rational parametrization of $V(I_{n})$}

The explicit generator $p_{n}$ for the ideal $I_{n}\subseteq \mathbb{C}[c,\lambda]$ which gives rise to the type $B$ and $C$ principal $\cW$-algebras as quotients of $\cW^{\mathrm{ev}}(c,\lambda)$, is given by

\begin{equation} \label{app:idealtypec} \begin{split}
p_{n} & = f(c,n) + \lambda g(c,n) + \lambda^2 h(c,n),
\\  f(c,n) & = -204 c^2 - 192 c^3 + 171 c^4 + 952 c n - 4612 c^2 n + 2348 c^3 n - 38 c^4 n + 1568 n^2 - 7708 c n^2
\\ & + 1788 c^2 n^2 + 2401 c^3 n^2 - 74 c^4 n^2 + 560 n^3 - 18936 c n^3 + 22280 c^2 n^3 - 2112 c^3 n^3 +  8 c^4 n^3 
\\ & - 16304 n^4 + 18640 c n^4 + 3420 c^2 n^4 - 364 c^3 n^4 + 8 c^4 n^4 - 17408 n^5 + 27680 c n^5 - 10576 c^2 n^5 
\\ & + 304 c^3 n^5 - 3264 n^6 - 3072 c n^6 + 2736 c^2 n^6,
\\ g(c,n) & = - 14 (c-1) (2c-1) (22 + 5 c) (n-2) (n-1) (3 c + 10 n + 2 c n + 12 n^2) 
\\ & (5 c + 28 n + 2 c n + 40 n^2) ,
\\ h(c,n) & =  49 (c-1)^2 (22 + 5 c)^2 (21 c^2 + 70 c n - 14 c^2 n + 200 n^2 - 135 c n^2 - 26 c^2 n^2 + 380 n^3 
\\ & - 176 c n^3 + 8 c^2 n^3 + 436 n^4 + 132 c n^4 + 8 c^2 n^4 + 448 n^5 + 112 c n^5 +  336 n^6).
 \end{split} \end{equation}

The variety $V(I_{n}) \subseteq \mathbb{C}^2$ is a rational curve that admits the following two rational parametrizations with parameter $k$.
 
\begin{equation} \label{app:typec} \begin{split}  c_{n}(k) &  = -\frac{n (k + 2 n + 2 k n + 2 n^2) (-3 - 2 k + 4 k n + 4 n^2)}{1 + k + n},
\\  \lambda_{n}(k) & = -\frac{(1 + k + n) f_n(k)}{7 (1 + 2 n) (-1 - k + 2 k n + 2 n^2) (-1 + n + 2 k n + 2 n^2) g_n(k) h_n(k)},
\\  g_n(k) & = -10 - 21 k - 14 k^2 - 23 n - 32 k n - 16 n^2 + 8 k n^2 + 8 k^2 n^2 + 8 n^3 + 16 k n^3 + 8 n^4,
\\  h_n(k) & = -22 - 22 k - 22 n - 15 k n - 10 k^2 n - 30 n^2 - 50 k n^2 - 30 n^3 + 40 k n^3 + 40 k^2 n^3 
\\  & + 40 n^4 + 80 k n^4 + 40 n^5,
\\  f_n(k) & = -28 - 130 k - 170 k^2 - 68 k^3 - 222 n - 532 k n - 161 k^2 n + 228 k^3 n + 76 k^4 n - 186 n^2 
\\ & + 687 k n^2 + 1892 k^2 n^2 + 908 k^3 n^2 + 730 n^3 + 2780 k n^3 + 1508 k^2 n^3 - 800 k^3 n^3 - 320 k^4 n^3 
\\ & + 1106 n^4 + 536 k n^4 - 2520 k^2 n^4 - 1360 k^3 n^4 - 136 n^5 - 2608 k n^5 - 2064 k^2 n^5 + 128 k^3 n^5 
\\ &+ 64 k^4 n^5 - 888 n^6 - 1328 k n^6 + 384 k^2 n^6 + 256 k^3 n^6 - 304 n^7 + 384 k n^7 + 384 k^2 n^7 
\\ & + 128 n^8 + 256 k n^8 + 64 n^9.
\end{split} \end{equation}

\begin{equation} \label{app:typeb} \begin{split}  c_{n}(k) &  = -\frac{n (-3 + 2 k + 2 k n + 4 n^2) (k - 2 n + 2 k n + 4 n^2)}{k + 2 n -1},
\\  \lambda_{n}(k) & = -\frac{(k + 2 n -1)  f_n(k)}{7 (1 + 2 n) (-1 + k + k n + 2 n^2) (1 - 4 n + 2 k n + 4 n^2)  g_n(k) h_n(k)},
\\  g_n(k) & -20 + 19 k - 6 k^2 + 46 n - 36 k n + 4 k^2 n - 32 n^2 + 4 k^2 n^2 - 16 n^3 + 16 k n^3 + 16 n^4,
\\  h_n(k) & = 22 - 22 k - 44 n - 15 k n + 10 k^2 n + 30 n^2 - 50 k n^2 + 30 k^2 n^2 - 60 n^3 + 40 k n^3 + 20 k^2 n^3 
\\ & - 40 n^4 + 80 k n^4 + 80 n^5,
\\  f_n(k) & = 56 - 66 k - 2 k^2 + 12 k^3 - 500 n + 1080 k n - 941 k^2 n + 348 k^3 n - 36 k^4 n + 760 n^2
\\ & - 792 k n^2 - 322 k^2 n^2 +  514 k^3 n^2 - 100 k^4 n^2 + 1476 n^3 - 4024 k n^3 + 2948 k^2 n^3 - 480 k^3 n^3 
\\ & - 40 k^4 n^3 - 3656 n^4 + 3672 k n^4 + 240 k^2 n^4 - 600 k^3 n^4 + 40 k^4 n^4 + 496 n^5 + 3008 k n^5 
\\ & - 2256 k^2 n^5 + 192 k^3 n^5 + 16 k^4 n^5 + 2816 n^6 - 3104 k n^6 + 192 k^2 n^6 + 128 k^3 n^6 - 1344 n^7 
 \\ & - 256 k n^7 + 384 k^2 n^7 - 384 n^8 + 512 k n^8 + 256 n^9.
 \end{split} \end{equation}
The parametrization \eqref{app:typec} gives rise to the type $C$ algebra $\cW^k(\gs\gp_{2n}, f_{\text{prin}})$ at level $k$, and the parametrization \eqref{app:typeb} gives rise to the type $B$ algebra $\cW^k(\gs\go_{2n+1}, f_{\text{prin}})$ at level $k$.

\section{Rational parametrization of $V(J_{n})$}
The explicit generator $q_{n}$ for the ideal $J_{n}\subseteq \mathbb{C}[c,\lambda]$ which gives rise $\cW^{\ell}(\gs\go_{2n}, f_{\text{prin}})^{\mathbb{Z}_2}$ as a quotient of $\cW^{\mathrm{ev}}(c,\lambda)$, is given by \eqref{ideal:wslna}. The variety $V(J_{n})$ admits the following rational parametrization with parameter $\ell$:

\begin{equation} \label{app:typedorb} \begin{split}  c_n(\ell) &  =  -\frac{n (5 - 10 n + 4 n^2 - 2 \ell + 2 n \ell) (4 - 8 n + 4 n^2 - \ell + 2 n \ell)}{\ell + 2 n -2 },
\\  \lambda_{n}(\ell ) & = -\frac{(2 n + \ell -2) f_n(\ell)}{7 (n-1) (1 - 6 n + 4 n^2 + 2 n \ell) (2 - 8 n + 4 n^2 - \ell + 2 n \ell) g_n(\ell) h_n(\ell)},
\\  g_n(\ell) & = -40 + 22 n + 60 n^2 - 64 n^3 + 16 n^4 + 29 \ell - 40 n^2 \ell + 16 n^3 \ell - 6 \ell^2 - 4 n \ell^2 + 4 n^2 \ell^2,
\\  h_n(\ell) & = 44 + 56 n - 400 n^2 + 580 n^3 - 360 n^4 + 80 n^5 - 22 \ell - 65 n \ell + 220 n^2 \ell - 240 n^3 \ell 
\\ & + 80 n^4 \ell + 10 n \ell^2 - 30 n^2 \ell^2 + 20 n^3 \ell^2,
\\  f_n(\ell) & = -352 + 1608 n + 1200 n^2 - 11792 n^3 + 15800 n^4 - 3904 n^5 - 7296 n^6 + 6656 n^7 
\\ & - 2176 n^8 + 256 n^9 + 388 \ell - 992 n \ell - 
  5486 n^2 \ell + 17312 n^3 \ell - 13688 n^4 \ell - 2368 n^5 \ell \\ & + 7936 n^6 \ell - 
  3584 n^7 \ell + 512 n^8 \ell - 130 \ell^2 - 221 n \ell^2 + 4352 n^2 \ell^2 - 
  7772 n^3 \ell^2 + 2640 n^4 \ell^2 \\ & + 2784 n^5 \ell^2 - 2112 n^6 \ell^2 + 
  384 n^7 \ell^2 + 12 \ell^3 + 228 n \ell^3 - 1186 n^2 \ell^3 + 1200 n^3 \ell^3 + 
  160 n^4 \ell^3 \\ & - 512 n^5 \ell^3 + 128 n^6 \ell^3 - 36 n \ell^4 + 100 n^2 \ell^4 - 
  40 n^3 \ell^4 - 40 n^4 \ell^4 + 16 n^5 \ell^4.
\end{split} \end{equation}

\section{Intersection points}

Recall that the truncation curves  $V(I_{n})$ and $V(I_{m})$ intersect at exactly eight points. Three of these points $(-24, -\frac{1}{245})$ and $(\frac{1}{2}, \pm \frac{2}{49})$ do not depend on $n$ and $m$, and the remaining five points $(c^i_{n,m}, \lambda^i_{n,m})$, $i=1,\dots, 5$, are as follows.

\begin{equation} \begin{split} c^1_{n,m} & = -\frac{(2 m + n + 2 m n) (m + 2 n + 2 m n)}{m + n},
\\  \lambda^1_{n,m} & = - \frac{(m + n) f(n,m)}{7 (1 + 2 m) (1 + 2 n) (m + n + m n) g(n,m)h(n,m)},
\\ g(n,m) & = -6 m^2 - 5 m n + 4 m^2 n - 6 n^2 + 4 m n^2 + 4 m^2 n^2,
\\ h(n,m) & = -22 m + 10 m^2 - 22 n + 25 m n + 30 m^2 n + 10 n^2 + 30 m n^2 + 20 m^2 n^2,
\\ f(n,m) & = 12 m^3 - 36 m^4 + 70 m^2 n + 60 m^3 n - 100 m^4 n + 70 m n^2 + 211 m^2 n^2 + 2 m^3 n^2  - 40 m^4 n^2 
\\ & + 12 n^3 + 60 m n^3 + 2 m^2 n^3 + 40 m^4 n^3 - 36 n^4 - 100 m n^4 - 40 m^2 n^4 + 40 m^3 n^4 + 16 m^4 n^4.
\end{split} \end{equation}

\begin{equation} \begin{split}  c^2_{n,m} & = -\frac{m n (4 m n  - 2 m - 2 n -3)}{1 + m + n},
\\  \lambda^2_{n,m} & = -\frac{(1 + m + n) f(n,m)}{7 (2 m n - m -1) (2 m n -n -1) g(n,m) h(n,m)},
\\  g(n,m) & = -10 - 21 m - 14 m^2 - 21 n - 18 m n - 14 n^2 + 8 m^2 n^2,
\\   h(n,m) & = -22 - 22 m - 22 n - 15 m n - 10 m^2 n - 10 m n^2 + 20 m^2 n^2,
\\  f(n,m) & = -28 - 130 m - 170 m^2 - 68 m^3 - 130 n - 372 m n - 93 m^2 n + 228 m^3 n + 76 m^4 n 
\\ & - 170 n^2 - 93 m n^2 + 610 m^2 n^2 + 300 m^3 n^2 - 152 m^4 n^2 - 68 n^3 + 228 m n^3 + 300 m^2 n^3
\\ & - 192 m^3 n^3 - 16 m^4 n^3 + 76 m n^4 - 152 m^2 n^4 - 16 m^3 n^4 + 32 m^4 n^4.
\end{split} \end{equation}

\begin{equation} \begin{split}  c^3_{n,m} & = -\frac{4 m n (3 + 2 m + 2 n)}{(2 m - 2 n -1) (1 + 2 m - 2 n)},
\\  \lambda^3_{n,m} & = -\frac{(2 m - 2 n -1) (1 + 2 m - 2 n) f(n,m)}{7 (1 + 2 m) (1 + 2 n) (2 m + 2 n -1)g(n,m) h(n,m)},
\\  g(n,m) & = (5 - m + 6 m^2 - n - 16 m n + 6 n^2),
\\  h(n,m) & = (11 - 44 m^2 + 118 m n + 20 m^2 n - 44 n^2 + 20 m n^2),
\\  f(n,m) & = -7 + 23 m + 34 m^2 - 92 m^3 - 24 m^4 + 23 n + 108 m n + 142 m^2 n - 168 m^3 n + 72 m^4 n 
\\ & + 34 n^2 + 142 m n^2 + 344 m^2 n^2 - 88 m^3 n^2 - 92 n^3 - 168 m n^3 - 88 m^2 n^3 - 24 n^4 + 72 m n^4.
\end{split} \end{equation}

\begin{equation} \begin{split}  c^4_{n,m} & = -\frac{2 m n (3 + 2 m + 2 n + 4 m n)}{1 + 2 m + 2 n},
\\  \lambda^4_{n,m} & = -\frac{(1 + 2 m + 2 n) f(n,m)}{7 (1 + 2 m) (1 + 2 n) (1 + 2 m n) g(n,m) h(n,m)},
\\  g(n,m) & = (-10 - 21 m - 14 m^2 + 2 n - 10 m n - 12 n^2 + 8 m n^2 + 8 m^2 n^2),
\\  h(n,m) & = (-11 - 22 m - 22 n + 15 m n + 10 m^2 n + 10 m n^2 + 20 m^2 n^2),
\\  f(n,m) & = (-14 - 79 m - 136 m^2 - 68 m^3 + 18 n - 36 m n - 179 m^2 n - 228 m^3 n - 76 m^4 n 
\\ & + 104 n^2 + 274 m n^2 + 228 m^2 n^2 - 4 m^3 n^2 - 152 m^4 n^2 + 24 n^3 + 264 m n^3 + 260 m^2 n^3 
\\ & - 96 m^3 n^3 + 16 m^4 n^3 - 72 m n^4 - 128 m^2 n^4 + 48 m^3 n^4 + 32 m^4 n^4).
\end{split} \end{equation}

\begin{equation} \begin{split}  c^5_{n,m} & = -\frac{2 m n (3 + 2 m + 2 n + 4 m n)}{1 + 2 m + 2 n},
\\  \lambda^5_{n,m} & = -\frac{(1 + 2 m + 2 n) f(n,m)}{7 (1 + 2 m) (1 + 2 n) (1 + 2 m n)  g(n,m) h(n,m)},
\\  g(n,m) & = (-10 + 2 m - 12 m^2 - 21 n - 10 m n + 8 m^2 n - 14 n^2 +  8 m^2 n^2),
\\  h(n,m) & = (-11 - 22 m - 22 n + 15 m n + 10 m^2 n + 10 m n^2 +  20 m^2 n^2),
\\  f(n,m) & = (-14 + 18 m + 104 m^2 + 24 m^3 - 79 n - 36 m n + 274 m^2 n + 264 m^3 n  - 72 m^4 n 
\\ & - 136 n^2 - 179 m n^2 +  228 m^2 n^2 + 260 m^3 n^2 - 128 m^4 n^2 - 68 n^3  - 228 m n^3 
\\ & - 4 m^2 n^3 - 96 m^3 n^3 + 48 m^4 n^3 - 76 m n^4 - 152 m^2 n^4 + 16 m^3 n^4 + 32 m^4 n^4).
\end{split} \end{equation}

Finally, recall that the truncation curves $V(I_{n})$ and $V(J_{m})$ intersect at two points $(-24, -\frac{1}{245})$ and $(\frac{1}{2}, - \frac{2}{49})$ that do not depend on $n$ and $m$, and three additional points $(c^i_{n,m}, \lambda^i_{n,m})$, $i=1,2, 3$, which are as follows.

\begin{equation} \begin{split}  c^1_{n,m} & = -\frac{(2 m n -m - 2 n ) (m - n + 2 m n)}{m + n},
\\  \lambda^1_{n,m} & = -\frac{(m + n) f(n,m)}{7 (m-1) (1 + 2 n) (2 m n -m - n )  g(n,m) h(n,m)},
\\  g(n,m) & = (-7 m^2 - 7 m n - 6 n^2 - 4 m n^2 + 4 m^2 n^2),
\\  h(n,m) & = (-22 m - 5 m^2 - 22 n - 5 m n + 10 n^2 - 30 m n^2 + 20 m^2 n^2),
\\  f(n,m) & = (-34 m^3 + 19 m^4 - 68 m^2 n + 38 m^3 n - 22 m n^2 - 185 m^2 n^2 + 302 m^3 n^2 - 80 m^4 n^2 
\\ & + 12 n^3 - 204 m n^3 + 302 m^2 n^3 - 80 m^3 n^3 - 36 n^4 + 100 m n^4 - 40 m^2 n^4 - 40 m^3 n^4 + 16 m^4 n^4).
\end{split} \end{equation}

\begin{equation} \begin{split}  c^2_{n,m} & = -\frac{2 m n (4 m - 2 n + 4 m n -3)}{2 m + 2 n -1},
\\  \lambda^2_{n,m} & = -\frac{(2 m + 2 n -1) f(n,m)}{7 (2 m + 2 m n -1) (1 - 2 n + 4 m n) g(n,m) h(n,m)},
\\  g(n,m) & = -10 + 19 m - 12 m^2 + 2 n - 22 m n + 8 m^2 n - 12 n^2 - 8 m n^2 + 8 m^2 n^2,
\\  h(n,m) & = 11 - 22 m - 22 n - 15 m n + 20 m^2 n - 10 m n^2 + 20 m^2 n^2,
\\  f(n,m) & = 14 - 33 m - 2 m^2 + 24 m^3 - 74 n + 404 m n - 873 m^2 n + 696 m^3 n - 144 m^4 n + 80 n^2 
\\ & - 178 m n^2 - 260 m^2 n^2 + 452 m^3 n^2 - 112 m^4 n^2 + 24 n^3 - 264 m n^3 + 348 m^2 n^3 - 256 m^3 n^3 
\\ & + 64 m^4 n^3 + 72 m n^4 - 128 m^2 n^4 - 48 m^3 n^4 + 32 m^4 n^4.
\end{split} \end{equation}

\begin{equation} \begin{split}  c^3_{n,m} & = -\frac{m n(4 m - 2 n -3)}{(2 m - 2 n -1) (m - n -1)},
\\  \lambda^3_{n,m} & = - \frac{(2 m - 2 n -1) (m - n -1) f(n,m)}{7 (m-1) (2 m - n -1) (1 + 2 n) g(n,m) h(n,m)},
\\  g(n,m) & = 10 - 19 m + 12 m^2 + 21 n - 28 m n + 14 n^2,
\\  h(n,m) & = -22 + 66 m - 44 m^2 - 66 n + 73 m n + 20 m^2 n - 44 n^2 - 10 m n^2,
\\  f(n,m) & = 28 - 94 m + 62 m^2 + 52 m^3 - 48 m^4 + 186 n - 668 m n + 857 m^2 n - 504 m^3 n 
\\ & + 144 m^4 n + 430 n^2 - 1267 m n^2 + 1198 m^2 n^2 - 376 m^3 n^2 + 408 n^3 - 772 m n^3 + 304 m^2 n^3 
\\ & + 136 n^4 - 76 m n^4.
\end{split} \end{equation}

\end{document}